\setlist[enumerate]{itemsep=0pt}
\setlist[itemize]{itemsep=0pt}
\newcommand\restr[2]{{
  \left.\kern-\nulldelimiterspace 
  #1 
  \right |_{#2} 
  }}
\newtheorem{theorem}{Theorem}[section]
\newtheorem{corollary}{Corollary}[theorem]
\newtheorem{lemma}[theorem]{Lemma}
\newtheorem{proposition}[theorem]{Proposition}
\newtheorem*{claim}{Claim}
\newenvironment{customthm}[1]
  {\innercustomthm}
  {\endinnercustomthm}
\theoremstyle{definition}
\newtheorem{definition}[theorem]{Definition}
\newtheorem{defprop}[theorem]{Definition/Proposition}
\newtheorem*{remark}{Remark(s)}
\begin{document}

\title{On transversely holomorphic partially hyperbolic flows}
\author{Mounib Abouanass}
\date{\today}

\maketitle

\begin{abstract}
    In this paper, we study transversely holomorphic partially hyperbolic flows, i.e. those whose holonomy pseudo-group is given by biholomorphic maps.
    We prove in the seven-dimensional case that under the assumption that the subcenter distribution is integrable to a flow invariant compact foliation with trivial holonomy, then the flow projects, by a smooth fiber bundle map, to a transversely holomorphic Anosov flow on a smooth five-dimensional manifold which is, in case of topological transitivity, either $C^\infty$ orbit equivalent to the suspension of a hyperbolic automorphism of a complex torus, or, up to finite covers, $C^\infty$-orbit equivalent to the geodesic flow of a compact hyperbolic manifold.
\end{abstract}

\tableofcontents

\section{Introduction}

Smooth flows on smooth manifolds have long been of interest to both mathematicians and physicists. Their approach can be either dynamical (by considering the one-parameter subgroup of smooth diffeomorphisms $(\varphi^t)_{t \in \mathbb R}$, using various tools coming from dynamical systems and ergodic theory), or geometrical (by considering the the partition of the phase space into orbits, i.e. the orbit foliation).

A classical approach in continuous dynamics involves assuming transverse structures for the flow.
For example, Brunella and Ghys (see \cite{brunella_umbilical_1995}, \cite{brunella_transversely_1996}, \cite{ghys_transversely_1996}) have studied transversely holomorphic flows on smooth three-manifolds, that is smooth flows whose orbit foliation $\Phi$ can be given by a smooth foliated atlas whose transition maps are holomorphic.
They achieved a complete classification using advanced topological and analytical techniques, see also \cite{carriere_flots_1984}.
However, their techniques fail for higher dimensions due to several reasons: a priori non-conformity, non-equivalence of harmonic functions and holomorphic maps, and the lack classification of compact complex manifolds of complex dimension three, among others.

On the other hand, Anosov flows exhibit rich dynamical properties. 
For instance, algebraic Anosov flows (that is finite covers of suspensions of Anosov diffeomorphisms or geodesic flows on hyperbolic manifolds) have been studied by Ghys. Their geometric properties are closely linked with the structure of their weak stable and unstable foliations: for example, Plante showed in \cite{plante_anosov_1981} that if the weak stable or weak unstable foliation is transversally affine, then the flow is $C^\infty$-orbit equivalent to the suspension of an Anosov diffeomorphism. 

By combining the rigid properties of holomorphic maps with the geometric richness of Anosov flows, we have achieved a classification of transversely holomorphic Anosov flows (i.e. those whose orbit foliation is transversely holomorphic) on smooth compact connected five-dimensional manifold (see \cite{abouanass_global_2025}).

In recent decades, interest has shifted toward a broader class of systems---\textit{partially hyperbolic} ones---pioneered notably in the works of Hirsch--Pugh--Shub \cite{hirsch_invariant_1977} and Brin--Pesin \cite{brin_partially_1974}. A diffeomorphism $f$ on $M$ is said to be (uniformly) partially hyperbolic if $TM$ admits a splitting into three invariant, transverse bundles $
TM = E^s \oplus E^c \oplus E^u,
$
where $E^s$ is uniformly contracted, $E^u$ uniformly expanded, and the \textit{center bundle} $E^c$ is neither contracted nor expanded as strongly. This class includes, for example, time-$t$ maps of Anosov flows. A smooth flow is called partially hyperbolic when its time-one map satisfies this definition; in this case, the flow direction is always contained in the center distribution.

In \cite{abouanass_dynamical_2026}, we refined this definition for flows: we require that the center distribution further decomposes as
$E^c = \mathbb RX \oplus E^{\hat{c}}$,
where $X$ is the smooth non-vanishing vector field inducing the flow, and $E^{\hat{c}}$ is an invariant subbundle called the \textit{subcenter distribution}. This definition has the advantage of considering explicitly a subbundle transverse to the flow. 
We proved that if the subcenter distribution of a partially hyperbolic flow on a smooth compact connected manifold is integrable to a flow-invariant compact foliation $\mathcal{F}^{\hat{c}}$ with trivial holonomy, then
the flow is \textit{dynamically coherent}: there exist flow-invariant foliations $\mathcal{F}^{\hat{c}s}$ and $\mathcal{F}^{\hat{c}u}$ tangent to $E^{\hat{c}s} = E^s \oplus E^{\hat{c}}$ and $E^{\hat{c}u} = E^u \oplus E^{\hat{c}}$, respectively.
In fact, we proved that the subcenter foliation is \textit{complete}, i.e.
$
\bigcup_{z \in \mathcal{F}^{\hat{c}}(x)} \mathcal{F}^{*}(z) = \bigcup_{w \in \mathcal{F}^{*}(x)} \mathcal{F}^{\hat{c}}(w)
$
for all $x \in M$ and $* \in \{s,u\}$. 
By Epstein \cite{epstein_foliations_1976}, the fact that the subcenter foliation $\mathcal{F}^{\hat{c}}$ is compact with trivial holonomy ensures that the leaf space $M/\mathcal{F}^{\hat{c}}$ is a topological manifold.
If a partially hyperbolic flow has a complete subcenter foliation, then the corresponding time-one map is a dynamically coherent partially hyperbolic diffeomorphism with a complete center foliation (see \ref{lem:weakfol}).

In the discrete case, partially hyperbolic diffeomorphisms which are quasiconformal, and in particular holomorphic partially hyperbolic diffeomorphisms, have been studied in \cite{butler_uniformly_2018}; \cite{xu_holomorphic_2025} and it has been shown in complex dimension three that if the center distribution is tangent to an invariant compact foliation with trivial holonomy, then the latter is holomorphic and the system is just a holomorphic skew product over a linear automorphism on a complex 2-torus.

The aim of this article is to prove an analog statement for transversely holomorphic partially hyperbolic flows in dimension seven. The main result is:

\begin{customthm}{A}
    Let $(\varphi^t)$ a smooth transversely holomorphic partially hyperbolic flow on a smooth compact manifold $M$ of dimension $7$ with a flow invariant compact subcenter foliation $\mathcal{F}^{\hat{c}}$ with $C^1$ leaves and trivial holonomy.\\
    Then 
    \begin{enumerate}[label=(\roman*)]
        \item The center-stable $\mathcal{F}^{cs}$, center-unstable $\mathcal{F}^{cu}$ and center $\mathcal{F}^{c}$ foliations are transversely holomorphic and $C^\infty$, and the subcenter foliation $\mathcal{F}^{\hat{c}}$ is $C^\infty$;
        \item There exists a smooth compact connected manifold $N$ of dimension $5$, a map $p:M\to N$ defining a $C^\infty$ fiber bundle whose fibers are exactly the subcenter leaves, and a smooth transversely holomorphic Anosov flow $(\psi^t)$ on $N$ such that for every $t\in \mathbb R$, $p\circ \varphi^t=\psi^t\circ p$ ;
        \item In particular if $(\psi^t)$ is topologically transitive, then $(\psi^t)$ is either $C^\infty$-orbit equivalent to the suspension of a hyperbolic automorphism of a complex torus, or, up to finite covers, $C^\infty$-orbit equivalent to the geodesic flow on the unit tangent bundle of a compact hyperbolic three-dimensional manifold.
    \end{enumerate}
\end{customthm}

In section \ref{sec:4}, we prove some fundamental results such as the fact that the stable, unstable and subcenter leaves of a transversely holomorphic partially hyperbolic flow have complex structures which are preserved by the action of the flow. This is also true for the subcenter-stable and subcenter-unstable foliations if they exist.
This leads, thanks in particular to the (holomorphic) non-stationary linearization of $\mathcal{F}^u$, to the quasiconformality of the center-stable holonomy as well as a form of Fubini's theorem in the seven-dimensional case.

In section \ref{sec:5}, we prove the most important result, that is the holomorphicity of the subcenter foliation inside subcenter-stable and subcenter-unstable leaves, which implies the main theorem in section \ref{sec:6}.

\section{Reminders and definitions}
We only consider smooth ($C^\infty$) connected manifolds without boundary.

\subsection{Foliations}
We start by recalling some notions coming from the theory of foliations (see \cite{camacho_geometric_1985}, \cite{moerdijk_introduction_2003}, , \cite{lee_manifolds_2009} and \cite{candel_foliations_2000} for more details). 
\begin{defprop}
\label{defprop:fol}
    Let $M$ a smooth manifold of dimension $n\in \mathbb N$. Let $r \geq 1$ (or $r\in \{\infty\}$) and $k\leq n$.\\
    A \emph{$C^r$ foliation} $\mathcal{F}$ of dimension $k$ on $M$ (or codimension $n-k$) is defined by one of the following equivalent assertions:
    \begin{enumerate}
        \item a $C^r$-atlas $(U_i, \psi_i)_i$ on $M$ which is maximal with respect to the following properties:
            \begin{enumerate}
                \item For all $i$, $\psi_i(U_i)=U_i^1 \times U_i^2$, where $U_i^1$ and $U_i^2$ are connected open subsets of $\mathbb R^k$ and $\mathbb R^{n-k}$ respectively ;
                \item For all $i,j$, there exist $C^r$ maps $f_{ij}$ and $h_{ij}$ such that
                \[\forall(x,y) \in \psi_i(U_i \cap U_j) \subset \mathbb R^k \times \mathbb R^{n-k}, \; \psi_i \circ \psi_j^{-1}(x,y)=(f_{ij}(x,y), h_{ij}(y)).\]
            \end{enumerate}
        \item a maximal atlas $(U_i, s_i)_i$, where each $U_i$ is an open subset of $M$ and $s_i:U_i \to \mathbb R^{n-k}$ is a $C^r$ submersion, satisfying:
            \begin{enumerate}
                \item $\bigcup_i U_i = M$ ;
                \item For all $i,j$, there exists a $C^r$ diffeomorphism
                \[\gamma_{ij}: s_i(U_i\cap U_j) \to s_j(U_i\cap U_j) \text{  such that  } \restr{s_i}{U_i\cap U_j} = \restr{\gamma_{ij}\circ s_j}{U_i\cap U_j}. \]
            \end{enumerate}
        \item a partition of $M$ into a family of disjoint connected $C^r$ immersed submanifolds $(L_\alpha)_\alpha$ of dimension $k$  such that for every $x \in M$, there is a $C^r$ chart $(U, \psi)$ at $x$, of the form $\psi: U \to U^1 \times U^2$ where $U^1$ and $U^2$ are connected open subsets of $\mathbb R^k$ and $\mathbb R^{n-k}$ respectively, satisfying: for each $L_\alpha$, for each connected component $(U \cap L_\alpha)_\beta$ of $U\cap L_\alpha$, there exists $c_{\alpha,\beta} \in \mathbb R^{n-k}$ such that 
        \[ \psi ((U\cap L_\alpha)_\beta) = U^1 \times \{c_{\alpha, \beta}\}.\]
     \end{enumerate}
    The maps $h_{ij}$ and $\gamma_{ij}$ are called \textit{transition maps}.\\
    We will call (for each equivalent assertion $(1)$, $(2)$ and $(3)$):
    \begin{itemize}
        \item a \emph{plaque}:
            \begin{enumerate}
                \item a set of the form $\psi_i^{-1}(U_i \times \{c\})$, for $c \in U_i^2$ ;
                \item a connected component of a fiber of $s_i$ ;
                \item a connected component $(U\cap L_\alpha)_\beta$ of $U\cap L_\alpha$.
            \end{enumerate}
        \item a \emph{leaf} of the foliation an equivalence class for the following equivalence relation: two points $x$ and $y$ are equivalent if and only if there exist a sequence of foliation charts $U_1, \ldots, U_k$ and a sequence of points $x=p_0, p_1, \ldots, p_{k-1},p_k=y$ such that, for $i\in\llbracket 1,k\rrbracket$, $p_{i-1}$ and $p_{i}$ lie on the same plaque of $U_i$.
    \end{itemize}
    \end{defprop}
    We will represent abusively a foliation by the set of its leaves $\mathcal{F}$ and will note $\mathcal{F}_x$ the leaf of the foliation containing $x\in M$. \\
    In case of the orbit foliation of a non-vanishing smooth vector field, we will write $\Phi$. 
    \begin{remark}
    \label{rem:fol1}
    \leavevmode
    \begin{itemize}
        \item A $C^r$ foliation $\mathcal{F}$ gives rise to a $C^{r-1}$ subbundle of $TM$, noted $T\mathcal{F}$, called the \textit{tangent bundle} to the foliation $\mathcal{F}$, whose fibers are the tangent spaces to the leaves. We also define the \textit{normal bundle} $\nu:= TM \diagup T\mathcal{F}$ of the foliation. It is a vector bundle whose transition functions are given by the differential of the transition maps $h_{ij}$.
        \item We can define in the same way a \emph{holomorphic} foliation on a complex manifold by replacing "$C^r$" in the above definition with "holomorphic" and $\mathbb R$ with $\mathbb C$.
    \end{itemize}
    \end{remark}

From the first and third formulations we can define a more general notion of foliation adapted to our purposes: 
\begin{definition}
    Let $M$ a smooth manifold of dimension $n\in \mathbb N$. Let $r \geq 1$ (or $r\in \{\infty\}$) and $k\leq n$.\\
    A \emph{(topological) foliation} with $C^r$ leaves, $\mathcal{F}$, of dimension $k$ (or codimension $n-k$) on $M$ is defined by one of the following equivalent assertions:
    \begin{enumerate}
        \item a $C^0$-atlas $(U_i, \psi_i)_i$ on $M$ which is maximal with respect to the following properties:
            \begin{enumerate}
                \item For all $i$, $\psi_i(U_i)=U_i^1 \times U_i^2$, where $U_i^1$ and $U_i^2$ are connected open subsets of $\mathbb R^k$ and $\mathbb R^{n-k}$ respectively ;
                \item For all $i,j$, there exist $C^0$ maps $f_{ij}$ and $h_{ij}$ such that
                \[\forall(x,y) \in \psi_j(U_i \cap U_j) \subset \mathbb R^k \times \mathbb R^{n-k}, \; \psi_i \circ \psi_j^{-1}(x,y)=(f_{ij}(x,y), h_{ij}(y))\]
                and for every such $y$, the map $x\mapsto f_{ij}(x,y)$ is $C^r$.
            \end{enumerate}
        \item a partition of $M$ into a family of disjoint connected $C^r$ immersed submanifolds $(L_\alpha)_\alpha$ of dimension $k$  such that for every $x \in M$, there is a $C^0$ chart $(U, \psi)$ at $x$, of the form $\psi: U \to U^1 \times U^2$ where $U^1$ and $U^2$ are connected open subsets of $\mathbb R^k$ and $\mathbb R^{n-k}$ respectively, satisfying: for each $L_\alpha$, for each connected component $(U \cap L_\alpha)_\beta$ of $U\cap L_\alpha$, there exists $c_{\alpha,\beta} \in \mathbb R^{n-k}$ such that 
        \[ \psi ((U\cap L_\alpha)_\beta) = U^1 \times \{c_{\alpha, \beta}\}\]
        and the map $\operatorname{pr}_1\circ \psi|_{(U \cap L_\alpha)_\beta}:(U \cap L_\alpha)_\beta\to U^1$ is a $C^r$ chart for $L$.
     \end{enumerate}
     We call such an atlas a \emph{foliated atlas} adapted to $\mathcal{F}$. 
\end{definition}

\begin{remark}
\label{rem:fol2}
    \leavevmode
        \begin{itemize}
        \item For every $y$, the map $x\mapsto f_{ij}(x,y)$ in the previous definition is automatically a $C^r$ diffeomorphism.  
        \item We can still define the tangent bundle $T\mathcal{F}$ to the foliation $\mathcal{F}$ if it is a foliation with $C^r$ leaves, $r\geq 1$.
        It is a vector bundle over $M$ whose transition maps are given by the differential along the first variable of the maps $f_{ij}$, ie by $d_1f_{ij}$.
        \item In case $T\mathcal{F}$ is a continuous subbundle of $TM$, the foliation $\mathcal{F}$ is said to be \emph{integral} (see \cite{wilkinson_dynamical_2008} and \cite{pugh_holder_1997} for a discussion about integrability of continuous subbundles of $TM$).
        In order to facilitate the statement of some results, we will call integral foliations \emph{$C^0$ foliations}.\\
        Note that every $C^r$ foliation, $r\geq 1$, is integral.
        \item We can define in the same way (if $k=2k'$ is even) a \emph{foliation with holomorphic leaves} by replacing "$C^r$" in the above definition with "holomorphic".
        \item A useful way to study a foliation with $C^r$ leaves locally is to lift it to $TM$. We explain this idea (see \cite{pugh_holder_1997}). 
We fix a smooth Riemannian metric on $TM$ and a point $p\in M$. The exponential map $\exp_p$ is a $C^\infty$ diffeomorphism from an open neighborhood $V_p$ of $0$ in $T_p M$ to an open neighborhood $U_p$ of $p$ in $M$. Therefore, it defines a foliation $\overline{\mathcal{F}}=\exp_p^{-1}(\mathcal{F})$ of $V_p$.
The foliation $\overline{\mathcal{F}}$ has the same regular properties as $\mathcal{F}|_{U_p}$.\\ 
Denote by $F_p$ the tangent space $T_p\mathcal{F}_p$ to the foliation $\mathcal{F}$ at $p$, and by $F_p(\delta)$ the ball at $0$ of size $\delta>0$ in $F_p$.
By restricting $V_p$ if necessary, there exists $\delta>0$ such that the plaques of $\overline{\mathcal{F}}$ are represented by graphs of $C^r$ maps $g_y:F_p(\delta)\to F_p^\perp$, where $y \in F_p^\perp$, such that $g_y(0)=y$.
Remark that the tangent space at $(x,g_y(x)) \in V_p$ of the foliation $\overline{\mathcal{F}}$ is given by the graph of the linear map $d_xg_y$.\\
As a result the map 
$$(x,y)\in V_p \to \exp_p(x,g_y(x))\in U_p$$
defines a foliated chart for $\mathcal{F}$ centered at $p$.

        \end{itemize}
\end{remark}

One interest of integral foliations is that their leaves are uniformly $C^1$ (see \cite{pugh_holder_1997}):
\begin{definition}
    A foliation $\mathcal{F}$ with $C^r$ leaves ($r \geq 1$ or $r=\infty$) has \emph{uniformly $C^r$ leaves} if there exists a foliated atlas $(U_i,\psi_i)_i$ adapted to $\mathcal{F}$ such that for every $i,j$, the partial derivatives of order $k\leq r$ of $x\mapsto f_{ij}(x,y)$ depend continuously on $(x,y)$.
\end{definition}
\begin{remark}
    A foliation $\mathcal{F}$ with $C^r$ leaves has uniformly $C^r$ leaves if and only if for every $p \in M$, the partial derivatives of order $k\leq n$ with respect to $x$ of the map $g=(x,y)\mapsto g_y(x)$ of the above construction depend continuously on $(x,y)$.
\end{remark}
\begin{proposition}
\label{prop:unifleaves}
    Let $\mathcal{F}$ a topological foliation with $C^1$ leaves.\\
    Then $\mathcal{F}$ has uniformly $C^1$ leaves if and only if it is integral.\\
    Any foliation with holomorphic leaves is integral and has uniformly $C^\infty$ leaves.
\end{proposition}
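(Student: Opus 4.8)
The plan is to carry out everything inside the local model of Remark \ref{rem:fol2}. Fix $p \in M$ and work in the exponential chart $\exp_p : V_p \to U_p$, replacing $\mathcal{F}|_{U_p}$ by $\overline{\mathcal{F}} = \exp_p^{-1}(\mathcal{F})$ on $V_p \subset T_pM = F_p \oplus F_p^\perp$, whose plaques are the graphs of the maps $g_y : F_p(\delta) \to F_p^\perp$ with $g_y(0) = y$. I would record two facts from this construction. First, by the last bullet of Remark \ref{rem:fol2}, $\exp_p \circ \Psi$ is a foliated chart, where $\Psi : (x,y) \mapsto (x, g_y(x))$; since $\exp_p$ is a diffeomorphism and $g$ is jointly continuous (the $C^0$ content of a topological foliation), $\Psi$ is a homeomorphism onto $V_p$, with inverse sending $q$ to its first coordinate $x$ and to the label $y$ of the plaque through $q$. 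Second, again by Remark \ref{rem:fol2}, the tangent plane to $\overline{\mathcal{F}}$ at $q = (x, g_y(x))$ equals the graph of the linear map $d_x g_y$.

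For the equivalence I would reduce both properties to continuity of the single assignment $(x,y) \mapsto d_x g_y$. By definition, $\mathcal{F}$ is integral precisely when $T\mathcal{F}$ is a continuous subbundle of $TM$; integrality is local, and since $\exp_p$ is a diffeomorphism carrying $T\overline{\mathcal{F}}$ to $T\mathcal{F}|_{U_p}$, it is equivalent to continuity of the plane field $q \mapsto T_q\overline{\mathcal{F}}$ on each $V_p$. By the second fact and precomposition with the homeomorphism $\Psi$, this is equivalent to continuity of $(x,y) \mapsto \operatorname{graph}(d_x g_y)$. Now $L \mapsto \operatorname{graph}(L)$ is a homeomorphism from the space of linear maps $F_p \to F_p^\perp$ onto the open cell of the Grassmannian of $k$-planes transverse to $F_p^\perp$, and every $T_q\overline{\mathcal{F}}$ lies in this cell because it is a graph over $F_p$; hence the displayed continuity is equivalent to continuity of $(x,y) \mapsto d_x g_y$. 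By the remark characterizing uniform regularity, this last statement (the order-one case, the order-zero case being the continuity of $g$ itself) is exactly the assertion that $\mathcal{F}$ has uniformly $C^1$ leaves. Reading the chain of equivalences in both directions proves the first claim.

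For the holomorphic statement I would argue directly on a foliated atlas and invoke Cauchy estimates. Let $(U_i, \psi_i)_i$ be an atlas with holomorphic leaves, so that for all $i,j$ the map $x \mapsto f_{ij}(x,y)$ is holomorphic on an open subset of $\mathbb{C}^{k'}$ while $f_{ij}$ is jointly continuous in $(x,y)$. Fixing a multi-index $\alpha$ and a polydisc with closure in the domain, the Cauchy integral formula in several complex variables expresses $\partial_x^\alpha f_{ij}(x,y)$ as the integral of $f_{ij}(\cdot, y)$ against a fixed kernel over the distinguished boundary, a contour independent of $y$ and, on a slightly smaller polydisc, uniform in $x$. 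Joint continuity of the integrand then forces $(x,y) \mapsto \partial_x^\alpha f_{ij}(x,y)$ to be continuous for every $\alpha$, so $\mathcal{F}$ has uniformly $C^\infty$ leaves. In particular it has uniformly $C^1$ leaves, and since holomorphic leaves are in particular $C^1$, the equivalence just established shows that $\mathcal{F}$ is integral.

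The step I expect to require the most care is the bookkeeping in the equivalence. Integrality is phrased in terms of the plane field indexed by the base point $q \in V_p$, whereas uniform $C^1$-ness is phrased in terms of $d_x g_y$ indexed by the chart parameters $(x,y)$, and the two index sets are identified only through $\Psi$. The crux is therefore to verify that $\Psi$ is genuinely bi-continuous: injectivity comes from the disjointness of distinct plaques (if $g_y$ and $g_{y'}$ meet, they bound the same plaque, so $y = g_y(0) = g_{y'}(0) = y'$), surjectivity is automatic since every point of $V_p$ lies on a plaque, and continuity of the inverse is exactly the foliated-chart property. Once this identification is secured, the Grassmannian chart and the Cauchy-estimate arguments are routine.
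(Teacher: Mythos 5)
Your proof is correct and follows essentially the same route as the paper: the first equivalence is exactly the ``immediate consequence'' of the remark identifying uniform $C^1$-ness with joint continuity of $(x,y)\mapsto d_xg_y$ (which you usefully spell out via the homeomorphism $\Psi$ and the Grassmannian chart), and the holomorphic claim is the paper's appeal to analytic properties of holomorphic maps, made explicit through the Cauchy integral formula applied to the transition maps $f_{ij}$.
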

\begin{proof}
    The first point is an immediate consequence of the remark above.\\
    As for the second point, by definition for a foliation $\mathcal{F}$ with holomorphic leaves, the maps $f_{ij}$ depend continuously on $(x,y)$ and are holomorphic with respect to $x$. By analytic properties of holomorphic maps, this implies that $d_1f_{ij}$ depends continuously on $(x,y)$ so $T\mathcal{F}$ is a continuous subbundle of $TM$. This also proves that for every $k$, the partial derivatives of order $k$ of $x\mapsto f_{ij}(x,y)$ depend continuously on $(x,y)$, which proves that the leaves of $\mathcal{F}$ are uniformly $C^\infty$.
\end{proof}
\label{rem:bohnetmetric}
As mentioned in \cite{bohnet_partially_2014}, an integral foliation $\mathcal{F}^*$ on a smooth compact manifold $M$ has the following properties. Denote by $d^*$ the distance in a leaf $L$ of $\mathcal{F}^*$ coming from the induced metric of $M$ on $L$. Fix a finite foliated atlas $(U_i,\psi_i)$ for $\mathcal{F}$. Then:
\begin{itemize}
    \item For every $\eta > 0$, there is $\mu$ so that, if $x,y$ are points in the same plaque such that $d(x,y) < \mu$, then and $d^{*}(x,y) < (1+\eta)d(x,y)$. As a consequence, there exists $\Delta^{*} > 0$ such that for every $i$ and every $x,y \in U_{i}$, if $d^{*}(x,y) < \Delta^{*}$, then $x$ and $y$ are in the same plaque.
    \item If $\mathcal{F}^{*}$ is a compact foliation (that is, every leaf is compact), then each of its leaf intersects any foliation chart in a finite number of plaques (see \cite{camacho_geometric_1985}). Therefore, two points in the same leaf which are close enough in the ambient manifold are in the same plaque. In that case, the above can be reformulated as:\\
    For every $\eta > 0$, there is $\mu > 0$ so that, if $x,y$ are points in the same plaque such that $d(x,y) < \mu$, then $d^{*}(x,y) < (1+\eta)d(x,y)$.
\end{itemize}

From now on, we will always consider $C^0$ foliations on a smooth manifold $M$.

\subsection{Holonomy}
    
We now define the fundamental concept of \textit{holonomy} for an integral foliation (see \cite{moerdijk_introduction_2003}, \cite{camacho_geometric_1985}).
In the following, a $C^0$ diffeomorphism means a homeomorphism. 

\begin{definition}
    A \emph{transversal} to the (integral) foliation $\mathcal{F}$ is a smooth submanifold $T$ of $M$ such that for every $x\in T$, $T_xT\oplus T_x\mathcal{F}=T_xM$.
\end{definition}
\begin{defprop}
\label{defprop:hol}
    Let $ \mathcal{F}$ a $C^r$ foliation on a smooth manifold $M$ ($r \in \mathbb N \cup \{\infty\}$). Let $x,y$ belonging to the same leaf $L$ of $\mathcal{F}$ and $\alpha$ a path from $x$ to $y$ in $L$. Let also $T,S$ two small transversal to $\mathcal{F}$ at $x$ and $y$ respectively.
    \begin{itemize}
        \item If there exists a foliation chart $U$ containing $\alpha([0,1])$, we can define a germ of $C^r$ diffeomorphism $h(\alpha)^{S,T}$ from a small open subset $A$ of $T$ to an open subset of $S$ such that: \begin{enumerate}
            \item $h(\alpha)^{S,T}(x)=y$ ;
            \item For any $x'\in A$, $h(\alpha)^{S,T}(x')$ lies on the same plaque in $U$ as $x'$.
        \end{enumerate}
        Moreover, the germ $h(\alpha)^{S,T}$ does not depend on $U$ nor the path in $L\cap U$ connecting $x$ and $y$.
        \item In the general case, we can choose a sequence of foliation charts $U_1, \ldots, U_k$ such that for all $i$, $\alpha([\frac{i-1}{k}, \frac{i}{k}]) \subset U_i$, and a sequence $(T_i)_i$ of transversal sections of $\mathcal{F}$ at $\alpha(\frac{i}{k})$, with $T_0=T$ and $T_k=S$. Let $\alpha_i$ a path in $L\cap U_i$ from $\alpha(\frac{i-1}{k})$ to $\alpha(\frac{i}{k})$. \\
        We define 
        \[h(\alpha)^{S,T}:=h(\alpha_k)^{T_k, T_{k-1}} \circ \cdots \circ h(\alpha_1)^{T_1, T_0}.
        \]
        Moreover, $h(\alpha)^{S,T}$ depends only on $T, S$ and $\alpha$, and satisfies the same properties $(1), (2)$.
    \end{itemize}
    The germ of $C^r$ diffeomorphism $h(\alpha)^{S,T}$ is called the \emph{$\mathcal{F}$-holonomy} of the path $\alpha$ with respect to the transversals $T$ and $S$.
\end{defprop} 
    We list some basic properties of holonomy maps:
    \begin{enumerate}[label=(\roman*)]
        \item If $\alpha$ is a path in $L$ from $x$ to $y$ and $\beta$ a path in $L$ from $y$ to $z$, and if $T$, $S$ and $R$ are transversal sections of $\mathcal{F}$ at $x$, $y$ and $z$ respectively, then
        \[h(\beta \alpha)^{R,T}=h(\beta)^{R,S} \circ h(\alpha)^{S,T},\]
        where $\beta\alpha$ is the concatenation of the paths $\alpha$ and $\beta$ ;
        \item If $\alpha$ and $\beta$ are homotopic paths in $L$ relatively to endpoints from $x$ to $y$, and if $T$ and $S$ are transversals sections at $x$ and $y$ respectively, then $h(\beta)^{S,T}= h(\alpha)^{S,T}$.
        \item If $\alpha$ is a path in $L$ from $x$ to $y$, and if $T,T'$ and $S,S'$ are pairs of transverse sections at $x$ and $y$, respectively, then \[h(\alpha)^{S',T'}=h(\overline{y})^{S',S} \circ h(\alpha)^{S,T} \circ h(\overline{x})^{T,T'},\] where $\overline{x}$ is the constant path with image $x$.
    \end{enumerate}
    \begin{remark}
    \label{rem:2.2}
    \begin{itemize}
    \leavevmode
        \item The construction of the holonomy diffeomorphism for two points in the same plaque of the same foliation chart $(U, \psi)$ is as follows. Keep the notations of the previous Definition/Proposition. Denote by $n$ the dimension of $M$ and $q$ the codimension of $\mathcal{F}$. 
        $\psi(T) , \psi(S)\subset \mathbb R^{n}$ are vertical graphs of $C^r$ maps from $\mathbb R^q$ to $\mathbb R^{n-q}$.
        By projecting these graphs on $\mathbb R^q$, we obtain two $C^r$ diffeomorphisms $\Psi: T \to \mathbb R^q$ and $\Psi': S \to \mathbb R^q$. Since the leaves of the foliation are the horizontal lines, it comes 
        \[h(\alpha)^{S,T}=\Psi'^{-1} \circ \Psi.\]
        This means that under these coordinates, the holonomy between two points in the same plaque of the same foliation chart is equal to the identity.
        \item The holonomy of a path in a leaf with respect to the natural transversals given by the foliation charts can be expressed in local coordinates as a composition of transition functions $h_{ij}$ (see Definition/Proposition \ref{defprop:fol}). More precisely, let $\alpha$ a path in $L$ from $x$ to $y$ and a sequence of foliation charts $U_1, \ldots, U_k$ such that for all $i$, $\alpha([\frac{i-1}{k}, \frac{i}{k}]) \subset U_i$ and $U_1$ and $U_k$ are centered at $x$ and $y$ respectively.
        Let $T=\psi_1^{-1}(\{0\} \times U_1^2)$ and $S=\psi_k^{-1}(\{0\} \times U_k^2)$.
        Then there exists a small open subset $V$ of $T$ such that 
        \[h(\alpha)^{S,T}= \restr{\psi_k^{-1} \circ h_{k,k-1} \circ \cdots \circ h_{2,1} \circ \psi_1}{V}.
        \]
    \end{itemize}
    \end{remark}
    
\begin{definition}
    Given a $C^r$ foliation $\mathcal{F}$ on a manifold $M$, the set of all holonomy maps of any path with respect to any transversals to $\mathcal{F}$ defines a pseudo-group called the \emph{holonomy pseudo-group} of the foliation $\mathcal{F}$.\\
    For every leaf $L$ of $\mathcal{F}$, $x\in L$ and small transversal section $T$ at $x$, there is a group homomorphism 
    \[h^T:=h^{T,T}: \pi_1(L,x) \to \text{Diff}^r_x(T), \]
    where $\text{Diff}^r_x(T)$ is the group of $C^r$ diffeomorphisms of $T$ which fix $x$, 
    called the \emph{holonomy homomorphism} of $L$, determined up to conjugation.\\
    The set $h^T(\pi_1(L,x))$ is a group called the \textit{holonomy group} of the leaf $L$, determined also up to conjugation.\\
    We define also the space $\widetilde{L}\diagup \ker(h^T)$, where $\widetilde{L}$ is the universal covering space of $L$. It does not depend on $x \in L$ nor $T$ and is called the \emph{holonomy covering} of $L$.
    \end{definition}
    
    We say that a $C^r$ foliation $\mathcal{F}$ has \textit{trivial holonomy} if the holonomy group of every leaf of $\mathcal{F}$ is trivial.

    \begin{remark}
        If we want to study the holonomy of a $C^0$ foliation in a neighborhood of some point $p$, we can lift it to $TM$ as been explained above.
        By the previous notations, the holonomy between two points $z_0=(0,g_{y_0}(0))=(0,y_0)$ and $z=(x,g_{y_0}(x))$ in the same plaque of $\overline{\mathcal{F}}$ with respect to the transversals $\{0\} \times F_p^\perp$ and $\{x\} \times F_p^\perp$ is given by the map 
        $$y=g_y(0)\in B(y_0,\delta_0) \mapsto g_y(x)\in F_p^\perp$$
        where $B(y_0,\delta_0)$ is a small ball of radius $\delta_0$ in $F_p^\perp$ centered at $y_0$.
    \end{remark}

    Even though the holonomy of a $C^0$ foliation $\mathcal{F}$ is only continuous by definition, it can have stronger regularity, such as being $C^r$ (we say that $\mathcal{F}$ \emph{has $C^r$ holonomy}) or even holomorphic. In the latter case, $\mathcal{F}$ is said to be \emph{transversely holomorphic}.\\
    However, as has been discussed in \cite{pugh_holder_1997}, it is not sufficient for a $C^0$ foliation with $C^1$ leaves and $C^1$ holonomy to be a $C^1$ foliation.
    \begin{definition}
        Let $\mathcal{F}$ a $C^0$ foliation of codimension $q$ on a smooth manifold $M$ with $C^r$ holonomy, $r\in \mathbb N^*\cup \{\infty\}$.\\
        We say that \emph{$\mathcal{F}$ has uniformly $C^r$ holonomy} (or the holonomy of \emph{$\mathcal{F}$ is uniformly $C^r$}) if for every pair of points $x,x'$ in the same plaque, the partial derivatives of order $k\leq r$ of $h_{x,x'}^{T',T}(y)$ vary continuously with $(x,y)$.
    \end{definition}

    \begin{remark}
        Following on from the previous remarks:
            A $C^0$ foliation has uniformly $C^r$ holonomy if and only if for every $p \in M$, the partial derivatives of order $k\leq r$ of the map $y\mapsto g_y(x)$ depend continuously on $(x,y)$.
    \end{remark}
    The main interest of the definitions of having uniformly smooth leaves and uniformly smooth holonomy comes from Journé's Lemma (see \cite{journe_regularity_1988}) which can be used to prove the following result (see \cite{pugh_holder_1997}):
    \begin{proposition}
    \label{prop:journe}
        A foliation with uniformly $C^1$ (respectively $C^\infty$) leaves and uniformly $C^1$ (respectively $C^\infty$) holonomy is $C^1$ (respectively $C^\infty$).
    \end{proposition}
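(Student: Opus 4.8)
The plan is to reduce the statement to a purely local regularity question for a single function of two groups of variables and then invoke Journé's Lemma. I would fix a point $p \in M$ and use the lift-to-$TM$ construction recalled in the remarks above: via the exponential map $\exp_p$, the foliation $\mathcal{F}$ near $p$ is modelled on the foliation $\overline{\mathcal{F}}$ of a neighborhood $V_p \subset T_pM$ whose plaques are graphs of $C^r$ maps $g_y : F_p(\delta) \to F_p^\perp$ with $g_y(0)=y$. The associated foliated chart is $(x,y) \mapsto \exp_p(x, g_y(x))$, so $\mathcal{F}$ is a $C^r$ foliation near $p$ precisely when the map $g : (x,y) \mapsto g_y(x)$ is jointly $C^r$ in $(x,y)$. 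Since $p$ is arbitrary and $\exp_p$ is smooth, establishing this joint regularity at every point yields a $C^r$ foliated atlas, hence a $C^r$ foliation.

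Next I would translate the two hypotheses into statements about $g$. By the remark characterizing uniformly $C^r$ leaves, the assumption that $\mathcal{F}$ has uniformly $C^r$ leaves says exactly that for every multi-index of order $k \le r$ the partial derivative $\partial_x^\alpha g(x,y)$ exists and depends continuously on $(x,y)$; that is, $g$ is uniformly $C^r$ along the horizontal coordinate foliation $\{y = \mathrm{const}\}$. Likewise, by the remark characterizing uniformly $C^r$ holonomy — and using that the holonomy between $(0,y)$ and $(x, g_y(x))$ is represented by $y \mapsto g_y(x)$ — the assumption of uniformly $C^r$ holonomy says that for every $k \le r$ the partial derivative $\partial_y^\beta g(x,y)$ exists and depends continuously on $(x,y)$; that is, $g$ is uniformly $C^r$ along the vertical coordinate foliation $\{x = \mathrm{const}\}$.

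The two coordinate foliations $\{y = \mathrm{const}\}$ and $\{x = \mathrm{const}\}$ of $F_p(\delta) \times B(0,\delta_0)$ are transverse, span the tangent space at every point, and, being affine, have $C^\infty$ leaves. I would therefore apply Journé's Lemma to $g$: a function that is uniformly $C^r$ along each of two complementary transverse foliations with uniformly $C^r$ leaves is itself $C^r$. This yields that $g$ is jointly $C^r$, hence that the foliated chart above is $C^r$, and the proposition follows verbatim in both cases $r=1$ and $r=\infty$.

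The main obstacle is not any single computation but the careful bookkeeping needed to match the hypotheses to the input of Journé's Lemma. In particular, one must check that the existence and \emph{joint} continuity of the pure $x$-derivatives and of the pure $y$-derivatives of $g$ — which is all that the two hypotheses supply — is exactly what the lemma requires: a priori the mixed derivatives such as $\partial_x\partial_y g$ are not known to exist, and it is precisely the role of Journé's Lemma to produce them together with their continuity. Getting the uniformity right, i.e. joint continuity in $(x,y)$ rather than mere leafwise smoothness, is the delicate point, and it is exactly what the definitions of ``uniformly $C^r$ leaves'' and ``uniformly $C^r$ holonomy'' were designed to guarantee.
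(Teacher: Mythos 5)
Your proposal is correct and follows essentially the same route as the paper: the paper's proof is precisely an application of Journé's Lemma to the map $(x,y)\mapsto g_y(x)$ from the lift-to-$TM$ construction, with the remarks identifying ``uniformly $C^r$ leaves'' with joint continuity of the $x$-derivatives and ``uniformly $C^r$ holonomy'' with joint continuity of the $y$-derivatives. Your write-up simply makes explicit the bookkeeping that the paper leaves implicit.
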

    The proof of this result is just an application of Journé's lemma to the map $(x,y) \mapsto g_y(x)$ of the previous remarks.

\subsection{Compact foliations}
We recall some results about compact foliations, that is foliations whose leaves are compact (see \cite{epstein_foliations_1976}, \cite{candel_foliations_2000}, \cite{carrasco_compact_2015}, \cite{bohnet_partially_2014} for more details).
The following is a result of Epstein:

\begin{theorem}
\label{thm:epstein}
    Let $\mathcal{F}$ a compact $C^0$ foliation on a smooth manifold $M$.\\
    Then the following properties are equivalent:
    \begin{enumerate}[label=(\roman*)]
        \item The quotient map $\pi: M\to M/\mathcal{F}$ is closed.
        \item $M/\mathcal{F}$ is Hausdorff.
        \item Every leaf of $\mathcal{F}$ has arbitrarily small saturated neighborhoods.
        \item The saturation of every compact subset of $M$ is compact.
    \end{enumerate}
    If $M$ is compact, then the above properties are also equivalent to:
    \begin{enumerate}[label=(\roman*),start=5]
        \item The holonomy group of every leaf of $\mathcal{F}$ is finite.
        \item The volume of the leaves is uniformly bounded from above.
    \end{enumerate}
    Moreover if $\mathcal{F}$ has trivial holonomy, then the resulting leaf space is a topological manifold.
\end{theorem}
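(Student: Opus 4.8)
The plan is to peel off the point-set-topological equivalences (i)--(iv) first and then attack the holonomy and volume statements (v)--(vi) together with the final assertion, for which the real work lies. For (i)--(iv) I would use only that $M$ is locally compact and metrizable, that the saturation map $\pi$ is open (the saturation of an open set is open along the leaves), and that every leaf is compact, hence closed. The central move is a squeezing lemma: if $\pi$ is closed and $U\supset L$ is open, then $\pi(M\setminus U)$ is closed and avoids $\pi(L)$, so its complement pulls back to a saturated open set trapped between $L$ and $U$; this gives (i)$\Rightarrow$(iii), and applying it to two disjoint open neighborhoods of two disjoint compact leaves gives (i)$\Rightarrow$(ii). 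Conversely (iii)$\Rightarrow$(i) follows by separating a compact leaf $L_x$ from a closed set $C$ that it misses and then shrinking to a saturated neighborhood disjoint from $C$, whence $\operatorname{sat}(C)$ is closed. Condition (iv) enters as the properness reformulation: since $\pi$ is open with compact fibers and $M$ is locally compact, $\pi$ is closed if and only if it is proper, i.e. if and only if saturations of compact sets are compact, which I would check by the usual covering and subsequence arguments; together with (i)$\Leftrightarrow$(ii)$\Leftrightarrow$(iii) this closes the loop.

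The substantial part is the block (v)--(vi) on a compact $M$, for which the main tool is Reeb's local stability theorem. The direction (v)$\Rightarrow$(iii),(vi) is the easier half: if the leaf $L$ has finite holonomy group $H$, Reeb stability yields a saturated tubular neighborhood modelled on $\widetilde{L}\times_H D$, where $\widetilde{L}$ is the holonomy covering and $D$ a transversal disc carrying an $H$-action; such neighborhoods are saturated and can be taken arbitrarily small, giving (iii), while every nearby leaf is a covering of $L$ of degree at most $|H|$, so its volume is controlled by that of $L$, and compactness of $M$ upgrades this to a uniform bound, giving (vi). The reverse implication is what I expect to be the genuine obstacle: showing that locally bounded leaf volume forces finite holonomy, equivalently that Epstein's \emph{bad set} --- the set of leaves in every neighborhood of which the volume function $x\mapsto \operatorname{vol}(L_x)$ is unbounded --- is empty. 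Here I would establish lower semicontinuity of the volume function and argue that a leaf with infinite holonomy is approximated by leaves winding around arbitrarily many times, so that their volume diverges and (vi) fails; this is precisely the point where Epstein's structure theory for compact foliations is indispensable, and I would appeal to \cite{epstein_foliations_1976}.

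For the closing statement, trivial holonomy is a special case of finite holonomy, so (v) holds and therefore, on the compact manifold $M$, all of (i)--(iv) and (vi) hold as well; in particular the leaf space $M/\mathcal{F}$ is Hausdorff, and it is second countable as the open continuous image of a second countable space. Invoking Reeb stability once more with $H$ trivial, the local model $\widetilde{L}\times_H D$ becomes the genuine product $L\times D$, so the saturated neighborhood is a trivial bundle over the disc $D$ with fiber $L$; the quotient chart then identifies a neighborhood of the point $[L]$ in $M/\mathcal{F}$ with $D$, an open subset of Euclidean space. A Hausdorff, second countable, locally Euclidean space is a topological manifold, which finishes the proof.
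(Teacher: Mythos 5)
Your outline cannot be compared to an argument in the paper itself, because the paper does not prove this theorem: it quotes it, points to \cite{carrasco_compact_2015} (and \cite{epstein_foliations_1976}) for the proof, and attributes the equivalence with (v)--(vi) to the Generalized Reeb Stability theorem. Measured against that standard proof, your sketch gets the elementary pieces right ((i)$\Rightarrow$(iii) by the squeezing lemma, (i)$\Rightarrow$(ii), (iii)$\Rightarrow$(i), and (v)$\Rightarrow$(iii),(vi) via Reeb stability, as well as the final trivial-holonomy statement), but it has two genuine gaps. The first is in the cycle (i)--(iv): you never prove any implication \emph{out of} (ii), yet assert ``(i)$\Leftrightarrow$(ii)$\Leftrightarrow$(iii)''; as written, Hausdorffness is only shown to be necessary. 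The standard fix: for a leaf $L$ and relatively compact open $U\supset L$, the set $\pi(\partial U)$ is compact, hence \emph{closed} in the Hausdorff quotient and misses $\pi(L)$, so $V=\pi^{-1}\bigl((M/\mathcal{F})\setminus\pi(\partial U)\bigr)\cap U$ is open, saturated (a connected leaf meeting $U$ and avoiding $\partial U$ stays in $U$) and squeezed between $L$ and $U$, giving (ii)$\Rightarrow$(iii). Similarly, your ``closed $\Leftrightarrow$ proper $\Leftrightarrow$ (iv)'' step is not sound in the direction starting from (iv): proper $\Rightarrow$ closed needs the target to be locally compact \emph{Hausdorff}, which is not yet available, and the ``usual subsequence argument'' tacitly assumes that if $z_n\to x$, $w_n\in\mathcal{F}_{z_n}$, $w_n\to w$, then $w\in\mathcal{F}_x$ --- i.e.\ closedness of the leaf relation, which is essentially (ii) itself and is exactly what fails in Sullivan-type examples. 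A correct route is (iv)$\Rightarrow$(iii) directly: with $S=\operatorname{sat}(\overline{U})$ compact, $\operatorname{sat}(S\setminus U)$ is compact hence closed, and $V=S\setminus\operatorname{sat}(S\setminus U)$ is an open saturated set with $L\subset V\subset U$.

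The second gap is the serious one: for compact $M$ you establish (v)$\Rightarrow$(iii),(vi) and sketch (vi)$\Rightarrow$(v), but you never prove any implication from the block (i)--(iv) to the block (v)--(vi). With what you have, $\{(v),(vi)\}$ implies $\{(i),\dots,(iv)\}$ and (v)$\Leftrightarrow$(vi), but nothing rules out a foliation satisfying (i)--(iv) whose leaves all have infinite holonomy, so the six conditions are not shown equivalent. This missing bridge --- Hausdorff leaf space (equivalently (i), (iii), (iv)) forces bounded volume and finite holonomy --- is precisely the hard core of Epstein's theorem; it is proved through the structure theory of the bad set and ultimately rests on results about groups of homeomorphism germs with finite orbits (Newman's theorem, Montgomery--Zippin), not on lower semicontinuity of the volume function, which only yields closed sublevel sets and a bound on a residual set of leaves. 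Your paragraph also conflates two different statements: ``locally bounded volume forces finite holonomy'' is (vi)$\Rightarrow$(v), whereas ``the bad set is empty'' is the assertion that volume \emph{is} locally bounded, which is what must be deduced from (i)--(iv) and which your outline never addresses. Deferring hard steps to \cite{epstein_foliations_1976} is legitimate (the paper itself defers everything), but then the deferral must be attached to the right implication; as structured, your proof is incomplete even granting the cited input.
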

See a proof in \cite{carrasco_compact_2015}. In fact, the equivalence with the two last assertions is due to the following theorem:
\begin{theorem}[Generalized Reeb Stability]
\label{thm:reeb}
    Let $L$ a compact leaf of a $C^r$ foliation on a smooth manifold $M$. Assume its holonomy group $\operatorname{Hol}(L)$ is finite.\\
    Then there exists a normal neighborhood $p:V\to L$ of $L$ in $M$ which is a $C^r$ fiber bundle with structure group $\operatorname{Hol}(L)$.\\
    Furthermore, each leaf $L'|_V$ is a covering space $p|_{L'}:L' \to L$ with $k\leq |\operatorname{Hol}(L)|$ sheets and the leaf $L'$ has a finite holonomy group of order $\frac{\operatorname{Hol}(L)}{k}$.
\end{theorem}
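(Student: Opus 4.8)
The plan is to model the normal neighbourhood on the \emph{holonomy covering} of $L$ and then to realize that model inside $M$ by holonomy transport. Write $G:=\operatorname{Hol}(L)$, a finite group. Fix $x_0\in L$ and a small transversal $T$ at $x_0$, and let $h^T:\pi_1(L,x_0)\to\operatorname{Diff}^r_{x_0}(T)$ be the holonomy homomorphism, with image $G$ and kernel $K$. The holonomy covering $\rho:\widehat L:=\widetilde L/K\to L$ is then a regular $C^r$ covering with finite deck group $G$; since $L$ is compact, $\widehat L$ is compact, $G$ acts freely on $\widehat L$ by deck transformations, and $\widehat L\to L$ is a principal $G$-bundle.

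First I would build the abstract model. Form the associated bundle $E:=\widehat L\times_G T=(\widehat L\times T)/G$ for the diagonal action $g\cdot(\widehat z,t)=(\widehat z\,g^{-1},h^T(g)t)$. Because $\widehat L\to L$ is principal with $G$ finite, the projection $p:E\to L$, $[\widehat z,t]\mapsto\rho(\widehat z)$, is a $C^r$ fibre bundle with fibre $T$ and structure group $G=\operatorname{Hol}(L)$. The horizontal foliation $\{\widehat L\times\{t\}\}$ of $\widehat L\times T$ is $G$-invariant, hence descends to a foliation of $E$; its leaf through $[\widehat z,t]$ is the image of $\widehat L\times\{t\}$, canonically $\widehat L/\operatorname{Stab}_G(t)$. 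A direct check shows that $p$ restricts on this leaf to a covering of $L$ with $k=[G:\operatorname{Stab}_G(t)]\le|G|$ sheets and holonomy group of order $|G|/k$, which is exactly the leaf structure asserted in the theorem. It therefore remains to identify $E$ (with $T$ small) with a neighbourhood of $L$ in $M$ by a foliated $C^r$ diffeomorphism.

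For this identification I would use holonomy transport. After shrinking $T$ and, if convenient, linearizing the finite $G$-action on it by a Bochner averaging argument, fix a basepoint $\widehat x_0$ over $x_0$. Given $\widehat z\in\widehat L$, choose a path $\widehat\alpha$ from $\widehat x_0$ to $\widehat z$ and set $\alpha:=\rho\circ\widehat\alpha$; define $\Theta(\widehat z,t):=h(\alpha)^{S,T}(t)$ for $t$ in a small subtransversal $T_0$, where $S$ is a transversal at the endpoint $\rho(\widehat z)$ and $h(\alpha)^{S,T}$ is the $\mathcal F$-holonomy transport of Definition/Proposition~\ref{defprop:hol}. Two paths to $\widehat z$ differ by a loop of $\widehat L$, which projects to an element of $K=\ker h^T$; by the homotopy invariance of holonomy the resulting germ is independent of the choice, so $\Theta$ is a well-defined $C^r$ map on $\widehat L\times T_0$. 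By construction $\Theta$ is $G$-equivariant and sends each horizontal plaque into a leaf of $\mathcal F$, so it descends to a foliated $C^r$ map $\bar\Theta:E|_{T_0}\to M$ over $\mathrm{id}_L$.

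The crux, and the step I expect to be the main obstacle, is to prove that $\bar\Theta$ is a diffeomorphism onto a saturated neighbourhood $V$ of $L$. That $\bar\Theta$ is a local diffeomorphism is clear, since along each fibre it is a $C^r$ holonomy germ transverse to the leaves; the difficulty is \emph{global injectivity}. For this I would cover $L$ by finitely many foliation charts, lift them to a finite chart cover of the compact space $\widehat L$ so that all the relevant holonomy germs are defined and uniformly controlled on a single $T_0$, and then use that a compact leaf is embedded together with the usual plaque-separation estimates in a tubular neighbourhood of $L$ to exclude collisions $\bar\Theta([\widehat z_1,t_1])=\bar\Theta([\widehat z_2,t_2])$ once $T_0$ is small. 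Surjectivity onto an open saturated neighbourhood then follows because the image contains a full transversal $T_0$ through every point of $L$, and transporting $p$ through $\bar\Theta$ exhibits $V\to L$ as the desired $C^r$ fibre bundle with structure group $\operatorname{Hol}(L)$.
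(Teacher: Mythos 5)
The paper itself gives no proof of this theorem: it is quoted as a classical result, with the reader referred to \cite{epstein_foliations_1976}, \cite{candel_foliations_2000}, \cite{camacho_geometric_1985} and \cite{carrasco_compact_2015}. So the only meaningful comparison is with the classical proof in those references, and your proposal is essentially that proof: pass to the holonomy covering $\rho:\widehat L\to L$ with deck group $G=\operatorname{Hol}(L)$, form the suspension-type associated bundle $E=\widehat L\times_G T$ with its horizontal foliation, and identify $E$ (for $T$ small) with a neighborhood of $L$ in $M$ by holonomy transport, using compactness of $\widehat L$ to get uniform domains of definition and global injectivity. The leaf count $k=[G:\operatorname{Stab}_G(t)]$ and the order $|G|/k$ of the holonomy of nearby leaves come out of the model exactly as you say.

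One step deserves to be flagged, because it is where finiteness of $\operatorname{Hol}(L)$ enters in an essential way and your write-up passes over it: the elements $h^T(g)$ are only \emph{germs} of diffeomorphisms at $x_0$, so the diagonal action $g\cdot(\widehat z,t)=(\widehat z g^{-1},h^T(g)t)$ used to define $E$ does not yet make sense on any fixed transversal. You must first realize the finite group of germs as an honest $G$-action on a small transversal: choose representatives $f_g$, observe that the finitely many relations $f_g\circ f_h=f_{gh}$ hold on a common neighborhood of $x_0$, and then either pass to a $G$-invariant sub-transversal or conjugate to the linear action of the differentials at $x_0$. Your appeal to Bochner averaging is the right tool for the linearization, but it logically presupposes an action and therefore comes \emph{after} this realization step, not in place of it; for infinite holonomy groups the realization fails, which is why the hypothesis is needed. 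A second, minor point: the holonomy group of a nearby leaf is the group of germs at $t$ of elements of $\operatorname{Stab}_G(t)$, which a priori is only a quotient of $\operatorname{Stab}_G(t)$; the equality of orders asserted in the statement (and in your proposal) should be read with this caveat, which is present in the classical formulation as well. With these points made precise, your argument is complete and coincides with the standard one.
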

A corollary of this result, and which motivates our assumption of trivial holonomy for the subcenter foliation, is that the leaves of a compact foliation with trivial holonomy are \emph{generic} (see \cite{candel_foliations_2000}):
\begin{corollary}
    Let $\mathcal{F}$ a compact $C^0$ foliation.\\
    Then the set of leaves with trivial holonomy is open and dense.
\end{corollary}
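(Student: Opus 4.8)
The plan is to establish the two halves of the statement separately: that the saturated set $G \subset M$ of leaves with trivial holonomy is \emph{open}, and that it is \emph{dense}. Throughout I identify ``the set of leaves with trivial holonomy'' with its saturation $G$ in $M$, density in $M$ being equivalent to density in the leaf space.

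For openness, I would feed a trivial-holonomy leaf directly into Generalized Reeb Stability (Theorem \ref{thm:reeb}). Let $L$ be a leaf with $\operatorname{Hol}(L) = \{e\}$; in particular its holonomy group is finite, so the theorem provides a saturated normal neighborhood $p\colon V \to L$ which is a fiber bundle with structure group $\operatorname{Hol}(L)$, i.e.\ a trivial bundle. Each leaf $L'$ meeting $V$ is then a covering $p|_{L'}\colon L' \to L$ with $k \le |\operatorname{Hol}(L)| = 1$ sheets, hence $k = 1$, and its holonomy group has order $|\operatorname{Hol}(L)|/k = 1$. Thus every leaf through $V$ has trivial holonomy, so $V \subset G$ and $G$ is open.

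For density, the idea is a Baire category argument carried out on transversals. Fix a countable foliated atlas $(U_i, \psi_i)_i$ with transversals $T_i = \psi_i^{-1}(\{0\} \times U_i^2)$; since $M$ is second countable, the holonomy pseudo-group is generated by the countably many elementary holonomies (the transition maps $h_{ij}$), so the holonomy loops based at points of a fixed $T_i$ yield only countably many self-transformations $g$ of open subsets of $T_i$. For each such $g$, the fixed-point set $\operatorname{Fix}(g)$ is closed, and a point $x$ carries a nontrivial holonomy germ of $g$ only if $x \in \operatorname{Fix}(g) \setminus \operatorname{int}\operatorname{Fix}(g)$, a closed nowhere dense set. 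A leaf has nontrivial holonomy precisely when, at one transversal point $x$ (equivalently, by conjugacy of the holonomy homomorphism, at every transversal point of the leaf), some holonomy loop has nontrivial germ; hence the trace on $T_i$ of the leaves with holonomy lies in the meager set $\bigcup_g \big(\operatorname{Fix}(g) \setminus \operatorname{int}\operatorname{Fix}(g)\big)$. Consequently the trivial-holonomy points are residual, hence dense, in each $U_i^2$, so $G$ meets every foliated chart densely and $G$ is dense in $M$.

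The openness half being immediate from Reeb stability, the main obstacle is the density half. The delicate points there are, first, verifying that the holonomy pseudo-group is genuinely countably generated and that each self-transformation's fixed set behaves as claimed (the fixed set of a continuous holonomy map is closed, and the locus of nontrivial germ is exactly the topological boundary of that fixed set), and second, the passage from ``residual on each transversal'' to ``dense in $M$'', which relies on the fact that a leaf with holonomy meets \emph{every} transversal in a bad point. This is precisely the content of the Epstein--Millett--Tischler residuality theorem, and an alternative would be to invoke it directly and combine it with the openness established above.
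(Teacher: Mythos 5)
Your proof is correct and follows exactly the route the paper intends: the paper states this corollary without proof, as a consequence of Theorem \ref{thm:reeb} together with the citation to \cite{candel_foliations_2000}, and your argument is precisely that combination — Generalized Reeb Stability applied to a trivial-holonomy (hence finite-holonomy, compact) leaf gives openness, while the Epstein--Millett--Tischler Baire-category argument on a countable family of transversals gives density. The only cosmetic imprecision is that $\operatorname{Fix}(g)\setminus\operatorname{int}\operatorname{Fix}(g)$ is closed only relative to the (open) domain of the holonomy word $g$, but it is still nowhere dense in the transversal, which is all the Baire argument needs.
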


\subsection{Almost complex structure}
\begin{definition}
    Let $M$ a smooth manifold of even dimension $2n\geq2$.\\
    An \emph{almost complex structure} on $M$ is a smooth vector bundle endomorphism $J:TM \to TM$ such that $J^2=-\operatorname{Id}$.
\end{definition}
Every complex manifold, of complex dimension $n$, carries a \emph{canonical} almost complex structure given in local coordinates $(z_1,\cdots, z_n)$, where $z_k=x_k+iy_k$ for $k\in \llbracket 1,n \rrbracket$, by
$$J\left ( \frac{\partial }{\partial x_k}\right) = \frac{\partial }{\partial y_k} \quad\text{ ; } \quad J\left ( \frac{\partial }{\partial y_k}\right) =  -\frac{\partial }{\partial x_k}.$$

\begin{definition}
    An almost complex structure $J$ on a smooth compact manifold $M$ is \emph{integrable} if there exists a complex manifold structure on $M$ compatible with its smooth structure whose canonical induced almost complex structure is $J$.
\end{definition}
Not every almost complex structure is integrable. This can be seen using the following fundamental theorem due to Newlander-Nirenberg (see \cite{kobayashi_foundations_1996}):

\begin{theorem}
    An almost complex structure $J$ on a smooth manifold $M$ is integrable if and only if its \emph{Nijenhuis tensor} 
    $$N_J: (X,Y)\mapsto  [JX, JY] - J[X, JY] - J[J X,Y] -[X, Y]$$ 
    vanishes identically.
\end{theorem}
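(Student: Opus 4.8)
The plan is to prove the two implications separately, with the forward direction (integrability $\Rightarrow N_J\equiv 0$) being a routine local computation and the converse carrying all the analytic weight. For the forward direction I would work in a local holomorphic chart $(z_1,\dots,z_n)$ with $z_k=x_k+iy_k$ and exploit the fact that $N_J$ is $C^\infty(M)$-bilinear and antisymmetric, so it suffices to evaluate it on a frame. The cleanest route is to complexify: extend $J$ $\mathbb{C}$-linearly to $TM\otimes\mathbb{C}$ and decompose into the $(\pm i)$-eigenbundles $T^{1,0}M$ and $T^{0,1}M$. In holomorphic coordinates $T^{0,1}M$ is spanned by the commuting fields $\partial/\partial\bar z_k$, so it is involutive, and by the algebraic lemma below this forces $N_J\equiv 0$.

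For the converse, I would first record the purely algebraic reformulation that converts the tensorial hypothesis into a Frobenius-type condition.
\begin{claim}
One has $N_J\equiv 0$ if and only if the bundle $T^{0,1}M$ is involutive, i.e. closed under the Lie bracket.
\end{claim}
This is proved by writing $N_J$ out on complex vector fields of pure type and using the eigenvalue relations $J=\pm i$ on the two summands; the $(0,2)$-component of $[\,\cdot\,,\,\cdot\,]$ on $T^{0,1}M$ is, up to a universal constant, exactly the complexification of $N_J$. The goal then becomes: near each point $p$, produce $n$ smooth complex-valued functions $w_1,\dots,w_n$ with $\mathbb{C}$-linearly independent differentials such that each $dw_k$ annihilates $T^{0,1}M$ (equivalently $\bar\partial_J w_k=0$). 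Such $w=(w_k)$ is a holomorphic chart, and any two overlapping charts of this type have automatically holomorphic transition maps, yielding a complex-manifold structure whose canonical almost complex structure is $J$.

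Producing these first integrals is where the real difficulty lies, and I would split into two regimes. In the real-analytic category I would complexify the involutive bundle $T^{0,1}M$ to a holomorphic distribution on a complexification of $M$ and invoke the holomorphic Frobenius theorem, which furnishes the required first integrals directly. In the $C^\infty$ category the holomorphic Frobenius theorem is unavailable, and solving $\bar\partial_J w=0$ becomes an overdetermined nonlinear first-order PDE system; this is the genuine content of Newlander–Nirenberg. I would treat the base case $n=1$ as a black box, where the equation reduces to a Beltrami equation whose solvability is the existence of isothermal coordinates, and then set up the inductive/approximation scheme that upgrades formal involutivity to actual holomorphic coordinates via elliptic estimates and a priori bounds for a perturbed $\bar\partial$-operator.

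The main obstacle is precisely this last step in the smooth case: the linear algebra (the Claim) and the Frobenius reduction are formal, but passing from formal involutivity of $T^{0,1}M$ to honest holomorphic coordinates without analyticity requires the hard analytic machinery—elliptic regularity and estimates for $\bar\partial$—and no purely formal argument suffices. I would therefore concentrate the effort on isolating the one-dimensional solvability and on making the induction in $n$ uniform enough for the bootstrap to close.
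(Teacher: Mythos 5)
This statement is the Newlander--Nirenberg theorem, which the paper does not prove at all: it is quoted as a classical result with a citation to Kobayashi--Nomizu, so there is no in-paper argument to compare yours against. Judged on its own terms, your proposal is a correct and well-organized roadmap of the standard proof: the forward direction via holomorphic coordinates, the algebraic equivalence between $N_J\equiv 0$ and involutivity of $T^{0,1}M$, the reformulation of integrability as finding $n$ independent solutions of $\bar\partial_J w=0$, and the real-analytic case via holomorphic Frobenius are all accurate and are exactly how the literature structures the argument.

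However, the proposal is an outline rather than a proof, and you say so yourself: the entire analytic core of the theorem --- solvability of the perturbed $\bar\partial$-system in the $C^\infty$ category --- is deferred (``treat the base case $n=1$ as a black box,'' ``set up the inductive/approximation scheme \dots via elliptic estimates''). That step is not a technicality one can wave at; it is the theorem. Everything before it (the Claim, the Frobenius reduction) is formal multilinear algebra, and everything after it is bookkeeping, so a referee would count the proposal as reducing Newlander--Nirenberg to Newlander--Nirenberg. Two smaller points worth fixing if you develop this: (i) in real dimension $2$ the hypothesis $N_J\equiv 0$ is automatic, so your ``base case'' is really the existence of isothermal coordinates (Korn--Lichtenstein / measurable Riemann mapping), and you should cite or prove it as such; (ii) the induction on $n$ is not a routine extension of the $n=1$ case --- in Malgrange's approach, for instance, one first uses an elliptic equation to make the structure real-analytic in a suitable sense and only then invokes holomorphic Frobenius, so the inductive step needs its own a priori estimates, not just ``uniformity'' of the one-dimensional argument. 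As it stands, the proposal identifies the difficulty correctly but does not overcome it.
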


\subsection{Transversely holomorphic foliations}
We all that in mind, we are able to define (and characterize) \emph{transversely holomorphic foliations} (see \cite{gomez-mont_transversal_1980}, \cite{duchamp_deformation_1979}, \cite{haefliger_deformations_1983},  \cite{brunella_transversely_1996}):
\begin{definition}
\label{def:trholo}
    A $C^r$ foliation $\mathcal{F}$ ($r \in \mathbb N \cup \{\infty\}$) of dimension $k \in \mathbb N$ on a smooth manifold of dimension $k+2q$ ($q\in \mathbb N^*)$ is \emph{transversely holomorphic} if there exists a $C^r$ atlas of foliated charts compatible with the one defining $\mathcal{F}$ whose transition maps $h_{ij}$ are holomorphic.\\
    If $r\geq 1$, such a structure can be given by a maximal $C^r$ atlas $(U_i,s_i)_i$ of submersions compatible with the one defining $\mathcal{F}$ and whose transition maps $\gamma_{ij}$ are holomorphic.
\end{definition}
\begin{remark}
    It does not seem a priori that these two properties are equivalent. We give a sketch of proof of why they are. Firstly, we recall that the $s_i$ and $\gamma_{ij}$ are obtained more or less directly from the foliated charts $\psi_i$ ($s_i=\text{pr}_2 \circ \psi_i$ and $\gamma_{ij}=\text{pr}_2 \circ (\psi_i\circ \psi_j^{-1})$) so that a direction is quite straightforward. The other direction is however a little subtle (see \cite{camacho_geometric_1985}): the $\psi_i$ are obtained from the constant rank theorem applied to the $s_i$, on which they act as the projection onto the second factor (i.e. $s_i \circ \psi_i^{-1} = \text{pr}_2$). Then the $h_{ij}$ are restrictions of the $\gamma_{ij}$.
\end{remark}
The following result proven in \cite{abouanass_global_2025} is a characterization of transversely holomorphic $C^r$ foliations:

\begin{proposition}
\label{prop:caractrholo}
    Let $M$ a smooth manifold of dimension $k+2q$ $(k \in \mathbb N, q \in \mathbb N^*)$ and $\mathcal{F}$ a $C^r$ foliation ($r \in \mathbb N \cup \{\infty\}$) of dimension $k$ on $M$.\\
    The following assertions are equivalent:
    \begin{enumerate}[label=(\roman*)]
        \item $\mathcal{F}$ is transversely holomorphic;
        \item For any points $x, y$ on the same leaf of $\mathcal{F}$, there exist (hence for all) small transversals to the foliation $T$ and $T'$ at $x$ and $y$ respectively, there exist diffeomorphisms $\Psi: T\to V$ and $\Psi':T'\to V'$, where $V, V'$ are open sets of $\mathbb C^q$, such that, for any $\mathcal{F}$-path $\alpha$ joining $x$ and $y$, 
        \[\Psi' \circ h(\alpha)^{T',T} \circ \Psi^{-1}\] is holomorphic.
    \end{enumerate}
    For $r\geq1$, these assertions are also equivalent to:
    \begin{enumerate}[label=(\roman*)]
        \setcounter{enumi}{2}
        \item There exists an almost complex structure on the normal bundle $ \nu=TM \diagup T\mathcal{F}$ of the foliation (that is a $C^r$ vector bundle endomorphism $I:\nu \to \nu$ satisfying $ I^2=-\operatorname{Id}$) such that
        \begin{enumerate}
            \item For any transversal $S$ to the foliation, the almost complex structure $I_S$ induced by $I$ on the tangent bundle $TS \cong \restr{\nu}{S}$ of $S$ is integrable,
            \item $I$ is invariant along $\mathcal{F}$-holonomy maps, that is:\\
            For every $x \in M$, $y \in \mathcal{F}_x$, for every path $\alpha$ in $\mathcal{F}_x$ connecting $x$ to $y$, for every small transversals $T$ and $S$ to $\mathcal{F}$ at $x$ and $y$ respectively,
            $$(h_\alpha^{S,T})^*I_S:= (Dh_\alpha^{S,T})^{-1}\circ I_S\circ Dh_\alpha^{S,T}= I_T .$$ 
        \end{enumerate}
    \end{enumerate}
\end{proposition}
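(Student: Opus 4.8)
The plan is to establish the cycle $(i)\Rightarrow(ii)\Rightarrow(i)$ and then, under the hypothesis $r\geq 1$, the two implications $(i)\Rightarrow(iii)$ and $(iii)\Rightarrow(i)$; the engine throughout is the description of holonomy maps as compositions of the transverse transition maps $h_{ij}$ given in Remark \ref{rem:2.2}. For $(i)\Rightarrow(ii)$ I would start from a transversely holomorphic foliated atlas, so that all the $h_{ij}$ are holomorphic. Given $x,y$ on a common leaf, I take the natural transversals $T,S$ coming from charts centered at $x$ and $y$, together with the diffeomorphisms $\Psi,\Psi'$ obtained by projecting these transversals onto the $\mathbb{C}^q$-factor of the chart (using $\mathbb{R}^{2q}\cong\mathbb{C}^q$). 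By Remark \ref{rem:2.2}, for \emph{every} path $\alpha$ the conjugated holonomy $\Psi'\circ h(\alpha)^{S,T}\circ\Psi^{-1}$ is literally a composition of the holomorphic maps $h_{ij}$, hence holomorphic. The ``hence for all'' clause then follows from the change-of-transversal property (property $(iii)$ of holonomy): replacing $T,S$ by other transversals $T_1,S_1$ at the same points only conjugates the holonomy by the constant-path holonomies $h(\overline{x})^{T,T_1}$ and $h(\overline{y})^{S_1,S}$, which I absorb by setting $\Psi_1:=\Psi\circ h(\overline{x})^{T,T_1}$ and $\Psi_1':=\Psi'\circ h(\overline{y})^{S,S_1}$.

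For $(ii)\Rightarrow(i)$ I would turn the holonomy data into a transversely holomorphic atlas of submersions. Starting from any foliated atlas $(U_i,\psi_i)$, I fix for each $i$ the transversal $T_i=\psi_i^{-1}(\{0\}\times U_i^2)$ and, using $(ii)$, a diffeomorphism $\Psi_i:T_i\to V_i\subset\mathbb{C}^q$. Composing $\Psi_i$ with the plaque projection of $U_i$ onto $T_i$ yields a submersion $\tilde{s}_i:U_i\to\mathbb{C}^q$ whose fibers are exactly the plaques, hence still defining $\mathcal{F}$. On an overlap $U_i\cap U_j$ the transition $\tilde{s}_i\circ\tilde{s}_j^{-1}$ is precisely a holonomy map along a short path in the overlap, conjugated by $\Psi_i$ and $\Psi_j$; by $(ii)$ it is holomorphic, so $(U_i,\tilde{s}_i)_i$ is a transversely holomorphic atlas, the coherence of the choices across charts being again guaranteed by the change-of-transversal formula.

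Assume now $r\geq 1$. For $(i)\Rightarrow(iii)$, the holomorphic transition maps have $\mathbb{C}$-linear differentials, so pulling back the standard complex structure of $\mathbb{C}^q$ through each chart defines the same endomorphism of $\nu$ on overlaps, producing a global almost complex structure $I$. Property $(a)$ holds since each transversal inherits through its chart a genuine complex manifold structure whose canonical almost complex structure is $I_S$, hence integrable; property $(b)$ holds because, by $(i)\Rightarrow(ii)$, the holonomy maps are holomorphic in the transverse complex coordinates, so their differentials commute with $I$. Conversely, for $(iii)\Rightarrow(i)$, condition $(a)$ together with the Newlander--Nirenberg theorem supplies on each transversal $S$ holomorphic coordinates inducing $I_S$, which I use as the transverse part of a foliated atlas; condition $(b)$ says that each holonomy map intertwines the source and target complex structures, i.e. its differential commutes with $I$, so by the standard characterization of holomorphic maps (a $C^1$ map between complex manifolds is holomorphic iff its differential is $\mathbb{C}$-linear) the holonomy maps, and hence the resulting transition maps, are holomorphic.

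The main obstacle I expect is the last implication $(iii)\Rightarrow(i)$: the complex structures furnished by Newlander--Nirenberg on the individual transversals are only local and defined up to biholomorphism, and one must check that they glue into a single transversely holomorphic atlas. Everything hinges on the holonomy-invariance $(b)$, which is exactly what upgrades the a priori merely $C^r$ holonomy maps to biholomorphisms; making precise the passage from ``$Dh$ commutes with $I$'' to ``$h$ is holomorphic'', and verifying that the glued transverse coordinates are mutually holomorphic, is the technical heart of the argument.
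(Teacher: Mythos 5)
Three of your four implications are essentially sound, and (since the paper itself defers the proof of this proposition to \cite{abouanass_global_2025}) I can only assess them on their merits: for (i)$\Rightarrow$(ii) the reduction of holonomies to compositions of the holomorphic transitions $h_{ij}$ via Remark \ref{rem:2.2} works, and the absorption of a change of transversals into $\Psi$ by constant-path holonomies is correct; for (i)$\Rightarrow$(iii) pulling back the complex structure of $\mathbb{C}^q$ through the transverse coordinates gives a well-defined $I$ on $\nu$ because the $h_{ij}$ have $\mathbb{C}$-linear differentials; for (iii)$\Rightarrow$(i) the complex structure on each transversal inducing $I_S$ is unique, condition (b) makes every holonomy differential $\mathbb{C}$-linear, and a $C^1$ map with everywhere $\mathbb{C}$-linear differential is holomorphic, so the rebuilt transition maps are holomorphic; no gluing problem arises there because $I$ is a single global object.

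The genuine gap is in (ii)$\Rightarrow$(i), and it is exactly at the sentence ``the coherence of the choices across charts being again guaranteed by the change-of-transversal formula.'' Statement (ii) produces identifications $\Psi,\Psi'$ \emph{for each pair of points} $(x,y)$; your construction needs one fixed $\Psi_i$ per transversal $T_i$ making \emph{all} holonomies between \emph{different} transversals holomorphic simultaneously. The change-of-transversal formula cannot bridge this: it only conjugates the holonomy of a fixed pair $(x,y)$ by the constant-path holonomies $h(\overline{x})^{T,T_1}$ and $h(\overline{y})^{S_1,S}$, so it compares different transversals at the \emph{same} base points; it says nothing about whether the chart chosen for the pair $(x,y)$ and the chart chosen for the pair $(x,z)$, or for the neighboring chart $U_j$, differ by a biholomorphism. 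Worse, under the literal pairwise reading your proof uses, the implication is false. Take $M=\mathbb{R}^k\times S^4$ (so $q=2$) foliated by the leaves $\mathbb{R}^k\times\{pt\}$. Leaves are contractible, so by homotopy invariance the holonomy germ between any two transversals is independent of the path; (ii) then holds by picking an arbitrary $\Psi$ and setting $\Psi':=\Psi\circ\bigl(h^{T',T}_{x,y}\bigr)^{-1}$, which turns the conjugated holonomy into the identity. But this foliation is not transversely holomorphic: by the (valid) implication (i)$\Rightarrow$(iii), a transverse holomorphic structure would yield an almost complex structure on $\nu\cong\mathrm{pr}^{*}TS^4$ invariant under holonomy, hence independent of the $\mathbb{R}^k$-coordinate, hence an almost complex structure on $S^4$, which does not exist. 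So (ii)$\Rightarrow$(i) can only be proved under the reading that the transverse charts are chosen \emph{coherently} --- one chart per transversal, fixed once and for all, the same for every pair of points and every holonomy map --- which is indeed how the proposition is applied later in the paper (in Proposition \ref{prop:Fcstrholo} the transversals are the unstable leaves carrying the canonical complex structures of Proposition \ref{prop:leavesholo}). Your argument must take that coherence as the actual content of (ii) and carry it through the construction of the submersions $\tilde{s}_i$; as written, it assumes precisely what needs to be assumed or proved.
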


Therefore, in the same spirit as Proposition \ref{prop:unifleaves} :
\begin{proposition}
\label{prop:unifholo}
Any transversely holomorphic foliation has uniformly $C^\infty$ holonomy.
\end{proposition}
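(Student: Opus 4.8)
The plan is to follow the same strategy as in Proposition~\ref{prop:unifleaves}, but with the roles of the two variables interchanged: there the foliation was holomorphic in the \emph{leafwise} variable, whereas here it is holomorphic in the \emph{transverse} one, the decisive analytic input in both cases being Cauchy's integral formula. Let $\mathcal{F}$ be transversely holomorphic of codimension $q$. By the remark reformulating uniform holonomy through the lift, it suffices to fix $p\in M$, pass to $\overline{\mathcal{F}}=\exp_p^{-1}(\mathcal{F})$ as in Remark~\ref{rem:fol2}, and show that for the graphs $g_y\colon F_p(\delta)\to F_p^\perp$ representing the plaques, the map $G\colon (x,y)\mapsto g_y(x)$ has all of its partial derivatives \emph{in} $y$ depending continuously on $(x,y)$; indeed the holonomy from $\{0\}\times F_p^\perp$ to $\{x\}\times F_p^\perp$ is precisely $y\mapsto g_y(x)$. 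Since such an $\mathcal{F}$ is integral, Proposition~\ref{prop:unifleaves} shows it has uniformly $C^1$ leaves, so that $G$ is jointly continuous; the only point to establish is the continuity in $(x,y)$ of its transverse derivatives.

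First I would encode the transverse holomorphicity via Proposition~\ref{prop:caractrholo}. For each fixed $x$ the map $y\mapsto g_y(x)$ is a holonomy map, hence by assertion (ii) becomes holomorphic after conjugation by suitable charts of the transversals into $\mathbb{C}^q$. The subtlety is that the complex structure carried by the target transversal $\{x\}\times F_p^\perp$ varies with $x$, so these charts cannot be chosen independently of $x$. To control this I would invoke assertion (iii): the transverse almost complex structure $I$ is a $C^\infty$ endomorphism, so its restriction $I_x$ to $\{x\}\times F_p^\perp$ varies smoothly with $x$, and one may choose integrating charts $\Psi_x\colon(\{x\}\times F_p^\perp,I_x)\to\mathbb{C}^q$ depending smoothly on $x$ (with $\Psi_0$ fixed). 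This yields a factorization
\[
g_{\,\cdot}(x)=\Psi_x^{-1}\circ F_x\circ\Psi_0 ,
\]
where each $F_x\colon\mathbb{C}^q\supset D\to\mathbb{C}^q$ is holomorphic and, by joint continuity of $G$ together with the smooth dependence of $\Psi_x$ and $\Psi_0$, the map $(x,z)\mapsto F_x(z)$ is jointly continuous.

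It then remains to run the Cauchy argument. For each multi-index $\alpha$, the Cauchy integral formula writes $\partial_z^{\alpha}F_x(z)$ as an iterated contour integral of $F_x$ over a fixed polydisc around $z$; joint continuity of $(x,z)\mapsto F_x(z)$ together with dominated convergence then gives that $(x,z)\mapsto\partial_z^{\alpha}F_x(z)$ is continuous for every $\alpha$. Composing with the smoothly varying diffeomorphisms $\Psi_x^{-1}$ and $\Psi_0$ and applying the chain rule, and using that a holomorphic $F_x$ is real-smooth with real derivatives expressible through its complex ones, I conclude that all partial derivatives in $y$ of $g_y(x)$ depend continuously on $(x,y)$. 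By the reformulation recalled above, this is exactly the assertion that $\mathcal{F}$ has uniformly $C^\infty$ holonomy.

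I expect the only genuine obstacle to be the one flagged in the second step: reconciling the $x$-dependence of the transverse complex structure with the requirement of \emph{uniform}, i.e.\ jointly continuous, estimates. Were $I$ constant along the leaf one could apply Cauchy's formula verbatim, exactly as in Proposition~\ref{prop:unifleaves}; the extra work lies entirely in absorbing the variation of $I_x$ into smoothly varying straightening charts, so that Cauchy's formula is applied to the single holomorphic family $F_x$ living between fixed copies of $\mathbb{C}^q$.
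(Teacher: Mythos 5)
The paper offers no written proof here: Proposition \ref{prop:unifholo} is presented as following from Proposition \ref{prop:caractrholo} ``in the same spirit as'' Proposition \ref{prop:unifleaves}, i.e.\ in the distinguished transverse charts in which all holonomy maps are holomorphic, those maps vary continuously with the base points, and Cauchy's integral formula upgrades this to joint continuity of all their derivatives. Your proposal shares this analytic core, but the straightening step you add is where it breaks. Assertion (iii) of Proposition \ref{prop:caractrholo} is only available for $C^r$ foliations with $r\geq 1$, and even then $I$ is only a $C^r$ vector bundle endomorphism --- it is $C^\infty$ only when $\mathcal{F}$ itself is. Consequently your integrating charts $\Psi_x$, and with them the conjugated holonomy $\Psi_x^{-1}\circ F_x\circ \Psi_0$, have only finitely many continuous derivatives unless the foliation is already smooth, so the chain-rule step cannot deliver \emph{uniformly $C^\infty$} holonomy. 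This is fatal for the role the proposition plays in the paper: in the proof of Proposition \ref{prop:Fcstrholo} it is applied to $\mathcal{F}^{cs}$ at a stage where that foliation is only a topological foliation with smooth leaves, i.e.\ the case $r=0$, where assertion (iii) is not available at all.

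Moreover the obstruction is not one you can engineer away, because under your literal reading (derivatives of $y\mapsto g_y(x)$ taken in the fixed linear coordinates of $F_p^\perp$) the statement is false in finite regularity: straighten the product foliation of $\mathbb{R}\times\mathbb{C}$ by the single foliated chart $(t,z)\mapsto (t,\phi_t(z))$, with $\phi_t$ jointly $C^1$, $\phi_0=\operatorname{Id}$, and $\phi_1$ a $C^1$ but not $C^2$ diffeomorphism; all transition maps are holomorphic (identities), yet the holonomy from $\{0\}\times\mathbb{C}$ to $\{1\}\times\mathbb{C}$ read in these coordinates is $\phi_1^{-1}$, which is not $C^2$. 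The uniform derivatives in Proposition \ref{prop:unifholo} must therefore be computed in the holomorphic transverse coordinates themselves --- and there the difficulty you labor to resolve evaporates, since the holonomy maps are holomorphic on the nose and Cauchy's formula applies directly, which is exactly the paper's implicit argument. Two further points: the parametrized Newlander--Nirenberg statement you invoke (integrating charts $\Psi_x$ depending smoothly on $x$) is a genuine theorem requiring proof or citation, not a consequence of assertion (iii); and ``since such an $\mathcal{F}$ is integral'' is unjustified, as transverse holomorphy says nothing about leafwise regularity (Proposition \ref{prop:unifleaves} concerns foliations with holomorphic \emph{leaves}), though this slip is harmless because joint continuity of $(x,y)\mapsto g_y(x)$ already follows from continuity of the foliated chart.
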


    If one studies foliations with holomorphic leaves on a complex manifold, then by analytic property of holomorphic maps, we can prove:
    \begin{proposition}
    \label{prop:journeholomo}
    A transversely holomorphic foliation $\mathcal{F}$ with holomorphic leaves on a complex manifold $M$ is a holomorphic foliation.
    \end{proposition}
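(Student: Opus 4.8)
The plan is to show that $\mathcal{F}$ admits holomorphic foliated charts by working with the local graph model of Remark \ref{rem:fol2}, but adapted to the ambient complex structure of $M$ rather than to the exponential map. Fix $p \in M$. Since the leaf through $p$ is a complex submanifold, its tangent space $T_p\mathcal{F}$ is invariant under the ambient almost complex structure $J$, hence is a complex subspace of $T_pM$. I would therefore choose a holomorphic chart of $M$ near $p$ with complex coordinates $(x,y) \in \mathbb{C}^{k'} \times \mathbb{C}^{q}$ (where $\dim_{\mathbb R}\mathcal{F} = 2k'$) such that $T_p\mathcal{F}$ corresponds to the horizontal subspace $\mathbb{C}^{k'} \times \{0\}$ and $S := \{0\} \times \mathbb{C}^{q}$ is a transversal. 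By the graph description of plaques, after shrinking the chart each plaque near $p$ is the graph $x \mapsto (x, g_y(x))$ of a map $g_y$ with $g_y(0) = y$, and $(x,y) \mapsto (x, g_y(x))$ is a $C^0$ foliated chart. It then suffices to show that $g(x,y) := g_y(x)$ is jointly holomorphic: the transition maps between two such charts are then of the form $(f(x,y), h(y))$ with $f$ and $h$ holomorphic, which is exactly the holomorphic foliation condition of Remark \ref{rem:fol1}.

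To apply an analytic continuation principle I would establish separate holomorphicity and continuity of $g$. First, for fixed $y$ the plaque is a complex submanifold transverse to $S$, so $x \mapsto g_y(x)$ is holomorphic; this is the holomorphic-leaves hypothesis. Second, for fixed $x$ the map $y \mapsto g_y(x)$ is precisely the $\mathcal{F}$-holonomy from $S$ to the transversal $\{x\} \times \mathbb{C}^{q}$, by the identification recalled in the remark following Definition/Proposition \ref{defprop:hol}; since $\mathcal{F}$ is transversely holomorphic and both transversals are complex slices carrying the complex structure induced by $M$, the characterization in Proposition \ref{prop:caractrholo} shows this holonomy is holomorphic in $y$. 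Finally, continuity of $g$ in $(x,y)$ is built into the definition of a $C^{0}$ foliation; alternatively, Propositions \ref{prop:unifleaves}, \ref{prop:unifholo} and \ref{prop:journe} together already guarantee that $\mathcal{F}$ is a $C^\infty$ foliation, so $g$ is in fact $C^\infty$.

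With these three facts in hand, joint holomorphicity of $g$ follows from Osgood's lemma: a continuous function on a domain of $\mathbb{C}^{k'} \times \mathbb{C}^{q}$ that is holomorphic in each group of variables separately is jointly holomorphic (Hartogs' theorem even removes the continuity assumption). This is the ``analytic property of holomorphic maps'' underlying the statement, in the same spirit as the second part of Proposition \ref{prop:unifleaves}. Consequently $(x,y) \mapsto (x, g_y(x))$ is holomorphic, is locally biholomorphic since its differential at the origin is $\begin{psmallmatrix}\operatorname{Id} & 0\end{psmallmatrix}$-unipotent with $\partial_y g(0)=\operatorname{Id}$, its transition maps are holomorphic, and therefore $\mathcal{F}$ is a holomorphic foliation.

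I expect the main obstacle to be the second separate-holomorphicity statement, namely checking that the holonomy $y \mapsto g_y(x)$ is holomorphic with respect to the \emph{same} transverse coordinate $y$ in which the leaves are expressed, that is the coordinate coming from the ambient complex structure of $M$. The subtlety is that Proposition \ref{prop:caractrholo} a priori only provides holomorphy of holonomy in some abstract transverse holomorphic charts, which could differ from the ambient complex charts on the transversals by a non-holomorphic change of coordinates; one must use that the transversals are complex submanifolds of $M$ to identify the transverse holomorphic structure of $\mathcal{F}$ with the one induced by $J$ on $\nu$. Once this compatibility is secured, both separate-holomorphicity statements refer to a single complex structure, Osgood's lemma applies, and the remaining verifications are routine.
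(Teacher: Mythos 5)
Your proposal follows the paper's proof essentially step for step: the paper likewise fixes a holomorphic chart $(U,\psi)$ of $M$ centered at $p$ in which the plaques become graphs of holomorphic maps $g_y\colon V_1\to V_2$ with $g_y(0)=y$, observes that $\phi(x,y)=\psi^{-1}(g_y(x))$ is a homeomorphism which is holomorphic in $x$ because the leaves are holomorphic and holomorphic in $y$ because the holonomy is, and concludes by Osgood's lemma that $\phi^{-1}$ is a holomorphic chart of $M$ adapted to $\mathcal{F}$. The chart construction, the two separate-holomorphicity inputs, and the appeal to Osgood are identical.

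The point you flag as the ``main obstacle'' is exactly the step the paper does not justify either: at that very spot its proof reads ``holomorphic with respect to $y$ since the holonomy of $\mathcal{F}$ is holomorphic (see remark ???)'', a dangling reference. Moreover, this compatibility between the abstract transverse holomorphic structure and the one induced by $M$ on transversals is not automatic, so it cannot be ``secured'' from the stated hypotheses alone. Concretely, on $M=\{|x|<1\}\times\mathbb{C}\subset\mathbb{C}^2$ consider the partition into the complex affine curves $L_{c}=\{(x,\,c+x\bar{c}) : |x|<1\}$, $c\in\mathbb{C}$. These are pairwise disjoint holomorphic submanifolds, and the foliation is transversely holomorphic in the paper's sense, since $(x,v)\mapsto\bigl(x,\tfrac{v-x\bar{v}}{1-|x|^{2}}\bigr)$ is a single global foliated chart, so there are no nontrivial transition maps; yet the ambient holonomy from $\{x=0\}$ to $\{x=x_1\}$ is $v\mapsto v+x_1\bar{v}$, not holomorphic, and the tangent line at $(0,v)$ is $\mathbb{C}(1,\bar{v})$, which varies anti-holomorphically in $v$, so this is not a holomorphic foliation of $M$. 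Hence the proposition is correct only when ``transversely holomorphic'' is understood with respect to the complex structures induced by $M$ on transversals --- which is what is actually available in the two places the paper invokes it (the unlabeled corollaries following Corollary \ref{cor:FçholoinFçs} and Proposition \ref{prop:concholo}, where the holonomies are proved holomorphic for the induced structures coming from Proposition \ref{prop:leavesholo}). Under that reading both your argument and the paper's are complete; as written, both share the same unfilled step, and your proposal has the merit of making the hidden hypothesis explicit rather than burying it in a broken reference.
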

    \begin{proof}
        The idea of proof is similar to that of the Proposition \ref{prop:journe}. The difference is that we use a specific holomorphic chart around every point instead of the exponential map in \cite{pugh_holder_1997}.
        Denote by $k$ the complex dimension of $\mathcal{F}$ and $n$ the complex dimension of $M$.
        Fix a point $p \in M$ and a holomorphic chart $(U,\psi)$ centered at $p$, with $\psi:U\to V_1\times V_2$, where $V_1$ is the unit ball of $\mathbb C^k$ and $V_2$ is the unit ball of $\mathbb C^{n-k}$. The chart $(U,\psi)$ can be chosen so that $T\psi_*(\mathcal{F})$ is close to $\mathbb C^k \times \{0\}$.
        The leaves of $\mathcal{F}|_U$ project to horizontal graphs of holomorphic maps $g_y:V_1 \to V_2$, where $y \in V_2$, such that $g_y(0)=y$.\\
        The map $\phi:(x,y)\in V_1\times V_2\mapsto \psi^{-1} (g_y(x))\in U$ is a homeomorphism. 
        It is holomorphic with respect to $x$ since $\mathcal{F}$ has holomorphic leaves, and holomorphic with respect to $y$ since the holonomy of $\mathcal{F}$ is holomorphic (see remark ???).
        Therefore, by Osgood's Lemma, the map $\phi^{-1}$ is a holomorphic chart for $M$, which is moreover adapted to $\mathcal{F}$ by construction. This proves that $\mathcal{F}$ is a holomorphic foliation of $M$.
    \end{proof}

\subsection{Partially hyperbolic flows}
See \cite{hirsch_invariant_1977}, \cite{brin_partially_1974}, \cite{barreira_nonuniform_2007} for more details.
\begin{definition}
\label{def:parthypflo}
    Let $M$ a smooth compact manifold and $(\varphi^t)_{t \in \mathbb R}$ a smooth flow on $M$  generated by a nowhere vanishing smooth vector field $X$.\\
   The flow $(\varphi^t)$ is said to be \textit{(uniformly) partially hyperbolic} on $M$ if there exist $(d\varphi^t)$-invariant continuous subbundles of $TM$ - $E^{s}$, $E^{\hat{c}}$ and $E^{u}$ - a Riemannian metric $\|.\|$ on $M$, real numbers $\alpha <\alpha'< 1 < \beta'<\beta$ and $C>0$ such that:
    \begin{enumerate}
        \item $TM = E^{s} \oplus E^{\hat{c}} \oplus  \mathbb R X \oplus E^{u}$ ;
        \item For every $t\geq 0$:
        \label{eq:ano}
            \begin{alignat*}{2}
                &\forall v \in E^{s}, \quad &&\|d\varphi^t(v)\| \leq C\alpha^t\|v\|,\\
                &\forall v \in E^{u}, \quad &C^{-1}\beta^{t}\|v\| \leq{} &\|d\varphi^{t}(v)\|,\\
                &\forall v \in E^{\hat{c}}, \quad &C^{-1}\alpha'^{t}\|v\| \leq{} &\|d\varphi^{t}(v)\| \leq C\beta'^{t}\|v\| .
            \end{alignat*}
    \end{enumerate}
    The vector bundles $E^{s}$, $E^{\hat{c}}$ and $E^{u}$ are called respectively the \emph{(strong) stable, subcenter, \emph{and} (strong) unstable distributions} of $(\varphi^t)$, while $\mathbb R X \oplus E^{s}$, $E^c:=\mathbb R X \oplus E^{\hat{c}}$ and $\mathbb R X \oplus E^{u}$ are called respectively the \emph{weak stable, center \emph{and} weak unstable distributions} of $(\varphi^t)$.
\end{definition}
Remark that for every $x \in M$ and $t\geq 0$, $d_x\varphi^t(X(x))=X(\varphi^t(x))$, so if we note $C_1,C_2>0$ such that for every $x \in M$, $C_1 \leq \|X(x)\|\leq C_2$ (which exist since $M$ is compact and $X$ is continuous nowhere vanishing), it comes that:
$$\forall v \in \mathbb RX(x), \quad \frac{C_1}{C_2}\|v\| \leq{} \|d\varphi^{t}(v)\| \leq\frac{C_2}{C_1}\|v\| .$$
In particular, the time-$t$ map of a partially hyperbolic flow is a partially hyperbolic diffeomorphism whose stable, center and unstable distributions are $E^s, E^c$ and $E^u$ respectively.\\
This definition recovers that of an \textit{Anosov flow}, that is when the subcenter bundle $E^{\hat{c}}$ is trivial.\\
Via a change of Riemannian metric we can assume that $C = 1$ (see \cite{barreira_nonuniform_2007}).

Our definition of partially hyperbolic flow is identical to that of \cite{wang_quasi-shadowing_2023}, or \cite{carneiro_partially_2014}, and has the advantage of considering explicitly a subbundle of $TM$ which is transverse to the vector field inducing the flow. 
Therefore, one of the main techniques used in order to study a flow - which is assuming a transverse structure invariant by flow holonomies - can be very efficient.
For example, we will manage to prove that if the subcenter distribution of a transversely holomorphic partially hyperbolic flow is integrable to a flow-invariant foliation $\mathcal{F}^{\hat{c}}$, then its leaves are holomorphic and the flow acts holomorphically on it.
Also, it is sometimes difficult to extrapolate some techniques used in the discrete case: in \cite{bohnet_partially_2011} and \cite{bohnet_partially_2014}, the authors study partially hyperbolic diffeomorphisms whose center distribution is integrable (see the next subsection) to a compact foliation with finite holonomy.
In our case, such a phenomenon is impossible as the center foliation would be subfoliated by the orbit foliation which possesses non-compact leaves (the non periodic orbits).

\subsection{Dynamical coherence}
As was discussed in \cite{wilkinson_dynamical_2008}, one can define different notions of integrability of a continuous subbundle of $TM$.
We recall some of them.
Let $E$ a continuous subbundle of dimension $k$ of the tangent bundle of a smooth manifold $M$.
$E$ is said to be \textit{integrable} if there exists a foliation $\mathcal{F}$ of $M$ by $C^1$ immersed submanifolds of dimension $k$ which are everywhere tangent to $E$.
$E$ is said to be \textit{uniquely integrable} if it is integrable to a foliation $\mathcal{F}$ and every $C^1$ path of $M$ everywhere tangent to $E$ lies in a single leaf of $\mathcal{F}$.
The notion of unique integrability is different from the fact that there exists a unique foliation tangent to $E$.
If $E$ is uniquely integrable, then there exists a unique foliation whose leaves are everywhere tangent to $E$.

The stable and unstable bundles of a partially hyperbolic flow are uniquely integrable to invariant foliations $\mathcal{F}^s$ and $\mathcal{F}^u$ respectively, whose leaves are as smooth as the flow (see \cite{hirsch_invariant_1977} or \cite{barreira_nonuniform_2007}).

\begin{definition}
    We say that a partially hyperbolic flow $(\varphi^t)$ is \textit{dynamically coherent} if there exist flow-invariant foliations $\mathcal{F}^{\hat{c}s}$ and $\mathcal{F}^{\hat{c}u}$ whose leaves are everywhere tangent to $E^{\hat{c}s}=E^{s}\oplus E^{\hat{c}}$ and $E^{\hat{c}u}=E^{u}\oplus E^{\hat{c}}$ respectively.
\end{definition}

This implies in particular, by Lemma \ref{lem:weakfol}, that the time-one map $\varphi^1$ is dynamically coherent.

If a partially hyperbolic flow is dynamically coherent, then its subcenter distribution is integrable (Proposition 2.10 in \cite{abouanass_dynamical_2026}):

\begin{proposition}
\label{prop:dyncohsubfol}
    Let $(\varphi^t)$ a partially hyperbolic flow on a smooth compact manifold which is dynamically coherent.\\
    Then the subcenter distribution is integrable to a flow-invariant foliation $\mathcal{F}^{\hat{c}}$. 
    Moreover, $\mathcal{F}^s$ and $\mathcal{F}^{\hat{c}}$ subfoliate $\mathcal{F}^{\hat{c}s}$ while $\mathcal{F}^u$ and $\mathcal{F}^{\hat{c}}$ subfoliate $\mathcal{F}^{\hat{c}u}$.
\end{proposition}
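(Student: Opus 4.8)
The plan is to realise $E^{\hat c}$ as the intersection of the tangent distributions of two \emph{genuinely transverse} foliations of $M$, and then to invoke the fact that the intersection of two transverse integral foliations is again a foliation. The starting point is the elementary linear-algebra identity coming from the splitting $TM = E^{s}\oplus E^{\hat c}\oplus \mathbb R X\oplus E^{u}$: setting $E^{cs}:=\mathbb R X\oplus E^{s}\oplus E^{\hat c}$ one checks directly that $E^{cs}\cap E^{\hat c u}=E^{\hat c}$ and $E^{cs}+E^{\hat c u}=TM$. Here lies the essential difference with the diffeomorphism case: the two subcenter foliations $\mathcal F^{\hat c s}$ and $\mathcal F^{\hat c u}$ are \emph{not} transverse in $M$, since $E^{\hat c s}+E^{\hat c u}=E^{s}\oplus E^{\hat c}\oplus E^{u}$ misses the flow direction $\mathbb R X$. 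One must therefore enlarge one of them by the flow before intersecting.

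Second, I would produce the weak foliation $\mathcal F^{cs}$ tangent to $E^{cs}=\mathbb R X\oplus E^{\hat c s}$ by saturating $\mathcal F^{\hat c s}$ along the flow: its leaves are $\mathcal F^{cs}(x):=\bigcup_{t\in\mathbb R}\varphi^{t}\big(\mathcal F^{\hat c s}(x)\big)$. Since $\mathcal F^{\hat c s}$ is flow-invariant and $X$ is transverse to it, a flow-box argument (locally $\bigcup_{|t|<\varepsilon}\varphi^{t}(\mathcal F^{\hat c s}_{\mathrm{loc}}(x))$ is a product chart) shows that these sets are the leaves of an integral, flow-invariant foliation tangent to $\mathbb R X\oplus E^{\hat c s}$.

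Third, I would intersect $\mathcal F^{cs}$ with $\mathcal F^{\hat c u}$. By the identity above these two foliations are transverse and their tangent distributions meet along the continuous, constant-rank subbundle $E^{\hat c}$. Since both foliations are integral (continuous tangent bundles, $C^1$ leaves), a standard argument — straightening $\mathcal F^{cs}$ in a foliated chart and tracing $\mathcal F^{\hat c u}$ on its leaves, or equivalently the fact that the intersection of two transverse integral foliations with constant-rank intersection is a foliation — shows that the connected components of the sets $\mathcal F^{cs}(x)\cap \mathcal F^{\hat c u}(x)$ are the leaves of an integral foliation $\mathcal F^{\hat c}$ tangent to $E^{\hat c}$. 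As $\mathcal F^{cs}$ and $\mathcal F^{\hat c u}$ are both flow-invariant, so is $\mathcal F^{\hat c}$, which settles integrability. The subfoliation statements then follow formally: the leaves of $\mathcal F^{s}$ are tangent to $E^{s}\subset E^{\hat c s}$, so by unique integrability of $E^{s}$ each strong stable leaf through a point of a $\mathcal F^{\hat c s}$-leaf $L$ stays inside $L$; likewise each $\mathcal F^{\hat c}$-leaf is tangent to $E^{\hat c}\subset E^{\hat c s}$, hence is contained in the corresponding $\mathcal F^{\hat c s}$-leaf; and since $E^{\hat c s}=E^{s}\oplus E^{\hat c}$, the two give complementary subfoliations of $\mathcal F^{\hat c s}$. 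The statement for $\mathcal F^{u}$ and $\mathcal F^{\hat c}$ inside $\mathcal F^{\hat c u}$ is symmetric.

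The main obstacle is precisely the non-transversality of $\mathcal F^{\hat c s}$ and $\mathcal F^{\hat c u}$ in $M$ and the consequent detour through the weak foliation $\mathcal F^{cs}$: one must check carefully that the flow-saturation is a bona fide integral foliation (and not merely a partition into immersed submanifolds) and that the transverse-intersection argument remains valid in the merely continuous\slash$C^{1}$ category rather than in the smooth, involutive-distribution setting where the bracket computation $[\,\cdot\,,\cdot\,]$ makes the integrability of $E^{\hat c}=T\mathcal F^{cs}\cap T\mathcal F^{\hat c u}$ immediate.
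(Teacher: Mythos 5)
Your construction of $\mathcal{F}^{\hat c}$ is sound, and it follows the route the paper makes available: the flow-saturation of $\mathcal{F}^{\hat c s}$ is exactly Lemma \ref{lem:weakfol} applied to $*=\hat c s$ (so you could simply cite it rather than redo the flow-box argument), and intersecting the resulting weak foliation $\mathcal{F}^{cs}$ with $\mathcal{F}^{\hat c u}$ — which is transverse to it with constant-rank intersection $E^{\hat c}$ — is the natural way to produce a flow-invariant foliation tangent to $E^{\hat c}$; the local-product-structure argument for intersecting two transverse integral foliations does go through in the $C^0$-with-uniformly-$C^1$-leaves category, and you are right that the bracket argument is unavailable there. (The paper itself gives no proof of this proposition — it quotes Proposition 2.10 of the companion work on dynamical coherence of flows — so the comparison below is with what a complete proof requires.)

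The genuine gap is the sentence ``by unique integrability of $E^{s}$ each strong stable leaf through a point of a $\mathcal F^{\hat c s}$-leaf $L$ stays inside $L$.'' Unique integrability of $E^{s}$ says that a $C^1$ curve tangent to $E^{s}$ lies in a single leaf of $\mathcal{F}^{s}$; it says nothing about which leaves of $\mathcal{F}^{\hat c s}$ such a curve may visit. What your inference actually needs is unique integrability of $E^{\hat c s}$ — that curves tangent to $E^{\hat c s}$ issuing from $L$ remain in $L$ — and that is precisely what dynamical coherence does \emph{not} provide: it gives a foliation tangent to $E^{\hat c s}$, and an integral foliation need not be uniquely integrable. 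A decisive example: the foliation of $\mathbb{R}^2$ by the cubics $y=(x-c)^3$ has smooth leaves and continuous tangent line field $\mathbb{R}\,(\partial_x+3y^{2/3}\partial_y)$, yet the $x$-axis is everywhere tangent to this line field and crosses every leaf. So ``tangent to a subbundle of $T\mathcal{F}^{\hat c s}$ and passing through a point of $L$'' does not imply ``contained in $L$.'' (A Peano-type patch — integrate $E^{s}|_L$ inside $L$, then invoke unique integrability of $E^s$ in $M$ — works only when $E^{s}$ has rank one, since continuous distributions of rank $\geq 2$ need not admit integral manifolds; in this paper $\dim_{\mathbb{R}}E^{s}=2$, and the proposition is stated in arbitrary dimension.)

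The subfoliation of $\mathcal{F}^{\hat c s}$ by $\mathcal{F}^{s}$ is a dynamical fact, not a formal one. The standard proof (Burns--Wilkinson style, adapted to flows) runs as follows: for $y\in\mathcal{F}^{s}_{\mathrm{loc}}(x)$, use the uniform local product structure between $\mathcal{F}^{\hat c s}$ and the flow-saturated unstable foliation $\mathcal{F}^{wu}$ (tangent to $\mathbb{R}X\oplus E^{u}$, again Lemma \ref{lem:weakfol}) to define $z_n=\mathcal{F}^{wu}_{\mathrm{loc}}(\varphi^{n}y)\cap\mathcal{F}^{\hat c s}_{\mathrm{loc}}(\varphi^{n}x)$; the contraction $d(\varphi^{n}x,\varphi^{n}y)\leq C\alpha^{n}d(x,y)$ keeps $z_n$ at distance $O(\alpha^{n})$ from $\varphi^{n}x$ inside its plaque, which lets one prove inductively that $z_{n}=\varphi^{n}(z_0)$; then the expansion at rate $\beta>1$ along $E^{u}$, together with the uniform transversality of the flow direction to $\mathcal{F}^{u}$ (Lemma \ref{lem:metricweak}), forces $z_0=y$, i.e.\ $y\in\mathcal{F}^{\hat c s}_{\mathrm{loc}}(x)$. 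Compactness, flow-invariance and the spectral gap $\alpha<1<\beta$ all enter essentially; none of them appear in your ``formal'' step. By contrast, your claim that $\mathcal{F}^{\hat c}$ subfoliates $\mathcal{F}^{\hat c s}$ \emph{can} be rescued softly: inside a leaf of $\mathcal{F}^{cs}$ the map $(t,y)\mapsto\varphi^{t}(y)$ is a $C^1$ flow-box whose time coordinate has differential vanishing exactly on $E^{\hat c s}$, so any connected $C^1$ manifold tangent to $E^{\hat c}$ through a point of the slice $\mathcal{F}^{\hat c s}(x)$ stays in that slice; no such differentiable transverse coordinate exists for the pair $(\mathcal{F}^{s},\mathcal{F}^{\hat c s})$, which is exactly why that case needs the dynamics.
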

\begin{definition}
For a partially hyperbolic flow $(\varphi^t)$ whose subcenter distribution is integrable to a foliation $\mathcal{F}^{\hat{c}}$, we say that its subcenter foliation is \textit{complete} if for every $x \in M$ and $* \in \{s,u\}$, $\bigcup_{z \in \mathcal{F}^{\hat{c}}(x)} \mathcal{F}^{*}(z)=\bigcup_{w \in \mathcal{F}^{*}(x)} \mathcal{F}^{\hat{c}}(w)$.
\end{definition}
If a partially hyperbolic flow admits a complete subcenter foliation, then it is dynamically coherent (see Proposition 6.4 of \cite{abouanass_dynamical_2026}).
Also :

\begin{theorem}
\label{thm:Fçcomp}
    Let $(\varphi^t)$ a partially hyperbolic flow on a smooth compact manifold $M$ whose subcenter distribution is integrable to a flow invariant compact foliation $\mathcal{F}^{\hat{c}}$ with trivial holonomy.\\
    Then $\mathcal{F}^{\hat{c}}$ is complete. 
    In particular, $(\varphi^t)$ is dynamically coherent.
\end{theorem}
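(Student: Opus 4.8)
The plan is to establish completeness for $*=s$ only; the case $*=u$ then follows verbatim by replacing $(\varphi^t)$ with the time-reversed flow $(\varphi^{-t})$, which exchanges $E^s$ and $E^u$ while fixing $E^{\hat c}$ and $\mathcal F^{\hat c}$. Fix $x$ and write $A:=\bigcup_{z\in\mathcal F^{\hat c}(x)}\mathcal F^s(z)$ and $B:=\bigcup_{w\in\mathcal F^s(x)}\mathcal F^{\hat c}(w)$, so that the assertion is $A=B$. I would reduce the whole statement to a single local transport property:
\[
\text{there is } \epsilon_0>0 \text{ s.t. for all } q\in M,\ q'\in\mathcal F^{\hat c}(q),\ p\in\mathcal F^s(q) \text{ with } d^s(p,q)<\epsilon_0,\ \mathcal F^{\hat c}(p)\cap\mathcal F^s(q')\neq\emptyset .
\]
Granting this, connecting $q$ to an arbitrary $p\in\mathcal F^s(q)$ by a path in the (connected) stable leaf, subdividing into arcs of $d^s$-length $<\epsilon_0$, and applying the property successively along the chain transports $\mathcal F^s(q')$ from over $q$ to over $p$ and upgrades it to the global statement $(\star)$: $\mathcal F^{\hat c}(p)\cap\mathcal F^s(q')\neq\emptyset$ whenever $p\in\mathcal F^s(q)$ and $q'\in\mathcal F^{\hat c}(q)$. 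Both inclusions then follow by chasing the equivalence relations ``$\in\mathcal F^s$'' and ``$\in\mathcal F^{\hat c}$'': for $A\subseteq B$, apply $(\star)$ with $(q,p,q')=(z,y,x)$; for $B\subseteq A$, apply it with $(q,p,q')=(w,x,y)$. Once $A=B$ is proved for both $*$, i.e. $\mathcal F^{\hat c}$ is complete, dynamical coherence is exactly Proposition 6.4 of \cite{abouanass_dynamical_2026}.

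The structural input comes from the compactness and triviality of holonomy of $\mathcal F^{\hat c}$. By Epstein's theorem \ref{thm:epstein} and generalized Reeb stability \ref{thm:reeb}, every leaf has a saturated product neighborhood $V\cong L\times D$ with trivial structure group, in which the leaves of $\mathcal F^{\hat c}$ are the horizontal slices $L\times\{d\}$; by compactness of $M$ these neighborhoods may be taken of uniform size $\epsilon_0$, leaf diameters are uniformly bounded, and the family of leaves is Hausdorff-continuous, so that a point $\delta$-close to a leaf forces the entire leaf through it to be $\kappa(\delta)$-close in Hausdorff distance with $\kappa(\delta)\to 0$. I will also use the leafwise-versus-ambient distance comparison of \cite{bohnet_partially_2014} to pass freely between $d$ and $d^s$ on small scales.

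The dynamical heart is the transport property itself, proved by flowing forward. The points $\varphi^t q,\varphi^t q'$ stay in the compact leaf $L_t:=\mathcal F^{\hat c}(\varphi^t q)$ at bounded leafwise distance, while $\varphi^t p\to\varphi^t q$ by stable contraction; hence for large $t$ the point $\varphi^t p$ lies in the uniform product neighborhood of $L_t$ with transversal coordinate $\tau_t\to 0$. In the chart $L_t\times D$ the leaf $\mathcal F^{\hat c}(\varphi^t p)$ is the horizontal slice at height $\tau_t$, and asking $\mathcal F^{\hat c}(\varphi^t p)\cap\mathcal F^s(\varphi^t q')\neq\emptyset$ amounts to asking that $\tau_t$ lie on the transversal shadow of the stable leaf through $\varphi^t q'$. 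Both this shadow and the shadow over $\varphi^t q$ (which contains $\tau_t$) pass through the origin, and $\tau_t\to 0$; I would then extract an exact intersection in the limit $t\to\infty$, pulling back by $\varphi^{-t}$ and using that $\mathcal F^{\hat c}(p)$ is compact, hence closed, to secure the limit point inside it.

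The main obstacle is precisely this last step: producing an \emph{exact} intersection with $\mathcal F^s(q')$ rather than an asymptotic one. Mere transversality of $E^s$ and $E^{\hat c}$ does \emph{not} give a local product of $\mathcal F^s$ and $\mathcal F^{\hat c}$ --- an explicit twist of $E^s$ along subcenter leaves obstructs it --- so the two transversal shadows above share the origin but have different tangent planes, and one cannot match the leaves at any finite time. The matching has to be won in the $t\to\infty$ limit by balancing the exponential stable contraction against the bounded (but not small) variation of $E^s$ along the compact subcenter leaves, i.e. through a normally-hyperbolic graph-transform estimate that forces the transversal error $\mathrm{dist}(\tau_t,\ \text{shadow over }\varphi^t q')$ to contract to $0$ at a rate strictly faster than the variation term grows; the compactness and closedness of the subcenter leaves are what turn this convergence into genuine membership.
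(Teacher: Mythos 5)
A preliminary remark: the paper you were given never proves this theorem --- it is imported as a black box from the companion work \cite{abouanass_dynamical_2026} (only the statement is recorded, together with the fact that completeness implies dynamical coherence via Proposition 6.4 of that reference). So your proposal can only be judged on its own terms. On those terms, your reduction is correct and well organized: the time-reversal trick for $*=u$, the chaining of the local transport property along a subdivided stable path to get the global statement $(\star)$, the derivation of both inclusions $A\subseteq B$ and $B\subseteq A$ from $(\star)$, the final appeal to Proposition 6.4 of \cite{abouanass_dynamical_2026}, and the structural inputs (uniform product neighborhoods, uniformly bounded leaf diameters, Hausdorff continuity of the leaf family) coming from Theorems \ref{thm:epstein} and \ref{thm:reeb} are all sound.

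The gap is that the local transport property --- which is the entire mathematical content of the theorem --- is never established, and the mechanism you sketch for it does not close. First, note that since $\mathcal{F}^s$ and $\mathcal{F}^{\hat{c}}$ are both flow-invariant, an intersection point of $\mathcal{F}^{\hat{c}}(\varphi^t p)$ and $\mathcal{F}^s(\varphi^t q')$ at any finite time $t$ pulls back under $\varphi^{-t}$ to one at time $0$; so flowing forward does not change the problem at all, it only improves the geometry, and the gain must be extracted entirely in the limit $t\to+\infty$. But your limiting argument produces only a sequence $z_{t}\in\mathcal{F}^{\hat{c}}(p)$ with $d\bigl(\varphi^{t}(z_{t}),\varphi^{t}(q')\bigr)\to 0$, a \emph{diagonal} asymptotic statement. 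Compactness of $\mathcal{F}^{\hat{c}}(p)$ secures a limit point $z_\infty\in\mathcal{F}^{\hat{c}}(p)$ --- the easy half --- but membership $z_\infty\in\mathcal{F}^s(q')$ is the condition $d\bigl(\varphi^{t}(z_\infty),\varphi^{t}(q')\bigr)\to 0$ for the \emph{fixed} point $z_\infty$, and this does not follow: $d\bigl(\varphi^{t_k}(z_\infty),\varphi^{t_k}(z_{t_k})\bigr)$ can blow up even though $z_{t_k}\to z_\infty$, because the forward flow expands along $E^u$.

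Your final paragraph acknowledges this and proposes to repair it by a graph-transform estimate balancing exponential stable contraction against the variation of $E^s$ along subcenter leaves; but the quantity that must be controlled with a \emph{rate} is the transversal displacement between $\mathcal{F}^{\hat{c}}(\varphi^t p)$ and $L_t=\mathcal{F}^{\hat{c}}(\varphi^t q)$, and for a foliation that is merely $C^0$ the dependence of leaves on the base point admits only a modulus of continuity $\kappa(\delta)\to 0$, not a Lipschitz or H\"older bound. There is therefore nothing of exponential (or any) rate to balance against, and the estimate you invoke is precisely the missing theorem rather than a routine normal-hyperbolicity argument. As you yourself point out, $\dim E^s+\dim E^{\hat{c}}<\dim M$, so proximity never forces an intersection at finite time, and the two transversal shadows genuinely differ; bridging exactly this point is what the technical sections of \cite{abouanass_dynamical_2026} are for, and your proposal stops at its threshold.
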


\subsection{Global $su\Phi$-holonomy maps}

If $(\varphi^t)$ is a dynamically coherent partially hyperbolic flow, then each leaf of $\mathcal{F}^{\hat{c}s}$ is simultaneously subfoliated by $\mathcal{F}^{\hat{c}}$ and $\mathcal{F}^{s}$.
In particular, for any $x\in M$, we can define the holonomy of the continuous foliation $\mathcal{F}^s|_{\mathcal{F}^{\hat{c}s}(x)}$ of $\mathcal{F}^{\hat{c}s}(x)$, which we call the local stable holonomy. 
For $y \in \mathcal{F}^s(x)$, since stable leaves are contractible, the local stable holonomy of a path $\gamma$ in $\mathcal{F}^s$ between $x$ and $y$ with respect to $\mathcal{F}_{\text{loc}}^{\hat{c}}(x)$ and $\mathcal{F}_{\text{loc}}^{\hat{c}}(y)$ does not depend on $\gamma$.

\begin{definition}
    A dynamically coherent partially hyperbolic flow $(\varphi^t)$ admits \emph{global stable holonomy maps} if for any $x\in M$, $y\in \mathcal{F}^{s}(x)$, there exists a globally defined homeomorphism $h_{xy}^{s}: \mathcal{F}^{ \hat{c}}(x)\to \mathcal{F}^{\hat{c}}(y)$ such that for every $z\in \mathcal{F}^{\hat{c}}(x)$, $h_{xy}^{s}(z)\in \mathcal{F}^{s}(z)\cap \mathcal{F}^{\hat{c}}(y)$.\\
    Similarly, we can define global unstable holonomy maps $h^{u}$ and global flow holonomy maps $h^\Phi$.\\
    We say that $(\varphi^t)$ admits \emph{global $su\Phi$-holonomy maps} if it admits global stable, global unstable and global flow holonomy maps.
\end{definition}
Since global holonomy maps coincide locally with local holonomy maps, we use $h_{xy}^{s}$ to denote both local holonomy maps and global holonomy maps. 
If $\mathcal{F}^{\hat{c}}$ is invariant by the flow, then the latter admits global flow maps as we can define, for $x \in M$ and $y =\varphi^{t_0}(x) \in \Phi(x)$, the homeomorphism $z \in \mathcal{F}^{\hat{c}}(x) \mapsto \varphi^{t_0}(z) \in \mathcal{F}^{\hat{c}}(y)$ which coincides locally with local flow holonomy maps.

In that respect, by analogy with the discrete case (Lemmas 2.13 and 2.14 and of \cite{abouanass_dynamical_2026}):

\begin{lemma}
\label{lem:subcenthomeo}
    If $(\varphi^t)$ admits global $su\Phi$-holonomy maps, then all subcenter leaves are homeomorphic. 
\end{lemma}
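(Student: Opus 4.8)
The plan is to show that the function sending each point $x\in M$ to the homeomorphism type of its subcenter leaf $\mathcal{F}^{\hat{c}}(x)$ is constant, exploiting that the global holonomy maps are leaf-to-leaf homeomorphisms and that the four distributions $E^s,E^{\hat{c}},\mathbb RX,E^u$ together span $TM$.

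First I would record the elementary observation that each family of global holonomy maps produces a homeomorphism between subcenter leaves. If $y\in\mathcal{F}^s(x)$, then $h^s_{xy}:\mathcal{F}^{\hat{c}}(x)\to\mathcal{F}^{\hat{c}}(y)$ is a homeomorphism by definition, and likewise $h^u_{xy}$ when $y\in\mathcal{F}^u(x)$; when $y=\varphi^{t_0}(x)\in\Phi(x)$ the flow-invariance of $\mathcal{F}^{\hat{c}}$ makes $z\mapsto\varphi^{t_0}(z)$ a homeomorphism $\mathcal{F}^{\hat{c}}(x)\to\mathcal{F}^{\hat{c}}(y)$; and if $y\in\mathcal{F}^{\hat{c}}(x)$ the two leaves coincide. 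Consequently, whenever $x$ and $y$ can be joined by a finite chain $x=x_0,\dots,x_n=y$ in which consecutive points lie on a common leaf of one of $\mathcal{F}^s,\mathcal{F}^u,\Phi,\mathcal{F}^{\hat{c}}$, composing the corresponding homeomorphisms shows $\mathcal{F}^{\hat{c}}(x)\cong\mathcal{F}^{\hat{c}}(y)$.

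It therefore suffices to prove that any two points of $M$ are joined by such a chain, i.e. that the accessibility relation generated by the four foliations is total, and I would argue this by the standard openness-plus-connectedness scheme. Denote by $\mathrm{AC}(x)$ the set of points reachable from $x$ by such chains; these sets partition $M$. To see that each $\mathrm{AC}(x)$ is open, fix $y\in\mathrm{AC}(x)$ and consider the concatenation map $\Psi(a,b,c,d)$ obtained by moving from $y$ successively inside the local leaves of $\mathcal{F}^s,\mathcal{F}^{\hat{c}},\Phi,\mathcal{F}^u$ by parameters $a,b,c,d$; its image lies in $\mathrm{AC}(y)=\mathrm{AC}(x)$. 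Since the fibers at $y$ of the spanning splitting, $E^s_y,E^{\hat{c}}_y,\mathbb RX(y),E^u_y$, are transverse and have dimensions summing to $\dim M$, the map $\Psi$ is a local homeomorphism at the origin, so $\mathrm{AC}(x)$ contains a neighborhood of $y$ and is open. As $M$ is connected and partitioned into nonempty open accessibility classes, there is a single class equal to all of $M$, and the lemma follows.

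The main obstacle is the local-homeomorphism property of $\Psi$ at the origin, since the foliations involved have low transverse regularity ($\mathcal{F}^s,\mathcal{F}^u$ are only as regular transversally as the flow, while $\mathcal{F}^{\hat{c}}$ merely has $C^1$ leaves), so the smooth inverse function theorem does not apply directly. Instead I would establish that the four local foliations form a local product structure at $y$—using the transversality of the spanning distributions together with the local uniqueness of each foliation—so that $\Psi$ is a continuous injection on a neighborhood of the origin, and then conclude via Brouwer's invariance of domain that $\Psi$ is open. This is the flow analog of the discrete accessibility argument underlying Lemmas 2.13--2.14 of \cite{abouanass_dynamical_2026}, with the additional flow coordinate $c$ accounting for the $\mathbb RX$ factor of the splitting.
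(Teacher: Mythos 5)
Your proposal is correct and follows essentially the same route as the paper's proof (inherited from Lemmas 2.13--2.14 of \cite{abouanass_dynamical_2026} and reflected in the proof of Corollary \ref{cor:Fçholoequiv}): one composes the global stable, unstable and flow holonomy homeomorphisms along an $su\Phi$-chain joining two given points, and the existence of such chains between arbitrary points follows from the openness-plus-connectedness argument based on the local product structure of the invariant foliations. Your appeal to invariance of domain to get openness of the accessibility classes is only a minor technical variant of that same local-product-structure step.
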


\begin{lemma}
\label{lem:globalholo}
    Let $(\varphi^t)$ a partially hyperbolic flow on a smooth compact manifold $M$, with a flow invariant compact subcenter foliation $\mathcal{F}^{\hat{c}}$ with $C^1$-leaves and trivial holonomy.\\
    Then $(\varphi^t)$ admits global $su\Phi$-holonomies.
\end{lemma}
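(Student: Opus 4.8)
The plan is to treat the three families of holonomy maps separately, the flow holonomy being essentially free and the unstable holonomy being symmetric to the stable one. As already observed above, the flow-invariance of $\mathcal{F}^{\hat{c}}$ yields global flow holonomy maps: for $y=\varphi^{t_0}(x)$ the diffeomorphism $\varphi^{t_0}$ maps $\mathcal{F}^{\hat{c}}(x)$ homeomorphically onto $\mathcal{F}^{\hat{c}}(y)=\varphi^{t_0}(\mathcal{F}^{\hat{c}}(x))$ and sends each $z$ into its own orbit. It therefore remains to construct the global stable holonomy maps; the unstable ones are obtained by the same argument applied to the time-reversed flow $(\varphi^{-t})$, for which $E^u$ plays the role of the stable bundle and which satisfies the same hypotheses.

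By Theorem~\ref{thm:Fçcomp}, the assumptions guarantee that $\mathcal{F}^{\hat{c}}$ is complete and that $(\varphi^t)$ is dynamically coherent, so the leaf $L:=\mathcal{F}^{\hat{c}s}(x)$ exists and, by Proposition~\ref{prop:dyncohsubfol}, is simultaneously subfoliated by the stable foliation $\mathcal{F}^s$ (whose leaves are contractible) and by the compact foliation $\mathcal{F}^{\hat{c}}$ (with trivial holonomy). I would fix $x$ and $y\in\mathcal{F}^s(x)$ and construct $h^s_{xy}$ as the global holonomy of $\mathcal{F}^s|_L$, transported along a stable path from $x$ to $y$, using the subcenter leaves $\mathcal{F}^{\hat{c}}(x)$ and $\mathcal{F}^{\hat{c}}(y)$ as (complete) transversals.

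First I would check that the target point is always available: for $z\in\mathcal{F}^{\hat{c}}(x)$, since $\mathcal{F}^{\hat{c}}(z)=\mathcal{F}^{\hat{c}}(x)$, completeness at $z$ gives $\bigcup_{w\in\mathcal{F}^s(z)}\mathcal{F}^{\hat{c}}(w)=\bigcup_{z'\in\mathcal{F}^{\hat{c}}(x)}\mathcal{F}^s(z')$, and the right-hand set contains $y$ because $y\in\mathcal{F}^s(x)$ with $x\in\mathcal{F}^{\hat{c}}(x)$; hence there is $w_0\in\mathcal{F}^s(z)$ with $y\in\mathcal{F}^{\hat{c}}(w_0)$, that is $w_0\in\mathcal{F}^s(z)\cap\mathcal{F}^{\hat{c}}(y)$. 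Thus the local stable holonomy germ at $x$ (sending $x\mapsto y$) can be continued over all of the compact leaf $\mathcal{F}^{\hat{c}}(x)$, which I would cover by finitely many charts of $\mathcal{F}^{\hat{c}s}$ in which $L$ has the local product structure; in each such chart the transport keeps a point inside its own stable leaf and inside a single subcenter plaque of $\mathcal{F}^{\hat{c}}(y)$, and is a local homeomorphism. The resulting map $h^s_{xy}:\mathcal{F}^{\hat{c}}(x)\to\mathcal{F}^{\hat{c}}(y)$ then satisfies $h^s_{xy}(z)\in\mathcal{F}^s(z)\cap\mathcal{F}^{\hat{c}}(y)$ by construction, and $h^s_{yx}$ will be its inverse, so it will be a homeomorphism.

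The main point, and the expected obstacle, is to show that this continuation is \emph{single-valued}, i.e. independent of the chosen stable path and of the chain of charts. Conceptually, the trivial holonomy of the compact foliation $\mathcal{F}^{\hat{c}}$ turns, via Generalized Reeb Stability (Theorem~\ref{thm:reeb}) and Epstein's theorem (Theorem~\ref{thm:epstein}), the projection $L\to B:=L/\mathcal{F}^{\hat{c}}$ into a fiber bundle whose compact fibers are the subcenter leaves, and $\mathcal{F}^s|_L$ into a flat Ehresmann connection transverse to these fibers; completeness is precisely the statement that this connection is complete, so that parallel transport between fibers is globally defined, and since the fibers are compact it acts by homeomorphisms. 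Because the stable leaves are simply connected, any two stable paths from $x$ to $y$ are homotopic relative to endpoints, so by the homotopy invariance of holonomy (property (ii) after Definition/Proposition~\ref{defprop:hol}) the transport, hence $h^s_{xy}$, does not depend on the chosen path; this is where both the contractibility of the $\mathcal{F}^s$-leaves and the triviality of the $\mathcal{F}^{\hat{c}}$-holonomy are genuinely used. Carrying out this verification carefully in the merely $C^0$ setting with $C^1$ leaves is the technical heart of the argument, after which the unstable and flow cases conclude the proof.
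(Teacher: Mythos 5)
The paper itself contains no proof of this lemma: it is imported ``by analogy with the discrete case'' from Lemmas 2.13--2.14 of \cite{abouanass_dynamical_2026}, so your plan can only be measured against the argument that reference (following \cite{bohnet_partially_2014}) carries out. Your architecture coincides with it: flow holonomy is immediate from flow-invariance of $\mathcal{F}^{\hat{c}}$, unstable holonomy follows from stable holonomy applied to $(\varphi^{-t})$, Theorem~\ref{thm:Fçcomp} supplies dynamical coherence and completeness, your verification that $\mathcal{F}^s(z)\cap\mathcal{F}^{\hat{c}}(y)\neq\emptyset$ is correct, and the stable holonomy is indeed to be built as transport of $\mathcal{F}^s|_L$ between the compact fibers of $L=\mathcal{F}^{\hat{c}s}(x)\to B=L/\mathcal{F}^{\hat{c}}$.

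There is, however, a genuine gap at the decisive step. You assert that completeness of $\mathcal{F}^{\hat{c}}$ ``is precisely the statement that this connection is complete.'' It is not: completeness of $\mathcal{F}^{\hat{c}}$ is the set-theoretic statement that every stable leaf in $L$ meets every subcenter leaf in $L$, whereas completeness of the Ehresmann connection is a path-lifting property (every path in $B$ lifts, inside a single stable leaf, through every point of the fiber over its origin), and the former does not formally imply the latter. What actually yields path lifting is compactness of the fibers: by Ehresmann's classical theorem, a foliation transverse to, and of complementary dimension to, the fibers of a fibration with compact fibers is a foliated bundle, so $\pi|_{\mathcal{F}^s(z)}:\mathcal{F}^s(z)\to B$ is a covering map for every $z$; unique path lifting and homotopy lifting then give a well-defined transport, and your appeal to simple connectivity of stable leaves correctly disposes of the dependence on the stable path $\gamma$. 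Equivalently, in your chart-by-chart continuation the unaddressed issue is monodromy over the fiber: $\mathcal{F}^{\hat{c}}(x)$ is compact and need not be simply connected (in this paper's application it is a Riemann surface, possibly of positive genus), so continuations of the stable-holonomy germ from $x$ to $z$ along two different paths in $\mathcal{F}^{\hat{c}}(x)$ could a priori disagree; contractibility of $\mathcal{F}^s$-leaves, which is what you invoke for single-valuedness, does not touch this. The mechanism that kills it is a second, direct use of trivial holonomy: sliding the stable path around a loop $\alpha\subset\mathcal{F}^{\hat{c}}(x)$ moves each of its points by the $\mathcal{F}^{\hat{c}}$-holonomy of $\alpha$ with respect to the transversal $\mathcal{F}^s_{\operatorname{loc}}$, which is the identity germ, and an open-closed argument along the stable path then shows the slid path returns to itself. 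So the blueprint is the right one, but the two hypotheses (compact fibers with trivial $\mathcal{F}^{\hat{c}}$-holonomy versus contractible stable leaves) do different jobs than the ones you assign them at the crucial moment, and the crucial property---path lifting, equivalently monodromy-freeness---is asserted rather than proved.
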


\subsection{Subcenter bunching}
\begin{definition}
    Let $(\varphi^t)$ a smooth partially hyperbolic flow on a smooth compact manifold $M$ such that there exists a flow-invariant subcenter foliation $\mathcal{F}^{\hat{c}}$ with $C^1$-leaves.\\
    We say that $(\phi^t)$ is \textit{subcenter bunched} if there exists $t_0>0$ such that for every $p \in M$,
    \begin{align*}
        \left\|\restr{d_p\varphi^{t_0}}{E^{s}(p)} \right\| \cdot&\left\|\restr{d_p\varphi^{t_0}}{E^{\hat{c}}(p)} \right\| < m\left( \restr{d_p\varphi^{t_0}}{E^{\hat{c}}(p)} \right)\\
        \left\|\restr{d_p\varphi^{t_0}}{E^{\hat{c}}(p)} \right\|<&\;m\left( \restr{d_p\varphi^{t_0}}{E^{\hat{c}}(p)} \right)\cdot m\left( \restr{d_p\varphi^{t_0}}{E^{u}(p)} \right).
    \end{align*}
\end{definition}

Again, by analogy with the discrete case (see \cite{pugh_holder_1997}) (Proposition 2.16 of \cite{abouanass_dynamical_2026}):

\begin{proposition}
\label{prop:subcenterbunchC1}
     Let $(\varphi^t)$ a smooth partially hyperbolic dynamically coherent flow which is subcenter bunched.\\
     Then for every $p \in M$, the restriction $\restr{\mathcal{F}^u}{\mathcal{F}^{\hat{c}u}(p)}$ to $\mathcal{F}^{\hat{c}u}(p)$ of the strong unstable foliation (respectively the restriction $\restr{\mathcal{F}^s}{\mathcal{F}^{\hat{c}s}(p)}$ to $\mathcal{F}^{\hat{c}s}(p)$ of the strong stable foliation) is $C^1$.
\end{proposition}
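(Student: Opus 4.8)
The plan is to reduce the $C^1$ regularity of $\restr{\mathcal{F}^u}{\mathcal{F}^{\hat{c}u}(p)}$ to an application of the $C^1$-section theorem of Hirsch--Pugh--Shub (the same technique used to establish $C^1$ regularity of stable/unstable foliations of diffeomorphisms, as in \cite{pugh_holder_1997}), carried out leafwise inside each subcenter-unstable leaf. Since the problem is local and each $\mathcal{F}^{\hat{c}u}(p)$ carries the restricted subbundles $E^u$ and $E^{\hat{c}}$ with $T\mathcal{F}^{\hat{c}u}=E^u\oplus E^{\hat{c}}$, the first step is to work with the time-$t_0$ diffeomorphism $\varphi^{t_0}$ restricted to a single subcenter-unstable leaf, viewing $E^u$ there as the strong-unstable bundle of a partially hyperbolic system whose ``center'' direction is now $E^{\hat{c}}$ (the flow direction having been absorbed into the center-stable side). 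The strong unstable foliation $\restr{\mathcal{F}^u}{\mathcal{F}^{\hat{c}u}(p)}$ is already known to be integral with uniformly $C^1$ leaves by Proposition \ref{prop:unifleaves}, so by Proposition \ref{prop:journe} it suffices to show that its holonomy is uniformly $C^1$.

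First I would set up the graph transform / invariant-section machinery: the candidate tangent distribution to $\restr{\mathcal{F}^u}{\mathcal{F}^{\hat{c}u}(p)}$ defines a section of an appropriate Grassmann (or jet) bundle over $\mathcal{F}^{\hat{c}u}(p)$, and one shows this section is the unique fixed point of a fiber-contracting graph transform induced by $d\varphi^{t_0}$. To upgrade from continuity to $C^1$ one applies the section theorem to the \emph{derivative cocycle}, i.e. one considers the induced action on the bundle whose fibers encode first-order jets of the holonomy maps; the fixed section of this extended transform gives the first derivative of the holonomy, and its continuity yields uniform $C^1$-dependence. The crucial input making the graph transform fiber-contracting is precisely the \textbf{subcenter bunching} hypothesis: the second bunching inequality $\|\restr{d_p\varphi^{t_0}}{E^{\hat c}}\| < m(\restr{d_p\varphi^{t_0}}{E^{\hat c}})\cdot m(\restr{d_p\varphi^{t_0}}{E^{u}})$ is exactly the condition guaranteeing that the rate of contraction transverse to the leaf dominates the possible expansion of first-order data along $E^{\hat{c}}$, forcing the contraction constant of the jet-level transform to be strictly less than one. (The symmetric first inequality handles the analogous statement for $\restr{\mathcal{F}^s}{\mathcal{F}^{\hat{c}s}}$.)

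Concretely, the ordered steps are: \emph{(i)} restrict to $\varphi^{t_0}$ on a subcenter-unstable leaf and fix adapted local coordinates in which $E^u$ is nearly horizontal and $E^{\hat c}$ nearly vertical, using the lift-to-$TM$ construction of Remark \ref{rem:fol2} so that plaques become graphs $g_y$; \emph{(ii)} express the holonomy of $\restr{\mathcal{F}^u}{\mathcal{F}^{\hat{c}u}(p)}$ as the map $y\mapsto g_y(x)$ and write down the functional equation it satisfies under pullback by $\varphi^{t_0}$; \emph{(iii)} verify that, under the bunching inequality, this is a fiber contraction on the space of continuous sections carrying one derivative in the $y$-variable, and invoke the Hirsch--Pugh--Shub $C^1$-section theorem to conclude the fixed section and its $y$-derivative are continuous, i.e. the holonomy is uniformly $C^1$; \emph{(iv)} combine uniform $C^1$ leaves (Proposition \ref{prop:unifleaves}) with uniform $C^1$ holonomy via Proposition \ref{prop:journe} to conclude $\restr{\mathcal{F}^u}{\mathcal{F}^{\hat{c}u}(p)}$ is $C^1$.

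**The hard part will be** justifying the fiber-contraction estimate uniformly over the non-compact leaf $\mathcal{F}^{\hat{c}u}(p)$ and controlling how the bunching inequality at each point $p$ assembles into a global contraction rate for the graph transform. Two technical subtleties deserve care: the subcenter bundle $E^{\hat c}$ is only continuous (not smooth), so the jet-bundle over which the section theorem operates has merely continuous, not differentiable, base regularity — one must phrase everything so that only fiberwise differentiability is needed, exactly as in the Hölder-section arguments of \cite{pugh_holder_1997}; and the holonomy is a priori only defined locally along $\mathcal{F}^u$-plaques, so one must check that the leafwise-global invariant section patches consistently, which is where the completeness of $\mathcal{F}^{\hat c}$ (Theorem \ref{thm:Fçcomp}) and the existence of global $su\Phi$-holonomy maps (Lemma \ref{lem:globalholo}) enter to guarantee the holonomy is globally well-defined on the whole subcenter leaf.
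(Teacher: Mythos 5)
Your proposal follows essentially the same route the paper relies on: the paper gives no argument for this proposition, deferring to Proposition 2.16 of \cite{abouanass_dynamical_2026} ``by analogy with the discrete case'' of \cite{pugh_holder_1997}, and that discrete-case proof is exactly your scheme --- a graph transform on first-order holonomy data made fiber-contracting by the bunching inequality, the Hirsch--Pugh--Shub $C^1$-section theorem, and then Journ\'e (Proposition \ref{prop:journe}) combining uniformly $C^1$ leaves (Proposition \ref{prop:unifleaves}) with uniformly $C^1$ holonomy. You also correctly identify which of the two subcenter-bunching inequalities governs the unstable case and which the stable case.

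One correction, though: your final appeal to Theorem \ref{thm:Fçcomp} (completeness) and Lemma \ref{lem:globalholo} (global $su\Phi$-holonomies) is both unjustified and unnecessary. Those results require $\mathcal{F}^{\hat{c}}$ to be a \emph{compact} foliation with \emph{trivial holonomy}, hypotheses that are absent from this proposition, which assumes only dynamical coherence and subcenter bunching. Fortunately nothing in the argument needs globally defined holonomy maps on whole subcenter leaves: being a $C^1$ foliation is a local property, so it suffices that the locally defined holonomy maps of $\restr{\mathcal{F}^u}{\mathcal{F}^{\hat{c}u}(p)}$ between small local transversals be uniformly $C^1$, which is exactly what the invariant-section argument produces; that step of your proposal should simply be dropped. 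Likewise, the uniformity over the non-compact leaf $\mathcal{F}^{\hat{c}u}(p)$ that you single out as the hard part is resolved in the standard way: one runs the graph transform on sections of a bundle over the compact manifold $M$ (fibered along $\mathcal{F}^{\hat{c}u}$, since $\varphi^{t_0}$ permutes the leaves rather than preserving any single one), so that the pointwise bunching inequalities yield a uniform contraction constant by continuity and compactness of $M$; no compactness of individual leaves is needed.
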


\subsection{Disintegration and Gibbs measures}

We recall the notion of disintegration of measures (see \cite{avila_absolute_2015}, \cite{butler_uniformly_2018} and \cite{avila_absolute_2022} for more details). Let $Z$ be a Polish space with a Borel probability measure $\mu$, and $\mathcal{P}$ a partition of $Z$ into measurable subsets.
Denote by $\overline{\mu}$ the induced measure on the $\sigma$-algebra generated by $\mathcal{P}$.

A \emph{disintegration} (or a \emph{system of conditional measures}) of $\mu$ with respect to the partition $\mathcal{P}$ is a family of probability measures $\{\mu_P\}_{P \in \mathcal{P}}$ on $Z$ with the following properties:
\begin{itemize}
    \item $P$ is of full $\mu_P$-measure for $\overline{\mu}$-almost every $P \in \mathcal{P}$;
    
    \item For any continuous function $f: Z \to \mathbb{R}$, the function $P \mapsto \int f \, d\mu_P$ is measurable and 
    \[
    \int_Z f \, d\mu = \int_{\mathcal{P}} \left( \int_P f \, d\mu_P \right) d\overline{\mu}(P).
    \]
\end{itemize}

The disintegration may not exist for an arbitrary given $\mathcal{P}$, but it always exists if $\mathcal{P}$ is a \emph{measurable partition} (see the above references for a definition):

\begin{theorem}[\cite{rokhlin_fundamental_nodate}]
\label{thm:rok}
With the previous notations, if $\mathcal{P}$ is a measurable partition, then there exists a system of conditional measures relative to $\mathcal{P}$. It is essentially unique in the sense that two such systems coincide in a set of full $\overline{\mu}$-measure.
\end{theorem}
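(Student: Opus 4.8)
The plan is to prove existence by constructing the conditional measures from conditional expectations on a countable generating algebra, and to deduce uniqueness from the same countable family. The key leverage is that $\mathcal{P}$ is a \emph{measurable} partition: by definition this provides a countable family $\{B_n\}_{n}$ of $\mathcal{P}$-saturated measurable sets which separates $\overline{\mu}$-almost every pair of distinct atoms. Consequently the sub-$\sigma$-algebra $\mathcal{B}$ of (mod $0$) saturated Borel sets is countably generated, the map $\pi(z)=(\mathbf{1}_{B_n}(z))_n$ into $\{0,1\}^{\mathbb{N}}$ has the atoms of $\mathcal{P}$ as its fibers up to a null set, and $(Z/\mathcal{P},\overline{\mu})$ is isomorphic mod $0$ to a standard Borel probability space.

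First I would fix a countable algebra $\mathcal{A}=\{A_k\}_k$ of Borel subsets of $Z$ that generates the Borel $\sigma$-algebra and is stable under the operations used below. Since $Z$ is Polish and $\mu$ is a probability measure, I may and do choose $\mathcal{A}$ so that each $A_k$ is inner-approximated by compact sets, which will be essential for countable additivity. For each $A_k$ I take a version of the conditional expectation $E_\mu[\mathbf{1}_{A_k}\mid\mathcal{B}]$; transported to the quotient this is a $\overline{\mu}$-measurable function $P\mapsto m_P(A_k)\in[0,1]$. Intersecting the countably many full-measure sets on which the algebraic identities ($m_P(\emptyset)=0$, $m_P(Z)=1$, finite additivity on disjoint unions within $\mathcal{A}$, and monotonicity) hold, I obtain a single $\overline{\mu}$-conull set $\Omega\subset Z/\mathcal{P}$ on which, for every $P\in\Omega$, the assignment $A\mapsto m_P(A)$ is a finitely additive, normalized set function on $\mathcal{A}$.

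The main obstacle is to upgrade, for $\overline{\mu}$-almost every $P$, this finitely additive set function to a genuine Borel measure $\mu_P$. Here I would combine the inner-compact-regularity of $\mathcal{A}$ with a tightness argument: if $A^{(j)}\downarrow\emptyset$ in $\mathcal{A}$ but $m_P(A^{(j)})\not\to 0$, one extracts compact $K^{(j)}\subset A^{(j)}$ with $m_P(A^{(j)}\setminus K^{(j)})$ summably small, and the finite-intersection property of the $K^{(j)}$ contradicts $A^{(j)}\downarrow\emptyset$; since only countably many monotone sequences from $\mathcal{A}$ arise, the exceptional sets again union to a $\overline{\mu}$-null set. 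Thus, off a conull set, $m_P$ is countably additive on $\mathcal{A}$ and extends, by Carathéodory, to a Borel probability measure $\mu_P$. The integration identity $\int_Z f\,d\mu=\int\bigl(\int_P f\,d\mu_P\bigr)\,d\overline{\mu}(P)$ then holds for $f=\mathbf{1}_{A_k}$ by the defining property of conditional expectation and for all bounded Borel $f$ by a monotone-class argument, while $P$ has full $\mu_P$-measure for $\overline{\mu}$-a.e. $P$ because each saturated $B_n$ satisfies $m_P(B_n)\in\{0,1\}$ according as $P\subset B_n^c$ or $P\subset B_n$, and the $\{B_n\}$ separate atoms.

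Finally, uniqueness is immediate from the countable family: if $\{\mu_P\}$ and $\{\mu_P'\}$ are two disintegrations, then for each fixed $k$ both $P\mapsto\mu_P(A_k)$ and $P\mapsto\mu_P'(A_k)$ are versions of $E_\mu[\mathbf{1}_{A_k}\mid\mathcal{B}]$, hence agree $\overline{\mu}$-a.e.; intersecting over the countably many $k$ yields a conull set on which $\mu_P$ and $\mu_P'$ coincide on $\mathcal{A}$, and therefore everywhere. I expect the countable-additivity step to be the only genuinely delicate point, precisely because it is where the Polish (tightness) hypothesis and the countability of $\mathcal{A}$ must be combined in order to tame the uncountably many potential exceptional null sets.
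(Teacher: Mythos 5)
The paper does not prove this statement at all: it is quoted as Rokhlin's classical disintegration theorem, with a citation to \cite{rokhlin_fundamental_nodate}, so there is no internal proof to compare against. Your proposal is, in outline, the standard textbook proof of that theorem (conditional expectations of a countable generating algebra, upgrade to countable additivity via compactness, Carath\'eodory extension, monotone class for the integration identity, and uniqueness from the same countable family), and the existence-of-atom-mass and uniqueness steps are handled correctly.

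There is, however, one genuine flaw, and it sits exactly at the step you yourself identify as the delicate one. Your null-set bookkeeping for countable additivity rests on the claim that ``only countably many monotone sequences from $\mathcal{A}$ arise.'' This is false: a countably infinite algebra has uncountably many decreasing sequences with empty intersection, so you cannot discard an exceptional null set per sequence. The same obstruction defeats the tempting alternative of applying conditional monotone convergence ($E_\mu[\mathbf{1}_{A^{(j)}}\mid\mathcal{B}]\downarrow 0$ a.e.) sequence by sequence. The standard repair reorganizes the quantifiers: first establish, on a \emph{single} conull set of atoms $P$, the tightness of $m_P$ itself --- for every $A_k\in\mathcal{A}$ and every rational $\varepsilon>0$ there is a compact $K$ from a \emph{pre-chosen countable family} with $K\subset A_k$ and $m_P(A_k\setminus K)<\varepsilon$. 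This costs only countably many conditions: pick compacts $K_{k,i}\subset A_k$ with $\mu(A_k\setminus K_{k,i})<4^{-i}$ (inner regularity of $\mu$ on a Polish space), note $\int m_P(A_k\setminus K_{k,i})\,d\overline{\mu}<4^{-i}$, and apply Markov plus Borel--Cantelli. Once tightness holds for a fixed $P$, your finite-intersection-property argument becomes a purely deterministic implication valid for \emph{all} decreasing sequences simultaneously, with no further exceptional sets needed. With that substitution your proof is complete and is the classical argument; as written, the justification of the key step does not stand.
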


The disintegration theorem of Rokhlin does not apply directly when a foliation has a positive measure set of non-compact leaves. On the other hand:

\begin{proposition}
Let $\mathcal{F}$ be a foliation on a manifold $M$, and let $\mu$ be a Borel probability measure on $M$. Assume that for $\mu$-almost every $x \in M$, the leaf $\mathcal{F}_x$ is compact.\\
Then $\mathcal{F}$ defines a measurable partition.\\
In particular, there exists a system of conditional measures relative to $\{\mathcal{F}_x: x \in M\}$ that is essentially unique.
\end{proposition}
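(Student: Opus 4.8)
The plan is to realize the leaf partition as the partition into fibers of a single Borel map from $M$ into a standard Borel space, modulo a $\mu$-null set; this is exactly the shape of a measurable partition to which Rokhlin's Theorem~\ref{thm:rok} applies. To encode leaves as points of a standard Borel space, note that $M$ is a second-countable manifold, hence Polish, so the collection $F(M)$ of closed subsets of $M$ equipped with the \emph{Effros Borel structure} is a standard Borel space, whose $\sigma$-algebra is generated by the countably many ``hit'' sets $H_U = \{C \in F(M) : C \cap U \neq \emptyset\}$, with $U$ ranging over a countable basis of $M$. I would then define $\Lambda : M \to F(M)$ by $\Lambda(x) = \overline{\mathcal{F}_x}$, the closure of the leaf through $x$. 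Working with the closure rather than the leaf itself is important, because it makes $\Lambda$ defined on all of $M$: a non-compact leaf need not be a closed subset of $M$.

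The next step is to verify that $\Lambda$ is Borel measurable. Since $U$ is open, one has $\overline{\mathcal{F}_x}\cap U \neq \emptyset$ if and only if $\mathcal{F}_x \cap U \neq \emptyset$, so $\Lambda^{-1}(H_U)$ equals the saturation $\mathrm{Sat}(U) = \bigcup_{w \in U}\mathcal{F}_w$. The elementary foliation fact I would invoke here is that the saturation of an open set is open: sliding $U$ along plaque chains inside a covering by foliated charts is realized by holonomy maps, which are homeomorphisms, so $\mathrm{Sat}(U)$ is a neighborhood of each of its points. Hence every $\Lambda^{-1}(H_U)$ is open, and since these sets generate the Effros structure, $\Lambda$ is Borel.

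Now the compactness hypothesis enters, and only on the conull saturated set $M_c = \{x \in M : \mathcal{F}_x \text{ is compact}\}$, which satisfies $\mu(M_c)=1$. On $M_c$ every leaf is closed, so $\Lambda(x) = \mathcal{F}_x$, and for $x,y \in M_c$ we have $\Lambda(x)=\Lambda(y)$ if and only if $\mathcal{F}_x = \mathcal{F}_y$, distinct leaves being disjoint. Thus, modulo the null set $M \setminus M_c$, the leaf partition coincides with the partition $\eta$ into fibers of the Borel map $\Lambda$. Pulling back through $\Lambda$ a countable family of Borel sets separating the points of $F(M)$ yields a countable family of saturated Borel sets separating the atoms of $\eta$, which is precisely Rokhlin's measurability criterion; and a partition that agrees mod~$0$ with a measurable partition is itself measurable. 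Therefore $\mathcal{F}$ defines a measurable partition, and Theorem~\ref{thm:rok} delivers the existence and essential uniqueness of the system of conditional measures.

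The main obstacle, and the point where the hypothesis is genuinely needed, is the tension between wanting a globally defined, well-behaved map $x \mapsto \mathcal{F}_x$ and the fact that non-compact leaves are not closed and may accumulate (for instance, spiraling onto a compact leaf), so their closures strictly contain them. Passing to $\overline{\mathcal{F}_x}$ repairs measurability at the level of the whole manifold, but it is essential that on $M_c$ the fibers of $\Lambda$ are \emph{exactly} the leaves; this recovery of leaves from leaf-closures is where compactness is used, and it is exactly the identification that breaks down—taking the conclusion with it—once a positive-measure set of leaves is non-compact.
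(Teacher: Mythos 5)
The paper does not actually prove this proposition: it is stated as a known fact, with the cited disintegration literature (\cite{avila_absolute_2015}, \cite{avila_absolute_2022}, \cite{butler_uniformly_2018}) standing in for the argument, so there is no in-paper proof to compare against and your argument must be judged on its own. It is correct, and it is a clean, self-contained route. The key verifications all check out: $F(M)$ with the Effros Borel structure is standard Borel because a manifold is Polish; $\Lambda^{-1}(H_U)=\operatorname{Sat}(U)$ because $\overline{\mathcal{F}_x}\cap U\neq\emptyset$ iff $\mathcal{F}_x\cap U\neq\emptyset$ for open $U$; saturations of open sets are open for topological foliations (your plaque-chain/holonomy sketch is the standard argument and is valid in the $C^0$ setting of this paper); the sets $H_{U_n}$, with $U_n$ ranging over a countable basis, separate points of $F(M)$, hence the open saturated sets $\operatorname{Sat}(U_n)$ separate any two distinct compact leaves (pick $p\in L_1\setminus L_2$ and a basis element containing $p$ and missing the closed set $L_2$); and on $M_c$ the $\Lambda$-fibers are exactly the leaves — in fact the fiber over $\Lambda(x)$, $x\in M_c$, cannot meet a non-compact leaf $\mathcal{F}_y$ at all, since $\mathcal{F}_y\subset\overline{\mathcal{F}_y}=\mathcal{F}_x$ would force $\mathcal{F}_y=\mathcal{F}_x$. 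This is precisely Rokhlin's countable-separation criterion, so Theorem \ref{thm:rok} applies and gives the essentially unique system of conditional measures. Only two small points deserve to be made explicit: the hypothesis that $\mu$-a.e.\ leaf is compact should be read as providing a Borel conull subset of $M_c$ (measurability of $M_c$ itself is never needed, since measurable partitions are a mod-$0$ notion), and one should note that the separating sets $\operatorname{Sat}(U_n)$ are unions of leaves, i.e.\ saturated, as Rokhlin's criterion requires — both are implicit in what you wrote. Your closing diagnosis of where compactness enters (leaf equals leaf-closure, which is exactly what fails when leaves accumulate) is also the right one; with a positive-measure set of non-compact leaves the map $\Lambda$ collapses distinct leaves to a common closure and the measurable hull of the partition degenerates, which is why the paper must then pass to conditional measures defined only up to scaling.
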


If there exists a set of non-compact leaves of positive measure, then one must consider disintegrations into \emph{measures defined up to scaling}, that is, equivalence classes where one identifies any two (possibly infinite) measures that differ only by a constant factor. 
Instead of a Borel probability measure $\mu$, we consider a more general locally finite Borel measure $m$ on $M$.
Let $U$ be a foliation chart. By Rokhlin, there is a disintegration $\{m_{x}^{U}: x \in U\}$ of the restriction of $m$ to the foliation chart into conditional probabilities along the plaques of $\mathcal{F}$ in $U$, and this disintegration is essentially unique. The main point is that conditional measures corresponding to different foliation charts coincide on the intersection, up to a constant factor (see Lemma 3.2 of \cite{avila_absolute_2015}):

\begin{lemma}
For any foliation charts $U$ and $U'$ and for $m$-almost every $x \in U\cap U'$, the restrictions of $m_{x}^{U}$ and $m_{x}^{U'}$ to $U\cap U'$ coincide up to a constant factor.
\end{lemma}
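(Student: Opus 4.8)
The plan is to leverage the essential uniqueness built into Rokhlin's theorem (Theorem \ref{thm:rok}) after checking that the two plaque partitions agree on the overlap. Set $W = U \cap U'$; taking the charts relatively compact and using that $m$ is locally finite, we have $m(W) < \infty$, so after normalizing we may treat $m|_W$ as a probability measure. Let $s : U \to \mathbb R^{n-k}$ and $s' : U' \to \mathbb R^{n-k}$ be the defining submersions of the two charts, whose fibers are the plaques. On $W$ these are related by the transition diffeomorphism $\gamma$ through $s = \gamma \circ s'$, so the fibers of $s|_W$ and of $s'|_W$ coincide as subsets of $W$, merely re-indexed by $\gamma$. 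Hence the partition $\mathcal Q$ of $W$ into the traces $P \cap W$ of plaques $P$ of $U$ is \emph{the same} partition as the one into traces $P' \cap W$ of plaques $P'$ of $U'$; this common partition $\mathcal Q$ is measurable, being the fiber partition of a Borel map to a standard Borel space, so Rokhlin's theorem applies to $m|_W$ along $\mathcal Q$.

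Next I would show that restricting and renormalizing the conditionals of a single chart produces a disintegration of $m|_W$ along $\mathcal Q$. For Borel $A \subseteq W$, the disintegration identity for $\{m_x^U\}$ expresses $m(A)$ as an integral of $m_x^U(A)$ against the base measure; since $A \subseteq W$, only the mass $m_x^U(W)$ of each conditional inside $W$ contributes. Writing $\tilde m_x^U := m_x^U|_W / m_x^U(W)$, which is well defined for $m$-a.e. $x \in W$ because $m_x^U(W) > 0$ there, and absorbing the weights $m_x^U(W)$ into the base measure, one verifies directly that $\{\tilde m_x^U\}$ is a system of conditional probabilities for $m|_W$ along $\mathcal Q$. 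The identical computation for $U'$ shows that $\{\tilde m_x^{U'} := m_x^{U'}|_W / m_x^{U'}(W)\}$ is likewise a disintegration of $m|_W$ along the very same partition $\mathcal Q$.

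The essential uniqueness clause of Theorem \ref{thm:rok} then forces $\tilde m_x^U = \tilde m_x^{U'}$ for $m$-almost every $x \in W$. Undoing the normalizations gives $m_x^U|_W = c(x)\, m_x^{U'}|_W$ with $c(x) = m_x^U(W)/m_x^{U'}(W)$, which is exactly the asserted proportionality up to a constant factor.

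I expect the genuinely delicate point to be the identification of the two restricted partitions in the first step: a plaque of $U$ is connected inside $U$, but its trace on $W$ may be disconnected and may split into several traces of plaques of $U'$, so one cannot argue plaque-by-plaque and must instead work at the level of the submersions $s|_W$ and $s'|_W$ to see that the two charts partition $W$ identically. The second place demanding care is the base-measure reweighting in the second step, where one must confirm that restriction of a disintegration to the open but non-$\mathcal F$-saturated set $W$ again yields a disintegration; it is precisely this lack of saturation that yields the constant factor $c(x)$ rather than outright equality.
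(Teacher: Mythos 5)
Your overall strategy --- restrict both systems of conditional measures to $W=U\cap U'$, renormalize, absorb the weights $m_x^U(W)$ into the base measure, and invoke the essential uniqueness of Theorem \ref{thm:rok} over a common partition --- is the standard route (it is essentially the argument behind the cited Lemma 3.2 of Avila--Viana--Wilkinson; the paper itself only cites that lemma and gives no proof), and your second and third steps are correct as written. The gap is in your first step, and it is not a formality: it is the entire content of the lemma. From $s=\gamma\circ s'$ one may conclude that the fibers of $s|_W$ and $s'|_W$ coincide only if $\gamma$ is \emph{injective}, and identifying those fibers with the plaque traces additionally requires the fibers of $s$ inside $U$ to be \emph{connected}. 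Neither is free: formulation (1)(b) of Definition/Proposition \ref{defprop:fol} only provides transition \emph{maps} $h_{ij}$ (no injectivity), while formulation (2)(b) provides a diffeomorphism $\gamma_{ij}$ but defines plaques as mere \emph{components} of fibers. Your argument needs the conjunction of the two, which amounts to the unproved claim: two points of $W$ lie on the same plaque of $U$ if and only if they lie on the same plaque of $U'$.

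This claim can genuinely fail for a pair of legitimate product charts, and then the conclusion of the lemma fails with it. Foliate $\mathbb{R}^3$ by vertical lines and take $U'=\{(r\cos\theta,\,r\sin\theta,\,z)\,:\,1<r<2,\ 0<\theta<4\pi,\ \theta<z<\theta+1\}$, a spiral ramp winding twice around the $z$-axis; it is the diffeomorphic image of $(1,2)\times(0,4\pi)\times(0,1)$, hence a genuine foliation chart whose plaques are vertical segments. Let $U=D\times(0,12)$, where $D$ is a small disc about the point with polar coordinates $(3/2,\pi)$. For $q\in D$ with polar angle $\theta$, the plaque $\{q\}\times(0,12)$ of $U$ meets $W$ in the two segments $\{q\}\times(\theta,\theta+1)$ and $\{q\}\times(\theta+2\pi,\theta+2\pi+1)$, and these lie in two \emph{different} plaques of $U'$; equivalently, the transverse transition $(r,\theta)\mapsto(r\cos\theta,r\sin\theta)$ is two-to-one, so no injective $\gamma$ exists (the pair is compatible in the sense of formulation (1)(b) but not (2)(b)). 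Taking $m$ to be Lebesgue measure, for \emph{every} $x\in W$ the measure $m_x^U|_W$ charges both segments while $m_x^{U'}|_W$ charges only the one through $x$, so no constant factor can relate them. Thus the "delicate point" you flagged is exactly right, but your resolution assumes what must be proved: a correct proof must restrict to pairs of charts with injective transverse transition (e.g.\ charts of a common atlas in the sense of formulation (2)(b), or a regular atlas in which each plaque of $U$ meets at most one plaque of $U'$), where your argument then goes through verbatim. Alternatively --- and this is the robust statement that the paper's subsequent gluing of the classes $m^{\mathcal{F}}_x$ actually requires --- disintegrate $m|_W$ over the finer partition into the intersections $P\cap P'$ of plaques of the two charts; the refinement (tower) property of conditional measures plus essential uniqueness then yields $m_x^U=c(x)\,m_x^{U'}$ on $P_x\cap P'_x$ for $m$-a.e.\ $x$, with no extra hypothesis on the pair of charts.
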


Therefore, as has been discussed in \cite{avila_absolute_2022}, there exists a family $\{m^{\mathcal{F}}_{x}: x \in M\}$ where each $m^{\mathcal{F}}_{x}$ is a measure defined up to scaling with $m^{\mathcal{F}}_{x}(M \setminus \mathcal{F}_{x}) = 0$. 
The function $x \mapsto m^{\mathcal{F}}_{x}$ is constant on the leaves of $\mathcal{F}$, and the conditional probabilities $m_{x}^{U}$ along the plaques of any foliation chart $U$ coincide almost everywhere with the normalized restrictions of the $m^{\mathcal{F}}_{x}$ to the plaques of $U$.
Such a family is essentially unique and is called the \emph{disintegration of $m$}. 
We refer to the $m^{\mathcal{F}}_{x}$ as \emph{conditional classes of $m$} along the leaves of $\mathcal{F}$.\\
If the foliation $\mathcal{F}$ is invariant by a homeomorphism $f:M\to M$ (i.e. for every $x\in M$, $f(\mathcal{F}(x))=\mathcal{F}(f(x))$), and if $m$ is $f$-invariant, then by essential uniqueness of the disintegration of $m$, it comes $f_*(m_{x})=m_{f(x)}$ for a.e. $x\in M$.

In case $m$ is the Riemannian volume of a smooth compact manifold $M$ and $\mathcal{F}$ is a foliation of $M$ by $C^1$-leaves, then we say that \emph{$m$ has Lebesgue disintegration along $\mathcal{F}$} if for $m$-a.e. $x\in M$, the conditional measure $m^{\mathcal{F}}_{x}$ on the leaf $\mathcal{F}(x)$ is equivalent to the induced Riemannian volume on $\mathcal{F}(x)$.\\

Now let's consider a partially hyperbolic diffeomorphism $f$ on a smooth compact manifold $M$ (for example, the time-one map of a partially hyperbolic flow), and let $\mu$ a $f$-invariant Borel probability measure on $M$.

\begin{definition}
    The measure $\mu$ is a \emph{Gibbs $u$-state} if its conditional measures along the local unstable leaves are equivalent to the leafwise Lebesgue measures.\\
    We can define a \emph{Gibbs $s$-state} in the same way.
\end{definition}
See \cite{bonatti_dynamics_2010} for more details about Gibbs $u$-states. 
\begin{proposition}
\label{prop:gibbs}
    Let $f$ a $C^2$ partially hyperbolic diffeomorphism. \\
    Then:
    \begin{itemize}
        \item There exists at least one Gibbs $u$-state ;
        \item The ergodic components of a Gibbs $u$-state are also Gibbs $u$-states ;
        \item The support of any Gibbs $u$-state consists of entire unstable leaves.
    \end{itemize}
\end{proposition}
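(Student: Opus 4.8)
The plan is to follow the classical construction of Gibbs $u$-states (as in \cite{bonatti_dynamics_2010}), whose engine is the bounded distortion of the unstable Jacobian afforded by the $C^2$ hypothesis. For \emph{existence}, I would fix an embedded disk $D$ contained in a single unstable leaf $\mathcal{F}^u(x_0)$ and let $\nu$ be the normalized leafwise Lebesgue measure on $D$. I then form the empirical averages
\[
\mu_n = \frac{1}{n}\sum_{k=0}^{n-1} f^k_*\nu,
\]
and extract, by weak-$*$ compactness of the probability measures on the compact manifold $M$, a subsequence $\mu_{n_j}\to\mu$. The usual Krylov--Bogolyubov argument shows $\mu$ is $f$-invariant, so the whole content is to prove that $\mu$ is a Gibbs $u$-state.

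The heart of the matter is the \emph{bounded distortion estimate}, which is also where I expect the main technical work and where $C^2$ is genuinely used. Each $f^k_*\nu$ is supported on the iterate $f^k(D)$, again contained in an unstable leaf, and its density relative to leafwise Lebesgue measure is governed by the unstable Jacobian $J^u f^k=\prod_{i=0}^{k-1}\lvert\det(df|_{E^u})\rvert\circ f^i$. Since $f$ is $C^2$, $E^u$ is Hölder and uniformly expanded, so the mean value theorem and summation of the resulting geometric series along the contraction of $f^{-1}$ yield a constant $C_0>0$, independent of $k$, with
\[
\left\lvert \log \frac{J^u f^k(y)}{J^u f^k(z)} \right\rvert \le C_0\, d^u\big(f^k(y), f^k(z)\big)
\]
for $y,z$ in a common unstable plaque. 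Hence the conditional densities of $f^k_*\nu$ along unstable plaques are bounded above and below by constants not depending on $k$. This uniform bound passes to the Cesàro averages and, crucially, survives the weak-$*$ limit: read through a fixed foliated atlas, absolute continuity of the unstable conditionals with uniformly bounded densities is a weak-$*$ closed condition, so the conditionals of $\mu$ along unstable plaques are equivalent to Lebesgue. Thus $\mu$ is a Gibbs $u$-state.

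For the \emph{ergodic components}, I would use the rigidity that the conditional density is not arbitrary: for any Gibbs $u$-state the density along an unstable leaf $L$ is forced, up to a multiplicative constant on each leaf, to equal the intrinsic cocycle expression $\rho_p$ with $\rho_p(p)=1$ and
\[
\frac{\rho_p(y)}{\rho_p(z)} = \prod_{i=1}^{\infty} \frac{J^u f\big(f^{-i}(z)\big)}{J^u f\big(f^{-i}(y)\big)}, \qquad p,y,z\in L,
\]
the infinite product converging precisely by the distortion bound above. Being a Gibbs $u$-state is therefore a condition verified leaf by leaf, independent of the particular invariant measure carrying it. Writing the ergodic decomposition $\mu=\int \mu_\xi\, d\hat\mu(\xi)$ and disintegrating both sides along $\mathcal{F}^u$, the conditionals of $\mu$ appear as averages of those of the $\mu_\xi$; since the left-hand conditionals are absolutely continuous with the forced density, the same must hold for $\hat\mu$-almost every $\mu_\xi$, so almost every ergodic component is again a Gibbs $u$-state.

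Finally, for the \emph{support}, let $\mu$ be any Gibbs $u$-state and $x\in\operatorname{supp}(\mu)$. By the Gibbs property the conditional of $\mu$ along $\mathcal{F}^u(x)$ is equivalent to leafwise Lebesgue measure, hence has a.e.\ positive density; combined with $x$ lying in the support, a standard local argument gives $\mathcal{F}^u_{\mathrm{loc}}(x)\subset\operatorname{supp}(\mu)$. Since $\operatorname{supp}(\mu)$ is closed and $f$-invariant and the global unstable leaf is $\mathcal{F}^u(x)=\bigcup_{n\ge 0} f^n\big(\mathcal{F}^u_{\mathrm{loc}}(f^{-n}x)\big)$, forward invariance propagates the local inclusion to the whole leaf. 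Therefore $\operatorname{supp}(\mu)$ is a union of entire unstable leaves, which completes the three assertions.
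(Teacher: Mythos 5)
The paper itself does not prove this proposition --- it is quoted directly from \cite{bonatti_dynamics_2010} --- so your proposal has to be measured against the standard proof there. Your existence argument (the Pesin--Sinai construction: push forward leafwise Lebesgue measure on an unstable disk, take Ces\`aro averages, and use the $C^2$ bounded-distortion estimate to show that absolute continuity with uniformly bounded conditional densities survives the weak-$*$ limit) and your support argument (local saturation from equivalence of conditionals to Lebesgue, then propagation to whole leaves by $f$-invariance and closedness of the support) are exactly the standard ones and are fine as sketches.

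There is, however, a genuine gap in the second bullet. The step ``the conditionals of $\mu$ appear as averages of those of the $\mu_\xi$; since the left-hand conditionals are absolutely continuous with the forced density, the same must hold for $\hat\mu$-almost every $\mu_\xi$'' is not a valid measure-theoretic inference: an average of mutually singular, even purely atomic, measures can perfectly well be absolutely continuous (for instance $\int \delta_\xi \, d\hat\mu(\xi)=\hat\mu$, which may be Lebesgue). Knowing that the density is forced to equal $\rho_p$ \emph{whenever} a conditional is absolutely continuous does not close this gap, because the issue is precisely whether the components' conditionals are absolutely continuous at all. The missing ingredient --- which is the actual key step in \cite{bonatti_dynamics_2010} --- is that the ergodic decomposition of $\mu$ is measurable (mod $0$) with respect to $u$-saturated sets: backward Birkhoff averages $\lim_n \frac{1}{n}\sum_{j=0}^{n-1}\varphi\bigl(f^{-j}(x)\bigr)$ of continuous functions are constant along unstable leaves, since points on the same unstable leaf are backward asymptotic, so each ergodic basin for $f^{-1}$ is a union of entire unstable leaves. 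Consequently, inside a foliated box the partition into ergodic components is coarser than the partition into unstable plaques, and by transitivity of disintegration the conditionals of $\hat\mu$-a.e.\ $\mu_\xi$ along plaques coincide with those of $\mu$, hence are equivalent to leafwise Lebesgue. Without this saturation argument your deduction does not go through.
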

In particular, if $(\varphi^t)$ is a partially hyperbolic flow on a smooth manifold $M$, then $\varphi^1$ is a $C^\infty$ partially hyperbolic diffeomorphism, so there exist Gibbs $u$-states for $\varphi^1$, which we call \emph{Gibbs $u$-state for the flow $(\varphi^t)$}.

In case of a dynamically coherent partially hyperbolic diffeomorphism $f$, Xu and Zhang defined in \cite{xu_holomorphic_2025} a \emph{Gibbs $cu$-state} similarly as an $f$-invariant Borel probability measure whose conditional measures along the local center-unstable leaves are equivalent to leafwise Lebesgue measures. We can define a \emph{Gibbs $cs$-state} in the same way.
Such measures do not exist generally a priori, but one can be constructed under some assumptions on $f$ (see \cite{xu_holomorphic_2025}).

For a dynamically coherent partially hyperbolic flow $(\varphi^t)$, we define analogously:

\begin{definition}
    A $\varphi^1$-invariant Borel probability measure $\mu$ is a \emph{Gibbs $\hat{c}u$ state} if its conditional measures along the local subcenter-unstable leaves are equivalent to the leafwise Lebesgue measures.\\
    We can define a \emph{Gibbs $\hat{c}s$-state} similarly.
\end{definition}
We do not know wether such measures exist generally, but we'll prove the existence of one in section \ref{sec:5} which we'll allow us to prove the holomorphicity of the center-stable foliation of a fibered transversely holomorphic partially hyperbolic flow on a smooth compact seven-manifold.

\subsection{Quasiconformality}
We recall some notions of quasiconformality (see \cite{ahlfors_quasiconformal_1953} or \cite{vaisala_lectures_1971}).

\noindent Let $k\geq 2$ and $\|\cdot\|$ be the euclidean norm on $\mathbb R^k$.\\
\label{rem:quasiconfo}
A homeomorphism $h: U \to V$ between two open subsets $U, V$ of $\mathbb{R}^k$ is \emph{conformal} if it is $C^1$ and its differential at every point $x\in U$ is angle-preserving and orientation-preserving.
If $k=2$, then this definition coincides with that of a holomorphic function whose holomorphic derivative is nowhere vanishing.

For such maps $h:U\to V$ which are not $C^1$ we can define a more general notion of \emph{quasiconformality}.\\
We define the \emph{linear dilatation} of $h$ at $x \in U$ to be  
\[
L_h(x) = \limsup_{r \to 0} \frac{\displaystyle\max_{\|y-x\| = r} \|h(y) - h(x)\|}{\displaystyle\min_{\|y-x\| = r} \|h(y) - h(x)\|}.
\]  
If $L_h(x) \leq K$ for every $x \in U$ (where $K\geq1$), then $h$ is said to be \emph{$K$-quasiconformal}.
A homeomorphism $h:U \to V$ is \emph{quasiconformal} if there exists $K\geq1$ such that $h$ is $K$-quasiconformal.\\
We list some properties of quasiconformal maps:
\begin{itemize}
    \item If $h:U\to V$ is a diffeomorphism, then it is $K$-quasiconformal if and only if for every $x \in U$, 
$$\frac{\displaystyle\sup_{\|v\|=1} \|d_xh(v)\|}{\displaystyle\inf_{\|v\|=1} \|d_xh(v)\|}\leq K;$$
    \item If $h:U\to V$ is $K$ quasiconformal and $g:V \to W$ is $K'$ quasiconformal, then $g\circ h: U \to W$ is $KK'$ quasiconformal;
    \item A homeomorphism $h:U\to V$ is $1$-quasiconformal if and only if it is conformal;
    \item If a sequence $(h_n)$ of $K$-quasiconformal maps converges uniformly to a homeomorphism $h$, then $h$ is $K$-quasiconformal.
\end{itemize}
The main result concerning quasiconformal maps is the following:
\begin{lemma}
\label{lem:quasiconfholo}
    Let $h: U\to V$ a quasiconformal map between open subsets of $\mathbb R^k$.\\
    Then $h$ is absolutely continuous and differentiable Lebesgue almost everywhere.\\
    If $k=2$ and $\frac{\partial}{\partial\overline{z}}h(a)=0$ for Lebesgue $a.e.$ $a\in U$, then $h$ is conformal.
\end{lemma}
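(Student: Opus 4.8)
The plan is to obtain both assertions from the analytic theory of quasiconformal maps, as developed in \cite{vaisala_lectures_1971} and \cite{ahlfors_quasiconformal_1953}. For the first assertion I would start from the metric hypothesis that the linear dilatation $L_h$ is bounded, say by $K$, and use it to control the distortion of moduli of curve families. The crucial intermediate step is to show that $h$ is \emph{absolutely continuous on lines} (ACL): for almost every line parallel to a coordinate axis, the restriction of $h$ is absolutely continuous. This is a Fubini-type argument in which the bounded dilatation prevents $h$ from collapsing or expanding too violently along the family of parallel segments filling $U$. Once ACL is established, the same dilatation bound shows that the partial derivatives (which exist almost everywhere) are locally $k$-integrable, so that $h \in W^{1,k}_{\mathrm{loc}}(U)$ and, by the change-of-variables theory for Sobolev homeomorphisms, $h$ is absolutely continuous and maps null sets to null sets.

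The almost-everywhere differentiability then follows from a Stepanov/Gehring--Lehto type argument: at almost every $x$ the finiteness of $L_h(x)$ forces the maximal and minimal stretchings of $h$ on small spheres around $x$ to be comparable, and combined with the ACL structure this yields a genuine linear approximation of $h$ at $x$. In dimension two this is precisely the Gehring--Lehto theorem, while in higher dimension one invokes the full structure theory of \cite{vaisala_lectures_1971}. For the second assertion I specialize to $k=2$ and identify $\mathbb R^2$ with $\mathbb C$. By the first part $h\in W^{1,2}_{\mathrm{loc}}$, so $\partial_{\overline{z}}h$ exists as a locally integrable function, and the hypothesis $\partial_{\overline{z}}h=0$ almost everywhere says exactly that $h$ is a weak solution of the Cauchy--Riemann equation. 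By Weyl's lemma, that is the hypoellipticity of the $\overline{\partial}$ operator, $h$ coincides almost everywhere with a holomorphic function; after correcting on a null set $h$ is holomorphic, hence $C^\infty$. Since $h$ is a homeomorphism it is locally injective, which forces its derivative $\partial_{z}h$ to be nowhere vanishing, because a holomorphic function fails to be locally injective at any zero of its derivative. Therefore $h$ is conformal, by the characterisation of conformality recorded just before the statement.

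The main obstacle is entirely contained in the first assertion, namely passing from the purely metric bound on the linear dilatation to the analytic conclusions (the ACL property, membership in $W^{1,k}_{\mathrm{loc}}$, and almost-everywhere differentiability). This is the technical heart of quasiconformal theory, where the equivalence between the metric and the analytic definitions of quasiconformality is established, and I would quote it from \cite{vaisala_lectures_1971} rather than reprove it. Granting that regularity, the two-dimensional conformality statement is a routine consequence of Weyl's lemma.
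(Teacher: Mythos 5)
Your proposal is correct and matches the paper's treatment: the paper states this lemma without proof, as a standard fact recalled from the quasiconformal literature (\cite{ahlfors_quasiconformal_1953}, \cite{vaisala_lectures_1971}), and your outline — the ACL property and $W^{1,k}_{\mathrm{loc}}$ regularity from the metric dilatation bound, Gehring--Lehto/Stepanov differentiability, then Weyl's lemma for $\overline{\partial}$ plus injectivity to get a nowhere-vanishing derivative in dimension two — is precisely the standard argument from those references, with the technical core correctly deferred to Väisälä.
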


\section{Sketch of proof of the results}

In section \ref{sec:4}, we state some fundamental general results which will allow us to extrapolate some of the work done in the discrete case in ours. One of the results is Lemma \ref{lem:weakfol} which gives, for an invariant foliation $\mathcal{F}$ with $C^1$-leaves transverse to the direction of the flow, the existence of a foliation tangent to $T\mathcal{F}\oplus \mathbb R X$.
As was the case for a transversely holomorphic Anosov flow, we prove that the stable unstable and subcenter leaves of a transversely holomorphic partially hyperbolic flow have complex structures which are preserved by the action of the flow ( Proposition \ref{prop:leavesholo}). This is also true for the subcenter-unstable and subcenter-stable foliations if they exist
Thanks to the works of \cite{hirsch_invariant_1977}, the above implies that if moreover the subcenter distribution is of complex dimension one and the flow is dynamically coherent, then the unstable (respectively stable) foliation holomorphically subfoliates the subcenter-unstable (respectively subcenter-stable) leaves (Corollary \ref{cor:FçholoinFçs}).
We also use the works of \cite{kalinin_cocycles_2013} and \cite{sadovskaya_uniformly_nodate} to define the non-stationary linearization of $\mathcal{F}^u$ in case it is of complex dimension one (Proposition \ref{prop:non-statiolin}).
This leads to the two main results of this section which assert that the center stable foliation holonomy is quasiconformal if the subcenter foliation is compact with trivial holonomy (Proposition \ref{prop:hcsquasiconf}), and a version of Fubini's theorem on subcenter-unstable and subcenter-stable leaves in the seven-dimensional case (Proposition \ref{prop:fubini}).

From section \ref{sec:5} onward, we assume $(\varphi^t)$ is a smooth transversely holomorphic partially hyperbolic flow on a smooth compact connected seven-manifold whose subcenter distribution is integrable to a flow-invariant compact foliation with trivial holonomy. 
We rely on the results of \cite{abouanass_dynamical_2026} on dynamical coherence of partially hyperbolic flows.
We prove that the subcenter foliation holomorphically subfoliates the subcenter-unstable and subcenter-stable leaves. This is the analog result of the one proven in \cite{xu_holomorphic_2025} in the holomorphic discrete case. 
We follow the proofs of this paper to show in Proposition \ref{prop:isoorcont} that $(\varphi^t)$ is either of two types: a $\mu$-subcenter contraction or a  $\mu$-subcenter isometry.
As for the contraction case, we prove directly that the subcenter holonomy inside subcenter-unstable leaves is holomorphic for a.e. leaf (Proposition \ref{prop:contract1}).
As for the isometry case, we first need to prove the existence of a measure whose conditional measures along subcenter-unstable leaves is equivalent to the leafwise Lebesgue measures (Proposition \ref{prop:gibbsçu}) in order to use the same techniques mentioned in \cite{butler_uniformly_2018} to show that the subcenter holonomy can be extended, thanks to the holomorphic non-stationary linearization of $\mathcal{F}^u$, to the entire unstable leaves as a $\mathbb R$-linear map (Proposition \ref{prop:subcentholoRlin}) and eventually as a $\mathbb C$-linear map (Proposition \ref{prop:isometryholo}) for a.e. subcenter leaf.
We prove the main result (Proposition \ref{prop:concholo}) thanks to results on Gibbs $u$-states (see \cite{bonatti_dynamics_2010}) as well as the local product structure between the projection of the center-stable and center-unstable foliations on the quotient manifold $M/\mathcal{F}^{\hat{c}}$.

In section \ref{sec:6}, we use the previous results as well as some of \cite{butler_uniformly_2018} to prove that the center-stable, center-unstable and center foliations are transversely holomorphic and thus $C^\infty$ (Proposition \ref{prop:Fcstrholo}). This leads to the smoothness of the subcenter foliation (Corollary \ref{cor:Fçsmooth}).
By the results of \cite{abouanass_global_2025}, we conclude (Corollary \ref{cor:classif}).

\section{Some fundamental results}
\label{sec:4}

\subsection{Complex structures of the leaves}
The following result proven in \cite{abouanass_global_2025} guarantees, for every flow-invariant foliation $\mathcal{F}$ not containing the flow direction, the existence of a foliation $\mathcal{F}^w$ tangent to $T\mathcal{F}\oplus \mathbb RX$.
\begin{lemma}
\label{lem:weakfol}
     Let $(\varphi^t)_{t\in \mathbb R}$ a partially hyperbolic flow on a smooth manifold $M$. Let $*\in \{s,u,\hat{c}, \hat{c}s, \hat{c}u \}$. Assume the foliation $\mathcal{F}^*$ exists and is invariant by the flow.\\
     Then there exists a continuous foliation with $C^1$ leaves tangent to $E^* \oplus \mathbb R X$ whose leaf at $x \in M$ is $\mathcal{F}^{w*}_x:=\bigcup_{t \in \mathbb R}\varphi^t(\mathcal{F}^*_x)$.
\end{lemma}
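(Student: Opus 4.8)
The plan is to realize $\mathcal{F}^{w*}$ as the saturation of $\mathcal{F}^*$ by the orbit foliation $\Phi$ and to produce foliated charts by straightening the flow; I would proceed in four steps. First I would check that the sets $\mathcal{F}^{w*}_x=\bigcup_{t\in\mathbb{R}}\varphi^t(\mathcal{F}^*_x)$ form a partition of $M$. Flow-invariance of $\mathcal{F}^*$ means precisely $\varphi^t(\mathcal{F}^*_x)=\mathcal{F}^*_{\varphi^t(x)}$, so $\mathcal{F}^{w*}_x=\bigcup_t \mathcal{F}^*_{\varphi^t(x)}$ is the union of all $\mathcal{F}^*$-leaves meeting the orbit of $x$. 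In particular $\mathcal{F}^{w*}_x$ is both $\Phi$-saturated and $\mathcal{F}^*$-saturated, and one verifies directly that it is the equivalence class of $x$ under the relation generated by ``lying on a common $\mathcal{F}^*$-leaf'' and ``lying on a common orbit''; hence the $\mathcal{F}^{w*}_x$ partition $M$.

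Second, I would fix $p\in M$ and straighten $X$: since $X$ is smooth and nowhere vanishing, there are $C^\infty$ flow-box coordinates $(s,w)\in(-\varepsilon,\varepsilon)\times W$ near $p$ in which $X=\partial_s$ and $\varphi^t(s,w)=(s+t,w)$. Because the differential of an $s$-translation is the identity, flow-invariance of $\mathcal{F}^*$ forces its tangent distribution to be $s$-independent, $E^*(s,w)=E^*(0,w)$, so that $\mathcal{F}^*$ is invariant under $s$-translation in these coordinates.

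Third, I would project along the flow. The projection $\operatorname{pr}_W$ (forgetting $s$) is $C^\infty$, with $1$-dimensional fibres transverse to $E^*$, hence restricts to a $C^1$ immersion on each $\mathcal{F}^*$-plaque; using $s$-invariance I would check that these projections fit together into a well-defined foliation $\mathcal{G}$ of $W$ with $C^1$ leaves, invoking that $\mathcal{F}^*$ is integral with uniformly $C^1$ leaves (Proposition \ref{prop:unifleaves}) to guarantee that the projected charts vary continuously in the transverse parameter. Saturating by the flow then gives, in these coordinates, $\mathcal{F}^{w*}_{(0,w_0)}=(-\varepsilon,\varepsilon)\times\mathcal{G}_{w_0}$; choosing a foliated chart $W\cong W^1\times W^2$ for $\mathcal{G}$ yields a foliated chart $(-\varepsilon,\varepsilon)\times W^1\times W^2$ for $\mathcal{F}^{w*}$ with plaques $(-\varepsilon,\varepsilon)\times W^1\times\{c\}$, exhibiting $\mathcal{F}^{w*}$ as a $C^0$ foliation with $C^1$ leaves. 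Tangency is then immediate: at $\varphi^t(y)$ the leaf tangent space is $D\varphi^t(T_y\mathcal{F}^*)\oplus\mathbb{R}X=E^*_{\varphi^t(y)}\oplus\mathbb{R}X(\varphi^t(y))$, the sum being direct since $X\notin E^*$, so $T\mathcal{F}^{w*}=E^*\oplus\mathbb{R}X$; this also re-proves integrality, as $E^*\oplus\mathbb{R}X$ is continuous.

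The main obstacle I anticipate is the regularity bookkeeping of the third step: $\mathcal{F}^*$ is only a $C^0$ foliation with $C^1$ leaves, not a $C^1$ foliation, so I must argue that the flow-projected structure $\mathcal{G}$, and hence $\mathcal{F}^{w*}$, is a genuine foliation with $C^1$ leaves rather than a mere partition into immersed submanifolds. The smooth flow-box straightening disposes of the flow direction cleanly, so the crux reduces to showing that projecting the uniformly $C^1$ plaques of $\mathcal{F}^*$ along the smooth fibration $\operatorname{pr}_W$ preserves both the $C^1$ regularity of the leaves and the continuity of the foliated atlas.
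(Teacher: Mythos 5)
The paper itself offers no proof to compare against: Lemma \ref{lem:weakfol} is quoted from \cite{abouanass_global_2025}. Judged on its own terms, your plan (flow-invariance gives a partition; flow-box straightening gives $s$-independence of $E^*$; project the strong plaques along the flow to get a foliation $\mathcal{G}$ of the transversal; cross with the flow interval and read off tangency) is the natural route and its steps 1, 2 and 4 are correct as written.

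Where I would push back is your diagnosis of the crux in step 3. The transfer of regularity is not the delicate point: if you choose the flow-box transversal $W$ with $T_pW \supset E^*(p)$ and use integrality (Proposition \ref{prop:unifleaves}, i.e.\ the graph representation of plaques of an integral foliation), each plaque near $p$ is the graph over the ball in $E^*(p)$ of a $C^1$ map $v \mapsto (\sigma_c(v),\zeta_c(v))$ into $\mathbb{R}\partial_s \oplus Z$, with derivatives continuous in all variables; $\operatorname{pr}_W$ just deletes the $\sigma$-component, so $C^1$ leaves and transverse continuity of the atlas come for free. The genuine content is the \emph{partition property} of $\mathcal{G}$: that the projections of two plaques are either disjoint or equal. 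This is precisely where flow-invariance of the foliation $\mathcal{F}^*$ (and not merely $d\varphi^t$-invariance of the distribution $E^*$, which is all that step 2 uses) must enter: if two projected plaques meet at $w$, the corresponding plaque points lie on one orbit segment, so by invariance one plaque is locally the $s$-translate of the other and the germs of the projections at $w$ coincide; the agreement set is then open and closed in the connected ball, hence the projections coincide globally, and $\zeta_c(0)=c$ pins down the transverse parameter, yielding the chart for $\mathcal{G}$ via invariance of domain. If $E^*$ is invariant but not uniquely integrable and $\mathcal{F}^*$ is a non-invariant integral foliation, projected plaques can genuinely cross and the construction collapses, so this verification — which you leave as ``I would check that these projections fit together'' and then classify under ``regularity bookkeeping'' — is the heart of the proof rather than bookkeeping. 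With that argument supplied, your proof is complete.
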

We will simply note $\mathcal{F}^{w\hat{c}}$, $\mathcal{F}^{w\hat{c}s}$ and $\mathcal{F}^{w\hat{c}u}$ as $\mathcal{F}^{c}$, $\mathcal{F}^{cs}$ and $\mathcal{F}^{cu}$ respectively.

One can prove that the induced metric on the plaques of $\mathcal{F}^w$ is equivalent to the product of induced metrics on the plaques of $\mathcal{F}$ and $\Phi$ (Lemma 4.2 of \cite{abouanass_dynamical_2026}) :

\begin{lemma}
\label{lem:metricweak}
    Let $(\varphi^t)$ a smooth flow on a smooth compact manifold M, induced by a nowhere vanishing vector field $X$.
    Let $\mathcal{F}$ a continuous foliation of $M$ with $C^r$ leaves $(r \geq 1)$, which is invariant under the flow $\varphi^t$ and such that $X$ is nowhere tangent to $\mathcal{F}$. \\
    Then there exist positive constants $C_1<1$, $C_2>1$ and $\delta_0$ such that for every $\delta<\delta_0$ and $x \in M$, 
    $$\bigcup_{z \in  \Phi_{C_1\delta}(x)}\mathcal{F}_{C_1\delta}(z)\subset \mathcal{F}^{w}_{\delta}(x)\subset \bigcup_{z \in  \Phi_{C_2\delta}(x)}\mathcal{F}_{C_2\delta}(z)$$
    and
    $$\bigcup_{z \in \mathcal{F}_{C_1\delta}(x) }\Phi_{C_1\delta}(z)\subset \mathcal{F}^{w}_{\delta}(x)\subset \bigcup_{z \in \mathcal{F}_{C_2\delta}(x) }\Phi_{C_2\delta}(z).$$
\end{lemma}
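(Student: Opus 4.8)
The plan is to reduce everything to a single geometric fact: the splitting $T_y\mathcal{F}^{w}=T_y\mathcal{F}\oplus\mathbb{R}X(y)$ is \emph{uniformly} transverse. Since $M$ is compact, $X$ is continuous and nowhere vanishing, and $\mathcal{F}$ is integral (so that $T\mathcal{F}$ is a continuous subbundle and the leaves are uniformly $C^1$ by Proposition \ref{prop:unifleaves}), the hypothesis that $X$ is nowhere tangent to $\mathcal{F}$ forces the angle between $X(y)$ and $T_y\mathcal{F}$ to be bounded below by some $\theta_0>0$, uniformly in $y$. Writing $m\le\|X(y)\|\le M$, a short argument with the angle bound produces a uniform constant $0<a\le1$ such that, for every $y\in M$, every $v\in T_y\mathcal{F}$ and every $s\in\mathbb{R}$,
\[
a\bigl(\|v\|+|s|\,\|X(y)\|\bigr)\le\|v+sX(y)\|\le\|v\|+|s|\,\|X(y)\|.
\]
In other words, on each weak-leaf tangent space the ambient norm is uniformly equivalent to the product norm adapted to $T\mathcal{F}\oplus\mathbb{R}X$.

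The two left-hand inclusions are immediate from the triangle inequality and do not even use transversality. If $z\in\Phi_{C_1\delta}(x)$ and $w\in\mathcal{F}_{C_1\delta}(z)$, I concatenate the flow arc from $x$ to $z$ (which lies in $\mathcal{F}^{w}_x$ since $\mathcal{F}^{w}$ contains the flow direction) with an $\mathcal{F}$-path from $z$ to $w$ of length $\le C_1\delta$ (which lies in $\mathcal{F}_z\subset\mathcal{F}^{w}_z=\mathcal{F}^{w}_x$). This is a path in $\mathcal{F}^{w}_x$ of length $\le2C_1\delta$, so $w\in\mathcal{F}^{w}_\delta(x)$ as soon as $C_1\le\tfrac12$; exchanging the roles of $\Phi$ and $\mathcal{F}$ gives the left-hand inclusion of the second display. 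I therefore fix $C_1=\tfrac12$.

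The two right-hand inclusions are the substance of the lemma, and I would obtain them from a uniform local product parametrization. For $\delta<\delta_0$ I work inside the chart $\Psi_x\colon(z,t)\mapsto\varphi^{t}(z)$, with $z\in\mathcal{F}_{\rho}(x)$ and $|t|<\tau$ (with $\rho,\tau$ uniform), of the weak leaf through $x$. Its differential is $D\Psi_x(v,s)=d\varphi^{t}_z(v)+sX(\varphi^{t}(z))$, and because $\mathcal{F}$ is flow-invariant the term $d\varphi^{t}_z(v)$ remains tangent to $\mathcal{F}$ while $X(\varphi^{t}(z))$ is the flow direction; hence the displayed norm equivalence applies verbatim to $D\Psi_x$. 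Using that $d\varphi^{t}$ is uniformly close to the identity for $|t|$ small (compactness of $M$) and that the leaves are uniformly $C^1$, $\Psi_x$ is a homeomorphism onto a neighborhood of $x$ in $\mathcal{F}^{w}_x$ with derivative and inverse-derivative bounded by a uniform constant $L\ge1$. Integrating along paths and taking infima, the intrinsic distance $d^{w}$ and the product distance $d^{\mathcal{F}}(z_1,z_2)+|t_1-t_2|$ become uniformly bi-Lipschitz equivalent, so in particular $\mathcal{F}^{w}_\delta(x)$ lies in the image of $\Psi_x$ for $\delta<\delta_0$.

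It then remains to read off the decomposition. Given $w\in\mathcal{F}^{w}_\delta(x)$, write $w=\varphi^{t_0}(z_0)$ with $z_0\in\mathcal{F}_{\rho}(x)$; the comparison yields $d^{\mathcal{F}}(x,z_0)\le L\delta$ and $|t_0|\le L\delta/m$. Setting $z=\varphi^{t_0}(x)\in\Phi(x)$, the flow arc from $x$ to $z$ has length $\le M|t_0|\le(LM/m)\delta$, and flow-invariance gives $w=\varphi^{t_0}(z_0)\in\varphi^{t_0}(\mathcal{F}_x)=\mathcal{F}_{z}$; pushing a shortest $\mathcal{F}_x$-path from $x$ to $z_0$ forward by $\varphi^{t_0}$, whose leafwise Lipschitz constant is $\le1+\varepsilon$ for $\delta_0$ small, gives $d^{\mathcal{F}}(z,w)\le(1+\varepsilon)L\delta$, which is the right-hand inclusion of the first display. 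Choosing instead $z=z_0\in\mathcal{F}_{L\delta}(x)$ places $w$ in $\Phi_{(LM/m)\delta}(z_0)$, the right-hand inclusion of the second display. Taking $C_2$ larger than $(1+\varepsilon)L$ and $LM/m$ (and $C_2>1$) finishes both. The only genuine obstacle is the uniformity of the bi-Lipschitz constant $L$ under the weak regularity available—$\mathcal{F}$ is merely continuous transversally with $C^r$ leaves while the flow is smooth—and this is exactly where integrality of $\mathcal{F}$, hence uniformly $C^1$ leaves via Proposition \ref{prop:unifleaves}, together with compactness of $M$ is used; it lets the length estimates run without appealing to a joint $C^1$ inverse function theorem.
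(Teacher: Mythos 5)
The first thing to note is that the paper contains no proof of this statement at all: it is imported verbatim as Lemma 4.2 of \cite{abouanass_dynamical_2026}, so your argument can only be judged on its own terms. Your architecture (uniform angle bound between $X$ and $T\mathcal{F}$, the resulting norm equivalence on $T\mathcal{F}\oplus\mathbb{R}X$, the flow-box map $\Psi_x(z,t)=\varphi^t(z)$, then reading off the four inclusions) is the natural one, and your endgame is correct: the left-hand inclusions with $C_1=\tfrac12$ are indeed trivial concatenations, and the bookkeeping producing $C_2\ge\max\bigl((1+\varepsilon)L,\,LM/m\bigr)$ is fine. The genuine gap is the sentence asserting that $\Psi_x$ ``is a homeomorphism onto a neighborhood of $x$ in $\mathcal{F}^{w}_x$'' because $d\varphi^t$ is close to the identity and the leaves are uniformly $C^1$. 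Together with your norm equivalence, those facts make $\Psi_x$ a $C^1$ immersion with uniform derivative and co-derivative bounds --- but derivative bounds never give injectivity, and injectivity on domains of \emph{uniform} size $\rho,\tau$ is exactly what everything downstream rests on. This is not a soft point: flow-invariance permits a leaf to be mapped to itself by arbitrarily small times (take the linear flow $\varphi^t(x,y)=(x+t,y)$ on $T^2$ and the invariant foliation by lines of irrational slope $\alpha$; then $\varphi^s(L)=L$ for a dense set of $s$), so what must be excluded is a return to the same \emph{plaque} in small time, and neither of the two facts you cite addresses that. A correct argument does exist with the ingredients you set up, plus one you never invoke: for an integral foliation on a compact manifold, plaque-mates at small ambient distance have leafwise distance at most $(1+\eta)$ times the ambient distance (the property recalled after Proposition \ref{prop:unifleaves}). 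Suppose $\varphi^{s_n}(z_1^n)=z_2^n$ with $z_1^n,z_2^n$ in one plaque, $s_n\to0$, $d^{\mathcal{F}}(z_1^n,z_2^n)\to0$, and $z_1^n\to z$. The chord from $z_1^n$ to $z_2^n$ is the endpoint displacement of a short flow arc, so its direction converges to $\pm X(z)/\|X(z)\|$; on the other hand it is, up to factor $1+\eta$, realized by a leafwise path whose tangents lie in $T\mathcal{F}$ near $z$, so by continuity of $T\mathcal{F}$ its direction is asymptotically tangent to $T_z\mathcal{F}$. This contradicts the angle bound. (One also needs the uniform lower bound on periods of closed orbits, again from compactness, to know $z_1^n\neq z_2^n$.)

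A second, smaller defect is that your phrase ``integrating along paths and taking infima, $d^w$ and the product distance become uniformly bi-Lipschitz equivalent, so $\mathcal{F}^w_\delta(x)$ lies in the image of $\Psi_x$'' is circular as written: $d^w$ is an infimum over paths in the whole weak leaf, your comparison applies only to paths that stay inside the image of $\Psi_x$, and that they stay inside is precisely what is being claimed. The standard repair is available once injectivity is settled: $\Psi_x$ is then (by invariance of domain and the $C^1$ inverse function theorem, since the flow is smooth and the leaf is $C^r$, $r\ge1$) a $C^1$ diffeomorphism onto an open subset of $\mathcal{F}^w_x$; lift a path of length $<\delta$ starting at $x$ through $\Psi_x$ and run an open--closed continuation argument --- as long as the lift exists its product length is at most $L\delta$, so for $L\delta<\min(\rho,\tau)$ it never reaches the boundary of the domain and therefore extends over the whole parameter interval. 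With these two repairs, both of which use only compactness, integrality of $\mathcal{F}$, and the transversality you established at the outset, your proof is complete.
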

\begin{lemma}
\label{lem:metricflo}
    Let $(\varphi^t)$ a smooth flow on a smooth compact manifold M, induced by a nowhere vanishing vector field $X$.\\
    Then there exist positive constants $C_1<1$, $C_2>1$ and $\delta_0$ such that for every $\delta<\delta_0$ and $x \in M$, 
    $$\bigcup_{t \in  \left]-C_1\delta, C_1\delta \right[}\varphi^t(x)\subset \Phi_\delta(x)\subset \bigcup_{t \in  \left]-C_2\delta, C_2\delta \right[}\varphi^t(x).$$
\end{lemma}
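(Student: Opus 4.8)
The plan is to reduce the statement to the elementary observation that, along a leaf of the orbit foliation $\Phi$, the leaf-intrinsic distance (the distance $d^\Phi$ induced on the orbit by the Riemannian metric of $M$, as in the convention of the remark following Proposition \ref{prop:unifleaves}) is comparable, uniformly in $x$, to the elapsed parameter time. First I would extract the two constants governing the flow's speed: since $X$ is continuous, nowhere vanishing, and $M$ is compact, the quantities $a := \min_{y\in M}\|X(y)\|$ and $b := \max_{y\in M}\|X(y)\|$ satisfy $0 < a \le b < \infty$. The curve $c(s) = \varphi^s(x)$ has velocity $c'(s) = X(\varphi^s(x))$, so its arc length over any parameter interval of length $|t|$ lies between $a|t|$ and $b|t|$. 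I would then record the key geometric fact that, because each leaf $\Phi(x)$ is a connected one-dimensional immersed submanifold (a line or a circle), the intrinsic distance $d^\Phi(x,\varphi^{t}(x))$ equals the minimal length among the one or two monotone sub-arcs of the orbit joining $x$ to $\varphi^t(x)$, since any path with backtracking is strictly longer.

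For the left inclusion I would choose $C_1 < \min(1,\tfrac{1}{b})$: if $|t| < C_1\delta$, the orbit arc from $x$ to $\varphi^t(x)$ has length at most $b|t| < bC_1\delta < \delta$, hence $d^\Phi(x,\varphi^t(x)) < \delta$ and $\varphi^t(x) \in \Phi_\delta(x)$. For the right inclusion I would choose $C_2 > \max(1,\tfrac{1}{a})$: given $y \in \Phi_\delta(x)$, the length-realizing minimal sub-arc from $x$ to $y$ is of the form $\{\varphi^s(x) : s \text{ between } 0 \text{ and } t_0\}$ and has length $d^\Phi(x,y) < \delta$; since this length is at least $a|t_0|$, we obtain $|t_0| < \delta/a < C_2\delta$, so that $y = \varphi^{t_0}(x)$ with $|t_0| < C_2\delta$. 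The value of $\delta_0$ is immaterial to these two estimates — any positive value works — but I would keep it in the statement for uniformity with Lemma \ref{lem:metricweak}.

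The only point requiring genuine care, and thus the main (minor) obstacle, is the treatment of periodic orbits: there the leaf is a circle and $d^\Phi(x,y)$ is the shorter of the two arc lengths around it. I would handle this case uniformly by always selecting the minimal-length arc and reading off the corresponding parameter $t_0$ along that arc; because the speed is bounded below by $a$, a short arc necessarily corresponds to a short time, so no lower bound on the periods of closed orbits is needed and the identical constants apply to periodic and non-periodic orbits alike. I would also note, for safety, that the identification of $d^\Phi$ with the minimal arc length uses only that the leaves are connected and one-dimensional, so the argument is insensitive to whether the non-periodic orbits are embedded or merely injectively immersed (possibly dense) in $M$.
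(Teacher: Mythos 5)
Your proof is correct. The paper itself states Lemma \ref{lem:metricflo} without proof (treating it as elementary, alongside Lemma \ref{lem:metricweak} which is quoted from \cite{abouanass_dynamical_2026}), and your argument --- bounding the speed $\|X\|$ between $a=\min_M\|X\|>0$ and $b=\max_M\|X\|$ by compactness, identifying the intrinsic leaf distance $d^\Phi$ with the minimal monotone orbit-arc length, and converting between arc length and parameter time with the constants $C_1<\min(1,1/b)$ and $C_2>\max(1,1/a)$ --- is exactly the standard argument that fills this gap, including the correct uniform treatment of periodic orbits without any lower bound on periods.
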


The next result proven in \cite{abouanass_global_2025} deals with flow holonomies with respect to transversals containing small open subsets of flow-invariant $C^0$ foliations transverse to the flow direction:
\begin{proposition}
\label{prop:holoflo}
    Let $(\varphi^t)_{t\in \mathbb R}$ a complete smooth flow on a smooth manifold M, induced by a nowhere vanishing vector field $X$.
    Let $\mathcal{F}$ a continuous foliation of $M$ with $C^1$ leaves, which is invariant under the flow $\varphi^t$ (i.e. for every $t \in \mathbb R$ and $x \in M$, $\varphi^t(\mathcal{F}_x)=\mathcal{F}_{\varphi^t(x)}$), such that $X$ is nowhere tangent to $\mathcal{F}$.\\
    For $x \in M$ and $t\in \mathbb{R}$, let $W_{\varphi^t(x)}$ be a small neighborhood of $\varphi^t(x)$ in $\mathcal{F}_{\varphi^t(x)}$ and by $T_{\varphi^t(x)}$ a small transversal to the flow in $\varphi^t(x)$ containing $W_{\varphi^t(x)}$. \\
    Fix $x\in M, t\in \mathbb R$ and let the path 
    $\beta^x_t: \left \{\begin{array}{ccc}
        [0,1] & \to & M \\
        s  &\mapsto  &\varphi^{st}(x)
    \end{array} \right. .$\\
    Then if $T_x$ is taken sufficiently small, the restriction to $W_x$ of the $\Phi$-holonomy of the path $\beta^x_t$ with respect to the transversals $T_{x}$ and $T_{\varphi^t(x)}$ is the restriction to $W_x$ of the map 
        \[\restr{\varphi^t}{\mathcal{F}_x}: \mathcal{F}_x \to \mathcal{F}_{\varphi^t(x)}.\]
\end{proposition}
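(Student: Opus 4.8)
The plan is to reduce the statement to two ingredients: that the $\Phi$-holonomy of an orbit path is always realized by the flow itself, and that the flow-invariance of $\mathcal{F}$ pins down the image of $W_x$ on the target transversal. First I would invoke the flow-box (tubular flow) theorem along the compact orbit segment $\beta^x_t([0,1])$: around each of its points there is a neighborhood in which the flow is conjugate to a translation, so that orbits appear as the horizontal lines of a product $(-\epsilon,\epsilon)\times D$ and the holonomy between two transverse slices simply follows these lines. Since $\beta^x_t([0,1])$ is compact, finitely many such boxes cover it; choosing intermediate transversals $T_0=T_x,T_1,\dots,T_k=T_{\varphi^t(x)}$ subordinate to the boxes and using the concatenation property of holonomy maps listed after Definition/Proposition \ref{defprop:hol}, the holonomy $h(\beta^x_t)^{T_{\varphi^t(x)},T_x}$ factors as a composition of local box-holonomies, each of which is a restriction of the flow. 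The composition is therefore the \emph{orbit-crossing map}: it sends a nearby point $x'\in T_x$ to the point at which the orbit of $x'$, followed along a path close to $\beta^x_t$, meets $T_{\varphi^t(x)}$.

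It then remains to evaluate this crossing map on $W_x$. Fix $x'\in W_x\subset \mathcal{F}_x$. By flow-invariance, $\varphi^t(x')\in \varphi^t(\mathcal{F}_x)=\mathcal{F}_{\varphi^t(x)}$, and by continuity of $\varphi^t$ I may shrink $W_x$ (equivalently $T_x$) so that $\varphi^t(W_x)\subset W_{\varphi^t(x)}\subset T_{\varphi^t(x)}$. Thus $\varphi^t(x')$ lies simultaneously on the orbit of $x'$ and on the target transversal. Moreover, by continuity of $(s,x')\mapsto \varphi^{st}(x')$, the orbit path $s\mapsto \varphi^{st}(x')$ joining $x'$ to $\varphi^t(x')$ is uniformly $C^0$-close to $\beta^x_t$ once $x'$ is close to $x$, so it stays inside the chain of flow boxes. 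Hence this path realizes the holonomy and $h(\beta^x_t)^{T_{\varphi^t(x)},T_x}(x')=\varphi^t(x')$. As this holds for every $x'\in W_x$, the restriction of the holonomy to $W_x$ coincides with $\restr{\varphi^t}{\mathcal{F}_x}$ restricted to $W_x$, which is the claim.

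The main obstacle is to secure the identification made in the first paragraph, and this is exactly where the hypothesis that $T_x$ be sufficiently small enters. The delicate point is the \emph{uniqueness of the crossing}: I must ensure that the orbit of each nearby $x'$ meets each intermediate transversal, and in particular $T_{\varphi^t(x)}$, exactly once along the relevant portion of the orbit, so that the abstractly-defined holonomy (independent of the admissible chain of charts) truly equals the flow realization I exhibit. This follows from the product structure of each flow box together with the transversality of $X$ to the slices---valid at $\varphi^t(x)$ by hypothesis and persisting at nearby points by continuity---provided the transversals are taken thin enough that no orbit can re-enter a box and recross a slice a second time. Once this uniqueness is in place, the well-definedness of the holonomy germ guarantees that the flow map is indeed the holonomy on $W_x$.
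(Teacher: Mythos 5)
Your proof is correct and complete; note that the paper does not actually prove this proposition here but imports it from \cite{abouanass_global_2025}, so your argument is the natural one to supply. Its three ingredients are exactly what is needed: the flow-box chain along $\beta^x_t([0,1])$ reduces the $\Phi$-holonomy (via the concatenation property after Definition/Proposition \ref{defprop:hol}) to the orbit-crossing map; flow-invariance of $\mathcal{F}$ together with leafwise continuity of $\varphi^t$ lets you shrink $W_x$ so that $\varphi^t(W_x)\subset W_{\varphi^t(x)}\subset T_{\varphi^t(x)}$, exhibiting $\varphi^t(x')$ as a crossing point; and the injectivity of the flow-box chart $(s,z)\mapsto \varphi^s(z)$ on a thin box around $T_{\varphi^t(x)}$ forces the crossing time of the holonomy lift (which depends continuously on $x'$ and equals $t$ at $x'=x$) to be exactly $t$, so the holonomy image is $\varphi^t(x')$ — the only cosmetic imprecision is calling the individual box-holonomies ``restrictions of the flow,'' since they are variable-time maps $z\mapsto\varphi^{\tau(z)}(z)$, but your orbit-crossing description already says this correctly.
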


\begin{corollary}
\label{cor:diffholo}
    Let $(\varphi^t)_{t\in \mathbb R}$ a partially hyperbolic flow on a smooth compact manifold $M$ with a flow invariant subcenter foliation $\mathcal{F}^{\hat{c}}$.
    For $*\in \{u,s,\hat{c}\}$, $x \in M$ and $t\in \mathbb{R}$, denote by $W^{*}_{\varphi^t(x)}$ a small neighborhood of $\varphi^t(x)$ in $\mathcal{F}^{*}_{\varphi^t(x)}$ and by $S_{\varphi^t(x)}$ a small transversal to the flow at $\varphi^t(x)$ containing $W^{u}_{\varphi^t(x)}$, $W^{s}_{\varphi^t(x)}$ and $W^{\hat{c}}_{\varphi^t(x)}$.\\
    Then $d_x(h(\beta^x_t)^{S_{\varphi^t(x)}, S_x})=d_x\varphi^t$.
    \end{corollary}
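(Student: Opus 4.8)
The plan is to reduce the statement to three applications of Proposition \ref{prop:holoflo}, one for each of the flow-invariant foliations $\mathcal{F}^{u}$, $\mathcal{F}^{s}$, $\mathcal{F}^{\hat{c}}$, and then to assemble the resulting partial differentials. First I would check that each $\mathcal{F}^{*}$ ($*\in\{u,s,\hat{c}\}$) satisfies the hypotheses of Proposition \ref{prop:holoflo}: it has $C^1$ leaves, it is invariant under the flow ($\mathcal{F}^u,\mathcal{F}^s$ always, $\mathcal{F}^{\hat{c}}$ by assumption), and $X$ is nowhere tangent to it, since $E^u$, $E^s$ and $E^{\hat{c}}$ are each summands of the splitting $TM=E^{s}\oplus E^{\hat{c}}\oplus \mathbb{R}X\oplus E^{u}$ and so are transverse to the flow direction $\mathbb{R}X$.

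The key structural observation is that $S_x$ and $S_{\varphi^t(x)}$ serve simultaneously as admissible transversals for all three foliations. Indeed, taking $t=0$ in the statement, $S_x=S_{\varphi^0(x)}$ contains $W^{u}_x$, $W^{s}_x$ and $W^{\hat{c}}_x$, so $T_xS_x\supseteq E^{u}_x\oplus E^{s}_x\oplus E^{\hat{c}}_x$; since $\dim S_x=\dim M-1=\dim(E^{s}\oplus E^{\hat{c}}\oplus E^{u})$, this containment is an equality $T_xS_x=E^{s}_x\oplus E^{\hat{c}}_x\oplus E^{u}_x$ (and likewise at $\varphi^t(x)$). Now applying Proposition \ref{prop:holoflo} to $\mathcal{F}^{*}$ with the transversals $S_x,S_{\varphi^t(x)}$ and the neighborhood $W^{*}_x$ (shrinking near $x$ if necessary, which changes neither the germ nor the differential at $x$), the restriction of $h(\beta^x_t)^{S_{\varphi^t(x)},S_x}$ to $W^{*}_x$ coincides with the restriction of $\varphi^t|_{\mathcal{F}^{*}_x}$. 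Differentiating this identity at $x$ along $E^{*}_x=T_xW^{*}_x$ gives $d_x\big(h(\beta^x_t)^{S_{\varphi^t(x)},S_x}\big)|_{E^{*}_x}=d_x\varphi^t|_{E^{*}_x}$. Running this for $*=u,s,\hat{c}$ and using that these three subspaces span $T_xS_x$, linearity yields $d_x\big(h(\beta^x_t)^{S_{\varphi^t(x)},S_x}\big)=d_x\varphi^t$ as linear maps $T_xS_x\to T_{\varphi^t(x)}S_{\varphi^t(x)}$.

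The one point requiring care, and the place where flow-invariance is essential, is the verification that $d_x\varphi^t$ actually maps $T_xS_x$ into $T_{\varphi^t(x)}S_{\varphi^t(x)}$ rather than acquiring a component along $X(\varphi^t(x))$. A priori the $\Phi$-holonomy is of the form $z\mapsto\varphi^{\tau(z)}(z)$ for a first-return time $\tau$ with $\tau(x)=t$, so its differential could carry a term $d_x\tau(v)\,X(\varphi^t(x))$ transverse to the target transversal. This term vanishes precisely because $d_x\varphi^t$ preserves the splitting — it sends each $E^{*}_x$ to $E^{*}_{\varphi^t(x)}$, hence $E^{s}\oplus E^{\hat{c}}\oplus E^{u}$ into itself — so $d_x\varphi^t(v)$ already lies in $T_{\varphi^t(x)}S_{\varphi^t(x)}$ and the holonomy coincides with $d_x\varphi^t$ on the nose, which is what Proposition \ref{prop:holoflo} encodes leafwise and what the assembly above makes global on $T_xS_x$.
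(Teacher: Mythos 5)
Your proof is correct and follows essentially the same route as the paper: the paper's own proof is just the observation that $T_xS_x=E^{u}_x\oplus E^{\hat{c}}_x\oplus E^{s}_x$ combined with Proposition \ref{prop:holoflo} applied to each of $\mathcal{F}^u$, $\mathcal{F}^s$, $\mathcal{F}^{\hat{c}}$, which is exactly the decomposition-and-linearity argument you spell out. Your additional verification that $d_x\varphi^t$ maps $T_xS_x$ into $T_{\varphi^t(x)}S_{\varphi^t(x)}$ via invariance of the splitting is a point the paper leaves implicit, but it is the same underlying mechanism, not a different approach.
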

    \begin{proof}
        Since $T_xS_x=E^{u}_x \oplus E^{\hat{c}}_x \oplus E^{s}_x$, the previous corollary gives the result.
    \end{proof}

Therefore, we can prove the following essential result:
\begin{proposition}
\label{prop:leavesholo}
    Let $(\varphi^t)_{t\in \mathbb R}$ a transversely holomorphic partially hyperbolic flow on a smooth compact manifold $M$ with a flow invariant subcenter foliation $\mathcal{F}^{\hat{c}}$. Let $*\in \{s,u,\hat{c}\}$.\\
    Then each leaf of $\mathcal{F}^*$ is an immersed complex manifold and for any $t\in \mathbb{R}$, $x\in M$, the well defined map:
    \[\restr{\varphi^t}{\mathcal{F}^{*}_x}: \mathcal{F}^{*}_x \to \mathcal{F}^{*}_{\varphi^t(x)}\]
    is holomorphic with respect to these complex structures.\\
    Moreover, if $*\in \{s,u\}$ and $E^{\hat{c}*}$ is integrable to a flow-invariant foliation $\mathcal{F}^{\hat{c}*}$, then each leaf of $\mathcal{F}^{\hat{c}*}$ is an immersed complex manifold and for any $t\in \mathbb{R}$, $x\in M$, the well defined map:
    \[\restr{\varphi^t}{\mathcal{F}^{\hat{c}*}_x}: \mathcal{F}^{\hat{c}*}_x \to \mathcal{F}^{\hat{c}*}_{\varphi^t(x)}\]
    is holomorphic with respect to these complex structures.
\end{proposition}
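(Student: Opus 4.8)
The hypothesis that the flow is transversely holomorphic means its orbit foliation $\Phi$ is transversely holomorphic, so by Proposition \ref{prop:caractrholo}(iii) there is a ($C^\infty$) almost complex structure $I$ on the normal bundle $\nu = TM/\mathbb RX$ which is integrable on flow transversals and invariant under $\Phi$-holonomy. By Corollary \ref{cor:diffholo} the differential of the flow holonomy along $\beta^x_t$ is $d_x\varphi^t$, so this invariance reads
\[ I\circ d\varphi^t = d\varphi^t\circ I \quad\text{on } \nu. \]
For $*\in\{s,u,\hat c\}$ the bundle $E^*$ is transverse to $\mathbb RX$, so the projection $TM\to\nu$ identifies $E^*$ with its image $\bar E^*\subset\nu$, and the splitting $TM = E^s\oplus E^{\hat c}\oplus\mathbb RX\oplus E^u$ descends to $\nu = \bar E^s\oplus\bar E^{\hat c}\oplus\bar E^u$, with $d\varphi^t$ acting on $\bar E^*$ with the same rates as on $E^*$. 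My plan is: first show $I$ preserves each $\bar E^*$; then transport $I$ to an almost complex structure $J^{*}:=I|_{E^{*}}$ on $T\mathcal F^{*}=E^{*}$ and prove it is integrable leaf by leaf; and finally read off holomorphicity of $\varphi^t$ directly from the commutation relation above.

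\textbf{Invariance of the bundles (the crux).} I expect this to be the main obstacle, and it is where partial hyperbolicity enters. Since $M$ is compact and $I^{-1}=-I$, the norms $\|I\|,\|I^{-1}\|$ are uniformly bounded; and since the splitting of $\nu$ is continuous over a compact base, the norm of a vector is uniformly comparable to the maximum of the norms of its three components. Fix $v\in\bar E^{s}$ and write $Iv = w_s+w_{\hat c}+w_u$. Using $d\varphi^t(Iv)=I(d\varphi^t v)$ one gets $\|d\varphi^t(Iv)\|\le \|I\|\,C\alpha^t\|v\|$ for $t\ge0$. If $w_u\neq0$ the $\bar E^u$-component forces growth $\gtrsim\beta^t$, and if $w_{\hat c}\neq0$ the $\bar E^{\hat c}$-component forces decay no faster than $\sim\alpha'^{\,t}$; both contradict the $\alpha^t$ bound as $t\to+\infty$, because $\alpha<\beta$ and $\alpha<\alpha'$. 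Hence $Iv\in\bar E^{s}$, and invertibility gives $I(\bar E^{s})=\bar E^{s}$. The same estimate run as $t\to-\infty$ yields $I(\bar E^{u})=\bar E^{u}$; for $\bar E^{\hat c}$ one kills the unstable component as $t\to+\infty$ (using $\beta'<\beta$) and the stable component as $t\to-\infty$ (using $\alpha<\alpha'$). The point is that $I$ is a genuine automorphism of $\nu$ commuting with the dynamics, so the dynamically defined summands, separated by the rate gaps $\alpha<\alpha'<1<\beta'<\beta$, must each be preserved.

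\textbf{Integrability and holomorphicity.} With $I(\bar E^{*})=\bar E^{*}$, the endomorphism $J^{*}:=I|_{E^{*}}$ is a well-defined almost complex structure on $T\mathcal F^{*}$, smooth along each (smooth) leaf. To see that a leaf $\mathcal F^{*}_{x}$ is a complex manifold, I would argue locally: around $y\in\mathcal F^{*}_{x}$ choose a flow transversal $S_{y}$ containing a neighborhood $W$ of $y$ in the leaf. By Proposition \ref{prop:caractrholo}(iii)(a), $(S_{y},I_{S_y})$ is a genuine complex manifold, and $W\subset S_{y}$ has $I_{S_y}$-invariant tangent space $E^{*}|_{W}$, hence is a complex submanifold: the Nijenhuis tensor of $J^{*}|_{W}$ is the restriction of that of $I_{S_y}$, which vanishes, so the Newlander--Nirenberg theorem applies. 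Since integrability is a local (pointwise) condition, $J^{*}$ is integrable on the whole leaf, making $\mathcal F^{*}_{x}$ a complex manifold. Restricting $I\circ d\varphi^t=d\varphi^t\circ I$ to the $I$- and $d\varphi^t$-invariant bundle $E^{*}$ shows $d\varphi^t|_{E^{*}}$ is $\mathbb C$-linear for $J^{*}$, i.e. $\varphi^t|_{\mathcal F^{*}_{x}}$ is holomorphic.

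\textbf{The subcenter-stable/unstable case.} Since $I$ preserves each of $\bar E^{s},\bar E^{\hat c},\bar E^{u}$, it preserves $\bar E^{\hat c s}=\bar E^{s}\oplus\bar E^{\hat c}$ and $\bar E^{\hat c u}=\bar E^{u}\oplus\bar E^{\hat c}$. Whenever $\mathcal F^{\hat c s}$ (resp. $\mathcal F^{\hat c u}$) exists and is flow-invariant, $E^{\hat c s}$ (resp. $E^{\hat c u}$) is $d\varphi^t$-invariant, so the three steps above apply verbatim: each such leaf carries an integrable complex structure $I|_{E^{\hat c*}}$ on which the flow acts holomorphically. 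The only genuinely substantial step throughout is the invariance of the bundles; integrability is then formal, and holomorphicity of $\varphi^t$ is an immediate consequence of the commutation relation.
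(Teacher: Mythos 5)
Your proposal is correct and follows essentially the same route as the paper: pull the holonomy-invariant integrable structure $I$ from Proposition \ref{prop:caractrholo} onto $\nu\cong E^u\oplus E^{\hat c}\oplus E^s$, use Corollary \ref{cor:diffholo} to turn holonomy-invariance into $I\circ d\varphi^t=d\varphi^t\circ I$, separate the invariant subbundles by the rate gaps $\alpha<\alpha'<1<\beta'<\beta$, and then get leafwise integrability from the vanishing Nijenhuis tensor on flow transversals and holomorphicity from the commutation relation. The only (cosmetic) difference is in the bundle-invariance step, where the paper introduces a continuous $I$-hermitian metric so that $\|d\varphi^t(iv)\|=\|d\varphi^t(v)\|$ exactly, while you instead bound $\|I\|$ over the compact manifold and run the growth-rate contradiction componentwise; these are the same rate-separation argument.
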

\begin{proof}
    Fix $x \in M$. For any $t\in \mathbb R$, let $S_{\varphi^t(x)}$ a small transversal to the flow containing a small open neighborhood $W^{u}_{\varphi^t(x)}$ of $\varphi^t(x)$ in $\mathcal{F}^{u}_{\varphi^t(x)}$, a small open neighborhood $W^{s}_{\varphi^t(x)}$ of $\varphi^t(x)$ in $\mathcal{F}^{s}_{\varphi^t(x)}$ and a small open neighborhood $W^{\hat{c}}_{\varphi^t(x)}$ of $\varphi^t(x)$ in $\mathcal{F}^{\hat{c}}_{\varphi^t(x)}$.
    Let $\Phi$ the foliation defined by the transversely holomorphic flow $(\varphi^t)_t$. The normal bundle $\nu = TM\diagup \Phi$ is naturally isomorphic (in the $C^0$ category) to $E^{u} \oplus E^{\hat{c}} \oplus E^{s}$.
    Therefore, the smooth almost complex structure $I$ on $\nu$ invariant by holonomy maps (see Proposition \ref{prop:caractrholo}) gives rise to a continuous almost complex structure, still denoted by $I$, on $E^{u} \oplus E^{\hat{c}} \oplus E^{s}$ which makes it a continuous vector subbundle of $TM$ with complex fibers.
    Moreover, since $I$ is invariant under $\Phi$-holonomy maps, then the restriction $I^{(t)}$ of $I$ on $TS_{\varphi^t(x)}$ is a smooth almost complex structure on $S_{\varphi^t(x)}$ satisfying 
    \[I^{(t)}\circ d(h(\beta^x_t)^{S_{\varphi^t(x)}, S_x}) = d(h(\beta^x_t)^{S_{\varphi^t(x)}, S_x}) \circ I^{(0)}  \quad (*)\]
    Since each $I^{(t)}$ is integrable by Proposition \ref{prop:caractrholo}, we can restrict each $S_{\varphi^t(x)}$ so that it is diffeomorphic to an open set of $\mathbb C^p$ and $I^{(t)}$ coincides with the multiplication by $i \in \mathbb C$. 
    With that in mind, thanks to Corollary \ref{cor:diffholo}, by taking $S_{x}$ even smaller, it comes for $* \in \{u,s,\hat{c}\}$,  $v\in E^{*}_x$ and $t \in \mathbb R$, that 
    $id_x\varphi^t(v)=d_x\varphi^t(iv).$\\
    Take a continuous metric  $\|\cdot\|_{(0)}$ on $M$ which is hermitian on $E^{u} \oplus E^{\hat{c}} \oplus E^{s}$ with respect to $I$.
    It comes, for $v \in E^{u}_x$ and $t \in \mathbb R$:
    \begin{align*} \|d_x\varphi^t(iv)\|_{(0)}=\|id_x\varphi^t(v)\|_{(0)}=\|d_x\varphi^t(v)\|_{(0)} \leq C\lambda^t\|v\|_{(0)}= C\lambda^t\|iv\|_{(0)}.
    \end{align*}
    Now take a smooth Riemannian metric $\|\cdot\|$ on $M$.
    Since $M$ is compact and both metrics are continuous, $\|\cdot\|$ and $\|\cdot\|_{(0)}$ are equivalent on $M$.
    Therefore, by the above and the characterization of the strong unstable space, it comes $iv\in E^{u}_x$.
    The same can be said for the stable distribution.
    As for the subcenter distribution, for $v \in E^{\hat{c}}_x$ and $t \in \mathbb R$:
    \begin{align*} \|d_x\varphi^t(iv)\|_{(0)}=\|id_x\varphi^t(v)\|_{(0)}=\|d_x\varphi^t(v)\|_{(0)} &\leq C\beta'^t\|v\|_{(0)}= C\beta'^t\|iv\|_{(0)}\\
    &\geq C^{-1}\alpha'^t\|v\|_{(0)}=C^{-1}\alpha'^t\|iv\|_{(0)}.
    \end{align*} 
    By using a smooth Riemannian metric as above and the characterization of the center space, it comes $iv\in E^{c}_x=\mathbb R X(x) \oplus E^{\hat{c}}_x$. Since $I$ takes value in $E^{s} \oplus E^{\hat{c}} \oplus E^{u}$, necessarily $iv\in E^{\hat{c}}_x$.\\
    To summarize, for $* \in \{u,s,\hat{c}\}$, we have defined on $\mathcal{F}^{*}_x$ a continuous almost complex structure $J_*^{(x)}$ whose restriction to $W^{*}_y$ for every $y \in \mathcal{F}^{*}_x$ is a smooth almost complex structure, making $J_*^{(x)}$ a smooth almost complex structure on $\mathcal{F}^{*}_x$.\\
    Now fix $y \in \mathcal{F}^{*}_x$ and let $S_{y}$ a small transversal to the flow containing a small open neighborhood $W^{*}_{y}$ of $y$ in $\mathcal{F}^{*}_{x}$. Denote by $I_*^{(y)}$ the restriction of $I$ to $S_y$. We know it is integrable by Corollary \ref{prop:caractrholo}. Therefore, its Nijenhuis tensor $N_{I_*^{(y)}}$ vanishes everywhere. Since the restriction of $I_*^{(y)}$ to $W^{*}_y$ is exactly the restriction of $J_*^{(x)}$ to $W^{*}_y$, it follows that $N_{J_*^{(x)}}(X,Y)=0$ for any vector fields $X,Y$ tangent to $W^{*}_{y}$. Since $y \in \mathcal{F}^{*}_x$ is arbitrary, it comes $N_{J_*^{(x)}}=0$ so that $J_*^{(x)}$ is an integrable almost complex structure on $\mathcal{F}^{*}_x$. Moreover, the equality $(*)$ still holds for $y \in \mathcal{F}^{*}_x$ so that for any $t\in \mathbb R$: 
    \[J_*^{(\varphi^t(x))}\circ d(\restr{\varphi^t}{\mathcal{F}^{*}_x}) = d(\restr{\varphi^t}{\mathcal{F}^{*}_x}) \circ J_*^{(x)}.\]
    Since for any $t\in \mathbb R$, $J_*^{(\varphi^t(x))}$ is integrable, it comes that $d(\restr{\varphi^t}{\mathcal{F}^{*}_x})$ is $\mathbb C$-linear, which means exactly that
    \[\restr{\varphi^t}{\mathcal{F}^{*}_x}: \mathcal{F}^{*}_x \to \mathcal{F}^{*}_{\varphi^t(x)}\]
    is holomorphic with respect to the complex structures induced by $J_*^{(x)}$ and $J_*^{(\varphi^t(x))}$.\\
    If $\mathcal{F}^{\hat{c}*}$ exists for $* \in \{s,u\}$, then the above shows that $E^{\hat{c}*}$ is stable by $I$ so the latter defines, for $x \in M$, a smooth almost complex structure $J^{(x)}_{\hat{c}*}$ on $\mathcal{F}^{\hat{c}*}(x)$. 
    We can proceed as above to prove that $J^{(x)}_{\hat{c}*}$ is integrable and that the flow acts holomorphically on the subcenter-* leaves.
\end{proof}
\begin{corollary}
\label{cor:FçholoinFçs}
    Let $(\varphi^t)$ a transversely holomorphic partially hyperbolic dynamically coherent flow on a smooth compact manifold $M$. Assume $\dim_{\mathbb C} E^{\hat{c}}=1$.\\
    Then for every $p \in M$, the holonomy of the foliation $\restr{\mathcal{F}^u}{\mathcal{F}^{\hat{c}u}(p)}$ in $\mathcal{F}^{\hat{c}u}(p)$ (respectively the foliation $\restr{\mathcal{F}^s}{\mathcal{F}^{\hat{c}s}(p)}$ in $\mathcal{F}^{\hat{c}s}(p)$) is holomorphic.
\end{corollary}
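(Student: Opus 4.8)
The plan is to combine Proposition \ref{prop:leavesholo} with the subcenter bunching result from \cite{hirsch_invariant_1977} / Proposition \ref{prop:subcenterbunchC1} to realize the holonomy as a limit of holomorphic maps, then invoke the closedness of holomorphy under such limits.

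Here is a sketch of the proof:

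\begin{proof}
    Fix $p \in M$ and work inside the leaf $L := \mathcal{F}^{\hat{c}u}(p)$, which is an immersed complex manifold by Proposition \ref{prop:leavesholo}, the flow acting holomorphically on it. By Proposition \ref{prop:dyncohsubfol}, $L$ is simultaneously subfoliated by $\restr{\mathcal{F}^u}{L}$ and $\restr{\mathcal{F}^{\hat{c}}}{L}$. Since $\dim_{\mathbb{C}}E^{\hat{c}} = 1$, the subcenter leaves serve as one-complex-dimensional transversals to the foliation $\restr{\mathcal{F}^u}{L}$; I want to show that the holonomy of $\restr{\mathcal{F}^u}{L}$ between two such transversals is holomorphic, equivalently (via Proposition \ref{prop:caractrholo}) that the almost complex structure $J^{(p)}_{\hat{c}u}$ restricted to the subcenter transversals is preserved by these holonomy maps.

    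First I would reduce a global holonomy map along an unstable path to a local one: since unstable leaves are contractible, the holonomy of $\restr{\mathcal{F}^u}{L}$ between two subcenter plaques does not depend on the path, so it suffices to treat local unstable holonomy between nearby subcenter plaques inside $L$. The key mechanism is the dynamics. For $x, y$ on the same strong unstable leaf, the forward flow $\varphi^{-t}$ (for $t > 0$) contracts the unstable direction and, by subcenter bunching, contracts it faster than it distorts the subcenter direction; this is exactly the hypothesis making $\restr{\mathcal{F}^u}{\mathcal{F}^{\hat{c}u}(p)}$ a $C^1$ foliation in Proposition \ref{prop:subcenterbunchC1}. I would express the unstable holonomy $h^u_{xy}$ between the subcenter transversals at $x$ and $y$ as a limit, conjugating by the flow: $h^u_{xy} = \lim_{t\to\infty} \varphi^{t} \circ h^u_{\varphi^{-t}(x),\varphi^{-t}(y)} \circ \varphi^{-t}$, where under $\varphi^{-t}$ the points $\varphi^{-t}(x)$ and $\varphi^{-t}(y)$ become exponentially close and the local unstable holonomy between them converges to the identity on the transversal.

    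Now comes the holomorphicity. Each conjugating map $\restr{\varphi^{\pm t}}{\mathcal{F}^{\hat{c}u}}$ is holomorphic by Proposition \ref{prop:leavesholo}, and it maps subcenter leaves to subcenter leaves (since $\mathcal{F}^{\hat{c}}$ is flow invariant), hence restricts to a biholomorphism between the relevant one-dimensional subcenter transversals. The innermost term $h^u_{\varphi^{-t}(x),\varphi^{-t}(y)}$ converges to the identity (hence to a holomorphic map) as the transversals collapse together. Therefore each term in the conjugation sequence is a composition of holomorphic maps between the subcenter transversals, and I would show the sequence converges uniformly on a transversal plaque to $h^u_{xy}$, using the uniform $C^1$ (indeed $C^\infty$, by Proposition \ref{prop:unifholo}) control on holonomy together with the contraction estimates. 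Since a uniform limit of holomorphic maps is holomorphic, $h^u_{xy}$ is holomorphic. The identical argument, replacing $u$ by $s$ and reversing the time direction, handles $\restr{\mathcal{F}^s}{\mathcal{F}^{\hat{c}s}(p)}$.

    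The main obstacle is making the limit $h^u_{xy} = \lim_{t\to\infty} \varphi^{t} \circ h^u_{\varphi^{-t}(x),\varphi^{-t}(y)} \circ \varphi^{-t}$ rigorous as a uniformly convergent sequence of maps between fixed one-dimensional transversals: one must verify that the conjugation is well defined on a fixed plaque for all large $t$ (controlling the domains as the flow distorts them) and that the convergence is uniform, which requires precisely the subcenter bunching inequality ensuring the unstable contraction dominates the subcenter distortion so that $h^u_{\varphi^{-t}(x),\varphi^{-t}(y)} \to \mathrm{Id}$ at a controlled rate. This is the same delicate estimate underlying the $C^1$ regularity in Proposition \ref{prop:subcenterbunchC1}, here upgraded to holomorphicity because every map in the approximating sequence is biholomorphic.
\end{proof}
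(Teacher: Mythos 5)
Your overall mechanism (conjugating the holonomy by the flow, using that the flow acts holomorphically on the leaves, and letting the contraction bring the base points together) is the right one, but your execution has a genuine flaw. By flow invariance of $\mathcal{F}^u$ and $\mathcal{F}^{\hat{c}}$, the conjugation identity $h^u_{xy} = \varphi^{t} \circ h^u_{\varphi^{-t}(x),\varphi^{-t}(y)} \circ \varphi^{-t}$ is an \emph{exact equality} on a suitable plaque for every $t$, not an approximation scheme: your ``approximating sequence'' is constant, equal to $h^u_{xy}$ itself. Worse, each term of that sequence is \emph{not} a composition of holomorphic maps, because the middle factor $h^u_{\varphi^{-t}(x),\varphi^{-t}(y)}$ is precisely a holonomy map of the kind whose holomorphicity is in question; your claim that ``every map in the approximating sequence is biholomorphic'' assumes what is to be proved. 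Converging uniformly to the identity does not make these factors holomorphic, so the principle ``a uniform limit of holomorphic maps is holomorphic'' has nothing to apply to. The paper's proof sidesteps this by working at the level of \emph{derivatives}: once the holonomy is known to be uniformly $C^1$ (Proposition \ref{prop:subcenterbunchC1}), one differentiates the exact identity at a fixed point $z$, obtaining that $d_z h^u_{xy}$ equals a conjugation of $d\,h^u_{\varphi^{-t}(x),\varphi^{-t}(y)}$ by the conformal maps $d\varphi^{\pm t}|_{E^{\hat{c}}}$ (conformal by Proposition \ref{prop:leavesholo} and $\dim_{\mathbb C}E^{\hat{c}}=1$); the middle derivative tends to the identity as the points collapse together, hence the \emph{fixed} linear map $d_z h^u_{xy}$ is conformal, i.e. $\mathbb{C}$-linear, and $h^u_{xy}$ is holomorphic. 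Only the derivative formula is passed to the limit; no limit of maps is taken.

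There is a second, smaller gap: Proposition \ref{prop:subcenterbunchC1} requires the flow to be subcenter bunched, which is \emph{not} a hypothesis of the corollary, yet you invoke bunching as if it were given. The paper derives it: since $\varphi^1$ restricted to each subcenter leaf is holomorphic (Proposition \ref{prop:leavesholo}) and $\dim_{\mathbb C}E^{\hat{c}}=1$, the restriction $d\varphi^1|_{E^{\hat{c}}}$ is conformal, so the ratio of its norm to its conorm equals $1$ and the bunching inequalities hold automatically. This observation is exactly where the one-dimensionality assumption enters; without it you have neither the $C^1$ regularity of the holonomy that your argument leans on, nor the ``controlled rate'' you defer to at the end of your sketch.
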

\begin{proof}
    As $(\varphi^t)$ is dynamically coherent, there exists a flow-invariant foliation with holomorphic leaves tangent to $E^{\hat{c}}$ by \ref{prop:dyncohsubfol} and \ref{prop:leavesholo}.
    If $\dim_{\mathbb C} E^{\hat{c}}=1$, then since for every $x\in M$,
    \[\restr{\varphi^1}{\mathcal{F}^{\hat{c}}_x}: \mathcal{F}^{\hat{c}}_x \to \mathcal{F}^{\hat{c}}_{\varphi^t(x)}\]
    is holomorphic thus conformal, it comes that 
    $$\frac{\left\|\restr{d_x\varphi^{1}}{E^{\hat{c}}(x)} \right\|}{m\left( \restr{d_x\varphi^{1}}{E^{\hat{c}}(x)} \right)}=1.$$
    Therefore $(\varphi^t)$ is subcenter-bunched, so Proposition \ref{prop:subcenterbunchC1} applies and shows that for every $x \in M$,  the holonomy of the foliation $\restr{\mathcal{F}^s}{\mathcal{F}^{\hat{c}s}(x)}$ in $\mathcal{F}^{\hat{c}s}(x)$ is uniformly $C^1$ (as well as the holonomy of the foliation $\restr{\mathcal{F}^u}{\mathcal{F}^{\hat{c}u}(x)}$ in $\mathcal{F}^{\hat{c}u}(x)$).
    We prove that it is holomorphic for the stable case. The unstable case is analogous by reversing time.
    \begin{claim}
        For every $x \in M$, $y \in \mathcal{F}^{s}(x)$  and $n \in \mathbb N$, if $V$ is a sufficiently small open neighborhood of $x$ in $\mathcal{F}^{\hat{c}}(x)$, then
        $$h_{xy}^{s}|_V=\varphi^{-n}\circ h_{\varphi^{n}(x),\varphi^{n}(y)}^{s}\circ \varphi^{n}|_V.$$
    \end{claim}
    \begin{proof}
        This is immediate since $\mathcal{F}^{\hat{c}}$ is invariant by $(\varphi^t)$.
    \end{proof}
    Let $x \in M$, $y \in \mathcal{F}^{s}(x)$, $z\in \mathcal{F}^{\hat{c}}_{\mathrm{loc}}(x)$ and $n \in \mathbb N$. We write $w=h_{xy}^{s}(z)$.
    We differentiate the previous equality so
    \[d_zh_{xy}^{s}=(d_w\varphi^{n}|_{E^{\hat{c}}})^{-1}\circ d_{\varphi^n(z)}h_{\varphi^{n}(x)\varphi^{n}(y)}^{s}\circ d\varphi_{z}^{n}|_{E^{\hat{c}}}.\]
Since $d_z\varphi^n|_{E^{\hat{c}}}$ is conformal and $d_{\varphi^n(z)}h_{\varphi^{n}(x)\varphi^{n}(y)}^{s}$ is close to the identity as $n$ goes to $+\infty$, it comes that $d_zh_{xy}^{s}$ is conformal thus holomorphic and so is $h_{xy}^{s}$.
\end{proof}

\begin{corollary}
    Let $(\varphi^t)$ a transversely holomorphic partially hyperbolic dynamically coherent flow on a smooth compact manifold $M$. Assume $\dim_{\mathbb C} E^{\hat{c}}=1$.\\
    Then for every $p \in M$, $\restr{\mathcal{F}^u}{\mathcal{F}^{\hat{c}u}(p)}$ is a holomorphic foliation in $\mathcal{F}^{\hat{c}u}(p)$ and $\restr{\mathcal{F}^s}{\mathcal{F}^{\hat{c}s}(p)}$ is a holomorphic foliation in $\mathcal{F}^{\hat{c}s}(p)$.
\end{corollary}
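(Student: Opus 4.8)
The plan is to recognize this statement as nothing more than the combination of the preceding corollary with the characterization of transversely holomorphic foliations with holomorphic leaves (Proposition \ref{prop:journeholomo}). I treat the unstable case; the stable one is identical, or follows by applying the result to the time-reversed flow.

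First I would fix $p \in M$ and note that, by the last part of Proposition \ref{prop:leavesholo}, the leaf $\mathcal{F}^{\hat{c}u}(p)$ carries an integrable (immersed) complex structure $J^{(p)}_{\hat{c}u}$ induced by the almost complex structure $I$ restricted to $E^{\hat{c}u}=E^u\oplus E^{\hat{c}}$. The proof of Proposition \ref{prop:leavesholo} establishes that $I$ preserves $E^u$ (one shows $iv\in E^u_x$ for $v\in E^u_x$), so $E^u$ is a complex subbundle of $E^{\hat{c}u}$. Hence, for every $x\in \mathcal{F}^{\hat{c}u}(p)$, the tangent space $E^u_x$ to the leaf $\mathcal{F}^u(x)$ of $\restr{\mathcal{F}^u}{\mathcal{F}^{\hat{c}u}(p)}$ is $J^{(p)}_{\hat{c}u}$-invariant, so that this leaf is a complex submanifold of $\mathcal{F}^{\hat{c}u}(p)$; moreover the induced complex structure agrees with the intrinsic one furnished on $\mathcal{F}^u(x)$ by Proposition \ref{prop:leavesholo}, since both are induced by $\restr{I}{E^u}$. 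Thus $\restr{\mathcal{F}^u}{\mathcal{F}^{\hat{c}u}(p)}$ is a foliation with holomorphic leaves of the complex manifold $\mathcal{F}^{\hat{c}u}(p)$.

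Next I would invoke Corollary \ref{cor:FçholoinFçs} (whose hypothesis $\dim_{\mathbb C} E^{\hat{c}}=1$ is exactly the one assumed here), which gives precisely that the holonomy of $\restr{\mathcal{F}^u}{\mathcal{F}^{\hat{c}u}(p)}$ inside $\mathcal{F}^{\hat{c}u}(p)$ is holomorphic; by definition this says the foliation is transversely holomorphic. Having both that the leaves are holomorphic and that the holonomy is holomorphic, the conclusion is immediate from Proposition \ref{prop:journeholomo}: a transversely holomorphic foliation with holomorphic leaves on a complex manifold is a holomorphic foliation. This applies verbatim with the complex manifold taken to be $\mathcal{F}^{\hat{c}u}(p)$, since the statement and the Osgood-type argument of Proposition \ref{prop:journeholomo} are local, so the fact that $\mathcal{F}^{\hat{c}u}(p)$ is only immersed is harmless. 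Hence $\restr{\mathcal{F}^u}{\mathcal{F}^{\hat{c}u}(p)}$ is a holomorphic foliation, and the analogous argument handles $\restr{\mathcal{F}^s}{\mathcal{F}^{\hat{c}s}(p)}$.

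The only point requiring genuine care — and the one I expect to be the main obstacle — is the compatibility of complex structures: verifying that the complex structure induced on an unstable leaf as a submanifold of $\mathcal{F}^{\hat{c}u}(p)$ coincides with the intrinsic one from Proposition \ref{prop:leavesholo}, and that $E^u$ is genuinely a complex subbundle of $E^{\hat{c}u}$. Both reduce to bookkeeping with the single flow-holonomy-invariant almost complex structure $I$ on $E^u\oplus E^{\hat{c}}\oplus E^s$, which is already the content of the proof of Proposition \ref{prop:leavesholo}; once this is recorded, no further analysis is needed beyond citing Proposition \ref{prop:journeholomo}.
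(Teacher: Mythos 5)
Your proposal is correct and takes essentially the same route as the paper: the paper's proof consists precisely of citing the preceding corollary (holomorphic holonomy of $\restr{\mathcal{F}^u}{\mathcal{F}^{\hat{c}u}(p)}$), Proposition \ref{prop:leavesholo} (complex structure on $\mathcal{F}^{\hat{c}u}(p)$ with holomorphic unstable leaves), and Proposition \ref{prop:journeholomo}, exactly the three ingredients you assemble. The compatibility-of-complex-structures bookkeeping you flag as the delicate point is exactly what the paper leaves implicit in calling the conclusion ``immediate.''
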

\begin{proof}
    This is an immediate consequence of the previous result, Proposition \ref{prop:leavesholo} and Proposition \ref{prop:journeholomo}.
\end{proof}

\begin{corollary}
\label{cor:Fçholoequiv}
    Let $(\varphi^t)$ a transversely holomorphic partially hyperbolic dynamically coherent flow on a smooth compact manifold $M$. Assume $\dim_{\mathbb C} E^{\hat{c}}=1$ and $(\varphi^t)$ admits global $su$ holonomy maps.\\
    Then for every $x, y \in M$, $\mathcal{F}^{\hat{c}}(x)$ and $\mathcal{F}^{\hat{c}}(y)$ are holomorphically equivalent.
\end{corollary}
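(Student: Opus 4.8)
The plan is to prove that the relation ``$\mathcal{F}^{\hat{c}}(x)$ is holomorphically equivalent to $\mathcal{F}^{\hat{c}}(y)$'' partitions $M$ into \emph{open} classes, so that the connectedness of $M$ forces a single class. The biholomorphisms realizing the equivalence will be produced by composing the three types of global holonomy maps, each of which I first argue is a biholomorphism of \emph{entire} subcenter leaves.

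First I would record that the global flow holonomy $\restr{\varphi^t}{\mathcal{F}^{\hat{c}}(x)}:\mathcal{F}^{\hat{c}}(x)\to\mathcal{F}^{\hat{c}}(\varphi^t(x))$ is a biholomorphism for every $t\in\mathbb R$ and $x\in M$; this is exactly Proposition \ref{prop:leavesholo}. Next, for $y\in\mathcal{F}^s(x)$ the global stable holonomy $h^s_{xy}:\mathcal{F}^{\hat{c}}(x)\to\mathcal{F}^{\hat{c}}(y)$ is a homeomorphism by hypothesis, and by construction it coincides locally with the local stable holonomy of $\restr{\mathcal{F}^s}{\mathcal{F}^{\hat{c}s}(x)}$ inside $\mathcal{F}^{\hat{c}s}(x)$, which is holomorphic by Corollary \ref{cor:FçholoinFçs}. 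Since a homeomorphism between complex manifolds of dimension one that is locally biholomorphic is a biholomorphism, $h^s_{xy}$ is a biholomorphism; reversing time gives that each global unstable holonomy $h^u_{xy}$ is a biholomorphism as well.

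I would then define $x\sim y$ when $y$ is reached from $x$ by a finite concatenation of stable, unstable and flow holonomy maps followed by a motion inside a single subcenter leaf. This is an equivalence relation (the maps are invertible and composable), each class is a union of subcenter leaves, and by the previous step any two subcenter leaves lying in the same class are holomorphically equivalent. It remains to show that each class is open. Fixing $x$ and a point $y$ close to $x$, I would use that $E^s\oplus E^u\oplus\mathbb RX\oplus E^{\hat{c}}=TM$ and that the corresponding continuous foliations $\mathcal{F}^s,\mathcal{F}^u,\Phi,\mathcal{F}^{\hat{c}}$ are transverse: the dynamically coherent flow then has a local product structure, so for $y$ in a small enough neighborhood of $x$ there are parameters giving $x_1=\varphi^t(x)$, a point $x_2\in\mathcal{F}^s_{\mathrm{loc}}(x_1)$, a point $x_3\in\mathcal{F}^u_{\mathrm{loc}}(x_2)$, and $y\in\mathcal{F}^{\hat{c}}(x_3)$. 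Composing $\restr{\varphi^t}{\mathcal{F}^{\hat{c}}(x)}$, $h^s_{x_1x_2}$ and $h^u_{x_2x_3}$ exhibits $\mathcal{F}^{\hat{c}}(y)=\mathcal{F}^{\hat{c}}(x_3)$ as biholomorphic to $\mathcal{F}^{\hat{c}}(x)$, so $y\sim x$; hence the class of $x$ contains a neighborhood of $x$ and is open.

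The classes are thus disjoint open sets covering $M$, so each is clopen, and the connectedness of $M$ yields exactly one class, proving that $\mathcal{F}^{\hat{c}}(x)$ and $\mathcal{F}^{\hat{c}}(y)$ are holomorphically equivalent for all $x,y\in M$. The hard part will be the openness step: one must justify the decomposition of a neighborhood of $x$ into flow, stable, unstable and subcenter coordinates for a splitting that is only continuous, and verify that the global holonomy maps restrict to and agree with the local holonomies used there, so that the composed map is a genuine biholomorphism defined on the full leaf rather than only a germ near a point.
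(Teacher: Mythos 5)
Your proposal is correct, and its essential engine is the same as the paper's: everything reduces to the fact that each of the three types of global holonomy maps is a biholomorphism between \emph{entire} subcenter leaves --- the stable and unstable ones because they coincide locally with the local holonomies inside $\mathcal{F}^{\hat{c}s}$- and $\mathcal{F}^{\hat{c}u}$-leaves, which are holomorphic by Corollary \ref{cor:FçholoinFçs} (and a locally biholomorphic homeomorphism of Riemann surfaces is a biholomorphism), the flow ones by Propositions \ref{prop:holoflo} and \ref{prop:leavesholo} --- so that any finite composition of them is a biholomorphism. Where you genuinely diverge is in producing, for arbitrary $x,y$, a finite chain of such maps joining $\mathcal{F}^{\hat{c}}(x)$ to $\mathcal{F}^{\hat{c}}(y)$. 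The paper outsources this: flow invariance of $\mathcal{F}^{\hat{c}}$ upgrades global $su$-holonomies to global $su\Phi$-holonomies, and then Lemma \ref{lem:subcenthomeo} (Lemmas 2.13--2.14 of \cite{abouanass_dynamical_2026}) is invoked, whose proof exhibits the homeomorphism between any two subcenter leaves exactly as such a finite composition. You instead reprove this connectivity inside the present paper by an open--closed argument: classes of leaves joined by finite chains, openness via a local product decomposition (in effect $\mathcal{F}^{ws}$ against $\mathcal{F}^{\hat{c}u}$, then $\Phi$ against $\mathcal{F}^{s}$ inside weak-stable leaves, then $\mathcal{F}^{u}$ against $\mathcal{F}^{\hat{c}}$ inside $\hat{c}u$-leaves), and connectedness of $M$. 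This is sound: all the foliations involved are integral, so on a compact manifold uniform transversality gives the local product structures you need --- this is precisely the content of statements like Lemma \ref{lem:metricweak} and Proposition 6.12 of \cite{abouanass_dynamical_2026} --- and the coincidence of global with local holonomies, which you rightly flag, is asserted by the paper when global holonomy maps are introduced. So your route buys self-containedness (no appeal to the structure of the proof of Lemma \ref{lem:subcenthomeo}), at the cost of having to verify by hand the product-structure facts that the cited lemma already encapsulates; the paper's route is shorter but leans on the companion paper.
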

\begin{proof}
    Since $\mathcal{F}^{\hat{c}}$ is invariant by the flow, it admits global flow holonomy maps.
    Therefore, it admits global $su\Phi$ holonomy maps and all subcenter leaves are homeomorphic thanks to Lemma \ref{lem:subcenthomeo}.
    As the proof of this lemma shows, any such homeomorphism is obtained as a finite composition of:
    \begin{itemize}
        \item global stable holonomies inside some stable subcenter leaves ;
        \item global unstable holonomies inside some unstable subcenter leaves ;
        \item global flow holonomies inside some center leaves.
    \end{itemize}
    By Corollary \ref{cor:FçholoinFçs}, the two first types are composed of holomorphic maps since they coincide locally with local holonomy maps.
    By Propositions \ref{prop:holoflo} and \ref{prop:leavesholo}, the third type is also composed of holomorphic maps, which concludes.
\end{proof}

Let $(\varphi^t)_{t\in \mathbb R}$ a transversely holomorphic partially hyperbolic flow on a smooth compact manifold $M$ with a flow invariant subcenter foliation $\mathcal{F}^{\hat{c}}$. 
Then by the local product structure of $\mathcal{F}^u$ and $\mathcal{F}^{cs}$ as well as Proposition \ref{prop:leavesholo}, it comes that $\mathcal{F}^u$ is a continuous foliation with holomorphic leaves, in the sense that there exists a $C^0$ foliated atlas for $\mathcal{F}^u$ whose restriction to unstable plaques is a holomorphic diffeomorphism.
Therefore, if we assume $\dim_{\mathbb C}E^u=1$, one can reiterate the proof of Theorem A of \cite{abouanass_global_2025} to show:
\begin{proposition}
    Let $(\varphi^t)_{t\in \mathbb R}$ a transversely holomorphic partially hyperbolic flow on a smooth compact manifold $M$ with a flow invariant subcenter foliation $\mathcal{F}^{\hat{c}}$. Assume $\dim_{\mathbb C}E^u=1$.\\
    Then there exists a unique family of complex affine structures on the unstable leaves $(\mathcal{F}^{u}(x))_{x\in M}$, holomorphically compatible with the initial complex structures introduced in Proposition \ref{prop:leavesholo} such that: for every $x \in M$ and every $t \in \mathbb R$, the map 
    \[\restr{\varphi^t}{\mathcal{F}^{u}(x)}: \mathcal{F}^{u}(x) \to \mathcal{F}^{u}(\varphi^t(x))\] 
    is affine.\\
Moreover, each of these complex affine structures is complete.
\end{proposition}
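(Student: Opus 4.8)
The plan is to construct, on each unstable leaf, a flow--equivariant holomorphic \emph{developing map} to $\mathbb{C}$ and to define the affine structure as the pullback of the standard affine structure of $\mathbb{C}$; the existence of such a developing map is exactly a holomorphic non--stationary linearization of the contraction $\varphi^{-t}$ along $\mathcal F^u$. Recall from Proposition~\ref{prop:leavesholo} that each leaf $\mathcal F^u(x)$ is a simply connected Riemann surface on which $\varphi^t$ acts biholomorphically, and that, since $\dim_{\mathbb C}E^u=1$, the derivative cocycle $D_x\varphi^t|_{E^u}\colon T_x\mathcal F^u(x)\to T_{\varphi^t(x)}\mathcal F^u(\varphi^t(x))$ is multiplication by a nonzero complex scalar, hence conformal. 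By partial hyperbolicity the backward flow $\varphi^{-t}$ contracts $\mathcal F^u$ uniformly, so the leaves play the role of uniformly contracted fibers for $\varphi^{-t}$, and the cocycle is automatically conformally bunched because $E^u$ is complex one--dimensional.

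First I would fix, using an auxiliary smooth metric, reference holomorphic charts $c_z\colon \mathcal F^u(z)\to T_z\mathcal F^u(z)$ normalized by $c_z(z)=0$ and $D_zc_z=\mathrm{Id}$, depending continuously on $z$, and set
\[
H_x(y)=\lim_{t\to+\infty}\Big(D_{\varphi^{-t}(x)}\varphi^{t}\big|_{E^u}\Big)\circ c_{\varphi^{-t}(x)}\circ \varphi^{-t}(y),\qquad y\in\mathcal F^u(x).
\]
The existence of this limit, uniform on compact subsets of the leaf, is the analytic heart of the construction: it follows from the uniform exponential contraction of $\varphi^{-t}$ along $\mathcal F^u$ together with the bounded distortion coming from holomorphicity (the scalar rescaling by the conformal cocycle compensates exactly the contraction), and it is precisely the content of the holomorphic non--stationary linearization of $\mathcal F^u$ (cf.\ \cite{kalinin_cocycles_2013}, \cite{sadovskaya_uniformly_nodate}), as used when reproducing the proof of Theorem~A of \cite{abouanass_global_2025}. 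Being a locally uniform limit of holomorphic maps, $H_x$ is holomorphic, is a biholomorphism onto its image, and satisfies $H_x(x)=0$, $D_xH_x=\mathrm{Id}$; the chain rule (cocycle property) for the derivative then yields the equivariance
\[
H_{\varphi^t(x)}\circ\varphi^t=\big(D_x\varphi^t|_{E^u}\big)\circ H_x\qquad(t\in\mathbb R).
\]

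Next I would define the complex affine structure on $\mathcal F^u(x)$ as the pullback by $H_x$ of the standard complex affine structure of $\mathbb C\cong T_x\mathcal F^u(x)$. Since $H_x$ is holomorphic for the complex structure of Proposition~\ref{prop:leavesholo}, this affine structure is holomorphically compatible with it. Because $D_x\varphi^t|_{E^u}$ is $\mathbb C$--linear, hence affine and fixing the origin, the displayed equivariance says exactly that $H_{\varphi^t(x)}\circ\varphi^t\circ H_x^{-1}$ is an affine automorphism of $\mathbb C$; thus $\restr{\varphi^t}{\mathcal F^u(x)}$ is affine in these structures, as required.

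There remain completeness and uniqueness, and I expect \emph{completeness to be the main obstacle}. Completeness amounts to showing that $H_x$ is onto \emph{all of} $\mathbb C$, i.e.\ that the developing map is a global affine chart (which in particular forces $\mathcal F^u(x)\cong\mathbb C$). I would argue this from the expansion: the image $\Omega_x=H_x(\mathcal F^u(x))$ is an open neighbourhood of $0$, so it contains some ball $B(0,r)$, and the equivariance gives $\Omega_{\varphi^t(x)}=\big(D_x\varphi^t|_{E^u}\big)\Omega_x\supset B\big(0,|D_x\varphi^t|_{E^u}|\,r\big)$, where the scalar $D_x\varphi^t|_{E^u}$ has modulus tending to $\infty$; choosing a recurrent $x$ and using continuity of $z\mapsto\Omega_z$ then forces $\Omega_x$ to contain balls of arbitrarily large radius, whence $\Omega_x=\mathbb C$, and this propagates to every $x$ by flow--invariance of the set $\{\Omega_x=\mathbb C\}$ and continuity. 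For uniqueness, if $H_x,H_x'$ are two equivariant developing maps giving the required structures, then on each leaf $H_x'\circ H_x^{-1}$ is an affine automorphism $z\mapsto\alpha_x z+\beta_x$ of $\mathbb C$; the two equivariances force it to conjugate each dilation $D_x\varphi^t|_{E^u}$ to itself, so $\beta_x=0$ (no nonzero translation survives conjugation by unbounded dilations) and the normalization $D_xH_x=D_xH_x'=\mathrm{Id}$ gives $\alpha_x=1$. Equivalently, two equivariant affine structures differ by a flow--equivariant holomorphic $1$--form on the leaf, which must vanish under the expanding cocycle. This yields uniqueness and completes the plan.
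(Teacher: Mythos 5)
Your construction is essentially the paper's own route: the paper proves this proposition by rerunning the proof of Theorem A of \cite{abouanass_global_2025}, i.e.\ the same renormalization scheme it later formalizes as the holomorphic non-stationary linearization of Proposition \ref{prop:non-statiolin}; your developing map $H_x$ is exactly $\theta_x^{-1}$ there, and your treatment of the existence of the limit (via \cite{kalinin_cocycles_2013}, \cite{sadovskaya_uniformly_nodate}), of the flow equivariance, and of the affineness of $\restr{\varphi^t}{\mathcal{F}^{u}(x)}$ in the pulled-back structure is sound.

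The completeness step, however, has a genuine gap --- and you correctly anticipated it would be the delicate point. You push a ball $B(0,r)\subset\Omega_x$ forward to get large balls in $\Omega_{\varphi^t(x)}$, then return to $x$ via a recurrent point and ``continuity of $z\mapsto\Omega_z$''. This fails twice. First, recurrent points need not be dense for a general partially hyperbolic flow (the non-wandering set can be a proper subset of $M$), so proving $\Omega_x=\mathbb{C}$ at recurrent points and invoking flow-invariance of the set $\{\Omega_x=\mathbb{C}\}$ does not propagate the conclusion to all of $M$. Second, the continuity the construction actually provides --- locally uniform convergence of $H_{z'}$ to $H_z$ on compact leaf pieces --- yields only \emph{lower} semicontinuity of the inradius of $\Omega_z$ at $0$, whereas your step ``$\varphi^{t_n}(x)\to x$ and $\Omega_{\varphi^{t_n}(x)}$ contains arbitrarily large balls, hence so does $\Omega_x$'' needs \emph{upper} semicontinuity, which is not available for such limits (a priori some leaves could even be conformal disks while nearby ones are planes). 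The standard repair removes recurrence altogether: by continuity of $(H_z)_{z\in M}$ on local leaves and compactness of $M$ (or by the Koebe quarter theorem, available since $\dim_{\mathbb C}E^{u}=1$), there is $r_0>0$ such that $H_z(\mathcal{F}^{u}_{\operatorname{loc}}(z))\supset B(0,r_0)$ for every $z\in M$; then for every $x\in M$ and $t>0$, equivariance applied at $\varphi^{-t}(x)$ gives
\[
\Omega_x \supset D_{\varphi^{-t}(x)}\varphi^{t}|_{E^{u}}\bigl(B(0,r_0)\bigr)\supset B\bigl(0,C^{-1}\beta^{t}r_0\bigr),
\]
so letting $t\to+\infty$ yields $\Omega_x=\mathbb{C}$ for every $x$, with no recurrence and no continuity of $\Omega_z$ needed. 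Finally, your uniqueness paragraph is circular as stated: that $H'_x\circ H_x^{-1}$ is an affine automorphism is precisely what must be proved when comparing two affine structures, not something that can be assumed. Your parenthetical reformulation is the correct mechanism --- the two structures differ by a holomorphic $1$-form on each leaf, equivariant under the cocycle --- but killing that $1$-form again requires a uniform bound on a fixed disk (continuity in $x$ plus compactness of $M$) before letting the contraction $\varphi^{-t}$ act; the dilation-conjugation remark alone only disposes of the translation part.
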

The previous result remains true if we replace "unstable" by "stable". 
\subsection{Non-stationary linearization and holonomy of $d\varphi^1|_{E^u}$}
Let $(\varphi^t)_{t\in \mathbb R}$ a transversely holomorphic partially hyperbolic flow on a smooth compact manifold $M$ with a flow invariant subcenter foliation $\mathcal{F}^{\hat{c}}$.
We proceed as in \cite{xu_holomorphic_2025}.
By \cite{kalinin_cocycles_2013}, the bundle $E^{u}$ is a Hölder continuous subbundle of $TM$ with some Hölder exponent $\beta>0$. Therefore $d\varphi^1|_{E^{u}}$ defines a Hölder continuous linear cocycle over $\varphi^1$ as been defined in \cite{kalinin_cocycles_2013}. 
For $x,y\in M$ two points closed enough, we define $I_{xy}:E^{u}_{x}\to E^{u}_{y}$ to be a linear identification which is $\beta$-Hölder close to the identity.
By Proposition \ref{prop:leavesholo}, $d\varphi^1|_{E^{u}}$ is holomorphic thus conformal if $\dim_{\mathbb C} E^u=1$ so it is fiber bunched in the sense of the authors. As a result, it follows:

\begin{proposition}
\label{prop:Hu}
For every $x\in M$ and $y\in \mathcal{F}^{u}(x)$, the quantity
\[
H^{u}_{xy}:=\lim_{n\to+\infty}d_{\varphi^{-n}y}\varphi^{n}|_{E^{u}}\circ I_{\varphi^{-n}x\varphi^{-n}y}\circ d_{x}\varphi^{-n}|_{E^{u}}
\]
exists and defines a $\mathbb{R}$-linear map from $E^{u}_{x}$ to $E^{u}_{y}$ with the following properties:
\begin{enumerate}[label=(\roman*)]
    \item $H^{u}_{xx}=\operatorname{Id}$ and $H^{u}_{yx}\circ H^{u}_{xy}=H^{u}_{xx}$;
    \item $H^{u}_{xy}=d_{\varphi^{-n}y}\varphi^{n}|_{E^{u}}\circ H^{u}_{\varphi^{-n}x\varphi^{-n}y}\circ d_{x}\varphi^{-n}$ for any $n\in \mathbb N$;
    \item $\|H^{u}_{xy}-I_{xy}\|\leqslant Cd(x,y)^{\beta}$, where $\beta$ is the exponent of Hölder continuity for $E^{u}$.
\end{enumerate}
\end{proposition}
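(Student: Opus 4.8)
The plan is to realize $H^u_{xy}$ as the limit of the sequence $H_n := d_{\varphi^{-n}y}\varphi^{n}|_{E^u}\circ I_{\varphi^{-n}x\varphi^{-n}y}\circ d_x\varphi^{-n}|_{E^u}$ and to prove that this sequence is Cauchy, the three listed properties then following from the construction. Write $A(z):=d_z\varphi^1|_{E^u}$ for the cocycle generator and $A^n(z):=d_z\varphi^n|_{E^u}$, so that $d_z\varphi^{-n}|_{E^u}=(A^n(\varphi^{-n}z))^{-1}$ and $H_n=A^n(\varphi^{-n}y)\,I_{\varphi^{-n}x\varphi^{-n}y}\,(A^n(\varphi^{-n}x))^{-1}$; in particular $H_0=I_{xy}$. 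Set $x_n:=\varphi^{-n}x$ and $y_n:=\varphi^{-n}y$; since $y\in\mathcal F^u(x)$, backward iteration contracts the unstable leaf and $d(x_n,y_n)\to 0$ exponentially, so the terms $I_{x_ny_n}$ are defined for $n$ large and the limit makes sense.

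For the convergence, the cocycle relation $A^{n+1}(x_{n+1})=A^n(x_n)A(x_{n+1})$ lets me write the increment as a conjugation
\[
H_{n+1}-H_n=A^n(y_n)\Bigl[A(y_{n+1})\,I_{x_{n+1}y_{n+1}}\,A(x_{n+1})^{-1}-I_{x_ny_n}\Bigr](A^n(x_n))^{-1}.
\]
The bracketed term is a map $E^u_{x_n}\to E^u_{y_n}$ close to the identity; using Hölder continuity of $A$ and the bound $\|I_{x_{n+1}y_{n+1}}-\operatorname{Id}\|\le Cd(x_{n+1},y_{n+1})^\beta$ one estimates its norm by $C'd(x_n,y_n)^\beta$. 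It remains to control the conjugation factor $\|A^n(y_n)\|\cdot\|(A^n(x_n))^{-1}\|$, and this is exactly where fiber bunching, i.e. conformality, enters: since $A$ is conformal one has $\|A^n(z)\|=m(A^n(z))$, hence $\|(A^n(x_n))^{-1}\|=\|A^n(x_n)\|^{-1}$ and
\[
\|A^n(y_n)\|\cdot\|(A^n(x_n))^{-1}\|=\frac{\|A^n(y_n)\|}{\|A^n(x_n)\|}=\exp\sum_{j=1}^{n}\bigl(\log\|A(y_j)\|-\log\|A(x_j)\|\bigr).
\]
Because $\log\|A\|$ is Hölder and $d(x_j,y_j)$ decays geometrically, the series converges absolutely, so this ratio is bounded uniformly in $n$. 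Combining the two estimates gives $\|H_{n+1}-H_n\|\le C''d(x_n,y_n)^\beta$, which is summable; hence $\{H_n\}$ converges, and summing from $H_0=I_{xy}$ yields $\|H^u_{xy}-I_{xy}\|\le Cd(x,y)^\beta$, which is property (iii).

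The remaining properties are then formal. Property (ii) follows by reindexing the defining limit: applying $A^n(y_n)A^m(y_{m+n})=A^{m+n}(y_{m+n})$ and its analogue on the $x$-orbit shows $A^n(y_n)\,H^u_{x_ny_n}\,(A^n(x_n))^{-1}=\lim_{m}H_{m+n}=H^u_{xy}$. For property (i), the case $y=x$ gives $I_{x_nx_n}=\operatorname{Id}$ and hence $H_n=\operatorname{Id}$ for all $n$, so $H^u_{xx}=\operatorname{Id}$; the inverse relation is the case $z=x$ of the composition law $H^u_{yz}\circ H^u_{xy}=H^u_{xz}$, which holds because in the product of the two limits the near-identity factor $I_{y_nz_n}I_{x_ny_n}-I_{x_nz_n}$ is $O(d(x_n,z_n)^\beta)\to 0$ while the conjugating factor stays bounded by the same estimate as above.

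The main obstacle is precisely the uniform boundedness of the conjugation factor $\|A^n(y_n)\|\,\|(A^n(x_n))^{-1}\|$: without it the geometric decay of $d(x_n,y_n)^\beta$ could be overwhelmed by the growth of the cocycle along the two orbits. Conformality resolves this by collapsing the product to $\|A^n(y_n)\|/\|A^n(x_n)\|$, a telescoping sum of Hölder differences of $\log\|A\|$ along two exponentially converging orbits. This is exactly the fiber-bunching condition of \cite{kalinin_cocycles_2013,sadovskaya_uniformly_nodate} specialized to the one-dimensional conformal setting, and it is the only place the hypothesis $\dim_{\mathbb C}E^u=1$ is used; alternatively one may simply invoke the holonomy theorem for fiber-bunched cocycles from those references.
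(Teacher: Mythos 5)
Your proof is correct, and it rests on the same key observation as the paper: in complex dimension one, the $\mathbb{C}$-linearity of $d\varphi^1|_{E^u}$ supplied by Proposition \ref{prop:leavesholo} makes the cocycle conformal, which is the fiber-bunching hypothesis of \cite{kalinin_cocycles_2013}. The difference lies in what is done with this observation. The paper stops there: its ``proof'' of Proposition \ref{prop:Hu} consists of this reduction followed by a citation of the holonomy theorem for fiber-bunched cocycles, i.e.\ exactly the alternative you offer in your closing sentence. You instead write out the proof of that theorem in the conformal case: the telescoping estimate $\|H_{n+1}-H_n\|\le C''d(x_n,y_n)^\beta$, with the conjugation factor collapsed to $\|A^n(y_n)\|/\|A^n(x_n)\|$ and controlled by a convergent series of H\"older differences, and the formal derivation of (i)--(iii) from the construction (your verifications of the cocycle reindexing for (ii) and of the composition law for (i) are sound). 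What your version buys is a self-contained argument that displays exactly where $\dim_{\mathbb C}E^u=1$ enters; what the citation buys is brevity and the extra generality of the non-conformal fiber-bunched case.

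One point you should tighten: your estimate uses simultaneously that $\|A^n(z)\|=m(A^n(z))$ (exact conformality, hence multiplicativity of norms under composition) and that $\log\|A\|$ is H\"older, and these two properties must hold for the \emph{same} metric on $E^u$. The ambient smooth Riemannian metric is in general not Hermitian for the complex structure $I$ on $E^u$, so $A$ is only uniformly quasiconformal with respect to it; conversely, the merely continuous $I$-Hermitian metric constructed in the proof of Proposition \ref{prop:leavesholo} gives exact conformality but no H\"older control on the conformal factor, and continuity alone does not make the series $\sum_j\bigl(\log\|A(y_j)\|-\log\|A(x_j)\|\bigr)$ converge. The repair is standard: $I|_{E^u}$ is H\"older, being the restriction of the smooth transverse almost complex structure to the H\"older bundle $E^u$, so $\|v\|_0^2:=\|v\|^2+\|Iv\|^2$ defines a H\"older $I$-invariant metric, with respect to which $\mathbb{C}$-linear maps are exactly conformal; running your estimate in $\|\cdot\|_0$ and converting back to the smooth metric costs only uniform constants. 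With that adjustment (which the paper itself also leaves implicit) your argument is complete.
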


Similarly, if $y \in \mathcal{F}^s(x)$, then the quantity
\[
\lim_{n\to+\infty}(D\varphi^{n}_{y}|_{E^{u}})^{-1}\circ I_{\varphi^{n}x\varphi^{n}y}\circ D\varphi^{n}_{x}|_{E^{u}}:=H^{s}_{xy}
\]
exists and gives a linear map from $E^{s}_{x}$ to $E^{s}_{y}$ with analogous properties. $H^{u}$ and $H^{s}$ are called the \emph{unstable holonomy} and \emph{stable holonomy} for $d\varphi^1|_{E^{u}}$ respectively.

\begin{remark}
Since $d\varphi^1|_{E^{u}}$ is $\mathbb{C}$-linear and $I_{xy}$ is close to a conformal linear map when $x$ and $y$ are close, $H^{u}$ and $H^{s}$ are actually $\mathbb{C}$-linear.
\end{remark}

We now introduce the non-stationary linearization of $\mathcal{F}^{u}$ as defined in \cite{sadovskaya_uniformly_nodate} for a discrete dynamical system:

\begin{proposition}   
\label{prop:non-statiolin}
Let $(\varphi^t)$ a transversely holomorphic partially hyperbolic flow on a smooth compact manifold $M$. Assume $\dim_{\mathbb C} E^u=1$.\\
Then for every $x \in M$, there exists a holomorphic diffeomorphism $\theta_x: E^u_x \to \mathcal{F}^u(x)$ such that:
\begin{enumerate}[label=(\roman*)]
    \item For every $t \in \mathbb R, \; \theta_{\varphi^t(x)}\circ d_x\varphi^t=\varphi^t\circ \theta_x$;
    \item $\theta_x(0)=x$ and $d_0\theta_x=\text{Id}$;
    \item The family of diffeomorphisms $(\theta_x)_{x \in M}$ varies continuously with $x$.
\end{enumerate}
Moreover, such a family is unique.\\
The same can be said with the strong stable distribution.
\end{proposition}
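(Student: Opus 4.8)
The plan is to construct $\theta_x$ by pulling back local holomorphic charts of the unstable leaves through the backward dynamics, in the spirit of the Sternberg/Katok--Lewis linearization adapted to the non-autonomous setting of \cite{sadovskaya_uniformly_nodate}. Using Proposition \ref{prop:leavesholo}, I would fix for each $x$ a local biholomorphism $c_x\colon E^u_x\supset U\to\mathcal{F}^u(x)$ with $c_x(0)=x$ and $d_0c_x=\operatorname{Id}$, chosen to depend continuously on $x$ and defined on a uniform ball $U$ (possible since the leafwise complex structures of Proposition \ref{prop:leavesholo} vary continuously and $E^u$ is H\"older by \cite{kalinin_cocycles_2013}). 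Recalling that $d_x\varphi^t|_{E^u}$ is $\mathbb C$-linear (Proposition \ref{prop:leavesholo}), set
\[
\theta_x^{(n)} := \varphi^{n}\circ c_{\varphi^{-n}(x)}\circ\bigl(d_x\varphi^{-n}|_{E^u}\bigr),
\]
a holomorphic map on a ball of radius $\sim\beta^{n}$ in $E^u_x$ (the contraction $d_x\varphi^{-n}|_{E^u}$ pushes large balls into $U$), with $\theta_x^{(n)}(0)=x$ and $d_0\theta_x^{(n)}=\operatorname{Id}$. The candidate is $\theta_x:=\lim_{n\to\infty}\theta_x^{(n)}$.

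Next comes convergence. Writing $y=\varphi^{-n}(x)$ and $y'=\varphi^{-1}(y)$ and using $d_x\varphi^{-(n+1)}=d_y\varphi^{-1}|_{E^u}\circ d_x\varphi^{-n}|_{E^u}$, one sees that $\theta_x^{(n+1)}$ and $\theta_x^{(n)}$ differ only through the two local biholomorphisms $\varphi^{1}\circ c_{y'}\circ d_y\varphi^{-1}|_{E^u}$ and $c_{y}$ at $y$; both send $0\mapsto y$ with derivative $\operatorname{Id}$, so their difference is $O(|w|^2)$ uniformly in $x$. Evaluating at $w_n=d_x\varphi^{-n}|_{E^u}(v)$ produces a leafwise displacement of order $|d_x\varphi^{-n}|_{E^u}|^{2}|v|^2$ on $\mathcal F^u(y)$, which is then transported to $\mathcal F^u(x)$ by $\varphi^{n}$. \textbf{The main obstacle} is to see that this transport does not reintroduce the full expansion: because $\dim_{\mathbb C}E^u=1$ and the flow acts conformally on the one-dimensional leaves (Proposition \ref{prop:leavesholo}), the Koebe distortion theorem bounds the distortion of the univalent map $\varphi^n|_{\mathcal F^u(y)}$ over the relevant disk, so its effect on the displacement is comparable to its derivative $|d_y\varphi^{n}|_{E^u}|=|d_x\varphi^{-n}|_{E^u}|^{-1}$. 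Hence
\[
\bigl|\theta_x^{(n+1)}(v)-\theta_x^{(n)}(v)\bigr|\lesssim |d_x\varphi^{-n}|_{E^u}|^{-1}\cdot|d_x\varphi^{-n}|_{E^u}|^{2}|v|^2=|d_x\varphi^{-n}|_{E^u}|\,|v|^2\lesssim\beta^{-n}|v|^2,
\]
which is summable and uniform on compacta and in $x$. Thus $(\theta_x^{(n)})$ converges uniformly on compact subsets of $E^u_x$ to a holomorphic map $\theta_x$ (a uniform limit of holomorphic maps into the leaf), giving (ii); and continuity in $x$, giving (iii), follows from this uniform convergence together with the continuous dependence of $c_x$, the flow and the cocycle.

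Finally I would address (i), surjectivity and uniqueness. Equivariance under $f=\varphi^1$ and its powers is immediate from the defining limit; to upgrade to all $t\in\mathbb R$ I would first prove uniqueness and then note that $\tilde\theta_x:=\varphi^{-t}\circ\theta_{\varphi^t(x)}\circ d_x\varphi^t$ again satisfies the normalization, the $f$-equivariance and the continuity, whence $\tilde\theta_x=\theta_x$, which is exactly (i). For uniqueness, if $\theta_x,\theta'_x$ both satisfy (i)--(iii) then $\Psi_x:=(\theta'_x)^{-1}\circ\theta_x$ is holomorphic with $\Psi_x(0)=0$, $d_0\Psi_x=\operatorname{Id}$, univalent on a uniform disk, and by (i) conjugates the linear cocycle to itself: $\Psi_{\varphi^{-n}(x)}=d_x\varphi^{-n}|_{E^u}\circ\Psi_x\circ(d_x\varphi^{-n}|_{E^u})^{-1}$. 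Expanding $\Psi_x(v)=v+\sum_{k\ge2}a_k(x)v^k$ and writing $d_x\varphi^{-n}|_{E^u}$ as multiplication by $\mu_n$ with $|\mu_n|\le C\beta^{-n}$ forces $a_k(\varphi^{-n}(x))=\mu_n^{1-k}a_k(x)$, so $|a_k(\varphi^{-n}(x))|\ge C^{-1}\beta^{n(k-1)}|a_k(x)|\to\infty$ unless $a_k(x)=0$; since the coefficients are uniformly bounded on the compact $M$ by Cauchy estimates on a uniform disk (using (iii)), every $a_k$ vanishes and $\Psi_x=\operatorname{Id}$. Surjectivity of $\theta_x$ onto the whole leaf follows from the completeness of the complex affine structure on $\mathcal F^u(x)$ established above together with the expanding dynamics: any point of $\mathcal F^u(x)$, carried backward by the flow, eventually enters the domain of a local chart and hence lies in the image. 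The strong stable case is obtained by reversing time.
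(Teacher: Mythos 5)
Your proposal is correct, but it takes a genuinely different route from the paper. The paper does not construct $\theta_x$ by hand: for each $t_0<0$ it invokes Proposition 4.1 of \cite{sadovskaya_uniformly_nodate} applied to $\varphi^{t_0}$ to get a unique normalized continuous family $(\theta^{(t_0)}_x)_{x\in M}$ equivariant under $\varphi^{t_0}$, proves that all these families coincide by a three-step ladder (negative integers by induction, negative rationals $-k/m$ by iterating the equivariance $m$ times, negative reals by rational approximation and continuity), which yields (i), and only at the very end obtains holomorphicity of $\theta_x$ via Lemma \ref{lem:derivHu} (Proposition 9 of \cite{butler_uniformly_2018}): $\theta_y^{-1}\circ\theta_x$ is affine with derivative $H^u_{xy}$, which is $\mathbb{C}$-linear, so $d_v\theta_x=H^u_{x\,\theta_x(v)}$ is $\mathbb{C}$-linear. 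You instead re-prove the discrete-time linearization from scratch (pullback of normalized leaf charts, telescoping with distortion control), which makes holomorphicity automatic as a locally uniform limit of holomorphic maps and avoids any appeal to the holonomies $H^u$, $H^s$; and your upgrade from $\varphi^1$-equivariance to full flow equivariance --- apply uniqueness to $\tilde\theta_x=\varphi^{-t}\circ\theta_{\varphi^t(x)}\circ d_x\varphi^t$ --- is a cleaner single-shot argument than the paper's integer/rational/real ladder; note that both hinge on uniqueness within the class of merely $\varphi^{\pm1}$-equivariant normalized continuous families, which your power-series cocycle argument does supply. What the paper's route buys is brevity, by outsourcing the hard estimates; what yours buys is self-containedness. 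If you write yours out in full, two points need more than a sentence: (a) the existence of normalized holomorphic charts $c_x$ on a uniform ball varying continuously with $x$ does not follow formally from Proposition \ref{prop:leavesholo} alone and should be extracted from the uniform smoothness of unstable leaves together with the continuity of the leafwise complex structures (or from the complete affine structures of the proposition preceding \ref{prop:Hu}); and (b) the Koebe step as stated is loose, since $\varphi^n|_{\mathcal{F}^u(y)}$ sends the relevant disk onto a region of size $O(|v|)$, beyond any uniform chart --- the correct implementation is to chain step-by-step bounded distortion (H\"older continuity of $p\mapsto|d_p\varphi^1|_{E^u}|$ plus exponential contraction of backward leafwise distances), or to prove convergence on a uniform small ball and extend to all of $E^u_x$ by the $\varphi^1$-equivariance. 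Neither point is an obstruction.
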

\begin{proof}
    Let $t_0<0$. Since $\dim_{\mathbb C} E^u=1$,  $(\varphi^{t_0},\mathcal{F}^u)$ satisfies the assumptions of Proposition 4.1 of \cite{sadovskaya_uniformly_nodate}. 
    Therefore, there exists a unique family of diffeomorphisms $(\theta^{(t_0)}_x)_{x \in M}$ satisfying:
    \begin{itemize}
        \item For every $x\in M$, $\theta^{(t_0)}_{\varphi^{t_0}(x)}\circ d_{x}\varphi^{t_0}=\varphi^{t_0}\circ \theta^{(t_0)}_{x}$.
        \item $\theta^{(t_0)}_x(0)=x$ and $d_0\theta^{(t_0)}_x=\text{Id}$;
        \item The family of diffeomorphisms $(\theta^{(t_0)}_x)_{x \in M}$ varies continuously with $x$.
    \end{itemize}
    \begin{lemma}
        For every $t\in \mathbb R^*_-$ and $x \in M$, $\theta^{(t)}_x=\theta^{(-1)}_x$.
    \end{lemma}
    \begin{proof}
        (see also \cite{fang_rigidity_2007}). \\
    We first prove that for every $k \in \mathbb N^*$ and $x \in M$, $\theta^{(-k)}_x=\theta^{(-1)}_x$, by induction on $k$.
    For $k=1$ it is immediate. Let $k \in \mathbb N^*$ fixed and assume that for every $x \in M$, $\theta^{(-k)}_x=\theta^{(-1)}_x$.
    We prove $\theta^{(-1)}_x=\theta^{(-k-1)}_x$ for every $x\in M$.
    In order to do that, we show that for every $x\in M$, $\theta^{(-1)}_{\varphi^{-k-1}(x)}\circ d_{x}\varphi^{-k-1}=\varphi^{-k-1}\circ \theta^{(-1)}_{x}$.
    Then the result will follow by uniqueness of $(\theta^{(-k-1)}_x)_{x \in M}$ as the second and third points are obviously true for $(\theta^{(-1)}_x)_{x \in M}$.
    By assumption, for every $x \in M$, $\theta^{(-1)}_{\varphi^{-k}(x)}\circ d_{x}\varphi^{-k}=\varphi^{-k}\circ \theta^{(-1)}_{x}$.
    By composing this equality on the left with $\varphi^{-1}$, it comes 
    $$\varphi^{-k-1}\circ \theta^{(-1)}_{x}=\varphi^{-1}\circ \theta^{(-1)}_{\varphi^{-k}(x)}\circ d_{x}\varphi^{-k}=\theta^{(-1)}_{\varphi^{-1}(\varphi^{-k}(x))}\circ d_{\varphi^{-k}(x)}\varphi^{-1}\circ d_{x}\varphi^{-k}=\theta^{(-1)}_{\varphi^{-k-1}(x)}\circ d_{x}\varphi^{-k-1}$$
    which concludes.\\
    Then we prove that for every $k,m \in \mathbb N^*$ and $x\in M$, $\theta^{(-k/m)}_x=\theta^{(-k)}_x$.
    Fix such $k \in \mathbb N^*$ and $m \in \mathbb N^*$ ($m\geq2$).
    For every $x \in M$, $\theta^{(-k/m)}_{\varphi^{-k/m}(x)}\circ d_{x}\varphi^{-k/m}=\varphi^{-k/m}\circ \theta^{(-k/m)}_{x}$.
    By composing this equality on the left with $\varphi^{-k/m}$, it comes 
    \begin{align*}
    \varphi^{-2k/m}\circ \theta^{(-k/m)}_{x}=\varphi^{-k/m}\circ \theta^{(-k/m)}_{\varphi^{-k/m}(x)}\circ d_{x}\varphi^{-k/m}&=\theta^{(-k/m)}_{\varphi^{-k/m}(\varphi^{-k/m}(x))}\circ d_{\varphi^{-k/m}(x)}\varphi^{-k/m} \circ d_{x}\varphi^{-k/m}\\&=\theta^{(-k/m)}_{\varphi^{-2k/m}(x)}\circ d_{x}\varphi^{-2k/m}.
    \end{align*}
    By induction, it comes that for $r \in \llbracket 1, m \rrbracket$ and $x \in M$, 
    $$\theta^{(-k/m)}_{\varphi^{-rk/m}(x)}\circ d_{x}\varphi^{-rk/m}=\varphi^{-rk/m}\circ \theta^{(-k/m)}_{x}.$$
    Letting $r=m$, this proves, by uniqueness of $(\theta^{(-k)}_x)_{x \in M}$, that for every $x \in M$, $\theta^{(-k/m)}_x=\theta^{(-k)}_x$.
    Therefore, for every $r \in \mathbb Q^*_-$, for every $x \in M$,
    $\theta^{(r)}_x=\theta^{(-1)}_x$.
    Now let $t \in \mathbb R^*_-$ and $(r_n)\in \mathbb Q^{\mathbb N}$ a sequence of rational numbers converging to $t_0$.
    Then for $n$ large enough, $r_n \in \mathbb Q^*_-$ so for every $x \in M$, $\theta^{(r_n)}_x=\theta^{(-1)}_x$.
    In particular, for every $x \in M$, $v \in E^u_{_x}$ and $n$ large enough,
    $$\theta^{(-1)}_{\varphi^{r_n}(x)} \left( d_{x}\varphi^{r_n}(v) \right)=\varphi^{r_n} \left( \theta^{(-1)}_{x}(v) \right).$$
    Since $\theta^{(-1)}_{y}$ varies continuously with $y \in M$, it comes, by letting $n \to +\infty$, that $\theta^{(-1)}_{\varphi^{t}(x)} \left( d_{x}\varphi^{t}(v) \right)=\varphi^{t} \left( \theta^{(-1)}_{x}(v) \right).$
    This proves the result by uniqueness of $(\theta^{(-1)}_{x})_{x \in M}$.
    \end{proof}
    Now let, for $x \in M$, $\theta_x:=\theta_x^{(-1)}$. 
    Then the family of diffeomorphisms $(\theta_x)_{x \in M}$ satisfies points $(ii)$ and $(iii)$.
    Also, since for every $t\in \mathbb R^*_-$ and $x \in M$, 
    $\theta^{(-1)}_{x}=\theta^{(t)}_{x}$, it comes $\theta_{\varphi^{t}(x)}\circ d_{x}\varphi^{t}=\varphi^{t}\circ \theta_{x}$ for $t\in \mathbb R^*_-$ and $x \in M$.
    Now let $t \in \mathbb R^*_+$ and $x \in M$.
    It comes by applying the previous equality with $-t \in \mathbb R^*_{-}$ and $\varphi^{t}(x)\in M$:
    $$\theta_{x}\circ d_{\varphi^{t}(x)}\varphi^{-t}=\theta_{\varphi^{-t}(\varphi^t(x))}\circ d_{\varphi^{t}(x)}\varphi^{-t}=\varphi^{-t}\circ \theta_{\varphi^t(x)}.$$
    By composing on the left with $\varphi^t$, and on the right with $d_{x}\varphi^t$, it comes $\theta_{\varphi^t(x)}\circ d_x\varphi^t=\varphi^t\circ \theta_x$ which is the result for $t\in \mathbb R^*_+$.
    As for $t=0$ this is immediate. 
    
    We eventually prove that for every $x \in M$, $\theta_x:E^u_x \to \mathcal{F}^u(x)$ is holomorphic.
    We use the following Lemma, which is Proposition 9 in \cite{butler_uniformly_2018} ($\varphi^1$ is a $C^\infty$ uniformly $u$-quasiconformal partially hyperbolic diffeomorphism):
    \begin{lemma}
    \label{lem:derivHu}
        For every $x\in M$ and $y\in \mathcal{F}^{u}(x)$, the map $\theta^{-1}_{y}\circ\theta_{x}:E^{u}_{x}\to E^{u}_{y}$ is an affine map with derivative $H^{u}_{xy}$.
    \end{lemma}
    As a result of the previous Lemma and property $(ii)$ of the Proposition, let $x \in M$, $v \in E^u_x$ and $y = \theta_x(v) \in \mathcal{F}^u(x)$.
    Then
    $$d_v\theta_x=d_{v}(\theta_y\circ (\theta_{y}^{-1}\circ\theta_{x}))=d_{0}\theta_y\circ d_v(\theta_{y}^{-1}\circ\theta_{x})=d_v(\theta_{y}^{-1}\circ\theta_{x})=H^u_{xy}$$
    is $\mathbb C$-linear, which exactly means that $\theta_x:E^u_x \to \mathcal{F}^u(x)$ is holomorphic.
\end{proof}
\subsection{Subcenter non-expansiveness and quasiconformality of the center-stable holonomy}
The main result of this subsection is Proposition \ref{prop:hcsquasiconf} which asserts that under the holomorphic coordinates given by the non-stationary linearization of $\mathcal{F}^u$, the center-stable holonomy is quasiconformal. 
Combined with some holomorphic properties, this will lead to the transverse holomorphic structure of the center-stable foliation and thus to the classification.

\begin{definition}
    Let $(\varphi^t)$ a partially hyperbolic smooth flow on a smooth manifold $M$. Assume $\mathcal{F}^*$ exists, where $ * \in \{s, u, \Phi, \hat{c}, \hat{c}s, \hat{c}u , c , cs, cu\} $.\\
    A path $\gamma: [0, 1] \to M$ is called a \emph{good (local) $*$-path} if $ \gamma $ is piecewise $ C^1 $ and lies entirely in one (local) $\mathcal{F}^*$-leaf.
    \end{definition}

\begin{definition}
    Let $(\varphi^t)$ a partially hyperbolic flow on a smooth manifold $M$, with a flow invariant subcenter foliation $\mathcal{F}^{\hat{c}}$ with $C^1$-leaves.\\
    $(\varphi^t)$ is called \emph{subcenter non-expansive} if there exists $ l > 0 $ which satisfies the following property: for any $ x \in M $, $ y \in \mathcal{F}^{\hat{c}}_{\operatorname{loc}}(x) $, $ n \in \mathbb N $ and any good local $ \hat{c} $-path $ \gamma $ from $ x $ to $ y $, there exists a good local $ \hat{c} $-path $ \gamma_n $ from $ \varphi^n(x) $ to $ \varphi^n(y) $ whose length is less than $l$ and such that $ \varphi^n(\gamma) \cdot \gamma_n^{-1} $ represents the identity element in $\text{Hol}(\mathcal{F}^{\hat{c}}(\varphi^n(x)), \varphi^n(x))$.
\end{definition}

\begin{lemma}
    Any partially hyperbolic flow on a smooth manifold $M$, with a flow invariant compact subcenter foliation $\mathcal{F}^{\hat{c}}$ with $C^1$-leaves and trivial holonomy is subcenter non-expansive.
\end{lemma}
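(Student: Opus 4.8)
The plan is to reduce the statement to two independent facts: that the triviality of the holonomy makes the holonomy requirement on $\gamma_n$ automatic, and that the compactness of the leaves forces a uniform bound on their intrinsic diameters, which in turn supplies the constant $l$.

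First I would dispose of the holonomy condition. Since $\mathcal{F}^{\hat{c}}$ has trivial holonomy, for every $x\in M$ and $n\in\mathbb N$ the holonomy group $\operatorname{Hol}(\mathcal{F}^{\hat{c}}(\varphi^n(x)),\varphi^n(x))$ is trivial, so the holonomy homomorphism sends every loop of the leaf $\mathcal{F}^{\hat{c}}(\varphi^n(x))$ based at $\varphi^n(x)$ to the identity. Hence, whatever good $\hat{c}$-path $\gamma_n$ from $\varphi^n(x)$ to $\varphi^n(y)$ we build, the concatenation $\varphi^n(\gamma)\cdot\gamma_n^{-1}$ is such a loop and automatically represents the identity element. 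This leaves the genuine task: producing $\gamma_n$ of length bounded independently of $x$, $y$, $n$ and $\gamma$.

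The key geometric input is a uniform bound on the leafwise diameters. As $\mathcal{F}^{\hat{c}}$ is tangent to the continuous bundle $E^{\hat{c}}$ it is integral, hence has uniformly $C^1$ leaves by Proposition \ref{prop:unifleaves}, and the induced leafwise metric $d^{\hat{c}}$ varies continuously. Since $M$ is compact and $\mathcal{F}^{\hat{c}}$ is a compact foliation with trivial (so finite) holonomy, Epstein's Theorem \ref{thm:epstein} shows the leaf space $B:=M/\mathcal{F}^{\hat{c}}$ is a compact Hausdorff manifold, and the Generalized Reeb Stability Theorem \ref{thm:reeb} provides, around each leaf $L$, a saturated neighborhood that is a trivial product $L\times D$ with leaves the slices $L\times\{t\}$. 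On such a neighborhood the intrinsic diameter of the slices is bounded, so the diameter function is locally bounded on $B$; compactness of $B$ then yields $R>0$ with $\operatorname{diam}_{d^{\hat{c}}}(\mathcal{F}^{\hat{c}}(z))\le R$ for all $z\in M$.

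Finally I would assemble the argument: given $x$, $y\in\mathcal{F}^{\hat{c}}_{\operatorname{loc}}(x)$, $n\in\mathbb N$ and a good local $\hat{c}$-path $\gamma$ from $x$ to $y$, flow-invariance gives $\varphi^n(y)\in\varphi^n(\mathcal{F}^{\hat{c}}(x))=\mathcal{F}^{\hat{c}}(\varphi^n(x))$, so both endpoints lie in one leaf of $d^{\hat{c}}$-diameter at most $R$. Taking $\gamma_n$ to be a piecewise $C^1$ path in that leaf joining $\varphi^n(x)$ to $\varphi^n(y)$ of length at most $R+1$ (possible since the leaf is a compact length space for $d^{\hat{c}}$) and setting $l:=R+1$, the path $\gamma_n$ is a good local $\hat{c}$-path of length $<l$, and by the first step $\varphi^n(\gamma)\cdot\gamma_n^{-1}$ represents the identity in holonomy. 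The hard part is precisely the uniform diameter bound: although the subcenter distribution may be mildly expanded by $d\varphi^n$, so that the length of $\varphi^n(\gamma)$ itself is uncontrolled, the compactness of the leaves caps the distance between $\varphi^n(x)$ and $\varphi^n(y)$ no matter how $\varphi^n$ distorts $\gamma$; it is the interplay of compactness and trivial holonomy, through Epstein and Reeb stability, that makes this bound uniform over all leaves and all iterates.
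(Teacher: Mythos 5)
Your proof is correct, and its skeleton is the same as the paper's: trivial holonomy makes the holonomy condition on $\varphi^n(\gamma)\cdot\gamma_n^{-1}$ automatic, and a uniform bound on the intrinsic diameters of the subcenter leaves supplies the constant $l$. The one genuine difference is how that diameter bound is obtained. The paper simply cites Lemma 3.5 of Bohnet's work on partially hyperbolic diffeomorphisms with uniformly compact center foliation, which states exactly that a uniformly compact foliation with trivial holonomy has uniformly bounded leaf diameters. You instead derive the bound internally: integrality of $E^{\hat{c}}$ gives uniformly $C^1$ leaves, Epstein's theorem makes the leaf space a compact Hausdorff manifold, Reeb stability with trivial holonomy gives product-like saturated neighborhoods in which the leaves are uniformly $C^1$-close graphs over the central leaf (hence of locally bounded intrinsic diameter), and compactness of the leaf space globalizes the bound. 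This buys self-containedness -- everything you invoke is already stated in the paper -- at the cost of a longer argument; the paper buys brevity at the cost of an external citation. A minor additional point in your favor: you take a nearly length-minimizing piecewise $C^1$ path for $\gamma_n$ (length at most $R+1$), whereas the paper takes a ``minimizing geodesic'' in a leaf that is a priori only $C^1$, where existence of genuine geodesics is delicate; your formulation sidesteps that issue while still satisfying the definition of subcenter non-expansiveness.
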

\begin{proof}
    As $\mathcal{F}^{\hat{c}}$ is a uniformly compact foliation with trivial holonomy, Lemma 3.5 of \cite{bohnet_partially_2014} implies that there exists a uniform bound $l>0$ on the diameter of the subcenter leaves.
    Now let $x \in M$, $y \in \mathcal{F}^{\hat{c}}_{\text{loc}}(x)$ and $\gamma$ a good local $\hat{c}$-path from $x$ to $y$. 
    For $n \in \mathbb N$, let $\gamma_n$ a minimizing geodesic in $\mathcal{F}^{\hat{c}}(\varphi^n(x))$ between $\varphi^n(x)$ and $\varphi^n(y)$.
    Then for every $n \in \mathbb N$, $l(\gamma_n)<l$ and $\varphi^n(\gamma)\cdot\gamma_n^{-1}$ is a loop in $\mathcal{F}^{\hat{c}}(\varphi^n(x))$ so it represents the identity element in $\text{Hol}(\mathcal{F}^{\hat{c}}(\varphi^n(x)), \varphi^n(x))$ since $\mathcal{F}^{\hat{c}}$ has trivial holonomy.
    \end{proof}

From now on, when we consider the holonomy of any foliation between two points in the same local leaf, we will omit to precise the path that links them as Definition \ref{defprop:hol} remarks that if the two points are close enough, then the holonomy does not depend on that path. 

The proof of the following result is very similar to that of \cite{butler_uniformly_2018}. We just have to take care of some subtlety specific to our flow.

\begin{proposition}
\label{prop:hcsquasiconf}
    Let $(\varphi^t)$ a transversely holomorphic partially hyperbolic flow on a smooth compact manifold $M$, with a flow invariant compact subcenter foliation $\mathcal{F}^{\hat{c}}$ with $C^1$-leaves and trivial holonomy. Assume $\dim_{\mathbb C} E^{u}=1$ and let $(\theta_x)_{x \in M}$ the family of diffeomorphisms given by Proposition \ref{prop:non-statiolin}.\\
    Then there exists $K\geq1$ such that for every $x \in M$ and $y\in \mathcal{F}^{cs}_{loc}(x)$, the map
    $$\theta_y^{-1} \circ h^{cs}_{x,y}\circ \theta_x: \theta_x^{-1}(\mathcal{F}^u_{loc}(x)) \to \theta_y^{-1}(\mathcal{F}^u_{loc}(y))$$
    is $K$-quasiconformal.
\end{proposition}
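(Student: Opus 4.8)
The plan is to follow \cite{butler_uniformly_2018}, reducing the quasiconformality of the full center-stable holonomy to that of the subcenter holonomy by discarding the conformal (flow and stable) directions, and then controlling the subcenter part by a renormalization that exploits the uniform boundedness of the subcenter leaves.

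Write $g_{x,y}:=\theta_y^{-1}\circ h^{cs}_{x,y}\circ\theta_x$. First I would record the \emph{equivariance} of $g$. As $\mathcal F^{cs}$ and $\mathcal F^u$ are flow invariant, the flow conjugates the center-stable holonomies between local unstable leaves, i.e. $h^{cs}_{\varphi^t(x),\varphi^t(y)}\circ\varphi^t=\varphi^t\circ h^{cs}_{x,y}$; feeding in the intertwining relation $\varphi^t\circ\theta_x=\theta_{\varphi^t(x)}\circ d_x\varphi^t$ of Proposition \ref{prop:non-statiolin} gives
\[ g_{x,y}=\bigl(d_y\varphi^t|_{E^u}\bigr)^{-1}\circ g_{\varphi^t(x),\varphi^t(y)}\circ\bigl(d_x\varphi^t|_{E^u}\bigr),\qquad t\in\mathbb R. \]
Since $\dim_{\mathbb C}E^u=1$ and the flow acts holomorphically on unstable leaves (Proposition \ref{prop:leavesholo}), the maps $d_x\varphi^t|_{E^u}$ and $d_y\varphi^t|_{E^u}$ are $\mathbb C$-linear, hence conformal; and since conjugation by $1$-quasiconformal maps preserves the linear dilatation, $L_{g_{x,y}}(p)=L_{g_{\varphi^t(x),\varphi^t(y)}}\bigl(d_x\varphi^t(p)\bigr)$ for all $t$ and $p$.

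Next I would \emph{peel off the conformal directions}. Dynamical coherence (Theorem \ref{thm:Fçcomp}, Proposition \ref{prop:dyncohsubfol}) makes $\mathcal F^{cs}$ a local product of $\Phi$, $\mathcal F^{\hat c}$ and $\mathcal F^s$, so $h^{cs}_{x,y}$ factors as a flow holonomy, a subcenter holonomy and a stable holonomy between local unstable leaves. In the coordinates $\theta$ the flow holonomy is a restriction of $\varphi^{s_0}|_{\mathcal F^u}$, which by Proposition \ref{prop:holoflo} together with Proposition \ref{prop:non-statiolin} equals the $\mathbb C$-linear map $d\varphi^{s_0}|_{E^u}$; and the stable holonomy is linearized, by the stable counterpart of Lemma \ref{lem:derivHu}, to an affine map with linear part the stable cocycle holonomy $H^s$, which is $\mathbb C$-linear (remark after Proposition \ref{prop:Hu}). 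Both are therefore conformal, i.e. $1$-quasiconformal, and since composing with $1$-quasiconformal maps does not change the quasiconformality constant, it suffices to bound, uniformly, the linear dilatation of the subcenter holonomy $\tilde h^{\hat c}_{a,b}:=\theta_b^{-1}\circ h^{\hat c}_{a,b}\circ\theta_a$ for $b\in\mathcal F^{\hat c}_{\mathrm{loc}}(a)$, read in the coordinates $\theta$.

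The crux, and the step I expect to be the main obstacle, is this uniform control over the \emph{expanding} unstable domains. I would again invoke the equivariance, now for $\tilde h^{\hat c}$: given an evaluation point $q\in E^u_a$, choose $t\le 0$ with $|d_a\varphi^t(q)|\le 1$ (possible since backward flow contracts $E^u$), so that $L_{\tilde h^{\hat c}_{a,b}}(q)=L_{\tilde h^{\hat c}_{\varphi^t(a),\varphi^t(b)}}\bigl(d_a\varphi^t(q)\bigr)$ with the evaluation point back inside the unit disc. The decisive point---exactly where the hypotheses of a \emph{compact} subcenter foliation with \emph{trivial holonomy} enter, through subcenter non-expansiveness and the uniform bound $l$ on the diameters of the subcenter leaves (Lemma 3.5 of \cite{bohnet_partially_2014})---is that, $\mathcal F^{\hat c}$ being flow invariant with leaves that stay compact of diameter at most $l$, the points $\varphi^t(a)$ and $\varphi^t(b)$ remain in one subcenter leaf at distance at most $l$ for \emph{every} $t$. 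Hence $\tilde h^{\hat c}_{\varphi^t(a),\varphi^t(b)}$, restricted to the unit disc, ranges over the family of subcenter holonomies whose base points lie in the compact manifold $M$ and whose subcenter separation is at most $l$. It then remains to bound the dilatation of this compact family uniformly on the unit disc: where the subcenter direction is conformal (as in the seven-dimensional setting, $\dim_{\mathbb C}E^{\hat c}=1$) the flow is subcenter bunched, so Proposition \ref{prop:subcenterbunchC1} makes $\mathcal F^u$ restrict to a $C^1$ foliation of the subcenter-unstable leaves; the subcenter holonomy is then $C^1$ with base-point-continuous derivative, and compactness over the closed set $\{(a,b):a\in M,\ b\in\mathcal F^{\hat c}(a)\}$ yields a uniform bound $K_0$ on the dilatation over the unit disc. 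This uniform regularity and boundedness of the subcenter holonomy is the delicate part; undoing the two reductions above then delivers the uniform constant $K$, which is the claim.
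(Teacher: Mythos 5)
Your reduction of the flow direction is fine and matches the paper, but the two steps that carry the rest of your argument both rely on facts that are not available at this stage, and in fact are essentially consequences of the proposition you are trying to prove. First, you ``peel off'' the stable part by asserting that the stable holonomy between local unstable leaves, read in the charts $\theta$, is affine with $\mathbb C$-linear part $H^s$, ``by the stable counterpart of Lemma \ref{lem:derivHu}.'' Lemma \ref{lem:derivHu} says nothing of the sort: it concerns the change-of-basepoint maps $\theta_y^{-1}\circ\theta_x$ for $y\in\mathcal F^u(x)$, i.e.\ maps \emph{along} an unstable leaf, not holonomies \emph{transverse} to the unstable foliation. The identification $d_xh^{cs}_{x,y}=H^s_{x,y}$ (Butler's Lemma 28, which the paper invokes only later, in the proof of Proposition \ref{prop:Fcstrholo}) presupposes that the holonomy is differentiable at $x$, and that differentiability is extracted from quasiconformality via Lemma \ref{lem:quasiconfholo} --- precisely the statement under proof. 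So this step is circular; conformality of the stable holonomy is (part of) the paper's main rigidity result, not an input.

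Second, your uniform bound on the dilatation of the subcenter holonomy over the unit disc does not follow from Proposition \ref{prop:subcenterbunchC1}. Subcenter bunching gives that $\mathcal F^u$ restricted to a subcenter-unstable leaf is a $C^1$ foliation, i.e.\ that the \emph{unstable} holonomy between \emph{subcenter} transversals is $C^1$; it says nothing about the transverse regularity of $\mathcal F^{\hat c}$ inside $\mathcal F^{\hat c u}$, which is what governs the subcenter holonomy $h^{\hat c}_{a,b}:\mathcal F^u_{\mathrm{loc}}(a)\to\mathcal F^u_{\mathrm{loc}}(b)$ --- these are the two dual holonomies of the local product structure, and regularity of one does not transfer to the other. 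That transverse regularity (indeed holomorphicity) is exactly the hard content of Section \ref{sec:5}. Without $C^1$ control, compactness of the family $\{(a,b): b\in\mathcal F^{\hat c}(a)\}$ cannot bound an \emph{infinitesimal} quantity like $L_h$: a compact family of mere homeomorphisms can have unbounded (even infinite) pointwise dilatation. (Your argument also needs $\dim_{\mathbb C}E^{\hat c}=1$, which the proposition does not assume.) The paper's mechanism avoids any regularity of the holonomy: after factoring out only the flow direction, it uses the analogue of Butler's Lemma 13 --- the conjugation relation under \emph{forward} time along $\hat c s$-paths of uniformly bounded length, supplied by subcenter non-expansiveness --- and then Butler's Lemma 12 scheme: the limsup defining $L_h$ at scale $r\to0$ is converted, by the conformal maps $d\varphi^n|_{E^u}$ and the affine re-centering of Lemma \ref{lem:derivHu}, into a distance ratio at a fixed macroscopic scale for holonomies between points at uniformly bounded distance, where uniform continuity and compactness suffice. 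That rescaling trick is the missing idea in your proposal.
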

\begin{proof}
    Remark in \cite{abouanass_dynamical_2026} that, by construction, $\mathcal{F}^{cs}$ coincides with $(\mathcal{F}^{\hat{c}s})^w$ with the notations of Lemma \ref{lem:weakfol}.
    Therefore, let $x \in M$ and $y\in \mathcal{F}^{cs}_{\operatorname{loc}}(x)$.
    By Lemma \ref{lem:metricweak}, there exist $y'\in \mathcal{F}^{\hat{c}s}_{\operatorname{loc}}(x)$ and a small $t \in \mathbb R$ such that $y=\varphi^t(y')$.
    Therefore, $h^{cs}_{x,y}=h^{cs}_{y',y} \circ h^{cs}_{x,y'}$, so
    $$\theta_y^{-1} \circ h^{cs}_{x,y}\circ \theta_x=(\theta_{y}^{-1} \circ h^{cs}_{y',y}\circ \theta_{y'}) \circ (\theta_{y'}^{-1} \circ h^{cs}_{x,y'}\circ \theta_x).$$
    The center-stable holonomy between $y'$ and $y \in \Phi_{\text{loc}}(y')$ with respect to $\mathcal{F}^u_{\text{loc}}(y')$ and $\mathcal{F}^u_{\text{loc}}(y)$ coincides with the restriction to $\mathcal{F}^u_{\text{loc}}(y')$ of the $\Phi$-holonomy between $y'$ and $y$ with respect to small transversals containing $\mathcal{F}^u_{\text{loc}}(y')$ and $\mathcal{F}^u_{\text{loc}}(y)$ respectively, since the strong unstable foliation is invariant by the flow.
    By Proposition \ref{prop:holoflo}, this means that $h^{cs}_{y',y}=\varphi^{t}|_{\mathcal{F}^u_{\text{loc}(y')}}:\mathcal{F}^u_{\text{loc}(y')}\to \mathcal{F}^u_{\text{loc}(y)}$ which is holomorphic by Proposition \ref{prop:leavesholo}.
    As a result, since for every $x' \in M$ the map $\theta_{x'}:E^u_{x'} \to \mathcal{F}^u(x')$ is holomorphic, the map
    $\theta_{y}^{-1} \circ h^{cs}_{y',y}\circ \theta_{y'}$ is holomorphic thus conformal because $\dim_{\mathbb C} E^{u}=1$. 
    Therefore, we can assume $y \in \mathcal{F}^{\hat{c}s}_{\text{loc}}(x)$.
    \begin{lemma}
        There exists $C_0>0$ such that for every $x\in M$ and $y \in \mathcal{F}^{\hat{c}s}_{\operatorname{loc}}(x)$, for every $n \in \mathbb N$, there exists a good $\hat{c}s$-path $\gamma_n$ between $\varphi^n(x)$ and $\varphi^n(y)$, with $l(\gamma_n)\leq C_0$,  satisfying:
        $$\theta_y^{-1}\circ h^{cs}_{x,y}\circ \theta_x=d_y\varphi^{-n}\circ \theta_{\varphi^n(y)}^{-1}\circ h^{cs}_{\gamma_n}\circ \theta_{\varphi^n(x)}\circ d_x\varphi^n.$$
    \end{lemma}
    \begin{proof}
        The proof is analogous to that of Lemma 13 in \cite{butler_uniformly_2018}: we replace $f$ with $\varphi^1$ and the word "center" by "subcenter" since our flow is subcenter non-expansive.
    \end{proof}
    Remark that since $\dim_{\mathbb C}E^u=1$, $\varphi^1$ is $u$-quasiconformal in the sense of \cite{butler_uniformly_2018}. Therefore we can proceed exactly as in their Lemma 12 by replacing $f$ with $\varphi^1$ and considering our coordinates charts $(\theta_x)_{x\in M}$ as well as the previous lemma.
\end{proof}
\subsection{Fubini's theorem}
We state a version of Fubini's theorem in our context which comes from the absolute continuity of the center-stable foliation (see Lemma \ref{lem:quasiconfholo} since the center-stable holonomy is quasiconformal by Proposition \ref{prop:hcsquasiconf}) as well as the local product structure of the (sub)center-unstable foliation given in Proposition 6.12 of \cite{abouanass_dynamical_2026}, since the subcenter foliation has trivial holonomy.
We use mainly the results of \cite{butler_uniformly_2018}.
\begin{proposition}
\label{prop:fubini}
    Let $(\varphi^t)$ a transversely holomorphic partially hyperbolic flow on a smooth compact manifold $M$ of dimension $7$, with a flow invariant compact subcenter foliation $\mathcal{F}^{\hat{c}}$ with $C^1$-leaves and trivial holonomy. \\
    Then for $* \in \{cu,cs,c,\hat{c}u,\hat{c}s, \hat{c}\}$, $m$ has Lebesgue disintegration along $\mathcal{F}^{*}$ leaves and for $*\in \{u,s\}, \;x\in M$:
    \[m_x^{\hat{c}*} \asymp \int_{\mathcal{F}^{\hat{c}}(x)} m^*_y \, dm^{\hat{c}}_{x}(y)\asymp \int_{\mathcal{F}^{*}_{\operatorname{loc}}(x)} m^{\hat{c}}_y \, dm^{*}_{x}(y)\]
    where $\asymp$ means that the two involved measures are equivalent.
\end{proposition}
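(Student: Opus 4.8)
The plan is to transpose the absolute continuity / Fubini argument of \cite{butler_uniformly_2018} to our flow, the two essential inputs being the absolute continuity of the center-stable (and center-unstable) holonomy and the local product structure of the subcenter-(un)stable leaves. First I record the dimension count: since $M$ is seven-dimensional and the transverse holomorphic structure makes each of $E^s,E^{\hat c},E^u$ a complex subbundle of the six-real-dimensional bundle $E^s\oplus E^{\hat c}\oplus E^u$ (Proposition \ref{prop:leavesholo}), the only possibility is $\dim_{\mathbb C}E^s=\dim_{\mathbb C}E^{\hat c}=\dim_{\mathbb C}E^u=1$; in particular $(\varphi^t)$ is dynamically coherent by Theorem \ref{thm:Fçcomp}, and all the hypotheses of the form ``$\dim_{\mathbb C}=1$'' used below hold. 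I then gather the absolute-continuity inputs. The strong foliations $\mathcal F^u$ and $\mathcal F^s$ have $C^\infty$ leaves and are absolutely continuous with Lebesgue disintegration classes $m^u,m^s$, by the classical theory (see \cite{barreira_nonuniform_2007}). By Corollary \ref{cor:FçholoinFçs} the holonomy of $\restr{\mathcal F^u}{\mathcal F^{\hat c u}}$ (resp. of $\restr{\mathcal F^s}{\mathcal F^{\hat c s}}$) is holomorphic, hence conformal, hence $1$-quasiconformal, so absolutely continuous by Lemma \ref{lem:quasiconfholo}. Finally, by Proposition \ref{prop:hcsquasiconf} together with Lemma \ref{lem:quasiconfholo}, the center-stable holonomy $h^{cs}$, and symmetrically the center-unstable holonomy $h^{cu}$, is absolutely continuous.

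The core step is the Fubini relation inside a single subcenter-unstable leaf, the stable case following by reversing time. Fixing $x$, the local product structure of Proposition 6.12 of \cite{abouanass_dynamical_2026} (where triviality of the subcenter holonomy is used) provides a box in $\mathcal F^{\hat c u}(x)$ which, by completeness of $\mathcal F^{\hat c}$ (Theorem \ref{thm:Fçcomp}), is simultaneously foliated by unstable plaques and by subcenter plaques. Since $\restr{\mathcal F^u}{\mathcal F^{\hat c u}}$ is $C^1$ (Proposition \ref{prop:subcenterbunchC1}, subcenter bunching holding because $d\varphi^1$ acts conformally on $E^{\hat c}$) with conformal, hence absolutely continuous, holonomy, the leafwise volume of $\mathcal F^{\hat c u}(x)$ disintegrates along unstable plaques into conditionals equivalent to $m^u_y$ with continuous positive density; the decisive point, exactly where absolute continuity of $h^{cs}$ enters as in \cite{butler_uniformly_2018}, is that the transverse measure on the subcenter plaque is then equivalent to the subcenter conditional class $m^{\hat c}_x$. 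This yields at once Lebesgue disintegration of $m$ along $\mathcal F^{\hat c u}$ and the first equivalence
\[ m^{\hat c u}_x \asymp \int_{\mathcal F^{\hat c}(x)} m^u_y \, dm^{\hat c}_x(y). \]
Disintegrating the same leafwise volume, now known to be $\asymp$ Lebesgue, along the subcenter subfoliation and invoking essential uniqueness of the disintegration (Theorem \ref{thm:rok}) identifies the subcenter conditionals with $m^{\hat c}_y$ and the transverse measure with $m^u_x$, which gives the second equivalence and, as a by-product, Lebesgue disintegration of $m$ along $\mathcal F^{\hat c}$ for almost every leaf.

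It remains to pass to all six foliations of the statement. The subcenter-(un)stable and subcenter cases are covered above. For the weak foliations $\mathcal F^{cu},\mathcal F^{cs},\mathcal F^{c}$ (which are $\mathcal F^{w\hat c u},\mathcal F^{w\hat c s},\mathcal F^{w\hat c}$), I would add the flow direction: by Lemma \ref{lem:metricweak} the leafwise metric of $\mathcal F^{w*}$ is uniformly equivalent to the product of the leafwise metric of $\mathcal F^{*}$ with the flow metric, and each $\varphi^t$ is a smooth diffeomorphism preserving the Lebesgue class, so the leafwise volume of $\mathcal F^{w*}$ is equivalent to the product of $m^{*}$ with arclength along orbits, and Lebesgue disintegration transfers from the ``hat'' foliation to its weak version. (Alternatively $\mathcal F^{cs}$ and $\mathcal F^{cu}$ follow directly from absolute continuity of $h^{cs},h^{cu}$ and the local product structure of $\mathcal F^{cs}$, resp. $\mathcal F^{cu}$, with $\mathcal F^u$, resp. $\mathcal F^s$.)

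The main obstacle is the measure-theoretic bookkeeping in the core step. Because $\mathcal F^{\hat c}$, $\mathcal F^{\hat c u}$ and $\mathcal F^{cs}$ are only $C^0$ foliations with, respectively, $C^1$ or conformal leaves and merely \emph{absolutely continuous}, not $C^1$, holonomies, one cannot appeal to a smooth change of variables and must run the conditional-measure argument of \cite{butler_uniformly_2018} carefully: the delicate point is to identify the transverse measure to the unstable subfoliation with the subcenter conditional class $m^{\hat c}_x$ rather than with some a priori different transverse measure, which genuinely requires the quasiconformality of $h^{cs}$ and not merely the conformality of the unstable holonomy, and then to glue the resulting local equivalences over the compact subcenter leaves and the non-compact unstable leaves with uniformly bounded densities. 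The secondary difficulty, the adaptation of the discrete argument to the flow, is handled through Lemma \ref{lem:metricweak}.
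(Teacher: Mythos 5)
Your ingredient list is the right one (the dimension count, quasiconformality of $h^{cs}$ and $h^{cu}$, strong absolute continuity of $\mathcal{F}^u,\mathcal{F}^s$, the identification of the subcenter holonomy inside $\mathcal{F}^{\hat c u}$-leaves with a restriction of $h^{cs}$, and the Proposition 16 / Corollary 17 machinery of \cite{butler_uniformly_2018}), but the architecture has a genuine gap. Your ``core step'' is entirely a statement about the disintegration of the \emph{leafwise} Riemannian volume of one leaf $\mathcal{F}^{\hat c u}(x)$ along its unstable and subcenter plaques; such within-leaf information cannot ``yield at once Lebesgue disintegration of $m$ along $\mathcal{F}^{\hat c u}$.'' That is a statement about how the \emph{ambient} volume $m$ conditions on $\hat c u$-leaves, and it requires transverse absolute continuity of the codimension-three foliation $\mathcal{F}^{\hat c u}$ inside $M$ (in the $E^s\oplus\mathbb{R}X$ directions), which you never establish. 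Moreover, identifying the factor measure of your within-leaf disintegration with ``the subcenter conditional class $m^{\hat c}_x$'' is circular: $m^{\hat c}_x$ is a Rokhlin conditional of the ambient $m$, and relating it to any object living inside the leaf $\mathcal{F}^{\hat c u}(x)$ presupposes exactly the ambient statements (Lebesgue disintegration of $m$ along $\mathcal{F}^{\hat c u}$, resp. $\mathcal{F}^{\hat c}$) that you claim as conclusions and ``by-products.'' (The analogous identification of the plaque conditionals with $m^u_y$ is legitimate only because Lebesgue disintegration along $\mathcal{F}^u$ is classical.) Your transfer to the weak foliations inherits this gap and additionally needs its own control of the factor measure in the flow direction.

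The paper proceeds in the opposite, and correct, order: a hierarchical descent from the ambient manifold. First, $m$ has Lebesgue disintegration along the codimension-two foliations $\mathcal{F}^{cs}$ and $\mathcal{F}^{cu}$, by Proposition 16 of \cite{butler_uniformly_2018} applied to the pair ($\mathcal{F}^{cs}$ with a.c.\ holonomy, transverse strongly a.c.\ $\mathcal{F}^u$) --- this is precisely your parenthetical ``alternative,'' which is not an alternative but the indispensable first step. Next, inside each $\mathcal{F}^{cs}(x)$ one disintegrates the conditional $m^{cs}_x$ (now known to be equivalent to leaf volume) along $\restr{\mathcal{F}^{\hat c s}}{\mathcal{F}^{cs}(x)}$, using the transverse foliation $\Phi$ (smooth, hence strongly a.c.) and the fact that the holonomy of $\restr{\mathcal{F}^{\hat c s}}{\mathcal{F}^{cs}(x)}$ with respect to flow-line transversals is the $C^1$ map $\varphi^t(y)\mapsto\varphi^t(z)$; transitivity of disintegration then gives the ambient statement for $\mathcal{F}^{\hat c s}$, and symmetrically for $\mathcal{F}^{\hat c u}$. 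Only then does one descend inside $\mathcal{F}^{cu}$- and $\mathcal{F}^{\hat c u}$-leaves, using that the transverse holonomies of $\restr{\mathcal{F}^{c}}{\mathcal{F}^{cu}(x)}$ and $\restr{\mathcal{F}^{\hat c}}{\mathcal{F}^{\hat c u}(x)}$ coincide with restrictions of $h^{cs}$, to reach $\mathcal{F}^c$ and $\mathcal{F}^{\hat c}$; the displayed equivalences come from Corollary 17 of \cite{butler_uniformly_2018} together with the local product structure of Proposition 6.12 of \cite{abouanass_dynamical_2026}. Your within-leaf Fubini is essentially the paper's final descent step; to repair the proof, promote your parenthetical remark to the opening step and descend from it, rather than starting inside the leaves and attempting to work outwards.
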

\begin{proof}
    Since $M$ is of dimension $7$ and each stable, subcenter and unstable bundle is non trivial, it comes by Proposition \ref{prop:leavesholo} that for $* \in \{s,\hat{c},u\}$, $\dim_{\mathbb C} E^*=1$.
    By Proposition \ref{prop:hcsquasiconf}, Lemma \ref{lem:quasiconfholo} and Proposition 16 of \cite{butler_uniformly_2018}, since $\mathcal{F}^u$ is a strongly absolute continuous foliation (see \cite{barreira_nonuniform_2007}), it comes that $m$ has Lebesgue disintegration along $\mathcal{F}^{cs}$ leaves.
    In the same way, $m$ has Lebesgue disintegration along $\mathcal{F}^{cu}$ leaves.
    Let $x \in M$ fixed. 
    The center stable leaf $\mathcal{F}^{cs}(x)$ is foliated by the two transverse foliations $\Phi|_{\mathcal{F}^{cs}(x)}$ and $\mathcal{F}^{\hat{c}s}|_{\mathcal{F}^{cs}(x)}$. 
    As for $\Phi|_{\mathcal{F}^{cs}(x)}$ , it is $C^1$ thus strongly absolutely continuous. On the other hand, the holonomy of $\mathcal{F}^{\hat{c}s}|_{\mathcal{F}^{cs}(x)}$ between two nearby points $y\in \mathcal{F}^{cs}(x)$ and $z \in \mathcal{F}^{\hat{c}s}_{\text{loc}}(y)$ with respect to the transversals $\Phi_{\text{loc}}(y)$ and $\Phi_{\text{loc}}(z)$ is $C^1$ (thus absolutely continuous): it coincides with the map 
    $$p=\varphi^{t}(y)\in \Phi_{\text{loc}}(y) \mapsto \varphi^t(z)\in \Phi_{\text{loc}}(z).$$
    Therefore $\mathcal{F}^{\hat{c}s}|_{\mathcal{F}^{cs}(x)}$ is an absolutely continuous foliation of $\mathcal{F}^{cs}(x)$.
    By Proposition 16 of \cite{butler_uniformly_2018}, since $m$ has Lebesgue disintegration along $\mathcal{F}^{cs}$ leaves, it comes that $m_x^{cs}$ has Lebesgue disintegration along $\mathcal{F}^{\hat{c}s}|_{\mathcal{F}^{cs}(x)}$, so $m$ has Lebesgue disintegration along $\mathcal{F}^{\hat{c}s}$.
    The equivalence of measures is given by Corollary 17 of \cite{butler_uniformly_2018} and the local product structure in $\mathcal{F}^{\hat{c}u}(x)$ given in Prop 6.12 of \cite{abouanass_dynamical_2026}.\\
    Also, all of the above can be stated if we switch the stable and unstable roles.\\
    We now prove that $m$ has Lebesgue disintegration along $\mathcal{F}^c$ and $\mathcal{F}^{\hat{c}}$.
    The center unstable leaf $\mathcal{F}^{cu}(x)$ is foliated by the two transverse foliations $\mathcal{F}^u|_{\mathcal{F}^{cu}(x)}$ and $\mathcal{F}^{c}|_{\mathcal{F}^{cu}(x)}$. 
    As for $\mathcal{F}^u|_{\mathcal{F}^{cu}(x)}$ , it is strongly absolutely continuous. On the other hand, the holonomy of $\mathcal{F}^{c}|_{\mathcal{F}^{cu}(x)}$ between two nearby points $y\in \mathcal{F}^{cu}(x)$ and $z \in \mathcal{F}^{c}_{\text{loc}}(y)$ with respect to the transversals $\mathcal{F}^u_{\text{loc}}(y)$ and $\mathcal{F}^u_{\text{loc}}(z)$ coincides with the holonomy of $\mathcal{F}^{cs}$ between $y$ and $z$ which is quasiconformal by Proposition \ref{prop:hcsquasiconf}, thus absolutely continuous.
    Therefore $\mathcal{F}^{c}|_{\mathcal{F}^{cu}(x)}$ is an absolutely continuous foliation of $\mathcal{F}^{cu}(x)$.
    By Proposition 16 of \cite{butler_uniformly_2018}, since $m$ has Lebesgue disintegration along $\mathcal{F}^{cu}$ leaves, it comes that $m_x^{cu}$ has Lebesgue disintegration along $\mathcal{F}^{c}|_{\mathcal{F}^{cu}(x)}$, so $m$ has Lebesgue disintegration along $\mathcal{F}^{c}$.
    The same can be said for the subcenter case since the holonomy of $\mathcal{F}^{\hat{c}}|_{\mathcal{F}^{\hat{c}u}(x)}$ between two nearby points $y\in \mathcal{F}^{\hat{c}u}(x)$ and $z \in \mathcal{F}^{\hat{c}}_{\text{loc}}(y)$ with respect to the transversals $\mathcal{F}^u_{\text{loc}}(y)$ and $\mathcal{F}^u_{\text{loc}}(z)$ coincides with the holonomy of $\mathcal{F}^{cs}$ between $y$ and $z$.
\end{proof}

\section{Holomorphicity of the subcenter holonomy restricted to subcenter-unstable leaves}
\label{sec:5}
From now until the rest of the paper, unless mentioned otherwise, we will state the results we obtain in the case of a smooth transversely holomorphic partially hyperbolic flow on a smooth compact manifold $M$ of dimension $7$ with a flow invariant compact subcenter foliation $\mathcal{F}^{\hat{c}}$ with $C^1$ leaves and trivial holonomy.
Our results are analogous to that of the discrete holomorphic case, studied in \cite{xu_holomorphic_2025}, by considering the subcenter foliation instead of the center one, and the time-one map $\varphi^1$ instead of the holomorphic partially hyperbolic diffeomorphism $f$.
In the proof of our results, we only refer to the (minor) changes.

Throughout this section, $(\varphi^t)$ is a smooth transversely holomorphic partially hyperbolic flow on a smooth compact manifold $M$ of dimension $7$ whose subcenter distribution is integrable to a flow invariant compact subcenter foliation $\mathcal{F}^{\hat{c}}$ with $C^1$ leaves and trivial holonomy.
$h^{\hat{c}}$ will denote the subcenter holonomy along the unstable direction, and $\mu$ an ergodic Gibbs $u$-state. Let $\overline{M}=M/\mathcal{F}^{c}$ be the leaf space of the center foliation. We will always use $\overline{\cdot}$ symbols for quotient objects. For example, $\overline{\varphi^t}$ is the quotient map of $\varphi^t$ on $\overline{M}$, $\overline{\mu}$ is the quotient measure on $\overline{M}$ of a measure $\mu$ on $M$.\\
Let $t\in \mathbb R$ and $x,y\in M$ such that $\overline{x}=\overline{y}$, that is $y \in \mathcal{F}^{\hat{c}}(x)$. 
Then by flow invariance of the subcenter foliation, it comes $\overline{\varphi^t(x)}=\overline{\varphi^t(y)}$. 
Therefore, we will sometimes use the notation $\varphi^t(\overline{x})$ to mean the heavier $\overline{\varphi^t(x)}$. 
Also, $\mathcal{F}^{\hat{c}}(\overline{x})$ will mean $\mathcal{F}^{\hat{c}}(y)$ for any $y\in \overline{x}$.\\

Since the subcenter foliation $\mathcal{F}^{\hat{c}}$ is a compact $C^0$ foliation (the subcenter bundle is assumed to be a continuous subbundle of $TM$), Reeb's stability theorem (Theorem \ref{thm:reeb}) applies and we obtain thus the following result:

\begin{lemma}
    $M \xrightarrow[]{p}\overline{M}$ is a continuous fiber bundle whose fibers are exactly the subcenter leaves.
\end{lemma}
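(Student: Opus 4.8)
The plan is to derive the statement entirely from the two structural theorems for compact foliations already recorded---Epstein's theorem (Theorem \ref{thm:epstein}) for the base and Generalized Reeb Stability (Theorem \ref{thm:reeb}) for local triviality---applied to the subcenter foliation $\mathcal{F}^{\hat{c}}$, which by hypothesis is a compact $C^0$ foliation with trivial holonomy.

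First I would fix the base space. Since $M$ is compact and every leaf of $\mathcal{F}^{\hat{c}}$ has trivial, hence finite, holonomy group, all the equivalent properties of Theorem \ref{thm:epstein} hold: in particular $p$ is closed, $\overline{M}$ is Hausdorff, and by the final clause of that theorem $\overline{M}$ is a topological manifold. Continuity of $p$ is built into the quotient topology, while openness follows because the saturation of an open set is open. Thus $p$ is already a continuous, open, closed surjection onto a manifold whose point preimages are precisely the subcenter leaves; only local triviality remains to be checked.

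Next I would build local trivializations one leaf at a time. Fix a leaf $L=\mathcal{F}^{\hat{c}}(x)$ and write $\bar{x}=p(x)$. Because $\operatorname{Hol}(L)=\{e\}$, Theorem \ref{thm:reeb} provides a saturated normal neighborhood $V\to L$ that is a fiber bundle with structure group $\operatorname{Hol}(L)$; triviality of this group upgrades the bundle to a product $V\cong T\times L$, where $T$ is a small transversal at $x$ that $p$ identifies homeomorphically with a neighborhood $\overline{V}=p(V)$ of $\bar{x}$, the leaves of $\restr{\mathcal{F}^{\hat{c}}}{V}$ corresponding to the slices $\{t\}\times L$. The same theorem shows each nearby leaf covers $L$ with at most $|\operatorname{Hol}(L)|=1$ sheet, so every fiber of $p$ near $\bar{x}$ is a subcenter leaf homeomorphic to $L$. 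Under this identification $\restr{p}{V}$ becomes the first projection $T\times L\to T\cong\overline{V}$, which is exactly a local trivialization of $p$ over $\overline{V}$.

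Letting $\bar{x}$ range over $\overline{M}$, these charts cover the base and exhibit $p\colon M\to\overline{M}$ as a continuous fiber bundle with fiber $L$ whose fibers are the subcenter leaves, as claimed. The one delicate point is the passage, in the middle step, from a fiber bundle with structure group $\operatorname{Hol}(L)$ to a genuine product: this is exactly where the trivial-holonomy hypothesis is indispensable, since finite but nontrivial holonomy would yield only a Seifert-type fibration with exceptional fibers rather than an honest bundle. I would also stress that, $E^{\hat{c}}$ being merely continuous, these trivializations are a priori only topological, so at this stage one obtains a continuous---not yet smooth---fiber bundle; the $C^\infty$ promotion promised in Theorem A is postponed to the later regularity arguments.
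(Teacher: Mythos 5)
Your proposal is correct and follows essentially the same route as the paper, whose own justification is the one-line remark preceding the lemma: since $\mathcal{F}^{\hat{c}}$ is a compact $C^0$ foliation with trivial holonomy, Generalized Reeb Stability (Theorem \ref{thm:reeb}) applies (with Epstein's theorem, Theorem \ref{thm:epstein}, giving the manifold structure on the leaf space). Your write-up simply makes explicit the details the paper leaves implicit---trivial structure group yielding a product neighborhood, one-sheeted coverings identifying nearby leaves with $L$, and the resulting local trivializations of $p$---which is exactly the intended argument.
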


\subsection{A disjunction: isometry or contraction}

A subcenter-leaf-wise defined metric $g^{\hat{c}}$ (that is a family $(g^{\hat{c}}_{\overline{x}})_{\overline{x}\in \overline{M}}$ such that for $\overline{x}\in \overline{M}$, $g^{\hat{c}}_{\overline{x}}$ is a metric on $\mathcal{F}^{\hat{c}}(\overline{x})$) is called \emph{uniformly bounded} if there exists $C>0$ such that for any $\overline{x}\in\overline{M}$ and $x_{1},x_{2}\in \mathcal{F}^{\hat{c}}(\overline{x})$,
$d_{g_{\overline{x}}^{\hat{c}}}(x_{1},x_{2})\leq Cd^{\hat{c}}(x_{1},x_{2})$, where $d^{\hat{c}}(\cdot,\cdot)$ is the distance induced by the restriction of a Riemannian metric on $M$ to a subcenter leaf. 
This definition is independent of the choice of the Riemannian metric.

\begin{definition}
If there is a measurably subcenter-leaf-wise metric $g^{\hat{c}}$ on $M$ such that:
\begin{itemize}
    \item $g^{\hat{c}}$ is uniformly bounded;
    \item $\varphi^1|_{\mathcal{F}^{\hat{c}}(\overline{x})}$ is an isometry with respect to  $g^{\hat{c}}_{\overline{x}}$ and $g^{\hat{c}}_{\varphi^1(\overline{x})}$ for $\overline{\mu}$-almost every $\overline{x}\in\overline{M}$,
\end{itemize}
then $(\varphi^t)$ is called a \emph{$\mu$-subcenter isometry}.
\end{definition}

For any $x\in M$, we denote by $L^{\hat{c}u}_{\operatorname{loc}}(x)$
the $\mathcal{F}^{\hat{c}}$-saturated set containing $\mathcal{F}^{u}_{\operatorname{loc}}(x)$, that is 
$$L^{\hat{c}u}_{\operatorname{loc}}(x)=\bigcup_{z \in \mathcal{F}^{u}_{\operatorname{loc}}(x)}\mathcal{F}^{\hat{c}}(z)=\bigcup_{w \in \mathcal{F}^{\hat{c}}(x)}\mathcal{F}^{u}_{\operatorname{loc}}(w).$$

\begin{definition}
If there exists a subset $A\subset\operatorname{supp}\mu$ such that
\begin{itemize}
    \item $\liminf_{n\to+\infty}d^{\hat{c}}\left(\varphi^{n}(x),\varphi^{n}(x') \right)=0$ for any $x\in A$, $x'\in A\cap \mathcal{F}^{\hat{c}}(x)$;
    \item $A\cap L^{\hat{c}u}_{\operatorname{loc}}(x)$ has full (leafwise Riemannian) volume in $L^{\hat{c}u}_{\operatorname{loc}}(x)$ for $\mu$-a.e. $x$.
\end{itemize}
then $(\varphi^t)$  is called a \emph{$\mu$-subcenter contraction}.
\end{definition}

As has been proved in the holomorphic discrete case, we prove:

\begin{proposition}
\label{prop:isoorcont}
Let $\mu$ be an ergodic Gibbs $u$-state. Then exactly one of the following holds:
\begin{enumerate}[label=(\roman*)]
    \item $(\varphi^t)$ is a $\mu$-subcenter isometry;
    \item $(\varphi^t)$ is a $\mu$-subcenter contraction.
\end{enumerate}
\end{proposition}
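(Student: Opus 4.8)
The plan is to follow the strategy of \cite{xu_holomorphic_2025}, replacing the holomorphic diffeomorphism $f$ by the time-one map $\varphi^1$ and the center foliation by the subcenter foliation $\mathcal{F}^{\hat{c}}$, and to organize the whole dichotomy around the single Lyapunov exponent of the (conformal) subcenter derivative cocycle. Since $M$ has dimension $7$ and each of $E^s,E^{\hat{c}},E^u$ is nontrivial, the dimension count in the proof of Proposition \ref{prop:fubini} gives $\dim_{\mathbb{C}}E^{\hat{c}}=1$, so by Proposition \ref{prop:leavesholo} the restriction $\varphi^1|_{\mathcal{F}^{\hat{c}}(\overline{x})}$ is a biholomorphism between the compact subcenter leaves and $d_x\varphi^1|_{E^{\hat{c}}}$ is conformal. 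Thus the cocycle is scalar: setting $a(x)=\|d_x\varphi^1|_{E^{\hat{c}}}\|$ and $a_n(x)=\|d_x\varphi^n|_{E^{\hat{c}}}\|$, one has $a_n=\prod_{k=0}^{n-1}a\circ\varphi^k$, the two extremal Lyapunov exponents coincide, and by Birkhoff's ergodic theorem applied to the ergodic measure $\mu$ there is a number $\lambda=\int_M\log a\,d\mu$ with $\tfrac1n\log a_n(x)\to\lambda$ for $\mu$-a.e.\ $x$. The argument then reduces to a trichotomy on the sign of $\lambda$.

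First I would exclude the case $\lambda>0$. Because $\varphi^n|_{\mathcal{F}^{\hat{c}}(\overline{x})}$ is conformal in real dimension two, its area Jacobian is $a_n^2$, so $\int_{\mathcal{F}^{\hat{c}}(\overline{x})}a_n^2\,d\mathrm{vol}^{\hat{c}}=\mathrm{vol}^{\hat{c}}\bigl(\mathcal{F}^{\hat{c}}(\varphi^n\overline{x})\bigr)$. Since $\mathcal{F}^{\hat{c}}$ is a compact foliation with uniformly bounded leaf volume (Theorem \ref{thm:epstein}(vi) and \cite{bohnet_partially_2014}) over the compact base $\overline{M}$, the leaf volume $\overline{x}\mapsto\mathrm{vol}^{\hat{c}}(\mathcal{F}^{\hat{c}}(\overline{x}))$ is continuous and lies between two positive constants $v\le V$; hence $\int_{\mathcal{F}^{\hat{c}}(\overline{x})}a_n^2\le V$ for all $n,\overline{x}$. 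Jensen's inequality on each leaf (of volume $\ge v$) then bounds the leafwise average of $\tfrac1n\log a_n^2$ by $\tfrac1n\log(V/v)\to0$, and combining this with the Fubini statement of Proposition \ref{prop:fubini}, which identifies the relevant conditionals with leafwise Lebesgue measure, forces $\lambda\le0$, contradicting $\lambda>0$.

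If $\lambda=0$, I would produce the invariant leafwise metric exhibiting $(\varphi^t)$ as a $\mu$-subcenter isometry. Starting from the induced Riemannian metric on the leaves and rescaling it leafwise, the isometry requirement becomes a measurable normalization of the conformal factor $a$; as in \cite{xu_holomorphic_2025}, the vanishing of the exponent together with conformality (the invariance principle for the fiber-bunched conformal cocycle) yields an essentially invariant measurable metric $g^{\hat{c}}$, whose uniform boundedness follows from the uniform bound on leaf diameters for a compact foliation with trivial holonomy (\cite{bohnet_partially_2014}, as in the proof of subcenter non-expansiveness). If instead $\lambda<0$, then $a_n(x)\to0$ for $\mu$-a.e.\ $x$, so bounded-distortion on the compact leaves gives $d^{\hat{c}}(\varphi^n x,\varphi^n x')\to0$ for every $x'\in\mathcal{F}^{\hat{c}}(x)$; taking $A$ to be this full-$\mu$-measure set and using that $\mu$ is a Gibbs $u$-state (so its conditionals along $\mathcal{F}^u$ are Lebesgue) together with Fubini (Proposition \ref{prop:fubini}) and the $\mathcal{F}^{\hat{c}}$-saturation defining $L^{\hat{c}u}_{\operatorname{loc}}$ upgrades this to $A\cap L^{\hat{c}u}_{\operatorname{loc}}(x)$ having full leafwise volume for $\mu$-a.e.\ $x$, exhibiting a $\mu$-subcenter contraction.

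Finally the two cases are mutually exclusive: an invariant uniformly bounded $g^{\hat{c}}$ is uniformly comparable to $d^{\hat{c}}$, so under an isometry $d^{\hat{c}}(\varphi^n x,\varphi^n x')$ stays bounded away from $0$ for distinct points, which is incompatible with $\liminf_n d^{\hat{c}}(\varphi^n x,\varphi^n x')=0$ on a full-volume set. Together with the trichotomy this gives exactly one alternative. I expect the main obstacle to be the two measure-theoretic transfers, namely excluding $\lambda>0$ and upgrading the $\mu$-almost-everywhere contraction to full leafwise volume on subcenter-unstable leaves: both hinge on matching the Gibbs $u$-state conditionals to leafwise Lebesgue measure, for which the Fubini statement of Proposition \ref{prop:fubini} and the local product structure of $\mathcal{F}^{\hat{c}u}$ are the essential inputs, exactly as in the discrete holomorphic case.
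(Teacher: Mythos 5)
Your reduction of the dichotomy to the sign of the fiberwise Lyapunov exponent $\lambda=\int_M\log\|d\varphi^1|_{E^{\hat{c}}}\|\,d\mu$ is not the paper's argument, and it is in fact not a correct route: the isometry/contraction alternative is not governed by $\lambda$, and two of your three implications fail. To see that $\lambda>0$ cannot be excluded, take $f$ a hyperbolic automorphism of a complex $2$-torus with complex one-dimensional stable and unstable bundles, let $g(z)=\rho z$ on the Riemann sphere with $\rho>1$ chosen so that $\rho$ and $\rho^{-1}$ sit strictly between the contraction and expansion rates of $f$, and let $(\varphi^t)$ be the suspension flow of $f\times g$. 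This is a smooth transversely holomorphic partially hyperbolic flow on a compact $7$-manifold whose sphere fibers form a flow-invariant compact subcenter foliation with trivial holonomy, and (an ergodic component for $\varphi^1$ of) the suspension of Haar measure on the repelling section $\{z=0\}$ is an ergodic Gibbs $u$-state with $\lambda=\log\rho>0$. Your Jensen argument does not rule this out because Proposition \ref{prop:fubini} is a statement about the Riemannian volume $m$, not about $\mu$: the conditionals of a Gibbs $u$-state along subcenter leaves can be singular (here they are the Dirac mass at the repelling fixed point), so $\mu$-a.e.\ Birkhoff convergence cannot be compared with leafwise Lebesgue averages. Likewise $\lambda=0$ does not imply isometry: with the parabolic map $g(z)=z+1$ and Haar measure on the section $\{z=\infty\}$ one gets $\lambda=0$, yet all pairs of points in a fiber converge together under iteration, so any uniformly bounded leafwise metric $g^{\hat{c}}$ making $\varphi^1$ leafwise isometric would satisfy $d_{g^{\hat{c}}}(z,z')=d_{g^{\hat{c}}}(g^nz,g^nz')\leq C\,d^{\hat{c}}(g^nz,g^nz')\to 0$ and hence be degenerate. (The invariance principle you invoke yields invariant fiber \emph{probability measures}, not metrics; on the sphere these can be atomic, which is exactly the contracting situation.) Finally, your $\lambda<0$ step rests on ``bounded distortion on the compact leaves,'' which is false for genus-$0$ leaves---M\"obius maps have unbounded distortion---and the asserted contraction ``for every $x'\in\mathcal{F}^{\hat{c}}(x)$'' cannot hold, since repelling points do not contract.

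The paper's proof is organized around a completely different invariant. All subcenter leaves are pairwise biholomorphic, so their genus $p_g$ is well defined; if $p_g\geq 1$, biholomorphisms automatically preserve the flat or hyperbolic leaf metric and the flow is a $\mu$-subcenter isometry (Lemma 3.4 of Xu--Zhang). If $p_g=0$, the leaf maps are M\"obius transformations and the correct dichotomy is \emph{boundedness} of $\|d(\varphi^n|_{\mathcal{F}^{\hat{c}}})\|$, i.e.\ precompactness of the orbit in $\operatorname{PSL}(2,\mathbb{C})$: the set $\overline{B}$ of leaves where this holds is invariant, so ergodicity gives $\overline{\mu}(\overline{B})\in\{0,1\}$, with $\overline{\mu}(\overline{B})=1$ yielding an isometry and $\overline{\mu}(\overline{B})=0$ yielding a contraction through the degeneration of unbounded M\"obius sequences to constant maps off a point (Lemmas 3.6 and 3.7 of Xu--Zhang, the latter using Proposition \ref{prop:fubini} for the volume). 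Both examples above have $\overline{B}=\emptyset$ and are contractions while $\lambda\geq 0$: the Lyapunov exponent simply does not detect the parabolic and hyperbolic degenerations that the bounded-orbit criterion is designed to capture, which is why the noncompactness of $\operatorname{PSL}(2,\mathbb{C})$, rather than any exponent computation, is the heart of the genus-$0$ case.
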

By Corollary \ref{cor:Fçholoequiv}, all subcenter leaves are holomorphically equivalent. 
We denote by $p_g$ the genus of the subcenter leaves. Also, fix a metric on $M$ and consider the set
    $$\overline{B}:=\{\overline{x} \in \overline{B} \mid \|d(\varphi^n|_{\mathcal{F}^{\hat{c}}(x)})\| \text{ is uniformly bounded for $n \geq 0$}\}.$$
In fact we have the following result:
    \begin{proposition}
    Let $\mu$ be an ergodic Gibbs $u$-state.
    \begin{itemize}
        \item If $p_g\geq 1$, then $(\varphi^t)$ is a $\mu$-subcenter isometry ;
        \item If $p_g=0$, then:
        \begin{itemize}
            \item either $\overline{\mu}(\overline{B})=1$ and $(\varphi^t)$ is a $\mu$-subcenter isometry ;
            \item or $\overline{\mu}(\overline{B})=0$ and $(\varphi^t)$ is a $\mu$-subcenter contraction.
        \end{itemize}
    \end{itemize}
    \end{proposition}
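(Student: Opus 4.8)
The plan is to read off the dichotomy from the complex geometry of the subcenter leaves. By Proposition \ref{prop:leavesholo} each leaf $\mathcal{F}^{\hat{c}}(x)$ is a compact Riemann surface and $\varphi^1$ restricts to a biholomorphism $\mathcal{F}^{\hat{c}}(\overline{x})\to\mathcal{F}^{\hat{c}}(\overline{\varphi^1 x})$; by Corollary \ref{cor:Fçholoequiv} all these surfaces share the same genus $p_g$. On the leaf space $\overline{M}$ the induced homeomorphism $\overline{\varphi^1}$ preserves the pushforward measure $\overline{\mu}=p_*\mu$, which is ergodic since $p$ semiconjugates $\varphi^1$ to $\overline{\varphi^1}$ and $\mu$ is ergodic. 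The whole argument turns on whether the genus forces the leafwise biholomorphisms to be isometries for a canonical metric, or leaves room for genuine conformal contraction.

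First suppose $p_g\geq 1$. Uniformization equips each leaf with a \emph{canonical} conformal metric of constant curvature, depending only on the complex structure: the area-$1$ flat metric when $p_g=1$, and the curvature $-1$ metric when $p_g\geq 2$. Every biholomorphism between genus-$p_g$ surfaces preserves it: for $p_g\geq 2$ this is the standard fact that biholomorphisms are hyperbolic isometries, and for $p_g=1$ it follows because holomorphic maps of tori are affine and an affine identification $\mathbb{C}/\Lambda_1\to\mathbb{C}/\Lambda_2$ carries the area-$1$ flat metric to the area-$1$ flat metric. Letting $g^{\hat{c}}$ be this leafwise canonical metric, $\varphi^1|_{\mathcal{F}^{\hat{c}}(\overline{x})}$ is thus a $g^{\hat{c}}$-isometry for every $\overline{x}$. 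Since the leafwise complex structures vary continuously over the compact manifold $M$ (Proposition \ref{prop:leavesholo}), $g^{\hat{c}}$ is uniformly comparable in both directions to the induced Riemannian metric; hence $g^{\hat{c}}$ is uniformly bounded in the required sense and $\sup_{n\geq 0}\|d(\varphi^n|_{\mathcal{F}^{\hat{c}}(x)})\|$ is finite uniformly in $x$. Therefore $(\varphi^t)$ is a $\mu$-subcenter isometry and $\overline{B}=\overline{M}$, so $\overline{\mu}(\overline{B})=1$.

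Now suppose $p_g=0$, so the leaves are Riemann spheres and the leafwise maps live in the non-compact group $\mathrm{PSL}(2,\mathbb{C})$, where no invariant metric exists a priori. I would first verify that $\overline{B}$ is measurable and $\overline{\varphi^1}$-invariant: from the cocycle relation $d\varphi^n|_{\mathcal{F}^{\hat{c}}(\varphi^1 x)}=d\varphi^{n+1}|_{\mathcal{F}^{\hat{c}}(x)}\circ(d\varphi^1|_{\mathcal{F}^{\hat{c}}(x)})^{-1}$ and the uniform bounds on $\|d\varphi^{\pm 1}|_{E^{\hat{c}}}\|$ coming from compactness of $M$, the finiteness of $\sup_n\|d\varphi^n|_{\mathcal{F}^{\hat{c}}(x)}\|$ is constant along $\overline{\varphi^1}$-orbits. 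Ergodicity of $\overline{\mu}$ then forces $\overline{\mu}(\overline{B})\in\{0,1\}$. When $\overline{\mu}(\overline{B})=1$ the leafwise derivatives are uniformly bounded on a full set; since a sequence escaping every compact subset of $\mathrm{PSL}(2,\mathbb{C})$ has unbounded maximal conformal distortion, this bound confines the maps $\varphi^n|_{\mathcal{F}^{\hat{c}}(x)}$ to a compact part of the Möbius group in both time directions, and averaging the round metric along the orbit (via an invariant mean, as in \cite{xu_holomorphic_2025}) produces a uniformly bounded $\varphi^1$-invariant measurable leafwise metric. Hence $(\varphi^t)$ is again a $\mu$-subcenter isometry.

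The remaining case $\overline{\mu}(\overline{B})=0$ with $p_g=0$ is the crux. Here the leafwise derivatives are a.e. unbounded, so the conformal maps $\varphi^n|_{\mathcal{F}^{\hat{c}}(x)}$ escape every compact subset of $\mathrm{PSL}(2,\mathbb{C})$; the standard dynamics of such escaping Möbius sequences is that, off an arbitrarily small neighborhood of one point, they contract the sphere, bringing generic pairs of points on a common leaf together under iteration. Combining this with Poincaré recurrence of $\overline{\varphi^1}$ and the Gibbs $u$-state description of $\mu$ along unstable leaves should yield both defining properties of a $\mu$-subcenter contraction: the vanishing of $\liminf_{n}d^{\hat{c}}(\varphi^n x,\varphi^n x')$ for points on the same subcenter leaf, and the contracting set having full leafwise volume in $L^{\hat{c}u}_{\operatorname{loc}}$. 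The delicate points are making the contraction uniform enough to survive the recurrence step and propagating it from a single leaf to a positive-measure family; I would follow the corresponding argument of \cite{xu_holomorphic_2025} verbatim, replacing $f$ by $\varphi^1$ and the center foliation by the subcenter foliation. Finally, Proposition \ref{prop:isoorcont} already guarantees that isometry and contraction are mutually exclusive, so the regimes $\overline{\mu}(\overline{B})=1$ and $\overline{\mu}(\overline{B})=0$ cannot overlap, confirming the stated result.
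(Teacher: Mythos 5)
Your proposal is correct and takes essentially the same approach as the paper: both reduce the trichotomy to Lemmas 3.4, 3.6 and 3.7 of \cite{xu_holomorphic_2025}, adapted by replacing $f$ with $\varphi^1$ and the center foliation with the subcenter one (the paper invokes Proposition \ref{prop:leavesholo} for leafwise holomorphicity and Proposition \ref{prop:fubini} for the Fubini step in the contraction case, exactly where you invoke the Gibbs $u$-state structure and leafwise propagation). You merely unfold details the paper delegates to the citation --- the canonical uniformized metrics when $p_g\geq 1$, the ergodicity dichotomy for $\overline{B}$, and the M\"obius-group compactness argument when $\overline{\mu}(\overline{B})=1$.
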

\begin{proof}
    If $p_g\geq 1$, for every $\overline{x} \in \overline{M}$, $\varphi^1|_{\mathcal{F}^{\hat{c}}(\overline{x})}: \mathcal{F}^{\hat{c}}(\overline{x}) \to \mathcal{F}^{\hat{c}}(\overline{\varphi^1}(\overline{x}))$ is holomorphic by \ref{prop:leavesholo} so the rest of the proof is identical to that of Lemma 3.4 of \cite{xu_holomorphic_2025}.\\
    If $p_g=0$, the case $\overline{\mu}(\overline{B})=1$ is identical (i.e. Lemma 3.6 of \cite{xu_holomorphic_2025}), while the case $\overline{\mu}(\overline{B})=0$ (Lemma 3.7 of \cite{xu_holomorphic_2025}) uses Proposition \ref{prop:fubini} which is merely Fubini's theorem in the subcenter-unstable leaves.
\end{proof}
\subsection{Contraction: the subcenter holonomy is holomorphic for a.e. leaf}
The main result of this subsection is the following:
\begin{proposition}
\label{prop:contract1}
If $(\varphi^t)$ is a $\mu$-subcenter contraction, then for any $\overline{x}\in\operatorname{supp}\overline{\mu}$ and any $x,y\in \mathcal{F}^{\hat{c}}(\overline{x})$, the subcenter holonomy between unstable leaves $h_{xy}^{\hat{c}}:\mathcal{F}_{\operatorname{loc}}^{u}(x)\to \mathcal{F}_{\operatorname{loc}}^{u}(y)$ is holomorphic.
\end{proposition}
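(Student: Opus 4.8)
The plan is to transport the discrete argument of \cite{xu_holomorphic_2025} to our flow, replacing $f$ by $\varphi^1$ and "center" by "subcenter", and feeding in the three structural inputs available in dimension seven. First, since $\dim M=7$ and each of $E^s,E^{\hat c},E^u$ is nontrivial, Proposition \ref{prop:leavesholo} forces $\dim_{\mathbb C}E^u=\dim_{\mathbb C}E^{\hat c}=\dim_{\mathbb C}E^s=1$, so $\varphi^1$ is a $C^\infty$ $u$-quasiconformal partially hyperbolic diffeomorphism and every unstable leaf is a complex curve. Second, for $y\in\mathcal{F}^{\hat c}_{\operatorname{loc}}(x)$ the subcenter holonomy $h^{\hat c}_{xy}\colon\mathcal{F}^u_{\operatorname{loc}}(x)\to\mathcal{F}^u_{\operatorname{loc}}(y)$ is exactly the restriction of the center-stable holonomy $h^{cs}_{xy}$: indeed $\mathcal{F}^{\hat c}(z)\subset\mathcal{F}^{cs}(z)$, so the point $h^{\hat c}_{xy}(z)\in\mathcal{F}^{\hat c}(z)\cap\mathcal{F}^u_{\operatorname{loc}}(y)$ must coincide, by uniqueness in the local product structure, with $h^{cs}_{xy}(z)\in\mathcal{F}^{cs}(z)\cap\mathcal{F}^u_{\operatorname{loc}}(y)$. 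Hence by Proposition \ref{prop:hcsquasiconf} the conjugate $\hat h_{xy}:=\theta_y^{-1}\circ h^{\hat c}_{xy}\circ\theta_x$ is $K$-quasiconformal with a uniform $K$ in the holomorphic charts $(\theta_x)$ of the non-stationary linearization; a global $y\in\mathcal{F}^{\hat c}(x)$ is handled by decomposing along a path and composing such local maps. Third, $\theta_x\colon E^u_x\to\mathcal{F}^u(x)$ is holomorphic, equivariant ($\theta_{\varphi^t(x)}\circ d_x\varphi^t=\varphi^t\circ\theta_x$) with complete image (Proposition \ref{prop:non-statiolin}).

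The map $\hat h_{xy}$ is thus a $K$-quasiconformal homeomorphism between neighborhoods of $0$ in $E^u_x\cong\mathbb C$ and $E^u_y\cong\mathbb C$, hence differentiable Lebesgue-a.e.\ with a well-defined complex dilatation (Lemma \ref{lem:quasiconfholo}). The first step is to record the conjugacy relation coming from the flow-invariance of $\mathcal{F}^{\hat c}$ and $\mathcal{F}^u$: since $\varphi^n$ maps subcenter leaves to subcenter leaves and unstable leaves to unstable leaves, one has $\varphi^n\circ h^{\hat c}_{xy}=h^{\hat c}_{\varphi^n(x),\varphi^n(y)}\circ\varphi^n$ on local unstable leaves, and substituting the equivariance of $\theta$ gives
\[
\hat h_{xy}=\bigl(d_y\varphi^n|_{E^u}\bigr)^{-1}\circ\hat h_{\varphi^n(x),\varphi^n(y)}\circ\bigl(d_x\varphi^n|_{E^u}\bigr).
\]
As $d\varphi^n|_{E^u}$ is $\mathbb C$-linear, pre- and post-composition by it leaves the linear dilatation (equivalently the modulus of the complex dilatation) unchanged, so $L_{\hat h_{xy}}(v)=L_{\hat h_{\varphi^n(x),\varphi^n(y)}}\!\bigl(d_x\varphi^n\,v\bigr)$ for every $n$.

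Next I would invoke the contraction hypothesis. Taking $x$ and $y=x'$ in the full-volume good set $A$ inside a common local subcenter-unstable leaf, there is a subsequence $n_k\to\infty$ with $d^{\hat c}\bigl(\varphi^{n_k}(x),\varphi^{n_k}(y)\bigr)\to0$. Along it the base points $\varphi^{n_k}(x),\varphi^{n_k}(y)$ merge, their unstable leaves merge, and the uniformly $K$-quasiconformal maps $\hat h_{\varphi^{n_k}(x),\varphi^{n_k}(y)}$ — being $K$-quasiconformal self-maps of the complete linearized unstable line converging to the identity — have dilatation tending to that of a conformal map; together with the flow-invariance of the dilatation this pins $L_{\hat h_{xy}}\equiv1$, so $\hat h_{xy}$ is $1$-quasiconformal, i.e.\ conformal, and $h^{\hat c}_{xy}$ is holomorphic for all $x,y\in A$. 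This is a full-$\mu$-measure statement; to upgrade it to every $\overline x\in\operatorname{supp}\overline\mu$ and every $x,y\in\mathcal{F}^{\hat c}(\overline x)$ I would use that $A$ meets a.e.\ local subcenter-unstable leaf in a full-volume set, that the holonomies depend continuously on their endpoints and are uniformly quasiconformal, and that a uniform limit of conformal maps is conformal, together with Lemma \ref{lem:quasiconfholo} to pass from a.e.\ vanishing of $\bar\partial$ to genuine conformality.

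The main obstacle is precisely the crux of the third step: converting the asymptotic/measure-theoretic contraction into a \emph{pointwise} statement about the complex dilatation. The conjugacy relation only shows the dilatations along the subsequence are all equal, with the evaluation point $d_x\varphi^{n_k}v$ escaping to infinity, so one must establish independently a uniform modulus of quasiconformality for the holonomy between two subcenter-related points that degenerates to conformality as the points merge — uniformly at all scales on the complete linearized leaf. This is where the leafwise-only holomorphicity of $\varphi^1$ makes the linearization $\theta$ and the identification $h^{\hat c}=h^{cs}$ indispensable, and where the uniform quasiconformality of Proposition \ref{prop:hcsquasiconf} together with the completeness of the linearized unstable leaves enters, exactly as in Lemmas 12 and 13 of \cite{butler_uniformly_2018} transposed to the contraction lemma of \cite{xu_holomorphic_2025}.
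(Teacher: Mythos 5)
Your overall scaffolding matches the paper's: identify $h^{\hat c}$ with the restriction of $h^{cs}$ to get uniform quasiconformality in the charts $(\theta_x)$ (Proposition \ref{prop:hcsquasiconf}), use the flow-conjugacy relation $\hat h_{xy}=(d_y\varphi^n|_{E^u})^{-1}\circ\hat h_{\varphi^n(x),\varphi^n(y)}\circ d_x\varphi^n|_{E^u}$, feed in the contraction hypothesis along a subsequence, and finish with a density/uniform-limit argument to pass from a.e.\ leaves to all of $\operatorname{supp}\overline\mu$. The first, second and last of these steps are sound and coincide with the paper. But the central step, as you yourself flag, has a genuine gap, and the way you propose to close it does not work. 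You argue that because $\hat h_{\varphi^{n_k}(x),\varphi^{n_k}(y)}\to\operatorname{id}$ in $C^0$, their linear dilatations tend to $1$, and then invariance of dilatation under the $\mathbb C$-linear conjugacy ``pins $L_{\hat h_{xy}}\equiv 1$''. This is a non sequitur on two counts: uniform convergence of $K$-quasiconformal maps to the identity controls only the dilatation of the \emph{limit} (dilatation is not continuous under $C^0$ limits --- one can have $K$-quasiconformal maps with dilatation exactly $K$ on shrinking sets converging uniformly to the identity); and the conjugacy relation evaluates the dilatation of $\hat h_{\varphi^{n_k}(x),\varphi^{n_k}(y)}$ at the points $d_x\varphi^{n_k}v$, which escape to infinity, i.e.\ outside any region where the $C^0$-closeness gives information.

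The paper closes this gap differently, and this is the content of its Lemmas \ref{lem:contract1} and \ref{lem:contract2} (transposed from Lemma 4.2 of \cite{xu_holomorphic_2025}): one does not try to control dilatations of the far-away holonomies at all, but instead works at a single point of differentiability. Assuming $h^{\hat c}_{x,x'}$ differentiable at $x$ (available at $m^u$-a.e.\ point by quasiconformality and Lemma \ref{lem:quasiconfholo}), the conjugacy relation plus the $C^0$-closeness of $h^{\hat c}_{\varphi^n(x),\varphi^n(x')}$ to conformal maps (the identity, in the contraction case) is used only on the fixed-size balls $\mathcal{F}^u_\delta(\varphi^n(x))$, which pull back under $d_x\varphi^n$ to balls shrinking to $0$; there the differentiability linearization takes over, and a rescaling argument (using that $d\varphi^n|_{E^u}$ is conformal and that the ratio of expansion rates at $x$ and $x'$ stays bounded via Proposition \ref{prop:Hu}) shows that the derivative at $0$ is a $C^0$-limit of rescaled conformal maps, hence $\mathbb C$-linear: $\overline\partial\bigl(\theta_{x'}^{-1}\circ h^{\hat c}_{x,x'}\circ\theta_x\bigr)(0)=0$. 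Running this at a.e.\ point of the local unstable leaf (Fubini, Proposition \ref{prop:fubini}) yields $\overline\partial=0$ almost everywhere, and then the second statement of Lemma \ref{lem:quasiconfholo} (quasiconformal $+$ a.e.\ vanishing $\overline\partial$ $\Rightarrow$ conformal) gives holomorphicity of $h^{\hat c}_{x,x'}$ for $\mu$-a.e.\ $x$ and a.e.\ $x'\in\mathcal{F}^{\hat c}(x)$; your closing density argument then applies verbatim. So the missing ingredient in your proposal is precisely this pointwise $\overline\partial$-vanishing lemma at points of differentiability; without it, the passage from the asymptotic contraction to conformality of $\hat h_{xy}$ is unsupported.
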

The proof of this proposition relies heavily on the following lemma, which is analogous to Lemma 4.2 of \cite{xu_holomorphic_2025}.

\begin{lemma}
\label{lem:contract1}
Let $(\varphi^t)$ a smooth transversely holomorphic partially hyperbolic flow on a smooth compact manifold $M$ of dimension $7$ with a flow invariant compact subcenter foliation $\mathcal{F}^{\hat{c}}$ with $C^1$ leaves and trivial holonomy.
Assume there exist $x\in M$ and $x' \in \mathcal{F}^{\hat{c}}(x)$ such that $h^{\hat{c}}_{x,x'}: \mathcal{F}^u_{\mathrm{loc}}(x) \to \mathcal{F}^u_{\mathrm{loc}}(x')$ is differentiable at $x$ and there is, for some $\delta >0$, a sequence of conformal maps $f_n: \mathcal{F}^u_{\delta}(\varphi^n (x)) \to M$ such that
\[
\liminf_{n \to +\infty} d_{C^0}\left(h^{\hat{c}}_{\varphi^n (x), \varphi^n (x')}, f_n \right) = 0.
\]
Then $\overline{\partial}\left( \theta_{x'}^{-1} \circ h^{\hat{c}}_{x,x'}\circ \theta_x \right)(0)=0.$
\end{lemma}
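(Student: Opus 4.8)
The plan is to show that the $\mathbb{R}$-linear differential $d_0g$ of $g:=\theta_{x'}^{-1}\circ h^{\hat c}_{x,x'}\circ\theta_x$ is in fact $\mathbb{C}$-linear; since $g$ is differentiable at $0$ with $g(0)=0$, this is exactly the assertion $\overline\partial g(0)=0$. First note that $g$ is $K$-quasiconformal: the subcenter holonomy between subcenter-related points is built from center-stable holonomies, which are quasiconformal by Proposition \ref{prop:hcsquasiconf}. Consequently, at the point $0$ where $g$ is differentiable, the inequality $\|d_0g\|^2\le K\,J_g(0)$ forces $d_0g$ to be either zero—in which case $\overline\partial g(0)=0$ trivially—or invertible; I may thus assume $d_0g$ invertible. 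The engine is flow-equivariance: since $\mathcal F^{\hat c}$ and $\mathcal F^u$ are flow-invariant, $\varphi^n\circ h^{\hat c}_{x,x'}=h^{\hat c}_{\varphi^n(x),\varphi^n(x')}\circ\varphi^n$ on local unstable leaves, and combining this with the intertwining relation $\varphi^n\circ\theta_y=\theta_{\varphi^n(y)}\circ d_y\varphi^n$ of Proposition \ref{prop:non-statiolin} gives, for every $n$,
\[ g=(d_{x'}\varphi^n|_{E^u})^{-1}\circ g_n\circ(d_x\varphi^n|_{E^u}),\qquad g_n:=\theta_{\varphi^n(x')}^{-1}\circ h^{\hat c}_{\varphi^n(x),\varphi^n(x')}\circ\theta_{\varphi^n(x)}. \]
By Proposition \ref{prop:leavesholo} the flow acts holomorphically on unstable leaves, so $d_x\varphi^n|_{E^u}$ and $d_{x'}\varphi^n|_{E^u}$ are multiplication by complex scalars $b_n,a_n\in\mathbb C^\ast$; hence $g_n(\zeta)=a_n\,g(b_n^{-1}\zeta)$ and $g_n(0)=0$.

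I would then form the renormalizations $\hat g_n:=\tfrac{b_n}{a_n}g_n=b_n\,g(b_n^{-1}\cdot)$, which rescale domain and range of $g$ by the conjugate conformal factors $b_n^{-1}$ and $b_n$. Since $E^u$ is uniformly expanded, $|b_n|\to+\infty$, so $b_n^{-1}\zeta\to 0$ uniformly on any fixed ball; feeding the first-order expansion $g(w)=\partial g(0)\,w+\overline\partial g(0)\,\overline w+o(|w|)$ into $\hat g_n$ gives
\[ \hat g_n(\zeta)=\partial g(0)\,\zeta+\tfrac{b_n}{\overline{b_n}}\,\overline\partial g(0)\,\overline\zeta+o(1) \]
uniformly on compact subsets of the common (uniform-size) domain. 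Passing to a subsequence along which the unimodular factor $b_n/\overline{b_n}$ converges to some $e^{i\psi}$, the $\hat g_n$ converge uniformly on compacta to the $\mathbb R$-linear map $L_\infty(\zeta)=\partial g(0)\,\zeta+e^{i\psi}\,\overline\partial g(0)\,\overline\zeta$, whose antiholomorphic part has modulus exactly $|\overline\partial g(0)|$. This step is independent of the ratio $|a_n|/|b_n|$, and $L_\infty$ is nondegenerate because $d_0g$ is.

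The delicate point—and the one I expect to be the main obstacle—is to control the scaling ratio, namely to prove $|a_n|\asymp|b_n|$ uniformly in $n$. Both $g_n$ and its inverse $g_n^{-1}=\theta_{\varphi^n(x)}^{-1}\circ h^{\hat c}_{\varphi^n(x'),\varphi^n(x)}\circ\theta_{\varphi^n(x')}$ are $K$-quasiconformal, fix $0$, and are uniformly bounded in $C^0$ on a fixed ball $B_{r_0}$: indeed the uniform bound on the diameter of the compact subcenter leaves makes the holonomy displace points by a bounded amount, while compactness of $M$ endows the family $(\theta_\bullet)$ and its inverses with a uniform modulus of continuity near the base points (Proposition \ref{prop:non-statiolin}(iii)). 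Writing $g_n=\tfrac{a_n}{b_n}\hat g_n$ with $\hat g_n\to L_\infty\neq 0$, uniform boundedness of $g_n$ keeps $|a_n|/|b_n|$ bounded above; and were $|a_n|/|b_n|\to 0$ along a sub-subsequence, $g_n$ would converge uniformly to the constant $0$, contradicting the uniform boundedness (hence non-collapsing) of the quasiconformal inverses $g_n^{-1}$. This gives the two-sided bound; it is exactly the step that prevents the $C^0$-approximation hypothesis from becoming vacuous through a degeneration of scale.

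Finally I would invoke the hypothesis. Along the subsequence realizing $\liminf_n d_{C^0}(h^{\hat c}_{\varphi^n(x),\varphi^n(x')},f_n)=0$, set $\tilde f_n:=\theta_{\varphi^n(x')}^{-1}\circ f_n\circ\theta_{\varphi^n(x)}$, which is holomorphic as a composition of conformal and holomorphic maps; uniform continuity of the charts yields $\|g_n-\tilde f_n\|_{C^0(B_{r_0})}\to 0$. The holomorphic maps $\hat f_n:=\tfrac{b_n}{a_n}\tilde f_n$ then satisfy $\|\hat g_n-\hat f_n\|_{C^0}=\tfrac{|b_n|}{|a_n|}\,\|g_n-\tilde f_n\|_{C^0}\to 0$ by the ratio bound just established. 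Since $\hat g_n\to L_\infty$ uniformly on compacta, so does $\hat f_n$; being a locally uniform limit of holomorphic maps, $L_\infty$ is holomorphic by Weierstrass' theorem, whence its antiholomorphic part vanishes. As that part has modulus $|\overline\partial g(0)|$, we conclude $\overline\partial g(0)=0$.
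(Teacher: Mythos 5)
Your overall strategy is the renormalization scheme of Lemma 4.2 of \cite{xu_holomorphic_2025}, which is exactly what the paper's own proof defers to, and most of it is sound: the quasiconformality of $g=\theta_{x'}^{-1}\circ h^{\hat c}_{x,x'}\circ\theta_x$ (via Proposition \ref{prop:hcsquasiconf} and the bounded diameter of subcenter leaves), the resulting ``$d_0g$ zero or invertible'' dichotomy, the conjugacy $g=(d_{x'}\varphi^n|_{E^u})^{-1}\circ g_n\circ d_x\varphi^n|_{E^u}$ coming from flow-invariance of $\mathcal{F}^{\hat c}$, $\mathcal{F}^u$ and Proposition \ref{prop:non-statiolin}, the expansion of $\hat g_n$, and the two-sided control of $|a_n|/|b_n|$ (your non-collapsing argument, which does work once one notes that the conjugacy relation, hence the convergence $\hat g_n\to L_\infty$, holds on balls of \emph{any} fixed radius for $n$ large, so that $g_n\circ g_n^{-1}=\operatorname{id}$ on a fixed ball yields the contradiction).

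The genuine gap is in the last step, the only place where the hypothesis on the $f_n$ is used. You set $\tilde f_n:=\theta_{\varphi^n(x')}^{-1}\circ f_n\circ\theta_{\varphi^n(x)}$, but this composition is not defined: $\theta_{\varphi^n(x')}^{-1}$ is a chart of the single leaf $\mathcal{F}^u(\varphi^n(x'))$, whereas the hypothesis only gives $f_n:\mathcal{F}^u_{\delta}(\varphi^n(x))\to M$, with no reason for its values to lie in $\mathcal{F}^u(\varphi^n(x'))$. This is not a corner case: in the paper's sole application of this lemma (Lemma \ref{lem:contract2}), $f_n$ is the inclusion of $\mathcal{F}^u_{\delta}(\varphi^n(x))$ into $M$, whose image lies in $\mathcal{F}^u(\varphi^n(x))$ --- a different unstable leaf from that of $\varphi^n(x')$ --- so for the very maps the lemma is designed to handle, $\tilde f_n$ does not exist, and neither does the estimate $\|g_n-\tilde f_n\|_{C^0}\to 0$ nor the ensuing Weierstrass argument. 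What is missing is the transfer mechanism that is the technical heart of the cited proof: one must compare $h^{\hat c}_{\varphi^n(x),\varphi^n(x')}$ and $f_n$ inside a \emph{common} complex chart, for instance by passing to subsequences with $\varphi^n(x)\to z$ and $\varphi^n(x')\to z'\in\mathcal{F}^{\hat c}(z)$, using the continuity of the family $(\theta_y)_{y\in M}$ together with the equicontinuity furnished by uniform quasiconformality and the uniform $C^0$ bounds to extract a common locally uniform limit of the two sequences of maps with values in the limiting leaf (or in a holomorphic chart of the complex surface $\mathcal{F}^{\hat c u}(z)$), and only then applying the fact that a locally uniform limit of holomorphic maps is holomorphic. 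As written, the $C^0$-approximation hypothesis never reaches the linearizing coordinates, so the conclusion $\overline{\partial}g(0)=0$ does not follow.
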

\begin{proof}
    See the proof of Lemma 4.2 of \cite{xu_holomorphic_2025}.
    We use the non-stationary linearization given by Proposition \ref{prop:non-statiolin} as well as the fact that $d\varphi^1|_{E^u}$ is a holomorphic thus conformal map.
\end{proof}

\begin{lemma}
\label{lem:contract2}
Let $(\varphi^t)$ a smooth transversely holomorphic partially hyperbolic flow on a smooth compact manifold $M$ of dimension $7$ with a flow invariant compact subcenter foliation $\mathcal{F}^{\hat{c}}$ with $C^1$ leaves and trivial holonomy. Let $x \in M$ and $x' \in \mathcal{F}^{\hat{c}}(x)$ such that $h_{xx'}^{\hat{c}}:\mathcal{F}_{\operatorname{loc}}^{u}(x)\to \mathcal{F}_{\operatorname{loc}}^{u}(x')$ is differentiable at $x$ and $\liminf_{n \to +\infty} d_{\hat{c}}(\varphi^n(x),\varphi^n(x'))=0$.\\
Then $\overline{\partial}\left( \theta_{x'}^{-1} \circ h^{\hat{c}}_{x,x'}\circ \theta_x \right)(0)=0$.
\end{lemma}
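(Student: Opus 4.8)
The plan is to deduce this statement from Lemma \ref{lem:contract1} by exhibiting, along a suitable subsequence of times, conformal maps that approximate the subcenter holonomy in the $C^0$ sense. Since $\liminf_{n\to+\infty} d^{\hat{c}}(\varphi^n(x),\varphi^n(x'))=0$, I would first fix a subsequence $(n_k)$ with $d^{\hat{c}}(\varphi^{n_k}(x),\varphi^{n_k}(x'))\to 0$. The differentiability of $h^{\hat{c}}_{x,x'}$ at $x$ is a hypothesis common to both lemmas, so the only thing left to produce is a sequence of conformal maps $f_n:\mathcal{F}^u_\delta(\varphi^n(x))\to M$ (for a uniform $\delta>0$) satisfying $\liminf_n d_{C^0}\big(h^{\hat{c}}_{\varphi^n(x),\varphi^n(x')},f_n\big)=0$; the conclusion then follows verbatim from Lemma \ref{lem:contract1}, with $(\theta_z)_{z\in M}$ the non-stationary linearization of Proposition \ref{prop:non-statiolin}.

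I would take $f_n$ to be simply the inclusion $\iota_n:\mathcal{F}^u_\delta(\varphi^n(x))\hookrightarrow M$ of the local unstable disk, for every $n$. Since $\dim_{\mathbb C}E^u=1$, Proposition \ref{prop:leavesholo} shows that $\mathcal{F}^u(\varphi^n(x))$ is an immersed Riemann surface, and with respect to a continuous ambient metric that is Hermitian for the induced leafwise complex structure (the metric $\|\cdot\|_{(0)}$ built in the proof of Proposition \ref{prop:leavesholo}), $\iota_n$ is the identity onto a complex curve and is therefore conformal. A uniform radius $\delta$ exists by compactness of $M$, since local unstable disks have uniformly bounded size.

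The substance of the argument is the $C^0$ estimate along the subsequence. For $r\in\mathcal{F}^u_\delta(\varphi^{n_k}(x))$, the point $h^{\hat{c}}_{\varphi^{n_k}(x),\varphi^{n_k}(x')}(r)$ is by definition the unique intersection $\mathcal{F}^{\hat{c}}(r)\cap\mathcal{F}^u_{\operatorname{loc}}(\varphi^{n_k}(x'))$, so $r$ and its image lie on a common subcenter leaf, and $h^{\hat{c}}_{p,p}=\operatorname{Id}$ for every $p$. I would invoke compactness of $M$, the existence of global subcenter holonomy maps (Lemma \ref{lem:globalholo}), and the transverse local product structure of the subcenter-unstable leaves (Proposition 6.12 of \cite{abouanass_dynamical_2026}) to obtain a uniform modulus of continuity: the quantity $\sup_{r\in\mathcal{F}^u_\delta(p)} d\big(h^{\hat{c}}_{p,q}(r),r\big)$ tends to $0$ as $d^{\hat{c}}(p,q)\to 0$, uniformly in $p\in M$ and $q\in\mathcal{F}^{\hat{c}}(p)$. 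Applying this with $p=\varphi^{n_k}(x)$, $q=\varphi^{n_k}(x')$ gives $d_{C^0}\big(h^{\hat{c}}_{\varphi^{n_k}(x),\varphi^{n_k}(x')},\iota_{n_k}\big)\to 0$, hence $\liminf_n d_{C^0}\big(h^{\hat{c}}_{\varphi^n(x),\varphi^n(x')},\iota_n\big)=0$, and Lemma \ref{lem:contract1} yields $\overline{\partial}\big(\theta_{x'}^{-1}\circ h^{\hat{c}}_{x,x'}\circ\theta_x\big)(0)=0$.

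The main obstacle I anticipate is precisely this uniform modulus-of-continuity estimate for the subcenter holonomy as the two base points coalesce: one must rule out that the holonomy, restricted to a full unstable disk of uniform radius, spreads out faster than the base subcenter distance shrinks, \emph{uniformly} over the basepoint rather than merely pointwise. This is where compactness, the uniform bound on subcenter-leaf diameters, and the local product structure of $\mathcal{F}^{\hat{c}u}$ are essential. By contrast, the conformality of the approximating maps is immediate once a Hermitian metric adapted to the leafwise complex structures is fixed, and the reduction to Lemma \ref{lem:contract1} is formal.
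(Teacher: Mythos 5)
Your proposal is correct and is essentially the paper's own proof: the paper simply takes $f_n$ to be the identity map on $\mathcal{F}^u_{\delta}(\varphi^n(x))$ and invokes Lemma \ref{lem:contract1}. Your additional work (conformality of the inclusion with respect to the leafwise complex structure, and the uniform $C^0$ convergence of $h^{\hat{c}}_{\varphi^{n_k}(x),\varphi^{n_k}(x')}$ to the identity as the subcenter distance shrinks) just makes explicit the verification the paper leaves implicit.
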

\begin{proof}
   Take $f_n$ to be the identity map on $\mathcal{F}^u_{\delta}(\varphi^n (x))$ and apply the previous Lemma.
\end{proof}

\begin{lemma}
    If $(\varphi^t)$ is a $\mu$-subcenter contraction, then for $\mu$-a.e. $x\in M$ and $\text{vol}_{\mathcal{F}^{\hat{c}}}$-a.e. $x' \in \mathcal{F}^{\hat{c}}(x)$, $h_{xx'}^{\hat{c}}:\mathcal{F}_{\operatorname{loc}}^{u}(x)\to \mathcal{F}_{\operatorname{loc}}^{u}(x')$ is holomorphic.
\end{lemma}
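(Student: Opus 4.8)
The plan is to transfer the problem into the holomorphic coordinates $(\theta_x)_{x\in M}$ of Proposition \ref{prop:non-statiolin} and to reduce everything to the pointwise Cauchy--Riemann identity of Lemma \ref{lem:contract2}. Fix $x$ and $x'\in\mathcal{F}^{\hat{c}}(x)$ and write
\[
H_{xx'}:=\theta_{x'}^{-1}\circ h^{\hat{c}}_{xx'}\circ\theta_x ,
\]
a homeomorphism between open subsets of $E^u_x\cong\mathbb C$ and $E^u_{x'}\cong\mathbb C$ (recall $\dim_{\mathbb C}E^u=1$). Since $E^{\hat{c}}\subset E^{cs}$, along every local piece of an $\mathcal{F}^{\hat{c}}$-path from $x$ to $x'$ the subcenter holonomy between local unstable leaves coincides with the center-stable holonomy of Proposition \ref{prop:hcsquasiconf}; decomposing such a path into a number of local pieces bounded uniformly by the (uniformly bounded) diameter of the subcenter leaves and applying that proposition to each piece, $H_{xx'}$ is $K$-quasiconformal for a constant $K$ independent of $x,x'$. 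By Lemma \ref{lem:quasiconfholo}, $H_{xx'}$ is then absolutely continuous, differentiable Lebesgue-a.e., and holomorphic as soon as $\overline\partial H_{xx'}=0$ Lebesgue-a.e. Thus it suffices to establish this last equation a.e.\ on $\theta_x^{-1}(\mathcal{F}^u_{\operatorname{loc}}(x))$.

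The second step converts the basepoint-only conclusion of Lemma \ref{lem:contract2} into an a.e.\ statement. Given $z\in\mathcal{F}^u_{\operatorname{loc}}(x)$, put $z':=h^{\hat{c}}_{xx'}(z)$, so that $z'\in\mathcal{F}^u_{\operatorname{loc}}(x')\cap\mathcal{F}^{\hat{c}}(z)$ and, locally, $h^{\hat{c}}_{zz'}=h^{\hat{c}}_{xx'}$. Then
\[
H_{zz'}=\bigl(\theta_{z'}^{-1}\circ\theta_{x'}\bigr)\circ H_{xx'}\circ\bigl(\theta_x^{-1}\circ\theta_z\bigr),
\]
and by Lemma \ref{lem:derivHu} the two outer maps are affine with $\mathbb C$-linear (unstable-holonomy) derivatives, hence biholomorphic. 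Consequently $\overline\partial H_{xx'}\bigl(\theta_x^{-1}(z)\bigr)=0$ if and only if $\overline\partial H_{zz'}(0)=0$. It therefore suffices to show that for Lebesgue-a.e.\ $z\in\mathcal{F}^u_{\operatorname{loc}}(x)$ the pair $(z,z')$ fulfils the hypotheses of Lemma \ref{lem:contract2}: differentiability of $h^{\hat{c}}_{zz'}$ at $z$, and $\liminf_{n}d^{\hat{c}}(\varphi^n(z),\varphi^n(z'))=0$.

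Differentiability holds for a.e.\ $z$ by the quasiconformality of $H_{xx'}$. The contraction condition is supplied by the defining set $A\subset\operatorname{supp}\mu$ of the $\mu$-subcenter contraction: when $z\in A$ and $z'\in A\cap\mathcal{F}^{\hat{c}}(z)$ one has $\liminf_n d^{\hat{c}}(\varphi^n(z),\varphi^n(z'))=0$. It remains to check that for $\mu$-a.e.\ $x$ and $\operatorname{vol}_{\mathcal{F}^{\hat{c}}}$-a.e.\ $x'\in\mathcal{F}^{\hat{c}}(x)$ one has $z\in A$ and $z'\in A$ for a.e.\ $z\in\mathcal{F}^u_{\operatorname{loc}}(x)$. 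Using Proposition \ref{prop:fubini}, the hypothesis that $A\cap L^{\hat{c}u}_{\operatorname{loc}}(x)$ has full leafwise volume for $\mu$-a.e.\ $x$ disintegrates inside the subcenter-unstable leaf along its unstable sub-leaves and, together with the Gibbs $u$-state property of $\mu$ (Lebesgue unstable conditionals), yields that $A\cap\mathcal{F}^u_{\operatorname{loc}}(x)$ has full measure for $\mu$-a.e.\ $x$ and that $A\cap\mathcal{F}^u_{\operatorname{loc}}(x')$ has full measure for $\operatorname{vol}_{\mathcal{F}^{\hat{c}}}$-a.e.\ $x'$. Since $h^{\hat{c}}_{xx'}$ and its inverse are quasiconformal, hence absolutely continuous, the set $(h^{\hat{c}}_{xx'})^{-1}\bigl(A\cap\mathcal{F}^u_{\operatorname{loc}}(x')\bigr)$ is again of full measure; intersecting it with $A\cap\mathcal{F}^u_{\operatorname{loc}}(x)$ shows that for a.e.\ $z$ both $z$ and $z'$ lie in $A$. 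Lemma \ref{lem:contract2} then gives $\overline\partial H_{zz'}(0)=0$ for a.e.\ $z$, hence $\overline\partial H_{xx'}=0$ a.e., and Lemma \ref{lem:quasiconfholo} yields that $H_{xx'}$, and therefore $h^{\hat{c}}_{xx'}=\theta_{x'}\circ H_{xx'}\circ\theta_x^{-1}$, is holomorphic.

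The delicate point is the measure-theoretic matching of the third step: one must ensure that, for almost every $z$, not only $z$ but also its holonomy image $z'$ lands in the contraction set $A$. This forces the use of both directions of the Fubini disintegration of Proposition \ref{prop:fubini} together with the absolute continuity of the quasiconformal holonomy (and of its inverse) to carry full-measure sets across from the target unstable leaf to the source. The subtlety hiding here is that $\mu$ need not disintegrate to Lebesgue measure in the subcenter direction, so that passing from the full-volume property of $A$ in subcenter-unstable leaves to the full measure of $A\cap\mathcal{F}^u_{\operatorname{loc}}(x)$ for $\mu$-a.e.\ $x$ genuinely requires the Gibbs $u$-state structure along unstable leaves; this is handled exactly as in the corresponding step of \cite{xu_holomorphic_2025}. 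The remaining verifications---the local identification of the subcenter and center-stable holonomies and the resulting uniform quasiconformality of the global map $h^{\hat{c}}_{xx'}$---are routine given the uniform bound on subcenter-leaf diameters.
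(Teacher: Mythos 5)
Your architecture is the intended one --- it is precisely the route of Lemma 4.4 of \cite{xu_holomorphic_2025} that the paper cites: quasiconformality of $H_{xx'}$ via Proposition \ref{prop:hcsquasiconf}, the affine identifications $\theta_{y}^{-1}\circ\theta_{x}$ of Lemma \ref{lem:derivHu} to transport the basepoint Cauchy--Riemann identity of Lemma \ref{lem:contract2} to a.e.\ point, Fubini (Proposition \ref{prop:fubini}) to produce pairs in the contraction set $A$, and Lemma \ref{lem:quasiconfholo} to conclude. Your handling of the target point ($z'\in A$ for a.e.\ $z$, via Fubini in $L^{\hat{c}u}_{\operatorname{loc}}(x)$ plus absolute continuity of the quasiconformal holonomy) is correct. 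The genuine gap is the companion claim that $A\cap\mathcal{F}^u_{\operatorname{loc}}(x)$ has full Lebesgue measure for $\mu$-a.e.\ $x$. This does not follow from the definition of a $\mu$-subcenter contraction together with the Gibbs $u$-state property. The full-volume hypothesis on $A\cap L^{\hat{c}u}_{\operatorname{loc}}(x)$ controls only leafwise \emph{volume}: after Fubini it says that $A\cap\mathcal{F}^u_{\operatorname{loc}}(y)$ is Lebesgue-full for vol-a.e.\ $y\in\mathcal{F}^{\hat{c}}(x)$, and the basepoint $x$ sits in a single, possibly exceptional, slice. The Gibbs property constrains only the unstable conditionals of $\mu$, not its transverse (subcenter-direction) structure, so it cannot bridge ``vol-a.e.\ slice'' to ``the slice through a $\mu$-generic point'' --- exactly the caveat you flag in your last paragraph but then defer to \cite{xu_holomorphic_2025} without resolving. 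Concretely: given any witness $A$ of the definition, remove from it all global unstable leaves through a vol-null subset $N$ of subcenter parameters; by Fubini and absolute continuity of the unstable holonomy between subcenter leaves (Corollary \ref{cor:FçholoinFçs}), the stripped set still satisfies both conditions of the definition, yet if the transverse structure of $\mu$ charges $N$ (nothing in the hypotheses forbids this, since $\mu$ need not have Lebesgue disintegration along $\mathcal{F}^{\hat{c}}$), then $A\cap\mathcal{F}^u_{\operatorname{loc}}(x)=\emptyset$ on a set of positive $\mu$-measure. Since your proof must work for an arbitrary witness $A$, this step fails.

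The repair stays entirely within your framework. Run your steps 2--3 not for the pair $(x,x')$ but for vol-a.e.\ pair $(y_1,y_2)\in\mathcal{F}^{\hat{c}}(x)\times\mathcal{F}^{\hat{c}}(x)$: by Fubini both slices $A\cap\mathcal{F}^u_{\operatorname{loc}}(y_1)$ and $A\cap\mathcal{F}^u_{\operatorname{loc}}(y_2)$ are Lebesgue-full, so your argument (differentiability a.e., both $z$ and $z'$ in $A$, Lemma \ref{lem:contract2}, affine identifications, Lemma \ref{lem:quasiconfholo}) shows $h^{\hat{c}}_{y_1y_2}$ is holomorphic for vol-a.e.\ pairs. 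Then recover the basepoint: for vol-a.e.\ $x'$, the set of $y_1$ with $(y_1,x')$ a good pair is vol-full, hence dense in the compact leaf $\mathcal{F}^{\hat{c}}(x)$; choosing $y_1^n\to x$ along it, one has $h^{\hat{c}}_{y_1^n x'}=h^{\hat{c}}_{xx'}\circ\bigl(h^{\hat{c}}_{x y_1^n}\bigr)^{-1}\to h^{\hat{c}}_{xx'}$ uniformly, and a locally uniform limit of holomorphic maps is holomorphic. This limiting device is exactly the one the paper already uses in the proof of Proposition \ref{prop:contract1}, so the lemma's conclusion survives; it is only your intermediate measure-transfer claim that must be replaced by the pair argument.
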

\begin{proof} 
See Lemma 4.4 of \cite{xu_holomorphic_2025}.
We use the Fubini theorem on the subcenter unstable leaves (Proposition \ref{prop:fubini}), as well as the fact that the subcenter holonomy between local unstable leaves is quasiconformal (by Proposition \ref{prop:hcsquasiconf}, since it coincides with the center-stable holonomy between local unstable leaves), thus absolutely continuous and differentiable almost everywhere (Lemma \ref{lem:quasiconfholo}). We also use the previous result as well as the properties of the coordinates charts $(\theta_x)_{x\in M}$ mentioned in \ref{prop:non-statiolin}.
\end{proof}

\begin{proof}[Proof of Proposition \ref{prop:contract1}]
By the previous Lemma, there exists a set $A\subset M$ of full $\mu$ measure for every $x \in A$, $h^{\hat{c}}_{x x'}$ is holomorphic for $\operatorname{vol}_{\mathcal{F}^{\hat{c}}}$-a.e. $x' \in \mathcal{F}^{\hat{c}}(x)$. Since a full volume subset of a compact set is necessarily dense, for every $x\in A$, and $y \in \mathcal{F}^{\hat{c}}(x)$ there exists a sequence $(y_n)$ converging to $y$ such that for every $n\in \mathbb N$, $x_n \in \mathcal{F}^{\hat{c}}(x)$ and $h_{xx_n}^{\hat{c}}$ is holomorphic. 
Since $h_{xx_n}^{\hat{c}}$ converges uniformly to $h_{xy}^{\hat{c}}$, the latter is holomorphic.
Also, by definition of the support of a measure, it comes that $A\cap \text{supp}(\mu)$ is dense in $\text{supp}(\mu)$. Therefore, for any $\hat{x} \in \operatorname{supp} \hat{\mu}$ and any $x,y \in \mathcal{F}^{\hat{c}}(\hat{x})$, we can find sequences $x_n \to x$, $y_n \to y$ with $y_n \in \mathcal{F}^{\hat{c}}(x_n)$ such that $h^{\hat{c}}_{x_n y_n}$ is holomorphic.
As a result, $h^{\hat{c}}_{xy}$ is holomorphic, which concludes.
\end{proof}

\subsection{Isometric case}
\subsubsection{Existence of a Gibbs $\hat{c}u$-state}
The main result of this subsection is:
\begin{proposition}
\label{prop:gibbsçu}
    Let $(\varphi^t)$ a smooth partially hyperbolic flow on a smooth compact manifold $M$ with a flow invariant compact subcenter foliation $\mathcal{F}^{\hat{c}}$ with $C^1$ leaves and trivial holonomy, and let $\mu$ a Gibbs u-state. Assume that $(\varphi^t)$ is a $\mu$-subcenter isometry and that $\mathcal{F}^{\hat{c}}$ is absolutely continuous within $\mathcal{F}^{\hat{c}u}$.\\
    Then there exists a Gibbs $\hat{c}u$-state $\nu$ whose support is the $\mathcal{F}^{\hat{c}}$-saturated subset of $M$ which projects to $\operatorname{supp}(\overline{\mu})$ in $\overline{M}$.
\end{proposition}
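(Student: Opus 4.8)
The plan is to manufacture $\nu$ by disintegrating over the base $\overline{M}$ and using the isometry to put a $\varphi^1$-invariant probability measure on each subcenter fibre. Since $(\varphi^t)$ is a $\mu$-subcenter isometry, fix the uniformly bounded leafwise metric $g^{\hat{c}}$ for which $\varphi^1\colon \mathcal{F}^{\hat{c}}(\overline{x})\to \mathcal{F}^{\hat{c}}(\varphi^1(\overline{x}))$ is an isometry for $\overline{\mu}$-a.e. $\overline{x}$, where $\overline{\mu}:=p_*\mu$. As the subcenter leaves are compact, the Riemannian volume $\operatorname{vol}_{g^{\hat{c}}_{\overline{x}}}$ of each leaf is finite, and normalising it produces a probability measure $\lambda_{\overline{x}}$ on $\mathcal{F}^{\hat{c}}(\overline{x})$ depending measurably on $\overline{x}$. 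I then define
$$\nu:=\int_{\overline{M}} \lambda_{\overline{x}}\, d\overline{\mu}(\overline{x}),$$
the measure on $M$ whose disintegration along the fibres of $p$ over $\overline{\mu}$ has the $\lambda_{\overline{x}}$ as conditionals.

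Invariance and the description of the support are then routine. For $\overline{\mu}$-a.e. $\overline{x}$ the isometry $\varphi^1|_{\mathcal{F}^{\hat{c}}(\overline{x})}$ sends $\operatorname{vol}_{g^{\hat{c}}_{\overline{x}}}$ to $\operatorname{vol}_{g^{\hat{c}}_{\varphi^1(\overline{x})}}$ and preserves total mass, so $\varphi^1_*\lambda_{\overline{x}}=\lambda_{\varphi^1(\overline{x})}$; together with $\overline{\varphi^1}_*\overline{\mu}=\overline{\mu}$ (which follows from $\varphi^1_*\mu=\mu$ and $p\circ\varphi^1=\overline{\varphi^1}\circ p$) the change of variables yields $\varphi^1_*\nu=\nu$. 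Each $\lambda_{\overline{x}}$ has full support in the compact fibre $p^{-1}(\overline{x})=\mathcal{F}^{\hat{c}}(\overline{x})$, hence $\operatorname{supp}\nu=p^{-1}(\operatorname{supp}\overline{\mu})$, which is exactly the $\mathcal{F}^{\hat{c}}$-saturated set projecting to $\operatorname{supp}\overline{\mu}$.

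The core of the argument is to check that $\nu$ is a Gibbs $\hat{c}u$-state, that is, that its conditionals along local $\mathcal{F}^{\hat{c}u}$-leaves are equivalent to the leafwise Lebesgue measures. I would work inside one local leaf $\mathcal{F}^{\hat{c}u}_{\operatorname{loc}}(x)$, foliated by the two transverse subfoliations $\mathcal{F}^u$ and $\mathcal{F}^{\hat{c}}$, and show that there $\nu$ is equivalent to a product of a leafwise Lebesgue measure in the unstable direction and of $\lambda\asymp m^{\hat{c}}$ in the subcenter direction. The unstable factor is Lebesgue because $\mu$ is a Gibbs $u$-state and $\nu$ shares the same base $\overline{\mu}$, differing from $\mu$ only through its fibre conditionals; the subcenter factor is Lebesgue because $\lambda_{\overline{x}}\asymp m^{\hat{c}}_x$. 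The hypothesis that $\mathcal{F}^{\hat{c}}$ is absolutely continuous within $\mathcal{F}^{\hat{c}u}$, combined with the strong absolute continuity of $\mathcal{F}^u$, is what makes these two partial disintegrations compatible and identifies their product with the leafwise Riemannian volume, giving
$$\nu^{\hat{c}u}_x \;\asymp\; \int_{\mathcal{F}^{\hat{c}}(x)} m^u_y \, d m^{\hat{c}}_x(y) \;\asymp\; m^{\hat{c}u}_x.$$
This Fubini step plays here the role that Proposition \ref{prop:fubini} plays in the seven-dimensional transversely holomorphic case, which is unavailable under the present, more general, hypotheses.

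I expect the main obstacle to be precisely this last verification, and within it two measure-theoretic points. First, one must establish the genuine equivalence $\lambda_{\overline{x}}\asymp m^{\hat{c}}_x$ and not merely a one-sided domination: uniform boundedness of $g^{\hat{c}}$ gives at once one inequality between the volumes, while the reverse must be extracted from the $\varphi^1$-invariance of $\lambda$ together with compactness of the leaves and Poincaré recurrence. Second, one must upgrade the qualitative absolute-continuity hypothesis into the quantitative product structure used above, controlling the $\mathcal{F}^{\hat{c}}$-holonomy densities well enough to integrate across a full (compact) subcenter leaf. Both are carried out by transcribing the measure-rigidity arguments of \cite{xu_holomorphic_2025}, and the absolute-continuity toolkit of \cite{butler_uniformly_2018}, with $f$ replaced by the time-one map $\varphi^1$ and ``center'' by ``subcenter''; the only genuinely new bookkeeping is the extra flow direction, which has already been quotiented out by passing to $\overline{M}$ and $p$.
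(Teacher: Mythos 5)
Your construction is genuinely different from the paper's and, in outline, it works; in fact it produces the same measure. The paper does not define $\nu$ directly: it evolves the subcenter conditionals of $\mu$ by the leafwise heat semigroup of $g^{\hat{c}}$, obtaining a family $(\mu_t)_{t\ge 0}$ of $\varphi^1$-invariant measures with $\overline{\mu_t}=\overline{\mu}$, proves a product structure for each $\mu_t$ on local $\hat{c}u$-leaves via the Avila--Viana invariance principle (vanishing subcenter exponents make the subcenter conditionals invariant under unstable holonomy), identifies the unstable factor with Lebesgue using the Gibbs $u$-state property and absolute continuity of $h^{\hat{c}}$, and finally takes a weak* limit as $t\to\infty$. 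Since $P_{\overline{x},t}f\to\int f\,d\mathrm{vol}_{g_{\overline{x}}}$, that limit is exactly your $\int_{\overline{M}}\lambda_{\overline{x}}\,d\overline{\mu}$. What you gain is economy: by placing the normalized invariant volumes on the fibres from the start, the product-type structure is an automatic consequence of iterated disintegration (conditionals of conditionals), so you never need the heat semigroup, the invariance principle, or weak* compactness; your remaining inputs --- the base identification $\Pi^u_*\mu^{\hat{c}u}_x\asymp m^u_x$, coming from the Gibbs $u$-state property together with two-sided absolute continuity of the subcenter holonomy, and the Fubini identification $m^{\hat{c}u}_x\asymp\int_{\mathcal{F}^u_{\operatorname{loc}}(x)} m^{\hat{c}}_y\,dm^u_x(y)$ --- are the same ingredients the paper uses in its corollary and final lemma. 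What the paper's route buys is the genuine product structure (unstable-holonomy invariance of the subcenter conditionals), which is strictly stronger than the Gibbs property; but the later sections only invoke the Gibbs property, so your $\nu$ would serve.

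One point in your write-up is wrong, though reparably so. The equivalence $\lambda_{\overline{x}}\asymp m^{\hat{c}}_x$ cannot be ``extracted from the $\varphi^1$-invariance of $\lambda$, compactness and Poincar\'e recurrence'': invariance of a metric says nothing about its volume class, and a uniformly bounded invariant distance can have Hausdorff measure singular with respect to Lebesgue --- the uniform bound $d_{g^{\hat{c}}}\le C\,d^{\hat{c}}$ only yields the one-sided $\lambda_{\overline{x}}\ll m^{\hat{c}}_x$, and no amount of recurrence upgrades it. What actually closes this gap is that $g^{\hat{c}}$ must be read as a leafwise Riemannian metric with almost everywhere positive density; this reading is forced in the paper's proof as well, since the heat semigroup (Laplace--Beltrami operator) is only defined for such metrics, and it is what the construction behind the isometry/contraction dichotomy (leafwise hyperbolic, flat, or averaged conformal metrics) actually provides. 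Under that reading the equivalence $\lambda_{\overline{x}}\asymp m^{\hat{c}}_x$ holds leaf by leaf for free and your argument is complete; without it, the paper's proof breaks at the same point as yours.
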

The proof of this result is analogous to that of Proposition 5.1 of \cite{xu_holomorphic_2025}. We recall as well some facts about the heat semigroup on manifolds.\\

Let $N$ a smooth Riemannian manifold without boundary.
Let $\text{vol}$ be the normalized volume induced by the Riemannian metric, and $\Delta$ be the associated Laplace-Beltrami operator. 
Then there is a semigroup action $(P_t)_{t\geq0}$, such that for $t\geq0$, $P_{t}:L^{2}(N,\operatorname{vol})\to L^{2}(N,\operatorname{vol})$ satisfies: for every $f\in L^{2}(N,\operatorname{vol})$, $u(t,x)=P_{t}(f)$ is the solution to the Cauchy problem:

\[
\left\{\begin{array}{l}
\frac{\partial u}{\partial t}=\Delta_{x}u,\quad t>0\\
u|_{t=0}=f.
\end{array}\right.
\]
In particular, for every $f\in L^{2}(N,\operatorname{vol})$, $P_0(f)=f$.\\
The semigroup $(P_t)$ satisfies the following properties. For every $t\geq0$:

\begin{itemize}
    \item $P_{t}(1)=1$ and for every non-negative function $f\in L^{2}(N)$, $P_{t}(f)(x)\geq 0$.
    
    \item If $\gamma:N\to N'$ is an isometry, then for every $f\in L^{2}(N')$
    \[
    (P_{N,t})(f\circ\gamma)=P_{N',t}(f)\circ\gamma.
    \]
    \item For every $f \in L^2(N)$ and $x \in N $,
    \[
    \lim_{t \to +\infty} P_t(f)(x) = \int_N f(y) \, d\operatorname{vol}(y).
    \]
\end{itemize}

From now on, assume $(\varphi^t)$ satisfies the assumptions of Proposition \ref{prop:gibbsçu}. Therefore, $\varphi^1|_{\mathcal{F}^{\hat{c}}(\overline{x})}$ is an isometry with respect to a subcenter-leafwise metric $g_{\overline{x}}$ for $\overline{\mu}$-a.e. $\overline{x} \in \overline{M}$. 
Let $P_{\overline{x}, t}$ be the heat semigroup on $\mathcal{F}^{\hat{c}}(\overline{x})$ with respect to $g_{\overline{x}}$. 
For every $t \geq 0$ and any Borel probability measure $\mu$, by the previous properties of the heat semi group and Riesz representation theorem, there exists a unique Borel probability measure $\mu_t$ such that for any bounded measurable function $f$:
\[
\int_M f \, d\mu_t = \int_{\overline{M}} \left( \int_{\mathcal{F}^{\hat{c}}(\overline{x})} P_{\overline{x}, t}(f|_{\mathcal{F}^{\hat{c}}(\overline{x})}) \, d\mu_{\overline{x}}^{\hat{c}}\right) d\overline{\mu}(\overline{x}),
\]
where $\mu_{\overline{x}}^{\hat{c}}$ is the conditional measure of $\mu$ along $\mathcal{F}^{\hat{c}}(x)$, and $\overline{\mu}$ is the projection of $\mu$ to $\overline{M}$. 
By Proposition \ref{prop:fubini}, it comes $\mu_0=\mu$. Also:
\begin{lemma}
Let $\mu$ a Borel probability measure on $M$.\\
Then for every $t \geq 0$, $\overline{\mu_t}=\overline{\mu}$ and $\mu_t$ is $\varphi^1$-invariant.
\end{lemma}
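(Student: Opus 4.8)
The plan is to verify the two assertions separately by testing each measure against bounded measurable functions, using only the two features of the heat semigroup recalled above: that it fixes constants, $P_{\overline{x},t}(1)=1$, and that it is equivariant under isometries. The first assertion will hold for an arbitrary $\mu$, while the $\varphi^1$-invariance will use that the fixed Gibbs $u$-state $\mu$ is $\varphi^1$-invariant.

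For $\overline{\mu_t}=\overline{\mu}$, I would test $\mu_t$ against functions of the form $f=\bar g\circ p$ with $\bar g$ bounded measurable on $\overline{M}$. Such an $f$ is constant on each fibre $\mathcal{F}^{\hat{c}}(\overline{x})$, equal to $\bar g(\overline{x})$, so its leafwise restriction is the constant $\bar g(\overline{x})\cdot 1$; applying $P_{\overline{x},t}(1)=1$ and using that each conditional $\mu_{\overline{x}}^{\hat{c}}$ is a probability measure on its leaf collapses the inner integral to $\bar g(\overline{x})$. Hence $\int_M \bar g\circ p\,d\mu_t=\int_{\overline{M}}\bar g\,d\overline{\mu}$, and since the left-hand side equals $\int_{\overline{M}}\bar g\,d\overline{\mu_t}$ by definition of the pushforward, we conclude $\overline{\mu_t}=\overline{\mu}$.

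For the $\varphi^1$-invariance, I would compute $\int_M f\circ\varphi^1\,d\mu_t$ and push it through two successive changes of variable. The core input is the isometry equivariance of the heat flow: since $(\varphi^t)$ is a $\mu$-subcenter isometry, the map $\varphi^1|_{\mathcal{F}^{\hat{c}}(\overline{x})}\colon \mathcal{F}^{\hat{c}}(\overline{x})\to \mathcal{F}^{\hat{c}}(\overline{\varphi^1}(\overline{x}))$ is an isometry for $\overline{\mu}$-a.e. $\overline{x}$, whence $P_{\overline{x},t}\big((f\circ\varphi^1)|_{\mathcal{F}^{\hat{c}}(\overline{x})}\big)=P_{\overline{\varphi^1}(\overline{x}),t}\big(f|_{\mathcal{F}^{\hat{c}}(\overline{\varphi^1}(\overline{x}))}\big)\circ\varphi^1$. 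Substituting this into the inner integral and changing variables along $\varphi^1$, I invoke the equivariance of the disintegration under flow-invariant maps recalled in the reminder, namely $(\varphi^1)_*\mu_{\overline{x}}^{\hat{c}}=\mu_{\overline{\varphi^1}(\overline{x})}^{\hat{c}}$ for a.e. $\overline{x}$ (valid because $\mu$, being a Gibbs $u$-state, is $\varphi^1$-invariant and $\mathcal{F}^{\hat{c}}$ is $\varphi^1$-invariant). The inner integral then becomes $\int_{\mathcal{F}^{\hat{c}}(\overline{\varphi^1}(\overline{x}))}P_{\overline{\varphi^1}(\overline{x}),t}\big(f|_{\mathcal{F}^{\hat{c}}(\overline{\varphi^1}(\overline{x}))}\big)\,d\mu_{\overline{\varphi^1}(\overline{x})}^{\hat{c}}$, i.e. the integrand in the definition of $\mu_t$ evaluated at $\overline{\varphi^1}(\overline{x})$. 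A final change of variables in the outer integral along $\overline{\varphi^1}$, together with $(\overline{\varphi^1})_*\overline{\mu}=\overline{\mu}$ (which follows from $p\circ\varphi^1=\overline{\varphi^1}\circ p$ and the $\varphi^1$-invariance of $\mu$), returns exactly $\int_M f\,d\mu_t$, proving $(\varphi^1)_*\mu_t=\mu_t$.

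The main obstacle is bookkeeping the two changes of variables cleanly, in particular ensuring that the leafwise pushforward identity for the conditional measures and the isometry property of $\varphi^1$ are invoked only on the full-$\overline{\mu}$-measure set where both hold. Beyond this, everything is a formal manipulation of the defining formula for $\mu_t$: the only properties of the heat flow that enter are the two algebraic ones (preservation of constants and isometry equivariance) already listed, so I do not anticipate any genuine analytic difficulty.
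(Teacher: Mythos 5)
Your proof is correct and follows essentially the same route as the paper: for $\overline{\mu_t}=\overline{\mu}$ the paper likewise tests $\mu_t$ on $\mathcal{F}^{\hat{c}}$-saturated sets (equivalently, functions pulled back from $\overline{M}$) and collapses the inner integral via $P_{\overline{x},t}(1)=1$. For the $\varphi^1$-invariance the paper simply cites Lemma 5.2 of \cite{xu_holomorphic_2025}; the argument you spell out---isometry equivariance of the heat semigroup, $(\varphi^1)_*\mu^{\hat{c}}_{\overline{x}}=\mu^{\hat{c}}_{\overline{\varphi^1}(\overline{x})}$ from essential uniqueness of the disintegration, and $(\overline{\varphi^1})_*\overline{\mu}=\overline{\mu}$---is exactly that proof, and you correctly note the implicit hypothesis (needed there but not stated in the lemma) that $\mu$ is the $\varphi^1$-invariant measure for which the flow is a $\mu$-subcenter isometry.
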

\begin{proof}
    Let $A$ a measurable subset of $\overline{M}$ and $t\geq 0$.
    Then:
    \begin{align*}
        \mu_t(p^{-1}(A))= \int_{A} \left( \int_{\mathcal{F}^{\hat{c}}(\overline{x})} P_{\overline{x}, t}(1) \, d\mu_{\overline{x}}^{\hat{c}}\right) d\overline{\mu}(\overline{x})=\int_{\overline{M}} \left( \int_{\mathcal{F}^{\hat{c}}(\overline{x})} \mathrm{1}_{p^{-1}(A)}\, d\mu_{\overline{x}}^{\hat{c}}\right) d\overline{\mu}(\overline{x})=\mu(p^{-1}(A)),
    \end{align*}
    which proves exactly the first point.
As for the $\varphi^1$-invariance of $\mu_t$, the proof is analogous to that of Lemma 5.2 of \cite{xu_holomorphic_2025} by replacing $f$ with $\varphi^1$.
\end{proof}

For $ x \in M $, let again 
\[
L_{\operatorname{loc}}^{\hat{c}u}(x):= \mathcal{F}^{\hat{c}}(\mathcal{F}_{\operatorname{loc}}^{u}(x))=\bigcup_{z \in \mathcal{F}^u_{\operatorname{loc}}(x)}\mathcal{F}^{\hat{c}}(z)
\] 
the $ \mathcal{F}^{\hat{c}} $-saturated set of $ \mathcal{F}_{\operatorname{loc}}^{u}(x)$.
By Proposition 6.12 of \cite{abouanass_dynamical_2026},
the map
$$\left \{\begin{array}{ccl}
        \mathcal{F}^u_{\operatorname{loc}}(x) \times \mathcal{F}^{\hat{c}}(x)&\to & M\\
         (y,z) &\mapsto & \mathcal{F}^{\hat{c}}(y)\cap \mathcal{F}^u_{\operatorname{loc}}(z)
    \end{array} \right.$$
    is a well-defined homeomorphism onto its image, so
$L_{\operatorname{loc}}^{\hat{c}u}(x) $ is homeomorphic to $ \mathcal{F}^{\hat{c}}(x) \times \mathcal{F}_{\operatorname{loc}}^{u}(x)$ through local unstable and subcenter holonomy.
Denote by $\Pi^{\hat{c}}$ and $\Pi^{u}$ the projections from $L_{\operatorname{loc}}^{\hat{c}u}(x)$ to $ \mathcal{F}^{\hat{c}}(x)$ along local unstable leaves and to $ \mathcal{F}_{\operatorname{loc}}^{u}(x) $ along subcenter leaves respectively.
\begin{lemma}
For any $ t \geq 0 $, the local disintegration $ \mu_{t,x}^{\hat{c}u} $ of $ \mu_t $ on $  L_{\operatorname{loc}}^{\hat{c}u}(x) $ has a product structure with respect to the topological product structure $  L_{\operatorname{loc}}^{\hat{c}u}(x) = \mathcal{F}^{\hat{c}}(x) \times\mathcal{F}_{\operatorname{loc}}^{u}(x) $, i.e. for $\mu_t$-a.e. $x\in M$:
\[
\mu_{t,x}^{\hat{c}u} = \mu_{t,x}^{\hat{c}} \times \Pi_{*}^{u} \mu_x^{\hat{c}u}.
\]
\end{lemma}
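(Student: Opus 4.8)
The plan is to compare the disintegration of $\mu_t$ along the subcenter leaves with that of $\mu$, exploiting that these leaves are precisely the fibers of $\Pi^{u}$ under the identification $L^{\hat{c}u}_{\operatorname{loc}}(x)\cong \mathcal{F}^{\hat{c}}(x)\times\mathcal{F}^{u}_{\operatorname{loc}}(x)$. First I would read off the subcenter conditionals of $\mu_t$ from its defining formula: matching test functions in the two expressions for $\int f\,d\mu_t$ and using $\overline{\mu_t}=\overline{\mu}$, one finds that for $\overline{\mu}$-a.e.\ $\overline{y}$ the subcenter conditional of $\mu_t$ over $\overline{y}$ is the heat push-forward $(P_{\overline{y},t})^{*}\mu_{\overline{y}}^{\hat{c}}$ of the subcenter conditional of $\mu$, where $\int f\,d\big((P_{\overline{y},t})^{*}\mu_{\overline{y}}^{\hat{c}}\big)=\int P_{\overline{y},t}(f)\,d\mu_{\overline{y}}^{\hat{c}}$. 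Since $P_{\overline{y},t}1=1$ and $P_{\overline{y},t}$ is self-adjoint for $\operatorname{vol}_{g_{\overline{y}}}$, each heat operator preserves the total mass of its leaf; as the subcenter leaves are exactly the $\Pi^{u}$-fibers, the heat flow only redistributes mass inside these fibers, so it preserves the $\Pi^{u}$-marginal, i.e.\ $\Pi^{u}_{*}\mu_{t,x}^{\hat{c}u}=\Pi^{u}_{*}\mu_x^{\hat{c}u}=:\nu^{u}_x$. Disintegrating along the $\Pi^{u}$-fibers then gives
\[
\mu_{t,x}^{\hat{c}u}=\int_{\mathcal{F}^{u}_{\operatorname{loc}}(x)}(P_{\overline{y},t})^{*}\mu_y^{\hat{c}}\,d\nu^{u}_x(y),
\]
and the statement reduces to showing that, under the unstable-holonomy identification $\Pi^{\hat{c}}=h^{u}$ of the fibers, the conditional $(P_{\overline{y},t})^{*}\mu_y^{\hat{c}}$ is independent of $y$ and equals $\mu_{t,x}^{\hat{c}}$.

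This independence rests on two facts. \emph{(I) Unstable holonomy intertwines the heat operators.} I would show that $h^{u}_{xy}\colon\mathcal{F}^{\hat{c}}(x)\to\mathcal{F}^{\hat{c}}(y)$ is an isometry of $(g_{\overline{x}},g_{\overline{y}})$ for $\nu^{u}_x$-a.e.\ $y$, by writing $h^{u}_{xy}=\varphi^{n}\circ h^{u}_{\varphi^{-n}x,\varphi^{-n}y}\circ\varphi^{-n}$ on subcenter leaves: the outer maps are isometries since $(\varphi^t)$ is a $\mu$-subcenter isometry, while $d^{\hat{c}}(\varphi^{-n}x,\varphi^{-n}y)\to0$ because $y\in\mathcal{F}^{u}(x)$, so the middle holonomy between the converging leaves tends to the identity (here the uniform boundedness of $g^{\hat{c}}$ is used). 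The isometry-invariance of the heat semigroup then yields $(h^{u}_{xy})_{*}\circ(P_{\overline{x},t})^{*}=(P_{\overline{y},t})^{*}\circ(h^{u}_{xy})_{*}$. \emph{(II) Unstable holonomy preserves the subcenter conditionals of $\mu$,} that is $(h^{u}_{xy})_{*}\mu_x^{\hat{c}}=\mu_y^{\hat{c}}$, which is exactly the product structure of $\mu$ itself on $L^{\hat{c}u}_{\operatorname{loc}}(x)$; this follows from $\mu$ being a Gibbs $u$-state together with the absolute continuity of $\mathcal{F}^{\hat{c}}$ within $\mathcal{F}^{\hat{c}u}$ assumed in Proposition \ref{prop:gibbsçu}, precisely as in the corresponding step of \cite{xu_holomorphic_2025}.

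Combining (I) and (II) gives $(P_{\overline{y},t})^{*}\mu_y^{\hat{c}}=(P_{\overline{y},t})^{*}(h^{u}_{xy})_{*}\mu_x^{\hat{c}}=(h^{u}_{xy})_{*}(P_{\overline{x},t})^{*}\mu_x^{\hat{c}}=(h^{u}_{xy})_{*}\mu_{t,x}^{\hat{c}}$, so the subcenter conditional of $\mu_t$ over $y$ is the holonomy transport of $\mu_{t,x}^{\hat{c}}$; together with the preservation of the marginal $\nu^{u}_x$ this is exactly $\mu_{t,x}^{\hat{c}u}=\mu_{t,x}^{\hat{c}}\times\nu^{u}_x$. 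I expect the main obstacle to be the rigorous justification of (I): the metric $g^{\hat{c}}$ is only measurable and $\varphi^1$ is a leafwise isometry merely $\overline{\mu}$-a.e., so one must control the convergence $h^{u}_{\varphi^{-n}x,\varphi^{-n}y}\to\operatorname{Id}$ as metric isometries uniformly in $n$ and align it with the full-measure set on which the isometry property holds. This is where the uniform bound in the definition of a $\mu$-subcenter isometry, and the leafwise regularity of the unstable holonomy, are essential.
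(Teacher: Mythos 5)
There is a genuine gap, and it sits at the foundation of your argument: step (II). The exact identity $(h^{u}_{xy})_{*}\mu^{\hat{c}}_{x}=\mu^{\hat{c}}_{y}$ does \emph{not} follow from $\mu$ being a Gibbs $u$-state together with absolute continuity of $\mathcal{F}^{\hat{c}}$ inside $\mathcal{F}^{\hat{c}u}$. Those hypotheses control only measure classes (they yield Fubini-type equivalences, as in Proposition \ref{prop:fubini}), whereas what you need is an exact equality of conditional measures, which is a rigidity statement. The paper obtains it from the invariance principle of Avila--Viana (Corollary 4.3 of \cite{avila_extremal_2010}): because $(\varphi^t)$ is a $\mu$-subcenter isometry, the subcenter Lyapunov exponents vanish, and for any $\varphi^1$-invariant Borel probability measure with this property the disintegration along $\mathcal{F}^{\hat{c}}$ can be chosen invariant under unstable holonomies. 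Without invoking this (or an equivalent result), your (II) is unsupported --- and note that (II) is essentially the $t=0$ case of the very lemma you are proving, so assuming it as input is close to circular.

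Moreover, the paper's proof shows that your heat-semigroup transport (step (I)) is unnecessary, which is fortunate because it faces a real obstruction you only partially acknowledge: $g^{\hat{c}}$ is a leafwise, merely measurable metric, so there is no meaningful sense in which $h^{u}_{\varphi^{-n}x,\varphi^{-n}y}$ is ``close to the identity'' as a metric map --- it goes between two \emph{different} leaves, and $d_{g^{\hat{c}}}$ is defined only within a single leaf; $C^{0}$-smallness of the displacement does not bound metric distortion. The paper's route avoids all of this with one observation: the subcenter-isometry property depends only on the projected measure, and $\overline{\mu_t}=\overline{\mu}$ while $\mu_t$ is $\varphi^1$-invariant (previous lemma), so the invariance principle applies \emph{directly to each $\mu_t$}, giving unstable-holonomy invariance of the conditionals $\mu^{\hat{c}}_{t,x}$ for all $t\geq 0$ simultaneously; the product structure then follows as in Lemma 5.4 of \cite{xu_holomorphic_2025}. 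If you replace your (I)+(II) by this single application of the invariance principle to $\mu_t$, the remainder of your outline (preservation of the $\Pi^{u}$-marginal by the heat flow, and holonomy invariance implying product structure) is sound and matches the paper's argument.
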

\begin{proof}
    We will need the following result, which comes from Corollary 4.3 of \cite{avila_extremal_2010}, since $(\varphi^t)$ is a $\mu$-subcenter isometry and therefore its subcenter Lyapunov exponents vanish. 
    \begin{proposition}
    Let $(\varphi^t)$ a smooth partially hyperbolic flow on a smooth compact manifold $M$ with a flow invariant compact subcenter foliation $\mathcal{F}^{\hat{c}}$ with $C^1$ leaves and trivial holonomy, and let $\mu$ be a $\varphi^1$-invariant Borel probability measure. Assume that $(\varphi^t)$ is a $\mu$-subcenter isometry.\\
    Then $\mu$ admits a disintegration $(\mu^{\hat{c}}_x)_{x \in M}$ along the subcenter foliation such that for $\mu$-a.e. $x \in M$ and every $y \in \mathcal{F}^u(x)$, 
    $$(h^u_{x,y})_*\mu^{\hat{c}}_x=\mu^{\hat{c}}_y.$$
\end{proposition}
By applying this Proposition to $\mu_t$ for $t \geq 0$, it comes that $\mu_t$ admits a disintegration which is invariant by unstable holonomies on a full $\mu_t$-measure set $E_t$, i.e., for any $x \in E_t$ and $y \in \mathcal{F}^u(x)$,
\[
(h_{xy}^u)_*\mu_{t,x}^{\hat{c}} = \mu_{t,y}^{\hat{c}}.
\]
The rest of the proof is identical to that of Lemma 5.4 of \cite{xu_holomorphic_2025}, by replacing "center" with "subcenter".
\end{proof}
\begin{corollary}
    For $\mu$-a.e. $x\in M$, $\Pi^u_*\mu^{\hat{c}u}_x$ is equivalent to the leafwise Lebesgue measure along $\mathcal{F}^u_{\operatorname{loc}}(x)$.
\end{corollary}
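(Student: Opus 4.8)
The plan is to deduce the statement from the product decomposition of the previous lemma (specialized to $t=0$, where $\mu_0=\mu$) together with the defining property of a Gibbs $u$-state, the bridge between them being the transitivity of disintegration along a refinement of foliations.

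First I would record that the local unstable foliation refines the local subcenter-unstable foliation: each $\mathcal{F}^u_{\operatorname{loc}}(x)$ is contained in the plaque $L^{\hat{c}u}_{\operatorname{loc}}(x)$, and under the topological splitting $L^{\hat{c}u}_{\operatorname{loc}}(x)=\mathcal{F}^{\hat{c}}(x)\times\mathcal{F}^u_{\operatorname{loc}}(x)$ the local unstable leaves are exactly the fibers $\{z\}\times\mathcal{F}^u_{\operatorname{loc}}(x)$ of the projection $\Pi^{\hat{c}}$. By the tower property of conditional measures (transitivity of disintegration along this refinement), disintegrating $\mu$ directly along $\mathcal{F}^u$ produces, up to normalization and for $\mu$-a.e. $x$, the same conditionals as first disintegrating $\mu$ along $L^{\hat{c}u}_{\operatorname{loc}}$ to obtain $\mu^{\hat{c}u}_x$, and then disintegrating each $\mu^{\hat{c}u}_x$ along its unstable fibers.

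Next I would apply the preceding lemma at $t=0$, which gives
\[
\mu^{\hat{c}u}_x=\mu^{\hat{c}}_x\times\Pi^u_*\mu^{\hat{c}u}_x .
\]
For such a product measure the conditional along each unstable fiber $\{z\}\times\mathcal{F}^u_{\operatorname{loc}}(x)$ is precisely the second marginal $\Pi^u_*\mu^{\hat{c}u}_x$, after the evident identification of the fiber with $\mathcal{F}^u_{\operatorname{loc}}(x)$. Combining this with the transitivity step, the conditional measures $\mu^u_x$ of $\mu$ along the local unstable leaves coincide, up to scaling and for $\mu$-a.e. $x$, with $\Pi^u_*\mu^{\hat{c}u}_x$. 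Since $\mu$ is a Gibbs $u$-state, $\mu^u_x$ is by definition equivalent to the leafwise Lebesgue measure, so the chain of essential equalities yields $\Pi^u_*\mu^{\hat{c}u}_x\asymp\operatorname{Leb}_{\mathcal{F}^u_{\operatorname{loc}}(x)}$ for $\mu$-a.e. $x$, which is the assertion.

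The only delicate point is the measure-theoretic bookkeeping: I must ensure the transitivity identity, the product structure, and the Gibbs property all hold on a common full-$\mu$-measure set, and that the local conditionals $\mu^{\hat{c}u}_x$, $\mu^{\hat{c}}_x$ and $\mu^u_x$ are honest conditional probabilities rather than merely classes up to scaling. This is routine here because every object is built on the same local product chart $L^{\hat{c}u}_{\operatorname{loc}}(x)$, where the subcenter leaves are compact and the unstable plaques are precompact, so Rokhlin's disintegration applies verbatim to each piece and the intersection of the relevant full-measure sets is again of full measure.
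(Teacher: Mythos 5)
Your outline reproduces the skeleton of the paper's argument (product structure at $t=0$, transitivity of disintegration, Gibbs $u$-state property), but it omits the one ingredient the paper's proof actually turns on: the absolute continuity of the subcenter holonomy $h^{\hat c}$ between local unstable leaves. The gap sits in the phrase ``after the evident identification of the fiber with $\mathcal{F}^u_{\operatorname{loc}}(x)$''. In the product chart of $L^{\hat c u}_{\operatorname{loc}}(x)$, the fiber $\{z\}\times\mathcal{F}^u_{\operatorname{loc}}(x)$ is, as a subset of $M$, the leaf $\mathcal{F}^u_{\operatorname{loc}}(z)$, and its identification with $\mathcal{F}^u_{\operatorname{loc}}(x)$ is the subcenter holonomy --- a priori only a homeomorphism. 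Moreover, the product-structure and transitivity identities hold only for $\mu^{\hat c}_x$-almost every fiber, and the single fiber through the chart center $x$ --- the only one on which the identification is the identity --- may perfectly well be $\mu^{\hat c}_x$-exceptional. So what your argument literally yields is: for almost every $z\in\mathcal{F}^{\hat c}(x)$, $\bigl(h^{\hat c}_{x,z}\bigr)_*\bigl(\Pi^u_*\mu^{\hat c u}_x\bigr)\asymp\operatorname{Leb}_{\mathcal{F}^u_{\operatorname{loc}}(z)}$. To convert this into $\Pi^u_*\mu^{\hat c u}_x\asymp\operatorname{Leb}_{\mathcal{F}^u_{\operatorname{loc}}(x)}$ you must know that $h^{\hat c}_{z,x}$ carries the Lebesgue measure class of $\mathcal{F}^u_{\operatorname{loc}}(z)$ to that of $\mathcal{F}^u_{\operatorname{loc}}(x)$: pushing a measure class through a bare homeomorphism preserves nothing (this is exactly the ``Fubini's nightmare'' phenomenon for non--absolutely continuous center holonomies). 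This is why the paper invokes, at precisely this point, that $h^{\hat c}$ is absolutely continuous between local unstable leaves --- available here because the subcenter holonomy coincides with the center-stable holonomy between local unstable leaves, which is quasiconformal (Proposition \ref{prop:hcsquasiconf}) and hence absolutely continuous (Lemma \ref{lem:quasiconfholo}); it is also a standing hypothesis of Proposition \ref{prop:gibbsçu}.

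Your closing remark about ``routine bookkeeping'' on common full-measure sets does not address this: the issue is not which sets have full measure, but that the identification itself transports measure classes by a map with no a priori regularity. The gap can be repaired in two ways. Either insert the absolute continuity of $h^{\hat c}$ as above (the paper's route, following Corollary 5.5 of the discrete case), or restructure so that the conclusion is asserted at almost every fiber rather than at the chart center: using the base-point change $\Pi^u_z=h^{\hat c}_{x,z}\circ\Pi^u_x$ and the constancy of the conditional class $\mu^{\hat c u}$ on plaques, the a.e.-fiber conditional is recognized as $\Pi^u_{z,*}\mu^{\hat c u}_z$, and a Fubini argument over a countable chart cover upgrades ``a.e.\ fiber in a.e.\ chart'' to ``$\mu$-a.e.\ point of $M$''. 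Either repair makes your proof correct, but as written the decisive step is unjustified.
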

\begin{proof}
The proof is similar to that of Corollary 5.5.
We use here that $h^{\hat{c}}$ is absolutely continuous between local unstable leaves as well as the previous results.
\end{proof}
Let $\nu$ a weak* limit point of $(\mu_t)_{t\geq0}$.

\begin{lemma}
   $\nu$ \emph{is a Gibbs $cu$-state with $\operatorname{supp}\nu = \{x \in M \mid \overline{x} \in \operatorname{supp}\overline{\mu}\}$.}
\end{lemma}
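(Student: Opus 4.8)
The plan is to identify the weak* limit $\nu$ completely and then read off its disintegration along $\mathcal{F}^{\hat{c}u}$. First I would observe that each $\mu_t$ is $\varphi^1$-invariant by the previous lemma, and since $\varphi^1$-invariance passes to weak* limits, $\nu$ is $\varphi^1$-invariant; likewise $\overline{\mu_t}=\overline{\mu}$ for all $t$ forces $\overline{\nu}=\overline{\mu}$. The decisive input is the third property of the heat semigroup: the subcenter leaves are compact (as $\mathcal{F}^{\hat{c}}$ is a compact foliation), so for $\overline{\mu}$-a.e.\ $\overline{x}$ and every continuous $f$ one has $P_{\overline{x},t}(f|_{\mathcal{F}^{\hat{c}}(\overline{x})})(x)\to\int_{\mathcal{F}^{\hat{c}}(\overline{x})}f\,d\operatorname{vol}_{g_{\overline{x}}}$ as $t\to+\infty$, pointwise on the leaf. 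Since $P_{\overline{x},t}$ is positive with $P_{\overline{x},t}(1)=1$, these integrands are dominated by $\|f\|_\infty$, so feeding the limit into the defining formula for $\mu_t$ and applying dominated convergence over $\overline{M}$ yields
\[
\int_M f\,d\nu=\int_{\overline{M}}\left(\int_{\mathcal{F}^{\hat{c}}(\overline{x})}f\,d\operatorname{vol}_{g_{\overline{x}}}\right)d\overline{\mu}(\overline{x}).
\]
In particular the limit is independent of the subsequence (so in fact $\mu_t\to\nu$), and the conditional measure of $\nu$ along each subcenter leaf is the normalized $g_{\overline{x}}$-volume $\operatorname{vol}_{g_{\overline{x}}}$, which by uniform boundedness of $g^{\hat{c}}$ is equivalent to the leafwise Riemannian volume induced from $M$.

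Next I would compute the conditional of $\nu$ on $L^{\hat{c}u}_{\operatorname{loc}}(x)$ by passing to the limit in the product formula $\mu^{\hat{c}u}_{t,x}=\mu^{\hat{c}}_{t,x}\times\Pi^u_*\mu^{\hat{c}u}_x$. The unstable factor $\Pi^u_*\mu^{\hat{c}u}_x$ does not depend on $t$ (the heat flow acts only along subcenter leaves) and is equivalent to the leafwise Lebesgue measure along $\mathcal{F}^u_{\operatorname{loc}}(x)$ by the preceding corollary; the subcenter factor $\mu^{\hat{c}}_{t,x}$ tends to $\operatorname{vol}_{g_{\overline{x}}}$, equivalent to leafwise Lebesgue along $\mathcal{F}^{\hat{c}}(x)$ by the previous step. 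Hence, under the product homeomorphism $L^{\hat{c}u}_{\operatorname{loc}}(x)\cong\mathcal{F}^{\hat{c}}(x)\times\mathcal{F}^u_{\operatorname{loc}}(x)$ of Proposition 6.12 of \cite{abouanass_dynamical_2026}, the conditional $\nu^{\hat{c}u}_x$ is equivalent to a product of two measures each equivalent to Lebesgue. Invoking the hypothesis that $\mathcal{F}^{\hat{c}}$ is absolutely continuous within $\mathcal{F}^{\hat{c}u}$ (so that this product chart has Jacobian bounded above and below), the product measure is equivalent to the leafwise Riemannian volume on $L^{\hat{c}u}_{\operatorname{loc}}(x)$, which is exactly the assertion that $\nu$ is a Gibbs $\hat{c}u$-state.

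For the support, since $\overline{\nu}=\overline{\mu}$ we have $\operatorname{supp}\overline{\nu}=\operatorname{supp}\overline{\mu}$. On each fiber $\mathcal{F}^{\hat{c}}(\overline{x})$ with $\overline{x}\in\operatorname{supp}\overline{\mu}$, the conditional $\operatorname{vol}_{g_{\overline{x}}}$ has full support, and the unstable conditionals likewise have full support; combining these with the local product structure shows that $\operatorname{supp}\nu$ is precisely the $\mathcal{F}^{\hat{c}}$-saturated set $p^{-1}(\operatorname{supp}\overline{\mu})=\{x\in M\mid\overline{x}\in\operatorname{supp}\overline{\mu}\}$.

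The main obstacle is the interchange of the weak* limit with the disintegration, since the conditional measures of a weak* limit need not be limits of the conditionals. I expect to handle this exactly as in \cite{xu_holomorphic_2025}: each $\mu_t$ is given by an explicit heat-smoothing along the subcenter leaves, so its subcenter conditionals equidistribute to $\operatorname{vol}_{g_{\overline{x}}}$ in a controlled way, and the product structure of the conditionals is preserved in the limit by the holonomy-invariance of the subcenter disintegration (the cited consequence of Corollary 4.3 of \cite{avila_extremal_2010}) together with the absolute continuity of $\mathcal{F}^{\hat{c}}$ inside $\mathcal{F}^{\hat{c}u}$. The only point requiring genuine care beyond the discrete case is verifying that the $g_{\overline{x}}$-volume is two-sided equivalent to the induced leafwise volume, which is where the uniform boundedness of $g^{\hat{c}}$ in the $\mu$-subcenter isometry hypothesis is used.
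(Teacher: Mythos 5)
Your proposal is correct and takes essentially the same route as the paper, whose proof simply defers to the discrete-case argument of \cite{xu_holomorphic_2025}: heat-semigroup equidistribution on the compact subcenter leaves identifies the limit $\nu$ explicitly (so any weak* limit point is the same, resolving the conditional-versus-limit subtlety you flag), and the product structure of the conditionals together with Fubini on subcenter-unstable leaves (Proposition \ref{prop:fubini}) gives the Gibbs $\hat{c}u$ property and the support statement. Two of your parenthetical justifications are overstated but harmless: equivalence of $\operatorname{vol}_{g_{\overline{x}}}$ with the induced leafwise volume follows from both being smooth volumes on a compact leaf (uniform boundedness of $g^{\hat{c}}$ is one-sided and not what is needed), and absolute continuity of $\mathcal{F}^{\hat{c}}$ within $\mathcal{F}^{\hat{c}u}$ yields a.e.\ positive Jacobians, which suffices for equivalence of measures, rather than Jacobians bounded above and below.
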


\begin{proof}
The proof is similar to that of Corollary 5.6.
We use the previous results as well as Fubini's theorem on local subcenter-unstable leaves (Proposition \ref{prop:fubini}).
\end{proof}

\subsubsection{The subcenter holonomy is $\mathbb R$-linear for a.e. leaf}
We will need a serie of Lemmas in order to prove the main result of this subsection, whose proofs are given in the holomorphic discrete case in Appendix B of \cite{xu_holomorphic_2025}:
\begin{proposition}
\label{prop:subcentholoRlin}
    Let $\nu$ be a Gibbs $\hat{c}u$ state.\\
    Then for every $x,y \in \operatorname{supp}(\nu)$ such that $y \in \mathcal{F}^{\hat{c}}(x)$, the map $H^{\hat{c}}_{x,y}:=\theta_y^{-1} \circ h^{\hat{c}}_{x,y}\circ \theta_x$ is an $\mathbb R$-linear map.
    In particular, $h^{\hat{c}}_{x,y}$ is defined on $\mathcal{F}^u(x)$.
\end{proposition}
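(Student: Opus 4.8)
The plan is to mirror the discrete argument of Appendix B of \cite{xu_holomorphic_2025}, replacing $f$ by $\varphi^1$, the center foliation by the subcenter foliation, and using the holomorphic non-stationary linearization $(\theta_x)_{x\in M}$ of Proposition \ref{prop:non-statiolin}. The starting point is a flow-equivariance identity for the holonomy in linearized coordinates. Since $\mathcal{F}^u$ and $\mathcal{F}^{\hat{c}}$ are both flow-invariant, for $z\in\mathcal{F}^u_{\operatorname{loc}}(x)$ the point $h^{\hat{c}}_{x,y}(z)=\mathcal{F}^{\hat{c}}(z)\cap\mathcal{F}^u_{\operatorname{loc}}(y)$ is carried by $\varphi^n$ into $\mathcal{F}^{\hat{c}}(\varphi^n z)\cap\mathcal{F}^u(\varphi^n y)$, so that $\varphi^n\circ h^{\hat{c}}_{x,y}=h^{\hat{c}}_{\varphi^n x,\varphi^n y}\circ\varphi^n$ on unstable leaves. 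Combining this with the intertwining relation $\varphi^t\circ\theta_x=\theta_{\varphi^t(x)}\circ d_x\varphi^t$ of Proposition \ref{prop:non-statiolin}(i), I would obtain
\[ H^{\hat{c}}_{\varphi^n x,\varphi^n y}=d_y\varphi^n|_{E^u}\circ H^{\hat{c}}_{x,y}\circ\big(d_x\varphi^n|_{E^u}\big)^{-1}, \]
where $H^{\hat{c}}_{x,y}=\theta_y^{-1}\circ h^{\hat{c}}_{x,y}\circ\theta_x$ fixes the origin. Because $\dim_{\mathbb C}E^u=1$ (Proposition \ref{prop:leavesholo}) and $d\varphi^1|_{E^u}$ is conformal, each $d_x\varphi^n|_{E^u}$ is multiplication by a complex scalar whose modulus tends to $+\infty$.

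Second, I would use this equivariance to establish the ``in particular'' clause, namely that $h^{\hat{c}}_{x,y}$ extends to all of $\mathcal{F}^u(x)$. Reading the identity for backward iterates, one has $H^{\hat{c}}_{x,y}=\big(d_y\varphi^{-n}|_{E^u}\big)^{-1}\circ H^{\hat{c}}_{\varphi^{-n}x,\varphi^{-n}y}\circ d_x\varphi^{-n}|_{E^u}$. Since $d_x\varphi^{-n}|_{E^u}$ contracts while the local holonomies $H^{\hat{c}}_{\varphi^{-n}x,\varphi^{-n}y}$ are all defined on a ball of uniform radius $\delta$ (the subcenter distances $d^{\hat{c}}(\varphi^{-n}x,\varphi^{-n}y)$ staying bounded because $(\varphi^t)$ is a $\mu$-subcenter isometry, and the local product structure of $L^{\hat{c}u}_{\operatorname{loc}}$ being uniform by compactness of $M$), the right-hand side defines $H^{\hat{c}}_{x,y}$ on balls of radius $\delta\,|d_x\varphi^{-n}|^{-1}\to+\infty$, hence on all of $E^u_x\cong\mathbb C$. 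The resulting global map $H^{\hat{c}}_{x,y}:\mathbb C\to\mathbb C$ is $K$-quasiconformal with $K$ uniform, since on unstable leaves the subcenter holonomy coincides with the center-stable holonomy, which is $K$-quasiconformal by Proposition \ref{prop:hcsquasiconf}.

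Third, and this is the core, I would prove the $\mathbb R$-linearity. Because $\nu$ is $\varphi^1$-invariant, Poincaré recurrence gives, for $\nu$-a.e. $x$, a sequence $n_k\to+\infty$ with $\varphi^{n_k}x\to x$; the subcenter isometry keeps $\varphi^{n_k}y$ within a bounded subcenter neighbourhood, so after passing to a subsequence $\varphi^{n_k}y\to y$ as well. By continuity of the family $(\theta_\cdot)$ and of the holonomy, $H^{\hat{c}}_{\varphi^{n_k}x,\varphi^{n_k}y}\to H^{\hat{c}}_{x,y}$, and the equivariance identity becomes a self-similarity equation
\[ H^{\hat{c}}_{x,y}(w)=\lim_{k\to+\infty} b_{n_k}\,H^{\hat{c}}_{x,y}\big(a_{n_k}^{-1}w\big),\qquad a_{n_k}=d_x\varphi^{n_k}|_{E^u},\ b_{n_k}=d_y\varphi^{n_k}|_{E^u}, \]
with $|a_{n_k}|\to+\infty$. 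Since $H^{\hat{c}}_{x,y}$ is $K$-quasiconformal it is differentiable Lebesgue-almost everywhere (Lemma \ref{lem:quasiconfholo}); feeding a point of differentiability into the renormalization, the rescaled maps $b_{n_k}H^{\hat{c}}_{x,y}(a_{n_k}^{-1}\,\cdot\,)$ converge to the $\mathbb R$-linear tangent map, and the self-similarity forces $H^{\hat{c}}_{x,y}$ to agree with this fixed $\mathbb R$-linear map everywhere. This gives $\mathbb R$-linearity for $\nu$-a.e. admissible pair; as in the proof of Proposition \ref{prop:contract1}, it then extends to all $x,y\in\operatorname{supp}(\nu)$ with $y\in\mathcal{F}^{\hat{c}}(x)$ by density of the full-measure set and the fact that a $C^0$-limit of $\mathbb R$-linear maps (obtained through the continuously varying $\theta$ and holonomy) is again $\mathbb R$-linear.

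The main obstacle is precisely this last renormalization step: controlling the rescaled quasiconformal maps and showing the self-similarity pins down an exact $\mathbb R$-linear map rather than merely a quasiconformal one---and, upstream of it, arranging the simultaneous recurrence of both $\varphi^{n_k}x$ and $\varphi^{n_k}y$, for which the $\mu$-subcenter isometry hypothesis (boundedness of iterated subcenter distances) is indispensable. I would import the quantitative version of this rigidity from Appendix B of \cite{xu_holomorphic_2025}, checking only that replacing $f$ by $\varphi^1$ and the center foliation by the subcenter foliation, together with the holomorphic linearization of Proposition \ref{prop:non-statiolin} and the Gibbs $\hat{c}u$-state $\nu$ of Proposition \ref{prop:gibbsçu}, leaves the argument intact.
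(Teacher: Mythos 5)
Your preliminary steps are sound: the equivariance identity $H^{\hat{c}}_{\varphi^n x,\varphi^n y}=d_y\varphi^n|_{E^u}\circ H^{\hat{c}}_{x,y}\circ\bigl(d_x\varphi^n|_{E^u}\bigr)^{-1}$ is correct, and conjugating backward iterates does give the global extension of $h^{\hat{c}}_{x,y}$ to $\mathcal{F}^u(x)$ together with uniform quasiconformality. The core step (3), however, has a genuine gap exactly where you flag the "main obstacle": simultaneous recurrence. Poincaré recurrence for $\nu$ on $M$ gives $\varphi^{n_k}x\to x$, and compactness of the subcenter leaves (or the isometry hypothesis) gives, along a subsequence, $\varphi^{n_k}y\to y'$ for \emph{some} $y'\in\mathcal{F}^{\hat{c}}(x)$ --- but nothing forces $y'=y$. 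Even when each $\varphi^{n}$ acts isometrically on subcenter leaves, the limits of $\varphi^{n_k}|_{\mathcal{F}^{\hat{c}}(x)}$ can be a nontrivial isometry fixing $x$ (think of a rotation about $x$) which moves $y$; then your self-similarity equation relates $H^{\hat{c}}_{x,y'}$ to rescalings of $H^{\hat{c}}_{x,y}$ and the blow-up no longer closes. This is precisely why the paper's proof begins by constructing the pair bundle $\mathcal{E}=\{(x,y):y\in\mathcal{F}^{\hat{c}}(x)\}$ with a $\varphi_{\mathcal{E}}$-invariant probability measure $\nu_{\mathcal{E}}$ assembled from $\nu$ and its conditionals $\nu^{\hat{c}}_x$: recurrence has to be run in the pair space, and that requires an invariant measure on $\mathcal{E}$, not on $M$. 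Your proposal omits this object entirely, and the substitute you offer (boundedness of iterated subcenter distances) cannot replace it.

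A second gap is the centering of the differentiability point. Your renormalization $w\mapsto b_{n_k}H^{\hat{c}}_{x,y}(a_{n_k}^{-1}w)$ zooms at the origin, so it needs $H^{\hat{c}}_{x,y}$ to be differentiable at $0$, i.e.\ $h^{\hat{c}}_{x,y}$ differentiable at the basepoint $x$ itself; Lemma \ref{lem:quasiconfholo} only gives differentiability at Lebesgue-a.e.\ point of the unstable leaf, which says nothing about the basepoint. Upgrading this to ``for a.e.\ pair $(x,y)$ the holonomy is differentiable at $x$'' is the paper's second lemma (the full-measure set $Q$), and it is exactly there that the Gibbs $\hat{c}u$-property of $\nu$ (Lebesgue disintegration along $\mathcal{F}^{\hat{c}u}$, Proposition \ref{prop:fubini}) is used --- in your sketch the hypothesis on $\nu$ never actually intervenes. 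Finally, note that the paper's mechanism after these two points is different from your blow-up: it proves the commutation relation $H^{\hat{c}}_{zw}\circ H^{u}_{xz}=H^{u}_{yw}\circ H^{\hat{c}}_{xy}$ on a full-measure set of pairs of pairs, which via Lemma \ref{lem:derivHu} shows that the derivative of $\theta_y^{-1}\circ h^{\hat{c}}_{x,y}\circ\theta_x$ is a.e.\ constant, hence the map is affine by absolute continuity of quasiconformal maps, and then extends to $\operatorname{supp}\nu_{\mathcal{E}}$ by continuity. Your renormalization could plausibly be completed once the pair-space measure and the basepoint-differentiability lemma are in place (one also needs $|d_y\varphi^{n_k}|/|d_x\varphi^{n_k}|$ bounded and the derivative at $0$ nonsingular), but as written the two missing ingredients are essential, and ``importing Appendix B'' would in effect mean replacing your argument by the paper's.
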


We consider the space
\[
\mathcal{E}=\{(x,y)\in M^{2}:x\in M,\; y\in \mathcal{F}^{\hat{c}}(x)\},
\]
and we define the map $\varphi_{\mathcal{E}}:\mathcal{E}\to\mathcal{E},\quad \varphi_{\mathcal{E}}(x,y)=(\varphi^1(x),\varphi^1(y))$.

\begin{lemma}
The space $\mathcal{E}$ is a continuous fiber bundle over $M$ with compact fibers. Moreover $\varphi_{\mathcal{E}}$ preserves an invariant probability measure $\nu_{\mathcal{E}}$ on $\mathcal{E}$ such that for any continuous function $f$ on $\mathcal{E}$ ,
\[
\int_{\mathcal{E}}f \, d\nu_{\mathcal{E}}=\int_{M}\int_{\mathcal{F}^{\hat{c}}(x)}f(x,y)\, d\nu^{\hat{c}}_{x}(y)\, d\nu(x). \]
\end{lemma}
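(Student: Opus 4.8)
The plan is to realize $\mathcal{E}$ as the fibered product of $p\colon M\to\overline{M}$ with itself and then transport both the bundle structure and the measure through this identification. Since the fibers of $p$ are exactly the subcenter leaves, one has $y\in\mathcal{F}^{\hat c}(x)$ if and only if $p(x)=p(y)$, so
\[\mathcal{E}=\{(x,y)\in M^2 : p(x)=p(y)\}=M\times_{\overline{M}}M.\]
The map $(x,y)\mapsto(p(x),p(y))$ is continuous and $\overline{M}$ is Hausdorff (Theorem \ref{thm:epstein}, the holonomy being trivial), so $\mathcal{E}$ is the preimage of the diagonal of $\overline{M}\times\overline{M}$, hence closed in the compact space $M\times M$ and therefore compact. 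Because $p$ is a continuous fiber bundle whose fibers are the compact subcenter leaves, its pullback along $p$ --- which is precisely the first projection $\pi_1\colon\mathcal{E}\to M$, $(x,y)\mapsto x$ --- is again a continuous fiber bundle: over a trivializing open set $U\subset\overline{M}$ with $p^{-1}(U)\cong U\times F$, one gets $\pi_1^{-1}(p^{-1}(U))\cong p^{-1}(U)\times F$. Its fiber over $x$ is $\{x\}\times\mathcal{F}^{\hat c}(x)$, which is compact. This settles the first assertion.

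Next I would construct $\nu_{\mathcal E}$ by Riesz representation. Since $\nu$ is a Gibbs $\hat c u$-state, hence $\varphi^1$-invariant, and the subcenter leaves are compact, Rokhlin's theorem (Theorem \ref{thm:rok}) furnishes a genuine disintegration $(\nu^{\hat c}_x)_x$ of $\nu$ into probability measures along $\mathcal{F}^{\hat c}$, with $x\mapsto\nu^{\hat c}_x$ measurable and constant on subcenter leaves. For $f\in C(\mathcal{E})$ set
\[\Lambda(f)=\int_M\Big(\int_{\mathcal{F}^{\hat c}(x)}f(x,y)\,d\nu^{\hat c}_x(y)\Big)\,d\nu(x).\]
The inner integral defines a bounded function of $x$; its measurability follows from the measurable dependence of the conditional measures together with the uniform continuity of $f$ on the compact set $\mathcal{E}$ (approximating $f$ by finite sums $\sum_k a_k(x)b_k(y)$, or invoking the standard measurability of $x\mapsto\int g\,d\nu^{\hat c}_x$). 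Then $\Lambda$ is a positive linear functional with $\Lambda(1)=\int_M 1\,d\nu=1$, since each $\nu^{\hat c}_x$ is a probability measure. By Riesz representation there is a unique Borel probability measure $\nu_{\mathcal E}$ on the compact space $\mathcal{E}$ with $\int_{\mathcal E}f\,d\nu_{\mathcal E}=\Lambda(f)$, which is exactly the claimed formula.

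Finally I would check $\varphi_{\mathcal E}$-invariance. Because $\mathcal{F}^{\hat c}$ is flow-invariant and $\nu$ is $\varphi^1$-invariant, essential uniqueness of the disintegration (as recalled in the disintegration subsection) gives $(\varphi^1)_*\nu^{\hat c}_x=\nu^{\hat c}_{\varphi^1(x)}$ for $\nu$-a.e. $x$. For $f\in C(\mathcal{E})$, the change of variables $w=\varphi^1(y)$ in the inner integral gives
\[\int_{\mathcal{F}^{\hat c}(x)}f(\varphi^1(x),\varphi^1(y))\,d\nu^{\hat c}_x(y)=\int_{\mathcal{F}^{\hat c}(\varphi^1(x))}f(\varphi^1(x),w)\,d\nu^{\hat c}_{\varphi^1(x)}(w)=g(\varphi^1(x)),\]
where $g(x')=\int_{\mathcal{F}^{\hat c}(x')}f(x',w)\,d\nu^{\hat c}_{x'}(w)$. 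Integrating against $\nu$ and using $(\varphi^1)_*\nu=\nu$ yields $\int_{\mathcal E}f\circ\varphi_{\mathcal E}\,d\nu_{\mathcal E}=\int_M g\circ\varphi^1\,d\nu=\int_M g\,d\nu=\int_{\mathcal E}f\,d\nu_{\mathcal E}$, proving invariance.

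The main obstacle is the bookkeeping between the everywhere-defined functional $\Lambda$ and the only almost-everywhere valid transformation rule for conditional measures: the relation $(\varphi^1)_*\nu^{\hat c}_x=\nu^{\hat c}_{\varphi^1(x)}$ holds on a full $\nu$-measure set, and one must verify this is enough in the invariance computation --- which it is, because the outer integrals there are taken against $\nu$. The secondary technical point is the measurability of the inner integral as a function of $x$, which I would dispatch by the approximation of continuous functions on $\mathcal{E}$ combined with the measurability of leafwise conditional expectations.
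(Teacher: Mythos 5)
Your proof is correct and is in substance the same argument as the paper's: the paper simply invokes Proposition 20 of \cite{butler_uniformly_2018}, replacing the preserved volume there by the Gibbs $\hat{c}u$-state $\nu$, and what you have written is exactly that argument unpacked --- the fibered-product description of $\mathcal{E}$ over the Hausdorff leaf space, Rokhlin disintegration of $\nu$ along the compact subcenter leaves, the Riesz functional defining $\nu_{\mathcal{E}}$, and invariance via essential uniqueness of the disintegration together with the $\varphi^1$-invariance of $\nu$. No gap; your closing remark correctly identifies why the almost-everywhere identity $(\varphi^1)_*\nu^{\hat{c}}_x=\nu^{\hat{c}}_{\varphi^1(x)}$ suffices.
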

\begin{proof}
    We replace, in \cite{butler_uniformly_2018} Proposition 20, the assumption that $f$ preserves the volume with the existence of a Gibbs $\hat{c}u$-state $\nu$ given by Proposition \ref{prop:fubini}.
\end{proof}

\begin{lemma}
Let
\[
Q:=\{x\in M:\text{for } \nu^{\hat{c}}_{x}\text{-a.e. } y\in \mathcal{F}^{\hat{c}}(x),\; h^{\hat{c}}_{xy}:\mathcal{F}^{u}_{\operatorname{loc}}(x)\to \mathcal{F}^{u}_{\operatorname{loc}}(y) \text{ is differentiable at } x\}.
\]
Then $\nu(Q)=1$.
\end{lemma}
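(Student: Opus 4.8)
The plan is to follow Appendix B of \cite{xu_holomorphic_2025}, replacing the holomorphic diffeomorphism there by the time-one map $\varphi^1$ and the center foliation by the subcenter foliation. Set
$\tilde R := \{(x,y)\in\mathcal{E} : h^{\hat c}_{xy}\text{ is differentiable at }x\}$.
By the Fubini formula defining $\nu_{\mathcal{E}}$, one has $\nu_{\mathcal{E}}(\tilde R)=\int_M \nu^{\hat c}_x(\{y:(x,y)\in\tilde R\})\, d\nu(x)$, and since the inner integrand lies in $[0,1]$ while $\nu$ is a probability measure, $\nu_{\mathcal{E}}(\tilde R)=1$ forces the integrand to equal $1$ for $\nu$-a.e.\ $x$, which is exactly $\nu(Q)=1$. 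So it suffices to prove $\nu_{\mathcal{E}}(\tilde R)=1$.

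First I would record the two local inputs. By Proposition \ref{prop:hcsquasiconf} the subcenter holonomy $h^{\hat c}_{xy}:\mathcal{F}^u_{\operatorname{loc}}(x)\to\mathcal{F}^u_{\operatorname{loc}}(y)$ coincides with the center-stable holonomy between local unstable leaves, hence is $K$-quasiconformal in the coordinates $(\theta_x)$ of Proposition \ref{prop:non-statiolin}; as $\dim_{\mathbb C}E^u=1$ this is a quasiconformal map between planar domains, so by Lemma \ref{lem:quasiconfholo} it is differentiable at Lebesgue-almost every point of $\mathcal{F}^u_{\operatorname{loc}}(x)$. The second input is a local consistency of the holonomy: for $z\in\mathcal{F}^u_{\operatorname{loc}}(x)$ and $z':=h^{\hat c}_{xy}(z)$, the germ of $h^{\hat c}_{xy}$ at $z$ agrees with that of $h^{\hat c}_{zz'}$, because $\mathcal{F}^u_{\operatorname{loc}}(z)=\mathcal{F}^u_{\operatorname{loc}}(x)$, $\mathcal{F}^u_{\operatorname{loc}}(z')=\mathcal{F}^u_{\operatorname{loc}}(y)$ and the subcenter holonomy is determined by the partition into subcenter leaves. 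Thus $h^{\hat c}_{xy}$ is differentiable at $z$ if and only if $(z,z')\in\tilde R$, so along each ``unstable leaf of $\mathcal{E}$'', namely $W:=\{(z,h^{\hat c}_{xy}(z)):z\in\mathcal{F}^u_{\operatorname{loc}}(x)\}$, the set $\tilde R$ pulls back via $z$ to a Lebesgue-full subset of $\mathcal{F}^u_{\operatorname{loc}}(x)$.

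It then remains to transport this leafwise fullness to $\nu_{\mathcal{E}}$-fullness, and this is where the Gibbs $\hat c u$-structure enters. Since $\nu$ is a Gibbs $\hat c u$-state (Proposition \ref{prop:gibbsçu}), the product-structure lemma and its corollary above show that on a local chart $L^{\hat c u}_{\operatorname{loc}}(x_0)\cong\mathcal{F}^u_{\operatorname{loc}}(x_0)\times\mathcal{F}^{\hat c}(x_0)$ the measure $\nu$ disintegrates as $\Pi^u_*\nu^{\hat c u}_{x_0}\times\nu^{\hat c}$ with $\Pi^u_*\nu^{\hat c u}_{x_0}$ equivalent to the unstable leafwise Lebesgue measure. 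Writing $\mathcal{E}$ over this chart in coordinates $(u,c_1,c_2)$, where $x=(u,c_1)$ and $y=(u,c_2)$ (the common $u$ reflecting $y\in\mathcal{F}^{\hat c}(x)$), the Fubini formula for $\nu_{\mathcal{E}}$ yields a disintegration of $\nu_{\mathcal{E}}$ along the leaves $\{u\mapsto(u,c_1,c_2)\}$ whose conditionals are absolutely continuous with respect to $d\mathrm{Leb}(u)$. Integrating the leafwise fullness of $\tilde R$ from the previous step against these absolutely continuous conditionals, and summing over a finite chart cover of $M$, gives $\nu_{\mathcal{E}}(\tilde R)=1$, hence $\nu(Q)=1$.

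I expect the main obstacle to be precisely the identification of the conditionals of $\nu_{\mathcal{E}}$ along the unstable leaves of $\mathcal{E}$ with absolutely continuous measures: this requires matching the abstract fiber bundle $\mathcal{E}\to M$ and its invariant measure with the geometric local product structure of $\mathcal{F}^{\hat c u}$ (Proposition 6.12 of \cite{abouanass_dynamical_2026}) and the Lebesgue disintegration of the Gibbs $\hat c u$-state, while checking the joint Borel measurability of $(x,y,z)\mapsto h^{\hat c}_{xy}(z)$ so that $\tilde R$ is measurable and the Fubini step is legitimate. An alternative to the Fubini route would be to note that $\tilde R$ is $\varphi_{\mathcal{E}}$-invariant (the linearization equivariance $\theta_{\varphi^t(x)}\circ d_x\varphi^t=\varphi^t\circ\theta_x$ together with flow-invariance of $\mathcal{F}^{\hat c}$ and conformality of $d\varphi^t|_{E^u}$ make differentiability at the base point an invariant condition) and combine this with an ergodicity argument; but the direct Fubini argument above is cleaner and is the one used in Appendix B of \cite{xu_holomorphic_2025}, transposing verbatim once $\varphi^1$ and $\mathcal{F}^{\hat c}$ replace $f$ and the center foliation.
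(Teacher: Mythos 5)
Your proposal is correct and takes essentially the same approach as the paper: the paper's proof is a one-line reference to Lemma B.2 of \cite{xu_holomorphic_2025}, carried out with ``center'' replaced by ``subcenter'' and using Fubini's theorem (Proposition \ref{prop:fubini}) together with the fact that $\nu$ is a Gibbs $\hat{c}u$-state --- precisely the inputs your argument assembles. Your write-up simply makes explicit the quasiconformality, germ-consistency, and product-structure steps that the paper's citation leaves implicit.
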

\begin{proof}
    The proof is similar to that of Lemma B.2 of \cite{xu_holomorphic_2025} by replacing "center" with "subcenter" and using Fubini's theorem as well as the fact that $\nu$ is a Gibbs $\hat{c}u$-state.
\end{proof}
Let $ \mathcal{Q} = \{(x, y) \in \mathcal{E}: x \in Q\} $. Then $ \mathcal{Q} $ has full $ \nu_{\mathcal{E}} $-measure inside $ \mathcal{E} $ by the definition of $Q$ and $\nu_{\mathcal{E}}$. For $(x, y) \in \mathcal{Q} $, we define $ H_{xy}^{\hat{c}}:=d_xh_{xy}^{\hat{c}}: E_{x}^{u} \to E_{y}^{u} $ to be the derivative of $ h_{xy}^{\hat{c}} $ at $ x $. The map $(x, y) \to H_{xy}^{\hat{c}} $ is measurable and defined $ \nu_{\mathcal{E}} $-a.e.

\begin{lemma}
There is a full $ \nu_{\mathcal{E}} $-measure subset $ \mathcal{Q'} \subset \mathcal{Q} $ such that if $(x, y), (z, w) \in \mathcal{Q'} $ with $ z \in \mathcal{F}_{\operatorname{loc}}^{u}(x) $ and $ w \in \mathcal{F}_{\operatorname{loc}}^{u}(y) $, then
\[
H_{zw}^{\hat{c}} \circ H_{xz}^{u} = H_{yw}^{u} \circ H_{xy}^{\hat{c}}.
\]
\end{lemma}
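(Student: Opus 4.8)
The plan is to reduce the identity, for each fixed $n$, to a \emph{backward pulled-back} version in which the ``square'' $x,z,y,w$ degenerates, and then to pass to the limit $n\to+\infty$ along a recurrence subsequence. Write $x_n=\varphi^{-n}(x)$, $y_n=\varphi^{-n}(y)$, $z_n=\varphi^{-n}(z)$, $w_n=\varphi^{-n}(w)$. First I would record the two dynamical invariances. For the unstable holonomy, Proposition \ref{prop:Hu}(ii) gives $H^{u}_{xz}=d_{z_n}\varphi^{n}|_{E^u}\circ H^{u}_{x_n z_n}\circ d_x\varphi^{-n}|_{E^u}$, and likewise for $H^{u}_{yw}$. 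For the subcenter holonomy, flow-invariance of $\mathcal F^{\hat c}$ and $\mathcal F^{u}$ yields $\varphi^{n}\circ h^{\hat c}_{x_n y_n}=h^{\hat c}_{xy}\circ\varphi^{n}$ on local unstable leaves; differentiating at $x_n$ gives $H^{\hat c}_{xy}=d_{y_n}\varphi^{n}|_{E^u}\circ H^{\hat c}_{x_n y_n}\circ d_x\varphi^{-n}|_{E^u}$, and similarly for $H^{\hat c}_{zw}$. Substituting all four expressions into the desired identity, the inner factors $d_z\varphi^{-n}\circ d_{z_n}\varphi^{n}$ and $d_y\varphi^{-n}\circ d_{y_n}\varphi^{n}$ telescope to the identity, so after conjugating away the common outer maps $d_{w_n}\varphi^{n}|_{E^u}$ and $d_x\varphi^{-n}|_{E^u}$ the claim becomes equivalent, for every $n$, to
\[
H^{\hat c}_{z_n w_n}\circ H^{u}_{x_n z_n}=H^{u}_{y_n w_n}\circ H^{\hat c}_{x_n y_n}, \qquad (\dagger_n).
\]

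Set $A_n:=H^{\hat c}_{z_n w_n}\circ H^{u}_{x_n z_n}-H^{u}_{y_n w_n}\circ H^{\hat c}_{x_n y_n}$. Since $\dim_{\mathbb C}E^{u}=1$, the maps $d\varphi^{\pm n}|_{E^u}$ are conformal (Proposition \ref{prop:leavesholo}), so the norm of the \emph{original} difference equals $\tfrac{\|d_{w_n}\varphi^{n}|_{E^u}\|}{\|d_{x_n}\varphi^{n}|_{E^u}\|}\,\|A_n\|$. The distortion prefactor is uniformly bounded: by conformal bounded distortion along the unstable leaves (here $z_n\to x_n$ and $w_n\to y_n$) it is comparable to $\|d_{y_n}\varphi^{n}|_{E^u}\|/\|d_{x_n}\varphi^{n}|_{E^u}\|$, and taking operator norms in $H^{\hat c}_{xy}\circ d_{x_n}\varphi^{n}=d_{y_n}\varphi^{n}\circ H^{\hat c}_{x_n y_n}$ shows this equals $\|H^{\hat c}_{xy}\|/\|H^{\hat c}_{x_n y_n}\|$, which is bounded above and below by the uniform quasiconformality of the subcenter holonomy (Proposition \ref{prop:hcsquasiconf}). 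Hence it suffices to produce a subsequence along which $\|A_{n_k}\|\to 0$.

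To do this I would select $\mathcal Q'$ as follows. By Lusin's theorem choose compact sets $K_j\subset\mathcal Q$ with $\nu_{\mathcal E}(K_j)\to 1$ on which $(x,y)\mapsto H^{\hat c}_{xy}$ is continuous, hence uniformly continuous; by Poincaré recurrence for $\varphi_{\mathcal E}^{-1}$, $\nu_{\mathcal E}$-a.e. point visits a given $K_j$ infinitely often under backward iteration. Along a common return time $n_k$ one has $(x_{n_k},y_{n_k})\in K_j$, while backward unstable contraction forces $d(x_{n_k},z_{n_k})\to 0$ and $d(y_{n_k},w_{n_k})\to 0$, so $(z_{n_k},w_{n_k})$ coalesces with $(x_{n_k},y_{n_k})$. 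Writing $A_n=H^{\hat c}_{z_n w_n}(H^{u}_{x_n z_n}-\mathrm{Id})+(H^{\hat c}_{z_n w_n}-H^{\hat c}_{x_n y_n})+(\mathrm{Id}-H^{u}_{y_n w_n})H^{\hat c}_{x_n y_n}$, the two unstable terms tend to $0$ by Proposition \ref{prop:Hu}(iii) together with the uniform bound on $\|H^{\hat c}\|$, while the middle term tends to $0$ by uniform continuity on $K_j$ once both endpoints lie simultaneously in $K_j$. Intersecting the resulting full-measure conditions over $j$ defines $\mathcal Q'$.

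\textbf{Main obstacle.} The delicate point is arranging this \emph{simultaneous} recurrence: $(\dagger_n)$ involves two a priori distinct $\mathcal E$-orbits, $(x_n,y_n)$ and $(z_n,w_n)$, whereas uniform continuity of $H^{\hat c}$ only helps when both land in the same $K_j$. I would resolve it by exploiting that the two orbits converge to one another under backward iteration, combined with the product structure of the Gibbs $\hat c u$-state: disintegrating $\nu_{\mathcal E}$ along local unstable leaves, one shows that for $\nu$-a.e. $(x,y)$ and, in its unstable fibre, $\mathrm{vol}^u$-a.e. $(z,w)$, the backward orbit of $(z,w)$ shadows that of $(x,y)$ closely enough that whenever $(x_{n_k},y_{n_k})\in K_j$ the coalescing point $(z_{n_k},w_{n_k})$ lies in a fixed slight enlargement of $K_j$ on which continuity still applies. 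This is precisely the mechanism of Appendix B of \cite{xu_holomorphic_2025}, with the center foliation replaced by $\mathcal F^{\hat c}$ and $f$ by $\varphi^{1}$; once simultaneity and the bounded distortion prefactor are in place, $\|A_{n_k}\|\to 0$ forces the (constant, $n$-independent) original difference to vanish, which is the assertion.
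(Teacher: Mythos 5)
Your reduction to $(\dagger_n)$ via the two invariance relations, the formula for the conformal prefactor, the bounded-distortion comparison $\|d_{w_n}\varphi^n|_{E^u}\|\asymp\|d_{y_n}\varphi^n|_{E^u}\|$, and the three-term decomposition of $A_n$ are all correct, and this is indeed the skeleton of the argument the paper invokes (Lemma B.4 of the cited reference, adapted from Butler--Xu). You also correctly isolate the main obstacle. But two steps, as written, do not hold up. First, the claim that the prefactor is uniformly bounded \emph{for every} $n$ because $\|H^{\hat c}_{xy}\|/\|H^{\hat c}_{x_ny_n}\|$ is controlled ``by the uniform quasiconformality of the subcenter holonomy'' is unjustified: Proposition \ref{prop:hcsquasiconf} bounds the eccentricity $\|H\|/m(H)$ of derivatives, not their norms. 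A $K$-quasiconformal map can have derivative $0$ or $\infty$ at individual points of differentiability, so quasiconformality gives no upper or lower bound on $\|H^{\hat c}_{x_ny_n}\|$; worse, by your own identity this ratio \emph{equals} the distortion ratio $\|d_{y_n}\varphi^n|_{E^u}\|/\|d_{x_n}\varphi^n|_{E^u}\|$ you are trying to bound ($x_n$ and $y_n$ stay only boundedly far apart, not summably close, so no bounded-distortion estimate applies), making the argument circular. The repair is that you only need this bound \emph{along the recurrence subsequence}: refine $\mathcal Q$ by the a.e.\ nondegeneracy of the derivative of a quasiconformal map (Lemma \ref{lem:quasiconfholo}), so that on each compact Lusin set $K_j$ the continuous function $H^{\hat c}$ is invertible, hence $\|H^{\hat c}\|$ and $\|(H^{\hat c})^{-1}\|$ are bounded on $K_j$; membership of $(x_{n_k},y_{n_k})$ in $K_j$ then bounds the prefactor along $n_k$, which suffices since the original difference is $n$-independent.

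Second, your resolution of the simultaneity obstacle is genuinely flawed: continuity of the measurable map $(x,y)\mapsto H^{\hat c}_{xy}$ on a Lusin set $K_j$ says \emph{nothing} about its values at points near $K_j$ but outside it, so placing $(z_{n_k},w_{n_k})$ in ``a fixed slight enlargement of $K_j$ on which continuity still applies'' does not yield $H^{\hat c}_{z_{n_k}w_{n_k}}\approx H^{\hat c}_{x_{n_k}y_{n_k}}$; there is no enlargement on which continuity persists. The correct mechanism (and the one in the reference you point to) is simultaneous positive-density returns: apply the Birkhoff ergodic theorem to $\mathbf{1}_{K_j}$ for $\varphi_{\mathcal E}^{-1}$ to get a full-measure set of points whose backward orbits visit $K_j$ with asymptotic frequency at least $\nu_{\mathcal E}(K_j)>1-\varepsilon$; then use the product structure of the disintegration of $\nu_{\mathcal E}$ along unstable plaques (Proposition \ref{prop:fubini} and the Gibbs $\hat c u$ property) to conclude that for $\nu_{\mathcal E}$-a.e.\ $(x,y)$, a.e.\ $(z,w)$ in its unstable fibre is \emph{itself} Birkhoff-generic. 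Two subsets of $\mathbb N$ of lower density $>1-\varepsilon$ intersect in an infinite set, so there are infinitely many times $n_k$ at which \emph{both} pairs lie in $K_j$; only then do uniform continuity on $K_j$ and the coalescence $d\bigl((x_{n_k},y_{n_k}),(z_{n_k},w_{n_k})\bigr)\to 0$ kill the middle term, and the norm bounds on $K_j$ kill the two unstable terms and the prefactor. With these two corrections your argument closes; without them, both the prefactor estimate and the limit $\|A_{n_k}\|\to 0$ are unsupported.
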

\begin{proof}
    The proof is similar to that of Lemma B.4 in \cite{xu_holomorphic_2025}. We replace $f$ with $\varphi^1$, use Proposition \ref{prop:Hu} as well as the equality for $n \in \mathbb N$, $x \in M$ and $y\in \mathcal{F}^{\hat{c}}(x)$:
    
    $h_{xy}^{\hat{c}}|_{\mathcal{F}^{u}_{\text{loc}(x)}}=\varphi^{-n}\circ h_{\varphi^{n}(x),\varphi^{n}(y)}^{\hat{c}}\circ \varphi^{n}|_{\mathcal{F}^{u}_{\text{loc}(y)}}.$
\end{proof}
Eventually, we come to the result:
\begin{lemma}
The measurable function $ H^{\hat{c}} $ on $ \mathcal{Q'} $ admits a continuous extension to $ \operatorname{supp} \nu_{\mathcal{E}} $:
for every $x, y \in \operatorname{supp} \nu $ with $ y \in \mathcal{F}^{\hat{c}}(x) $,
\[
\theta_y^{-1} \circ h^{\hat{c}}_{x,y} \circ \theta_x = H^{\hat{c}}_{x,y}.
\]
Therefore, the center holonomy is $\mathbb R$-linear in the charts $ \{\theta_x\}_{x \in M} $.
\end{lemma}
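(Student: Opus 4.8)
The plan is to build a continuous map $(x,y)\mapsto \tilde H^{\hat c}_{xy}$ on $\operatorname{supp}\nu_{\mathcal{E}}$ that extends the almost everywhere defined derivative $H^{\hat c}$, and then to show that this continuous $\mathbb R$-linear object already coincides with the full conjugated holonomy $\Theta_{xy}:=\theta_y^{-1}\circ h^{\hat c}_{xy}\circ\theta_x$, which forces $\Theta_{xy}$ to be linear. The whole argument follows Appendix B of \cite{xu_holomorphic_2025}, replacing $f$ by $\varphi^1$ and the center objects by their subcenter analogues, and using that $d\varphi^1|_{E^u}$ is conformal (Proposition \ref{prop:leavesholo} together with $\dim_{\mathbb C}E^u=1$).

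First I would construct the continuous extension from the covariance relation $H^{\hat c}_{zw}\circ H^u_{xz}=H^u_{yw}\circ H^{\hat c}_{xy}$ established on $\mathcal{Q}'$. Since $\nu$ is an ergodic Gibbs $u$-state, Proposition \ref{prop:gibbs} gives that $\operatorname{supp}\nu$ is a union of entire unstable leaves, and since $\nu$ is the Gibbs $\hat c u$-state of Proposition \ref{prop:gibbsçu} with the local product structure $L^{\hat c u}_{\operatorname{loc}}(x)=\mathcal{F}^{\hat c}(x)\times \mathcal{F}^u_{\operatorname{loc}}(x)$, the full-measure set $\mathcal{Q}'$ meets $\operatorname{vol}$-a.e. local unstable leaf, hence densely. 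Thus for any $(x,y)\in\operatorname{supp}\nu_{\mathcal{E}}$ I may pick $(z,w)\in\mathcal{Q}'$ with $z\in\mathcal{F}^u_{\operatorname{loc}}(x)$ and $w\in\mathcal{F}^u_{\operatorname{loc}}(y)$ and set
\[
\tilde H^{\hat c}_{xy}:=H^u_{wy}\circ H^{\hat c}_{zw}\circ H^u_{xz}.
\]
The covariance relation together with the cocycle property of $H^u$ (Proposition \ref{prop:Hu}(i)) makes this independent of the choice of $(z,w)$, the Hölder continuity of $H^u$ (Proposition \ref{prop:Hu}(iii)) together with the product structure makes $(x,y)\mapsto\tilde H^{\hat c}_{xy}$ continuous, and by construction it agrees with $H^{\hat c}$ on $\mathcal{Q}'$; each $\tilde H^{\hat c}_{xy}$ is $\mathbb R$-linear since the $H^{\hat c}_{zw}$ and the $H^u$ are.

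Next I would identify $\tilde H^{\hat c}_{xy}$ with $\Theta_{xy}$. Combining the flow-invariance of $\mathcal{F}^{\hat c}$ with the conjugation rule $\varphi^t\circ\theta_x=\theta_{\varphi^t(x)}\circ d_x\varphi^t$ of Proposition \ref{prop:non-statiolin}, both $\Theta$ and the derivative $H^{\hat c}$ satisfy
\[
\Theta_{xy}=(d_y\varphi^{-m}|_{E^u})^{-1}\circ\Theta_{\varphi^{-m}x,\varphi^{-m}y}\circ d_x\varphi^{-m}|_{E^u}.
\]
By Poincaré recurrence for $(\varphi_{\mathcal{E}},\nu_{\mathcal{E}})$ I may take $m_k\to+\infty$ with $(\varphi^{-m_k}x,\varphi^{-m_k}y)\in\mathcal{Q}$. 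For fixed $v\in E^u_x$ one has $d_x\varphi^{-m_k}|_{E^u}v\to 0$, so by differentiability at $x$, $\Theta_{\varphi^{-m_k}x,\varphi^{-m_k}y}(d_x\varphi^{-m_k}v)=H^{\hat c}_{\varphi^{-m_k}x,\varphi^{-m_k}y}(d_x\varphi^{-m_k}v)+o(\|d_x\varphi^{-m_k}v\|)$. Because $d\varphi^{-1}|_{E^u}$ is conformal and uniformly contracting, $\|d_x\varphi^{-m_k}|_{E^u}\|=m(d_x\varphi^{-m_k}|_{E^u})$ decays exponentially while $\|(d_y\varphi^{-m_k}|_{E^u})^{-1}\|$ grows at the reciprocal rate, and since $x,y$ lie in one compact subcenter leaf these rates are comparable; hence the remainder is pushed to $0$, while the main term reproduces $H^{\hat c}_{xy}(v)=\tilde H^{\hat c}_{xy}(v)$ by the covariance of the derivative. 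Thus $\Theta_{xy}=\tilde H^{\hat c}_{xy}$ is $\mathbb R$-linear, which is the claim; in particular $h^{\hat c}_{xy}$ extends to all of $\mathcal{F}^u(x)$.

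The step I expect to be the main obstacle is the regularity upgrade in the first paragraph: promoting the merely measurable, almost everywhere defined $H^{\hat c}$ to a genuinely continuous object on $\operatorname{supp}\nu_{\mathcal{E}}$. This is precisely where the Gibbs $\hat c u$ product structure and the Hölder continuity of the unstable holonomy $H^u$ must be combined, and where the absolute continuity of $\mathcal{F}^{\hat c}$ inside $\mathcal{F}^{\hat c u}$ (used to guarantee that $\mathcal{Q}'$ meets almost every unstable leaf densely) enters. Once continuity is secured, the second paragraph is a routine self-similarity argument in the non-stationary linearization charts, made possible by the conformality of $d\varphi^1|_{E^u}$.
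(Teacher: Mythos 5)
The fatal step is in your identification argument: the claim that ``since $x,y$ lie in one compact subcenter leaf these rates are comparable,'' i.e.\ that $\|d_x\varphi^{-m}|_{E^u}\|$ and $\|d_y\varphi^{-m}|_{E^u}\|$ are comparable uniformly in $m$. Compactness of $\mathcal{F}^{\hat{c}}(x)$ only bounds $d^{\hat{c}}(\varphi^{-m}x,\varphi^{-m}y)$; this distance does \emph{not} tend to $0$ in backward time (in the isometric case, where the Gibbs $\hat{c}u$-state $\nu$ lives, it is essentially constant), so H\"older continuity of the conformal cocycle $d\varphi^{-1}|_{E^u}$ gives no control on the ratio of the two products of $m$ factors: the individual discrepancies do not sum. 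Worse, uniform comparability of these norms is precisely a \emph{consequence} of the lemma you are proving (once $\Theta_{xy}:=\theta_y^{-1}\circ h^{\hat{c}}_{xy}\circ\theta_x$ is a bounded invertible linear map intertwining the backward cocycles at $x$ and $y$, take norms in the equivariance relation), so assuming it here is circular. There are secondary gaps as well: Poincar\'e recurrence and your remainder estimate $o(\|d_x\varphi^{-m_k}v\|)$ are available only for $\nu_{\mathcal{E}}$-a.e.\ $(x,y)$ and only after an Egorov/Lusin uniformization, not at every point of $\operatorname{supp}\nu_{\mathcal{E}}$; and in your first paragraph the extension $\tilde H^{\hat{c}}$ is only well-defined and continuous \emph{along unstable leaf pairs}, since the covariance relation never compares $H^{\hat{c}}_{zw}$ with $H^{\hat{c}}_{z'w'}$ for pairs lying on different unstable leaves, so at an arbitrary point of the support neither the existence of an admissible $(z,w)\in\mathcal{Q}'$ nor transverse continuity is secured.

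The proof the paper points to (Lemma B.5 of Xu--Zhang, after Butler) uses the covariance relation in a different way, and this is exactly where quasiconformality enters. For $\nu_{\mathcal{E}}$-a.e.\ $(x,y)$ and a.e.\ $z\in\mathcal{F}^u_{\mathrm{loc}}(x)$, the pair $(z,w)$ with $w=h^{\hat{c}}_{xy}(z)=h^u_{xz}(y)$ lies in $\mathcal{Q}'$ (invariance of the conditional measures under unstable holonomy plus Fubini). Since $h^{\hat{c}}_{zw}$ coincides with $h^{\hat{c}}_{xy}$ near $z$, and $\theta_z^{-1}\circ\theta_x$, $\theta_y^{-1}\circ\theta_w$ are affine with linear parts $H^u_{xz}$, $H^u_{wy}$ (Lemma \ref{lem:derivHu}), the chain rule gives $D_v\Theta_{xy}=H^u_{wy}\circ H^{\hat{c}}_{zw}\circ H^u_{xz}$ at $v=\theta_x^{-1}(z)$, and the covariance relation collapses this to $H^u_{wy}\circ H^u_{yw}\circ H^{\hat{c}}_{xy}=H^{\hat{c}}_{xy}$: the a.e.\ derivative of $\Theta_{xy}$ is \emph{constant}. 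Because $\Theta_{xy}$ is quasiconformal (Proposition \ref{prop:hcsquasiconf}), it is absolutely continuous and differentiable a.e.\ (Lemma \ref{lem:quasiconfholo}), so integrating the constant derivative along segments yields $\Theta_{xy}=H^{\hat{c}}_{xy}$ outright, with no recurrence, no rescaling, and no comparison of contraction rates. Finally, the full-measure set where this linearity holds is dense in $\operatorname{supp}\nu_{\mathcal{E}}$, and $(x,y)\mapsto\Theta_{xy}$ is continuous in the locally uniform topology (continuity of the foliations and of the charts $(\theta_x)_{x\in M}$); since locally uniform limits of linear maps are linear, linearity passes to every point of $\operatorname{supp}\nu_{\mathcal{E}}$, and this limit is the desired continuous extension. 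You should restructure your argument along these lines rather than via the blow-up/recurrence scheme.
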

\begin{proof}
    The proof is similar to that of Lemma B.5 of \cite{xu_holomorphic_2025}.
    We use quasiconformality of the subcenter holonomy (see Proposition \ref{prop:hcsquasiconf}) as well as the previous Lemmas in order to conclude.
\end{proof}

\subsubsection{The subcenter holonomy is holomorphic for a.e. leaf}
We come to the main result concerning the subcenter-isometry case:
\begin{proposition}
\label{prop:isometryholo}
If $(\varphi^t)$ is a $\mu$-subcenter isometry, then for any $\overline{x}\in\operatorname{supp}\overline{\mu}$ and any $x,y\in \mathcal{F}^{\hat{c}}(\overline{x})$, the subcenter holonomy between unstable leaves $h_{xy}^{\hat{c}}:\mathcal{F}_{\operatorname{loc}}^{u}(x)\to \mathcal{F}_{\operatorname{loc}}^{u}(y)$ is holomorphic.
\end{proposition}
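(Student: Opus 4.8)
The plan is to reduce the statement to the $\mathbb{C}$-linearity of the linearized holonomy and then kill the antiholomorphic part by a dynamical rigidity argument. By Proposition \ref{prop:gibbsçu}, $\operatorname{supp}\nu$ is precisely the $\mathcal{F}^{\hat{c}}$-saturate of $\operatorname{supp}\overline{\mu}$, so the pairs $(x,y)$ with $\overline{x}\in\operatorname{supp}\overline{\mu}$ and $y\in\mathcal{F}^{\hat{c}}(x)$ are exactly the pairs $x,y\in\operatorname{supp}\nu$ lying on a common subcenter leaf. By Proposition \ref{prop:subcentholoRlin} the map $H^{\hat{c}}_{x,y}=\theta_y^{-1}\circ h^{\hat{c}}_{x,y}\circ\theta_x$ is a globally defined $\mathbb{R}$-linear automorphism of $E^u_x\cong\mathbb{C}$ (recall $\dim_{\mathbb{C}}E^u=1$ in dimension seven), and since each $\theta_x$ is a biholomorphism onto $\mathcal{F}^u(x)$ (Proposition \ref{prop:non-statiolin}), holomorphicity of $h^{\hat{c}}_{x,y}$ is equivalent to $\mathbb{C}$-linearity of $H^{\hat{c}}_{x,y}$. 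So I would first reduce everything to proving that $H^{\hat{c}}_{x,y}$ is $\mathbb{C}$-linear for all such $(x,y)$.

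Next I would exploit the flow-invariance of $\mathcal{F}^{\hat{c}}$. Writing $H^{\hat{c}}_{x,y}(v)=a\,v+b\,\bar v$, the conjugation relation $h^{\hat{c}}_{x,y}=\varphi^{-n}\circ h^{\hat{c}}_{\varphi^n(x),\varphi^n(y)}\circ\varphi^n$ together with the intertwining $\varphi^n\circ\theta_x=\theta_{\varphi^n(x)}\circ d_x\varphi^n$ of Proposition \ref{prop:non-statiolin} yields
\[
H^{\hat{c}}_{x,y}=\bigl(d_y\varphi^n|_{E^u}\bigr)^{-1}\circ H^{\hat{c}}_{\varphi^n(x),\varphi^n(y)}\circ\bigl(d_x\varphi^n|_{E^u}\bigr).
\]
Because $d\varphi^n|_{E^u}$ is $\mathbb{C}$-linear by Proposition \ref{prop:leavesholo} (hence, in complex dimension one, multiplication by some $\lambda_n\in\mathbb{C}^*$), this shows that the Beltrami coefficient $\kappa(x,y):=b/a$ transforms by the pure phase $\overline{\lambda_n}/\lambda_n$, so that $|\kappa|$ is invariant under $\varphi_{\mathcal{E}}$ while the phase of $\kappa$ is twisted by the argument cocycle $2\arg\lambda_n$ of the unstable derivative. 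Moreover $|\kappa|\le (K-1)/(K+1)<1$ by the quasiconformality of Proposition \ref{prop:hcsquasiconf}.

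The heart of the argument, which I would carry out exactly as in the isometric case of \cite{xu_holomorphic_2025}, is then to show $\kappa\equiv 0$. Here the subcenter-isometry hypothesis is essential: it keeps $\varphi^n(x),\varphi^n(y)$ at bounded subcenter distance inside the compact set $\operatorname{supp}\nu$, so that $(x,y)\mapsto\kappa(x,y)$ is bounded and continuous on $\operatorname{supp}\nu_{\mathcal{E}}$ and $|\kappa|$ is a genuine $\varphi_{\mathcal{E}}$-invariant function. A nonzero $\kappa$ on a positive-measure $\varphi_{\mathcal{E}}$-invariant set would provide a measurable equivariant section of the unit circle bundle twisting by the cocycle $\arg\lambda_n$; that is, it would realize the argument cocycle of $d\varphi^1|_{E^u}$ as a measurable coboundary, which is incompatible with the Gibbs $\hat{c}u$-state $\nu$ whose unstable conditionals are Lebesgue by construction (Proposition \ref{prop:gibbsçu}). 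Hence $\kappa\equiv 0$ $\nu_{\mathcal{E}}$-almost everywhere, and by continuity $\kappa\equiv 0$ on all of $\operatorname{supp}\nu_{\mathcal{E}}$, so every $H^{\hat{c}}_{x,y}$ is $\mathbb{C}$-linear and each $h^{\hat{c}}_{x,y}$ is holomorphic.

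The main obstacle is precisely this last step: promoting the globally defined $\mathbb{R}$-linear holonomy to a $\mathbb{C}$-linear one. The $\mathbb{R}$-linear and quasiconformal structure alone does \emph{not} force conformality, and Poincaré recurrence by itself only constrains the return phases rather than killing $\kappa$; one genuinely needs the rigidity that the argument cocycle of the (conformal) unstable derivative is not a coboundary with respect to $\nu$. This is exactly where the detailed measure-theoretic estimates of \cite{xu_holomorphic_2025} are invoked, transplanted to our setting by replacing $f$ with $\varphi^1$ and the center foliation with the subcenter foliation $\mathcal{F}^{\hat{c}}$.
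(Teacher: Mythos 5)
Your opening reductions are fine and match the paper's setup: by Propositions \ref{prop:non-statiolin} and \ref{prop:subcentholoRlin}, holomorphicity of $h^{\hat{c}}_{x,y}$ is equivalent to $\mathbb{C}$-linearity of $H^{\hat{c}}_{x,y}$, and your transformation law for the Beltrami coefficient (so that $|\kappa|$ is $\varphi_{\mathcal{E}}$-invariant and its phase is twisted by the argument cocycle of $d\varphi^1|_{E^u}$) is a correct computation. The genuine gap is the step you yourself identify as the heart of the matter: the claim that realizing the argument cocycle of $d\varphi^1|_{E^u}$ as a measurable coboundary is ``incompatible with the Gibbs $\hat{c}u$-state $\nu$.'' No such incompatibility exists, and it is false in general. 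Take the model where $(\overline{\varphi}^t)$ is the suspension of a hyperbolic automorphism of a complex torus with \emph{real positive} eigenvalues and $M$ is a product of $\overline{M}$ with a compact Riemann surface: there the unstable multiplier is real and positive, the argument cocycle vanishes identically (hence is trivially a coboundary), and the volume is a Gibbs $\hat{c}u$-state. So ``the cocycle is a coboundary'' carries no contradiction whatsoever, and your argument cannot rule out $\kappa\neq 0$; the obstruction you invoke simply does not exist. (A secondary imprecision: from $\arg\kappa(x,y)-\arg\kappa(\varphi^n x,\varphi^n y)=-2\arg\lambda_n(x)$ you only obtain coboundary-ness over $(\mathcal{E},\varphi_{\mathcal{E}})$ with a transfer function depending on both coordinates, which is weaker still than coboundary-ness over $(M,\varphi^1)$; but even the stronger statement would yield no contradiction.)

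The paper closes this step by an entirely different mechanism, in which the source of rigidity is the \emph{compactness of the subcenter leaves} together with complex geometry, not any cocycle non-triviality — and, contrary to your final appeal, Proposition 7.1 of \cite{xu_holomorphic_2025} is proved this way, not by measure-theoretic estimates. Using the $\mathbb{R}$-linear maps $H^{\hat{c}}_{xy}$ one extends the translations $T_v:p\mapsto\theta_y\bigl(\theta_y^{-1}(p)+H^{\hat{c}}_{xy}(v)\bigr)$, $y=\mathcal{F}^u(p)\cap\mathcal{F}^{\hat{c}}(x)$, to holomorphic diffeomorphisms of $\mathcal{F}^{\hat{c}u}(x)$ (translations along unstable leaves; unstable holonomies, hence holomorphic by Corollary \ref{cor:FçholoinFçs}, along subcenter leaves; then the holomorphic Journé lemma). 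A lattice $\Lambda_x=\operatorname{span}_{\mathbb{Z}}\{v_1,v_2\}\subset E^u_x$ then acts properly discontinuously and holomorphically, and $N=\mathcal{F}^{\hat{c}u}(x)/\Lambda_x$ fibers holomorphically over the compact Riemann surface $\mathcal{F}^{\hat{c}}(x)$ with fibers the complex tori $E^u_y/H^{\hat{c}}_{xy}(\Lambda_x)$. The modulus of the fiber varies holomorphically over a compact Riemann surface, hence is constant, equal to its value at $y=x$ where $H^{\hat{c}}_{xx}=\operatorname{Id}$; since this holds for every choice of lattice, each $H^{\hat{c}}_{xy}$ preserves the conformal class of every lattice and is therefore conformal, i.e.\ $\mathbb{C}$-linear. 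If you wish to keep a cocycle-style proof you would have to establish the non-coboundary statement you assumed, which the example above shows is impossible in general; this step must instead be replaced by an argument of the torus-fibration type.
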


The proof of this result is again identical to that of the discrete case.
We can define on each unstable leaves a translation structure thanks to Proposition \ref{prop:non-statiolin}, in the following way.
Let $x\in M$ and $v \in E^u_x$. We define the map
$$T_{v}:\mathcal{F}^{u}(x)\to\mathcal{F}^{u}(x),\quad p\mapsto\theta_{x}(\theta^{-1}_{x}(p)+v)$$
which is a holomorphic diffeomorphism of $\mathcal{F}^{u}(x)$.
In our case, we have the following result (see Proposition 6.13 of \cite{abouanass_dynamical_2026}):
\begin{lemma}
     For every $x \in M$, the projection
    $$p\ \mapsto \mathcal{F}^{u}(p)\cap \mathcal{F}^{\hat{c}}(x)$$
    from $\mathcal{F}^{\hat{c}u}(x)$ to $\mathcal{F}^{\hat{c}}(x)$
    is a well-defined map, which is moreover a holomorphic submersion.
\end{lemma}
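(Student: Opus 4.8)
The plan is to separate the statement into its structural part and its complex-analytic part, proving only the latter from first principles and borrowing the former. Writing $\pi\colon\mathcal{F}^{\hat c u}(x)\to\mathcal{F}^{\hat c}(x)$, $\pi(p)=\mathcal{F}^u(p)\cap\mathcal{F}^{\hat c}(x)$, the single-valuedness of $\pi$ together with the fact that it is a smooth submersion is precisely the content of Proposition 6.13 of \cite{abouanass_dynamical_2026}; it rests on the completeness of $\mathcal{F}^{\hat c}$ (Theorem \ref{thm:Fçcomp}) and on the local product structure $L^{\hat c u}_{\operatorname{loc}}(x)\cong\mathcal{F}^u_{\operatorname{loc}}(x)\times\mathcal{F}^{\hat c}(x)$ recalled above. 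So I would take these for granted and concentrate on showing that $\pi$ is holomorphic, which upgrades the cited smooth statement to the transversely holomorphic setting.

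For the holomorphic part I would equip $\mathcal{F}^{\hat c u}(x)$ with the complex structure $J$ furnished by Proposition \ref{prop:leavesholo}. The key observation, already isolated in the proof of that proposition (there we checked $iv\in E^u_p$ for $v\in E^u_p$ and $iv\in E^{\hat c}_p$ for $v\in E^{\hat c}_p$), is that $J$ leaves both $E^u$ and $E^{\hat c}$ invariant; hence $T\mathcal{F}^{\hat c u}=E^u\oplus E^{\hat c}$ splits into two complex lines and $\mathcal{F}^{\hat c}(x)$ sits inside $\mathcal{F}^{\hat c u}(x)$ as a complex submanifold. Fix $p\in\mathcal{F}^{\hat c u}(x)$ and set $q=\pi(p)$. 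Since $\pi$ is constant along the unstable leaf $\mathcal{F}^u(p)$, its differential annihilates $E^u_p$. On the other hand, because $q\in\mathcal{F}^u(p)\cap\mathcal{F}^{\hat c}(x)$, the restriction $\pi|_{\mathcal{F}^{\hat c}(p)}$ is exactly the unstable holonomy $h^u_{p,q}\colon\mathcal{F}^{\hat c}(p)\to\mathcal{F}^{\hat c}(x)$, $z\mapsto\mathcal{F}^u(z)\cap\mathcal{F}^{\hat c}(x)$, which coincides locally with the holonomy of $\mathcal{F}^u|_{\mathcal{F}^{\hat c u}(p)}$ between subcenter transversals and is therefore holomorphic by Corollary \ref{cor:FçholoinFçs} (the hypotheses $\dim_{\mathbb C}E^{\hat c}=1$ and dynamical coherence hold throughout this section). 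Consequently $d_p\pi|_{E^{\hat c}_p}$ is a $\mathbb C$-linear isomorphism onto $E^{\hat c}_q=T_q\mathcal{F}^{\hat c}(x)$.

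Putting the two computations together, $d_p\pi$ vanishes on the complex line $E^u_p$ and is $\mathbb C$-linear on the complementary complex line $E^{\hat c}_p$, hence is $\mathbb C$-linear; moreover $\ker d_p\pi=E^u_p$, which recovers the submersion property intrinsically. Since $\pi$ is smooth — being the projection along the holomorphic, hence smooth, foliation $\mathcal{F}^u|_{\mathcal{F}^{\hat c u}(x)}$ (the corollary following Corollary \ref{cor:FçholoinFçs}) — and has $\mathbb C$-linear differential at every point, it is a holomorphic submersion. I expect the only genuinely delicate point to be the identification of $\pi|_{\mathcal{F}^{\hat c}(p)}$ with the unstable holonomy $h^u_{p,q}$ and the verification that this is the very map to which Corollary \ref{cor:FçholoinFçs} applies (global versus local holonomy, path-independence inside the contractible unstable leaves); once this identification is pinned down, the $\mathbb C$-linearity of $d_p\pi$, and with it the conclusion, follows immediately.
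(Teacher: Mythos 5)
Your proposal is correct and takes essentially the same route as the paper: the paper's own proof likewise quotes Proposition 6.13 of \cite{abouanass_dynamical_2026} (via subcenter bunching) for well-definedness and the submersion property, and deduces holomorphicity from Corollary \ref{cor:FçholoinFçs}, i.e.\ from the unstable foliation being a holomorphic foliation of $\mathcal{F}^{\hat{c}u}(x)$. Your pointwise computation --- $d_p\pi$ vanishing on the complex line $E^u_p$ and agreeing with the holomorphic unstable holonomy on $E^{\hat{c}}_p$ --- is just an explicit unwinding of that one-line deduction.
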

\begin{proof}
    Its existence and the fact that it is a submersion has been proved in \cite{abouanass_dynamical_2026}, since $(\varphi^t)$ is subcenter-bunched.. The holomorphicity comes from Proposition \ref{cor:FçholoinFçs} since in our case the unstable foliation is a holomorphic foliation of $\mathcal{F}^{\hat{c}u}(x)$.
\end{proof}

Therefore, thanks also to Proposition \ref{prop:subcentholoRlin}, we can extended $T_v$ to a homeomorphism of $\mathcal{F}^{\hat{c}u}(x)$:
\[
T_{v}:\mathcal{F}^{\hat{c}u}(x)\to\mathcal{F}^{\hat{c}u}(x),\quad p\mapsto\theta_{y}(\theta^{-1}_{y}(p)+H^{\hat{c}}_{xy}(v))
\]
where $y=\mathcal{F}^{u}(p)\cap \mathcal{F}^{\hat{c}}(x)$.
By definition, $T_{v}$ is a translation when restricted to an unstable leaf and is thus holomorphic. 
Moreover, we prove that:

\begin{lemma}
For any $v\in E^{u}_{x}$, $T_{v}:\mathcal{F}^{\hat{c}u}(x)\to\mathcal{F}^{\hat{c}u}(x)$ is a holomorphic diffeomorphism.
\end{lemma}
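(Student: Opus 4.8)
The plan is to reduce the statement to a question of \emph{transverse} holomorphicity and then to kill the transverse complex dilatation using the self-similarity of $T_v$ under the (holomorphic) flow maps. First I would record the structural facts already available: by the preceding Lemma the projection $\pi\colon \mathcal{F}^{\hat c u}(x)\to\mathcal{F}^{\hat c}(x)$, $p\mapsto \mathcal{F}^u(p)\cap\mathcal{F}^{\hat c}(x)$, is a holomorphic submersion whose fibres are exactly the unstable leaves, and $\pi\circ T_v=\pi$, so that $T_v$ preserves each unstable leaf. On the fibre $\mathcal{F}^u(y)$ the map $T_v$ acts as the translation $\zeta\mapsto\zeta+H^{\hat c}_{xy}(v)$ in the holomorphic linearizing coordinate $\theta_y$ of Proposition \ref{prop:non-statiolin}, hence is holomorphic along the fibres; moreover $T_{-v}$ is a two-sided inverse (using $\pi\circ T_v=\pi$ together with the additivity of $H^{\hat c}_{xy}$ from Proposition \ref{prop:subcentholoRlin}), so $T_v$ is a homeomorphism. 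Choosing, near any point $p_0$, holomorphic coordinates $(z_1,z_2)$ adapted to $\pi$ (so that the unstable leaves are the slices $\{z_2=\mathrm{const}\}$ and $z_2$ is a holomorphic coordinate on the base pulled back by $\pi$), the map takes the form $T_v(z_1,z_2)=(g(z_1,z_2),z_2)$ with $\partial_{\overline{z}_1}g\equiv 0$. Thus the Lemma is equivalent to the transverse holomorphicity $\partial_{\overline{z}_2}g\equiv 0$.

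The heart of the argument is this transverse holomorphicity, which I would extract from the dynamical equivariance of the translation structure. From $\theta_{\varphi^t(x)}\circ d_x\varphi^t=\varphi^t\circ\theta_x$ one verifies the conjugacy relation $T_v=\varphi^{n}\circ T_{w_n}\circ\varphi^{-n}$ on $\mathcal{F}^{\hat c u}(x)$, where $w_n:=d_x\varphi^{-n}(v)$ and the flow maps $\varphi^{\pm n}$ restrict to biholomorphisms between subcenter–unstable leaves by the last part of Proposition \ref{prop:leavesholo}. Since $\dim_{\mathbb C}E^u=1$ and the subcenter holonomy between local unstable leaves is quasiconformal in the charts $(\theta_x)_{x\in M}$ (Proposition \ref{prop:hcsquasiconf}), $T_v$ is a quasiconformal homeomorphism of the complex surface $\mathcal{F}^{\hat c u}(x)$; by Lemma \ref{lem:quasiconfholo} it is absolutely continuous and differentiable almost everywhere, so its complex dilatation $\mu_{T_v}$ is a well-defined measurable field, which is supported on the transverse direction because $\partial_{\overline{z}_1}g\equiv 0$. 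Holomorphic conjugation preserves the modulus of the dilatation pointwise, whence $|\mu_{T_v}(p)|=|\mu_{T_{w_n}}(\varphi^{-n}(p))|$ for every $n$; as $n\to+\infty$ the strong unstable contraction forces $w_n\to 0$, so the translation amount of $T_{w_n}$ degenerates to the identity, and controlling the transverse dilatation of $T_{w_n}$ as $w_n\to 0$ yields $|\mu_{T_v}|\equiv 0$ almost everywhere, i.e.\ $\partial_{\overline{z}_2}g\equiv 0$.

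With the transverse holomorphicity established, the conclusion is routine: $g$ is continuous and separately holomorphic in $z_1$ and in $z_2$, so by Osgood's Lemma (exactly as in the proof of Proposition \ref{prop:journeholomo}) it is jointly holomorphic, whence $T_v$ is holomorphic; applying the same argument to $T_{-v}$ shows $T_v$ is a holomorphic diffeomorphism. I expect the main obstacle to be precisely the transverse holomorphicity of the second paragraph, and specifically the justification that the complex dilatation of the nearly-trivial translations $T_{w_n}$ tends to $0$ as $w_n\to 0$ uniformly enough to force $\mu_{T_v}=0$: this is where the anisotropy of the dynamics (strong unstable expansion against the subcenter isometry) together with the uniform quasiconformality bound of Proposition \ref{prop:hcsquasiconf} must be combined carefully, in the same manner as in the discrete holomorphic setting.
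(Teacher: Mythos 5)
Your first and third paragraphs are sound and parallel the paper's skeleton: $T_v$ preserves the fibers of the holomorphic submersion $\pi$, acts as a translation in the linearizing charts of Proposition \ref{prop:non-statiolin} along each unstable leaf, and once holomorphicity along a transverse family of leaves is known, an Osgood/Journ\'e-type argument finishes. The genuine gap is in your second paragraph, exactly where you place your own caveat: the step ``controlling the transverse dilatation of $T_{w_n}$ as $w_n\to 0$ yields $|\mu_{T_v}|\equiv 0$'' does not work. Uniform ($C^0$) convergence of quasiconformal maps to the identity gives no control on their dilatations: one can build homeomorphisms supported on arbitrarily small balls, with dilatation bounded away from $1$, converging uniformly to the identity. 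The conjugation identity $|\mu_{T_v}(p)|=|\mu_{T_{w_n}}(\varphi^{-n}(p))|$ is therefore useless in the limit---it is scale-invariant, and shrinking the translation vector $w_n$ does not shrink the transverse dilatation. Worse, the transverse dilatation of $T_{w_n}$ vanishes precisely when $T_{w_n}$ restricted to subcenter leaves is holomorphic, which is the statement being proven (for $w_n$ in place of $v$), so any honest attempt to control it is circular. There are also secondary problems: quasiconformality of $T_v$ as a homeomorphism of the four-real-dimensional manifold $\mathcal{F}^{\hat{c}u}(x)$ does not follow from Proposition \ref{prop:hcsquasiconf}, which concerns maps between one-complex-dimensional unstable plaques, and the Beltrami-coefficient formalism you invoke is a one-complex-variable tool that does not transfer to this fibered four-dimensional setting without an actual argument.

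The missing idea---and the paper's route---is to identify the restriction of $T_v$ to a subcenter leaf with a map whose holomorphicity is already known. One shows, as in Lemma 7.2 of the Xu--Zhang paper, using the global holonomy maps of Lemma \ref{lem:globalholo} and the $\mathbb R$-linearity of $H^{\hat{c}}$ from Proposition \ref{prop:subcentholoRlin}, that for every $p\in\mathcal{F}^{\hat{c}u}(x)$ one has $T_v|_{\mathcal{F}^{\hat{c}}(p)}=h^u_{p,T_v(p)}$, the unstable holonomy between subcenter leaves inside $\mathcal{F}^{\hat{c}u}(x)$. That holonomy is holomorphic by Corollary \ref{cor:FçholoinFçs}, which was established much earlier via subcenter bunching, so there is no circularity. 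With holomorphicity along both the unstable leaves and the subcenter leaves in hand, the holomorphic Journ\'e lemma (Proposition 2.11 of Xu--Zhang, in the spirit of Proposition \ref{prop:journeholomo}) concludes. Your transverse-dilatation dynamics cannot substitute for this identification and should be replaced by it.
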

\begin{proof}
    By Holomorphic's Journé Lemma (Proposition 2.11 of \cite{xu_holomorphic_2025}) since the restriction of $T_v$ to some unstable leaf is of the form $T_{v'}$ and is thus holomorphic, we prove that its restriction to any subcenter leaf is holomorphic in order to conclude.
    In fact, one can prove analogously to Lemma 7.2 of \cite{xu_holomorphic_2025} and thanks to Propositions \ref{lem:globalholo} and \ref{prop:subcentholoRlin}, that for $p \in \mathcal{F}^{\hat{c}u}(x)$, 
    $T_v|_{\mathcal{F}^{\hat{c}}(p)}$ is equal to $h^u_{p, T_v(p)}$ the unstable holonomy in $\mathcal{F}^{\hat{c}u}(x)$ between subcenter leaves.
    By Corollary \ref{cor:FçholoinFçs}, it is holomorphic and thus we conclude.
\end{proof}

Fix two $\mathbb{R}$-linearly independent vectors $v_1, v_2 \in E^{u}(x)$, and let $\Lambda_x = \operatorname{span}_{\mathbb{Z}}\{v_1, v_2\}$ be a lattice in $E^{u}_{x}$.
Then $\{T_v \mid v \in \Lambda_x\}$ gives a properly discontinuous holomorphic action of $\Lambda_x$ on $\mathcal{F}^{\hat{c}u}(x)$. Let $N =\mathcal{F}^{\hat{c}u}(x)/\Lambda_x$ be the quotient manifold.

\begin{lemma}
There is a holomorphic submersion $\pi_N: N \to \mathcal{F}^{\hat{c}}(x)$ such that for any $y \in \mathcal{F}^{\hat{c}}(x)$, $\pi_N^{-1}(y)$ is a complex torus of dimension 1 .
\end{lemma}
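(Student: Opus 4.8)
The plan is to descend the holomorphic submersion of the previous Lemma to the quotient $N=\mathcal{F}^{\hat{c}u}(x)/\Lambda_x$. Write $\rho\colon \mathcal{F}^{\hat{c}u}(x)\to\mathcal{F}^{\hat{c}}(x)$, $p\mapsto \mathcal{F}^{u}(p)\cap\mathcal{F}^{\hat{c}}(x)$, for that submersion. First I would note that its fibers are precisely the unstable leaves: for $y\in\mathcal{F}^{\hat{c}}(x)$ one has $\rho^{-1}(y)=\mathcal{F}^{u}(y)$, and $\theta_y$ identifies this fiber holomorphically with $E^{u}_{y}\cong\mathbb{C}$.

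Next I would check that each translation $T_v$ (for $v\in E^{u}_{x}$) preserves the fibers of $\rho$, that is $\rho\circ T_v=\rho$. Indeed, from the defining formula $T_v(p)=\theta_y(\theta^{-1}_y(p)+H^{\hat{c}}_{xy}(v))$ with $y=\rho(p)$, the point $T_v(p)$ lies in the same unstable leaf $\mathcal{F}^{u}(y)$ as $p$, so $\mathcal{F}^{u}(T_v(p))=\mathcal{F}^{u}(p)$ and hence $\rho(T_v(p))=\rho(p)$. In particular the whole $\Lambda_x$-action is fiber-preserving, so $\rho$ is $\Lambda_x$-invariant and factors through the quotient map $q\colon\mathcal{F}^{\hat{c}u}(x)\to N$, giving a well-defined map $\pi_N\colon N\to\mathcal{F}^{\hat{c}}(x)$ with $\pi_N\circ q=\rho$. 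Since the $\Lambda_x$-action is holomorphic, free and properly discontinuous, $q$ is a holomorphic local biholomorphism, and as $\rho$ is a holomorphic submersion, so is $\pi_N$.

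Finally I would identify the fibers. For $y\in\mathcal{F}^{\hat{c}}(x)$ one gets $\pi_N^{-1}(y)=\rho^{-1}(y)/\Lambda_x=\mathcal{F}^{u}(y)/\Lambda_x$, where $\Lambda_x$ acts on $\mathcal{F}^{u}(y)$, identified with $E^{u}_{y}$ via $\theta_y$, by the translations $w\mapsto w+H^{\hat{c}}_{xy}(v)$ for $v\in\Lambda_x$. By Proposition \ref{prop:subcentholoRlin} the map $H^{\hat{c}}_{xy}$ is an $\mathbb{R}$-linear isomorphism of $E^{u}_{x}$ onto $E^{u}_{y}$, so it carries the rank-two lattice $\Lambda_x=\operatorname{span}_{\mathbb{Z}}\{v_1,v_2\}$ to a full-rank lattice $H^{\hat{c}}_{xy}(\Lambda_x)$ in $E^{u}_{y}\cong\mathbb{C}$; in particular this translation action is free. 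Hence $\pi_N^{-1}(y)\cong\mathbb{C}/H^{\hat{c}}_{xy}(\Lambda_x)$ is a complex torus of dimension one.

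The main obstacle is not a single computation but assembling the descent correctly: one must verify that the $\Lambda_x$-action restricts on each fiber to translation by the \emph{full-rank} lattice $H^{\hat{c}}_{xy}(\Lambda_x)$, so that the fiber quotient is a genuine one-dimensional torus rather than a cylinder or a point. This is exactly where the $\mathbb{R}$-linearity and invertibility of $H^{\hat{c}}_{xy}$ furnished by Proposition \ref{prop:subcentholoRlin} enter, together with the fact that holomorphicity of $\pi_N$ is inherited from $\rho$ through the local biholomorphism $q$.
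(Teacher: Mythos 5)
Your proposal is correct and follows essentially the route the paper intends: the paper's own proof simply defers to Lemma 7.3 of \cite{xu_holomorphic_2025}, and your write-up is precisely that descent argument, built from the same ingredients the paper lines up beforehand (the holomorphic submersion $p\mapsto \mathcal{F}^{u}(p)\cap\mathcal{F}^{\hat{c}}(x)$, the holomorphic fiber-preserving $\Lambda_x$-action by the maps $T_v$, and the $\mathbb{R}$-linearity and invertibility of $H^{\hat{c}}_{xy}$ from Proposition \ref{prop:subcentholoRlin}). In particular your identification of each fiber as $E^{u}_{y}/H^{\hat{c}}_{xy}(\Lambda_x)$, a quotient of $\mathbb{C}$ by a full-rank lattice, is exactly the point that makes the fibers one-dimensional complex tori.
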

\begin{proof}
   The proof is analogous to that of Lemma 7.3 of \cite{xu_holomorphic_2025}, with the help of the previous lemma.
\end{proof}
\begin{proof}[Proof of Proposition \ref{prop:isometryholo}]
    The proof is similar to that of Proposition 7.1 in \cite{xu_holomorphic_2025} with the help of the previous results, and by replacing "center" by "subcenter".
\end{proof}

\subsection{Conclusion}
Eventually, we come to the main result of this section:
\begin{proposition}
\label{prop:concholo}
    For every $x \in M$ and $y \in \mathcal{F}^{\hat{c}}(x)$, the subcenter holonomy between unstable leaves $h_{xy}^{\hat{c}}:\mathcal{F}_{\operatorname{loc}}^{u}(x)\to \mathcal{F}_{\operatorname{loc}}^{u}(y)$ is holomorphic.
\end{proposition}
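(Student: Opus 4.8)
The plan is to prove that the set of subcenter leaves over which $h^{\hat c}$ is holomorphic is a closed, flow-- and holonomy--invariant subset of the leaf space $\overline M=M/\mathcal F^{\hat c}$ (on which $\varphi^t$ descends to an Anosov flow $\overline{\varphi^t}$ whose weak stable/unstable foliations are $\overline{\mathcal F^{cs}},\overline{\mathcal F^{cu}}$), and then to use the local product structure of those foliations to force this set to be all of $\overline M$. First I would fix an ergodic Gibbs $u$-state $\mu'$ (which exists by Proposition \ref{prop:gibbs}). By Proposition \ref{prop:isoorcont}, $(\varphi^t)$ is a $\mu'$-subcenter isometry or a $\mu'$-subcenter contraction, so by Propositions \ref{prop:contract1} and \ref{prop:isometryholo} the map $h^{\hat c}_{xy}$ is holomorphic for every $\overline x\in\operatorname{supp}\overline{\mu'}$ and all $x,y\in\mathcal F^{\hat c}(\overline x)$. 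I then set
\[
\mathcal H=\{\overline x\in\overline M : h^{\hat c}_{xy}\colon\mathcal F^u_{\operatorname{loc}}(x)\to\mathcal F^u_{\operatorname{loc}}(y)\text{ is holomorphic for all }x,y\in\mathcal F^{\hat c}(\overline x)\},
\]
which is nonempty since $\operatorname{supp}\overline{\mu'}\subset\mathcal H$ (and the same holds for \emph{every} ergodic Gibbs $u$-state).

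Next I would record two stability properties. The set $\mathcal H$ is closed: if $\overline{x}_n\to\overline x$ with $\overline x_n\in\mathcal H$, choosing $x_n\to x$, $y_n\to y$ with $y_n\in\mathcal F^{\hat c}(x_n)$ and writing $\theta_{y}^{-1}\circ h^{\hat c}_{xy}\circ\theta_x$ as the locally uniform limit of the holomorphic maps $\theta_{y_n}^{-1}\circ h^{\hat c}_{x_ny_n}\circ\theta_{x_n}$ — using the continuity of $x\mapsto\theta_x$ from Proposition \ref{prop:non-statiolin} and that a locally uniform limit of holomorphic maps is holomorphic — gives $\overline x\in\mathcal H$. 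Moreover $\mathcal H$ is $\overline{\varphi^t}$-invariant, since the identity $h^{\hat c}_{xy}=\varphi^{-t}\circ h^{\hat c}_{\varphi^t x,\varphi^t y}\circ\varphi^t$ combined with the holomorphicity of $\varphi^t$ along unstable leaves (Proposition \ref{prop:leavesholo}) shows holomorphicity of $h^{\hat c}_{xy}$ is unaffected by the flow.

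The key step is invariance of $\mathcal H$ under the strong stable and strong unstable holonomies of $\overline{\varphi^t}$. For $\overline z\in\overline{\mathcal F^u}(\overline x)$ with $\overline x\in\mathcal H$, lift to $z\in\mathcal F^u(x)$; given $w,w'\in\mathcal F^{\hat c}(z)$, pick $y,y'\in\mathcal F^{\hat c}(x)$ with $w\in\mathcal F^u(y)$, $w'\in\mathcal F^u(y')$. Then $h^{\hat c}_{ww'}$ and $h^{\hat c}_{yy'}$ differ only by the identifications of the local pieces $\mathcal F^u_{\operatorname{loc}}(w),\mathcal F^u_{\operatorname{loc}}(y)$ (resp. of $\mathcal F^u_{\operatorname{loc}}(w'),\mathcal F^u_{\operatorname{loc}}(y')$) lying in a common unstable leaf, and in the charts $\theta$ these identifications are affine with $\mathbb C$-linear derivative $H^u$ (Lemma \ref{lem:derivHu} and the remark after Proposition \ref{prop:Hu}). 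Hence $\theta^{-1}\circ h^{\hat c}_{ww'}\circ\theta$ is a composition of the holomorphic map $\theta^{-1}\circ h^{\hat c}_{yy'}\circ\theta$ with two $\mathbb C$-affine maps, so it is holomorphic and $\overline z\in\mathcal H$. The identical argument with the strong stable holonomy, invoking the stable analog of Lemma \ref{lem:derivHu} (the geometric stable holonomy between unstable leaves equals in the charts $\theta$ the $\mathbb C$-linear holonomy $H^s$ of the fiber-bunched cocycle $d\varphi^1|_{E^u}$, by \cite{sadovskaya_uniformly_nodate} and \cite{butler_uniformly_2018}), gives stable-holonomy invariance. Together with flow-invariance, $\mathcal H$ is thus saturated by both $\overline{\mathcal F^{cu}}$ and $\overline{\mathcal F^{cs}}$; by the local product structure of these foliations on $\overline M$, a nonempty set saturated by both is open, and since $\overline M$ is connected and $\mathcal H$ is closed, $\mathcal H=\overline M$, which is the desired statement.

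I expect the main obstacle to be exactly this key step: verifying the commutation relations between the subcenter holonomy and the (un)stable holonomies, and especially the stable analog of Lemma \ref{lem:derivHu} identifying the \emph{geometric} (un)stable holonomy between unstable leaves with the $\mathbb C$-linear cocycle holonomies $H^u,H^s$ in the non-stationary linearization charts. This is where the hypotheses $\dim_{\mathbb C}E^u=1$ (hence $u$-quasiconformality of $\varphi^1$) and Proposition \ref{prop:non-statiolin} are indispensable, and where care is needed to ensure the relevant holonomies are globally defined on the unstable leaves and depend continuously on the basepoint; the remaining topological promotion via the local product structure, by contrast, is routine.
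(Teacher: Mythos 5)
Your skeleton (a ``good set'' containing the supports of all ergodic Gibbs $u$-states via the isometry/contraction dichotomy of Propositions \ref{prop:isoorcont}, \ref{prop:contract1} and \ref{prop:isometryholo}, followed by a global argument forcing it to be everything) parallels the paper's, but the step you yourself flag as the key one --- saturation of $\mathcal H$ under the unstable and stable holonomies --- has a genuine gap. For the unstable direction: writing $A=\theta_y^{-1}\circ\theta_w$ and $A'=\theta_{w'}^{-1}\circ\theta_{y'}$, your claimed factorization $\theta_{w'}^{-1}\circ h^{\hat c}_{ww'}\circ\theta_w=A'\circ(\theta_{y'}^{-1}\circ h^{\hat c}_{yy'}\circ\theta_y)\circ A$ collapses, after cancelling the $\theta$'s, to the statement that $h^{\hat c}_{ww'}$ and $h^{\hat c}_{yy'}$ are germs of one and the same global subcenter holonomy $h\colon\mathcal F^u(y)\to\mathcal F^u(y')$, taken at $w$ and at $y$ respectively --- true but vacuous. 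Membership $\overline x\in\mathcal H$ gives holomorphicity of $h$ only near $y$; what you need is holomorphicity near $w$, which may be far from $y$ inside the unstable leaf, and the $\mathbb C$-affine identifications merely move the basepoint, they do not transport holomorphicity. This is precisely why the paper's set $S$ carries a uniform radius $\delta(\overline x)$ over the whole compact subcenter leaf: that definition yields only that $S$ is \emph{open} (points unstably close to $x$ inherit holomorphicity on $\delta/2$-balls), never that $S$ is saturated by full unstable leaves. For the stable direction the situation is worse: identifying the geometric stable holonomy between unstable leaves with the $\mathbb C$-linear cocycle holonomy $H^s$ in the charts $\theta$ is not available at this stage --- Lemma 28 of \cite{butler_uniformly_2018} (and the corresponding lemma in the proof of Proposition \ref{prop:Fcstrholo}) only says that \emph{if} $h^{cs}$ is differentiable at the basepoint, \emph{then} its derivative there is $H^s$; that $h^{cs}$ is affine or holomorphic in these charts is essentially the content of Proposition \ref{prop:Fcstrholo}, which the paper deduces \emph{from} Proposition \ref{prop:concholo}, so invoking it here is circular. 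Your closedness claim has a further (more minor) gap: it needs holomorphicity radii that are uniform along the sequence $\overline x_n$, which your definition of $\mathcal H$ does not provide.

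Because full stable/unstable saturation is not obtainable at this stage, the clopen-plus-connectedness endgame cannot be run, and this is exactly where the paper argues differently: $S$ is shown to be open, $\varphi^1$-invariant, and to contain $\operatorname{supp}\mu$ for every ergodic Gibbs $u$-state; Lemma \ref{lem:S} then gives that $S\cap\mathcal F^u_{\mathrm{loc}}(x_1)$ has full $m^u_{x_1}$-measure for \emph{every} $x_1\in M$; finally, quasiconformality of the subcenter holonomy (Proposition \ref{prop:hcsquasiconf}) together with Lemma \ref{lem:quasiconfholo} (a quasiconformal map with $\overline\partial=0$ almost everywhere is conformal) upgrades holomorphicity from this full-measure subset to all of $\mathcal F^u_{\mathrm{loc}}(x_1)$. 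This measure-theoretic and quasiconformal upgrade is the ingredient your proposal is missing; without it (or a genuinely new argument producing full saturation) the proof does not close.
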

We will need the following Lemma:
\begin{lemma}[8.2 of \cite{xu_holomorphic_2025}]
\label{lem:S}
Let $ f: M \to M $ be a $ C^2 $ partially hyperbolic diffeomorphism and $ S \subset M $ be an $ f $-invariant open set such that for every ergodic Gibbs $ u $-state $ \mu $, $\operatorname{supp} \mu \subset S$. \\
Then for every $ x \in M $, $ S \cap \mathcal{F}^{u}_{\mathrm{loc}}(x) $ has full $ m^{u}_{x} $-measure.
\end{lemma}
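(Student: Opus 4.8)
The plan is to argue by contradiction, exploiting the standard realisation of Gibbs $u$-states as weak-$*$ limits of forward Cesàro averages of Lebesgue measure carried by an unstable disk. Suppose the conclusion fails: there is a point $x \in M$ for which the set
\[
A := \mathcal{F}^u_{\operatorname{loc}}(x) \cap (M \setminus S)
\]
has strictly positive $m^u_x$-measure. Since $S$ is open and $f$-invariant and $f$ is a diffeomorphism, its complement $K := M \setminus S$ is compact and satisfies $f(K)=K$, and by construction $A \subset K$.

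I would then normalise the restriction of $m^u_x$ to $A$ to a Borel probability measure $\lambda := m^u_x(A)^{-1}\, m^u_x|_A$. By construction $\lambda$ is absolutely continuous with respect to the leafwise Lebesgue measure on the unstable disk $\mathcal{F}^u_{\operatorname{loc}}(x)$, with bounded density, and $\lambda(K)=1$. Forming the averages
\[
\mu_n := \frac{1}{n}\sum_{k=0}^{n-1} f^k_* \lambda, \qquad n \geq 1,
\]
the invariance $f(K)=K$ forces each $f^k_*\lambda$, and hence each $\mu_n$, to be supported in $K$; as $K$ is closed, any weak-$*$ accumulation point $\mu$ of $(\mu_n)$ satisfies $\mu(K)=1$, that is $\mu(S)=0$. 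The whole point of starting from a measure equivalent to leafwise Lebesgue on an \emph{unstable} disk is that, by the bounded-distortion theory for $C^2$ partially hyperbolic diffeomorphisms (see \cite{bonatti_dynamics_2010}), every such accumulation point $\mu$ is itself a Gibbs $u$-state.

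It then remains to extract an ergodic component living off $S$. Writing the ergodic decomposition $\mu = \int \mu_\omega \, d\omega$ and integrating $\mathbf{1}_S$ gives $\int \mu_\omega(S)\, d\omega = \mu(S) = 0$, so $\mu_\omega(S)=0$, i.e. $\operatorname{supp}\mu_\omega \subset K$, for $\omega$-almost every component. By Proposition \ref{prop:gibbs} the ergodic components of a Gibbs $u$-state are again Gibbs $u$-states, so choosing one such $\mu_\omega$ yields an \emph{ergodic} Gibbs $u$-state whose support is contained in $M \setminus S$. This contradicts the hypothesis that $\operatorname{supp}\mu \subset S$ for every ergodic Gibbs $u$-state, completing the proof (this is exactly the scheme of Lemma 8.2 of \cite{xu_holomorphic_2025}).

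The main technical obstacle is the claim that the limit $\mu$ is genuinely a Gibbs $u$-state, i.e. that the conditional measures of $\mu$ along unstable plaques remain equivalent to Lebesgue in the limit rather than degenerating to something singular. This is precisely where the $C^2$ hypothesis enters, through the bounded-distortion estimate along unstable manifolds that controls the densities of the pushforwards $f^k_*\lambda$ uniformly in $k$; this is the input I would import from \cite{bonatti_dynamics_2010}, from which Proposition \ref{prop:gibbs} is itself drawn. Everything else—the invariance of $K$, the weak-$*$ compactness of the space of probability measures on the compact manifold $M$, and the ergodic decomposition—is routine.
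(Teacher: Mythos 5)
Your proof is correct, and in fact the paper offers no proof of its own here: Lemma \ref{lem:S} is imported verbatim by citation from Lemma 8.2 of \cite{xu_holomorphic_2025}, whose argument is exactly the contradiction scheme you describe (Ces\`aro averages of an absolutely continuous measure on the unstable disk trapped in the compact invariant set $M\setminus S$, weak-$*$ limits being Gibbs $u$-states, then passing to an ergodic component via Proposition \ref{prop:gibbs}). The only point deserving a word of care is the one you flag: your density $\mathbf{1}_A$ is bounded above but not below, so the cleanest route is to dominate $\lambda \leq C\,\mathrm{Leb}_D$, conclude that the limit $\mu$ is dominated by a constant multiple of a genuine Gibbs $u$-state (a weak-$*$ limit of averages of $\mathrm{Leb}_D$), hence has absolutely continuous conditionals along unstable plaques, and then use invariance to upgrade absolute continuity to equivalence --- all of which is standard in \cite{bonatti_dynamics_2010}, as you indicate.
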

    We define the set
    $$S:=\{x \in M \mid \exists \delta=\delta(\overline{x})>0, \; \forall x_1,x_2 \in \mathcal{F}^{\hat{c}}(x),\, h^{\hat{c}}_{x_1,x_2}: \mathcal{F}^u_{\delta}(x_1) \to \mathcal{F}^u_{\delta}(x_2) \text{ is holomorphic}\}. $$
    We prove that $S$, together with $f=\varphi^1$, satisfies the assumptions of the previous Lemma.
    By construction, $S$ is $\mathcal{F}^{\hat{c}}$-saturated. Also, since for every $x \in M$ and $x_1,x_2 \in \mathcal{F}^{\hat{c}}(x)$, 
    $$h^{\hat{c}}_{\varphi^1(x_1),\varphi^1(x_2)}=\varphi^1\circ h^{\hat{c}}_{x_1,x_2} \circ \varphi^{-1}|_{\mathcal{F}^u_{\operatorname{loc}}(\varphi^1(x_1))},$$
    it comes that $S$ is $\varphi^1$-invariant.
\begin{lemma}
    For any ergodic Gibbs $u$-state $\mu$, we have $\operatorname{supp} \mu \subset S$.
\end{lemma}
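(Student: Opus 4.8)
The plan is to combine the dichotomy of Proposition \ref{prop:isoorcont} with the two holomorphicity results already established, transfer the conclusion from $\operatorname{supp}\overline{\mu}$ down to $\operatorname{supp}\mu$, and finally extract a single local radius valid over the entire subcenter leaf.

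First, since $\mu$ is an ergodic Gibbs $u$-state, Proposition \ref{prop:isoorcont} ensures that $(\varphi^t)$ is either a $\mu$-subcenter isometry or a $\mu$-subcenter contraction. In the contraction case Proposition \ref{prop:contract1} gives, for every $\overline{x}\in\operatorname{supp}\overline{\mu}$ and every $x_1,x_2\in\mathcal{F}^{\hat{c}}(\overline{x})$, that $h^{\hat{c}}_{x_1,x_2}:\mathcal{F}^{u}_{\operatorname{loc}}(x_1)\to\mathcal{F}^{u}_{\operatorname{loc}}(x_2)$ is holomorphic; in the isometry case Proposition \ref{prop:isometryholo} yields the same statement. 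So in either situation I obtain: for all $\overline{x}\in\operatorname{supp}\overline{\mu}$ and all pairs $x_1,x_2$ in the subcenter leaf over $\overline{x}$, the subcenter holonomy between local unstable leaves is holomorphic. Since $\overline{\mu}=p_{*}\mu$, one has $p(\operatorname{supp}\mu)\subset\operatorname{supp}\overline{\mu}$, so every $x\in\operatorname{supp}\mu$ projects to some $\overline{x}\in\operatorname{supp}\overline{\mu}$ and hence enjoys this holomorphicity for all $x_1,x_2\in\mathcal{F}^{\hat{c}}(x)$.

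It then remains to produce a uniform $\delta=\delta(\overline{x})>0$ as required in the definition of $S$. Here I would argue by compactness of the subcenter leaf $\mathcal{F}^{\hat{c}}(x)$: using the local product structure of $\mathcal{F}^{\hat{c}u}$ (Proposition 6.12 of \cite{abouanass_dynamical_2026}) and the continuity of the unstable foliation, each point of the leaf has a neighborhood on which the local subcenter holonomies between unstable plaques are defined with a uniform radius; covering the compact leaf by finitely many such neighborhoods and taking the minimum radius gives the desired $\delta$. Because $\mathcal{F}^{\hat{c}}$ has trivial holonomy, any global $h^{\hat{c}}_{x_1,x_2}$ is a path-independent finite composition of these local holonomies, so holomorphicity is preserved on $\mathcal{F}^{u}_{\delta}(x_1)$ for every pair. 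This places $x$ in $S$, and since $x\in\operatorname{supp}\mu$ is arbitrary, $\operatorname{supp}\mu\subset S$.

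The main obstacle is exactly this uniformization: the input propositions only furnish holonomies holomorphic on a priori pair-dependent local unstable leaves, whereas $S$ demands one radius valid across the whole compact subcenter leaf. The compactness of $\mathcal{F}^{\hat{c}}(x)$ together with the trivial-holonomy hypothesis—which guarantees path-independence and hence that the finite compositions expressing the global holonomies are well-defined and holomorphic—is what makes the finite-cover argument succeed.
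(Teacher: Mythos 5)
Your proof is correct and follows essentially the same route as the paper: the paper's proof simply invokes the dichotomy of Proposition \ref{prop:isoorcont} together with Propositions \ref{prop:contract1} and \ref{prop:isometryholo}, deferring the remaining bookkeeping to Lemma 8.3 of \cite{xu_holomorphic_2025}. The details you supply yourself—projecting $\operatorname{supp}\mu$ into $\operatorname{supp}\overline{\mu}$, and extracting a leafwise-uniform $\delta$ via compactness, the local product structure, and path-independence from trivial holonomy—are exactly the content the paper outsources to that citation.
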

\begin{proof}
The proof is similar to Lemma 8.3 \cite{xu_holomorphic_2025} since we have proven that:
\begin{itemize}
    \item $ (\varphi^t) $ is either a $\mu$-subcenter isometry or a $\mu$-subcenter contraction.
    \item If $ (\varphi^t) $ is a $\mu$-subcenter contraction, then for every $\overline{x} \in \operatorname{supp} \overline{\mu}$ and every $ x_1, x_2 \in \mathcal{F}^{\hat{c}}(\overline{x}) $, $h_{x_1 x_2}^{\hat{c}}: \mathcal{F}_{\mathrm{loc}}^{u}(x_1) \to \mathcal{F}_{\mathrm{loc}}^{u}(x_2) $ is holomorphic (see Proposition \ref{prop:contract1}).
    \item If $ (\varphi^t) $ is a $\mu$-subcenter isometry, then for every $\overline{x} \in \operatorname{supp} \overline{\mu}$ and every $ x_1, x_2 \in \mathcal{F}^{\hat{c}}(\overline{x}) $, $h_{x_1 x_2}^{\hat{c}}: \mathcal{F}_{\mathrm{loc}}^{u}(x_1) \to \mathcal{F}_{\mathrm{loc}}^{u}(x_2) $ is holomorphic (see Proposition \ref{prop:isometryholo}).
\end{itemize}
\end{proof}
\begin{lemma}
    $S$ is open.
\end{lemma}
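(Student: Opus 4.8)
The plan is to show that the $\mathcal{F}^{\hat{c}}$-saturated set $S$ is open by establishing that its projection $\overline{S}\subset\overline{M}$ is open, working through the fiber bundle $p:M\to\overline{M}$ whose fibers are the (compact) subcenter leaves. First I would record the following reformulation: since each subcenter leaf is compact and the local subcenter holonomies between unstable leaves are defined on unstable balls whose radius can be taken uniform along the (compact) leaf, the uniform $\delta$ in the definition of $S$ is automatic. Hence $x\in S$ if and only if for \emph{every} pair $x_1,x_2\in\mathcal{F}^{\hat{c}}(x)$ the map $h^{\hat{c}}_{x_1,x_2}$ is holomorphic where defined, and it suffices to prove: if $x\in S$ and $x^{(k)}\to x$, then $x^{(k)}\in S$ for $k$ large.

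Suppose not, so after passing to a subsequence there are $x^{(k)}\to x$ with $x^{(k)}\notin S$, and hence pairs $x_1^{(k)},x_2^{(k)}\in\mathcal{F}^{\hat{c}}(x^{(k)})$ for which $h^{\hat{c}}_{x_1^{(k)},x_2^{(k)}}$ fails to be holomorphic on $\mathcal{F}^u_{\delta_0/2}(x_1^{(k)})$, where $\delta_0=\delta(\overline{x})$. Using that $p$ is a continuous fiber bundle with compact fibers, the subcenter leaves depend continuously on the base point, so up to a further subsequence $x_1^{(k)}\to y_1$ and $x_2^{(k)}\to y_2$ with $y_1,y_2\in\mathcal{F}^{\hat{c}}(x)$. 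In the charts given by the non-stationary linearization (Proposition \ref{prop:non-statiolin}), the maps $\theta_{x_2^{(k)}}^{-1}\circ h^{\hat{c}}_{x_1^{(k)},x_2^{(k)}}\circ\theta_{x_1^{(k)}}$ are $K$-quasiconformal with one and the same $K$ by Proposition \ref{prop:hcsquasiconf}. Since $x\mapsto\theta_x$ is continuous and the subcenter holonomy between unstable leaves depends continuously on its endpoints (continuity of $\mathcal{F}^{\hat{c}}$, $\mathcal{F}^u$ and their local product structure, Proposition 6.12 of \cite{abouanass_dynamical_2026}), these maps converge uniformly on compact sets to $\theta_{y_2}^{-1}\circ h^{\hat{c}}_{y_1,y_2}\circ\theta_{y_1}$, which is holomorphic because $x\in S$ and $y_1,y_2\in\mathcal{F}^{\hat{c}}(x)$.

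The delicate point---and the main obstacle---is that a uniform limit of non-holomorphic $K$-quasiconformal maps may well be holomorphic, so the convergence above does not by itself contradict the non-holomorphicity of the $h^{\hat{c}}_{x_1^{(k)},x_2^{(k)}}$. To close the argument I would exploit the dynamical self-similarity exactly as in \cite{xu_holomorphic_2025}: from the relation $h^{\hat{c}}_{x_1,x_2}=\varphi^{-n}\circ h^{\hat{c}}_{\varphi^{n}(x_1),\varphi^{n}(x_2)}\circ\varphi^{n}$ and the fact that $d\varphi^{n}|_{E^u}$ is $\mathbb{C}$-linear while $\theta$ conjugates $\varphi^{n}$ to it (Propositions \ref{prop:leavesholo} and \ref{prop:non-statiolin}), the Beltrami coefficient of $\theta_{x_2}^{-1}\circ h^{\hat{c}}_{x_1,x_2}\circ\theta_{x_1}$ transforms only by a conformal change of variable under the forward dynamics. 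Consequently its essential supremum is invariant under the (expanding) renormalization, so a definite amount of non-conformality present along the sequence cannot be destroyed in the limit; together with the uniform $K$-quasiconformal bound and the $\varphi^{1}$-invariance of $S$ this yields the desired contradiction and proves that $S$ is open.
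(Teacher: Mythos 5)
There is a genuine gap, and it is exactly at the point you yourself flag as delicate. Your proposed resolution---invariance of the essential supremum of the Beltrami coefficient under the dynamical renormalization---does not close the argument, for two independent reasons. First, the negation of ``$x^{(k)}\in S$'' only gives that each $h^{\hat{c}}_{x_1^{(k)},x_2^{(k)}}$ is non-holomorphic; it gives no uniform (in $k$) lower bound on its non-conformality, so there is no ``definite amount of non-conformality present along the sequence'' to preserve. Second, even if such a uniform bound existed, it would not pass to the limit: the essential supremum of the Beltrami coefficient is not lower semicontinuous under locally uniform convergence of $K$-quasiconformal maps (maps whose dilatation equals $\epsilon$ on sets of shrinking measure converge uniformly to conformal maps), so a uniform limit of uniformly non-conformal maps can perfectly well be holomorphic. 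Finally, the renormalization identity $h^{\hat{c}}_{x_1,x_2}=\varphi^{-n}\circ h^{\hat{c}}_{\varphi^{n}(x_1),\varphi^{n}(x_2)}\circ\varphi^{n}$ preserves the Beltrami supremum of each \emph{individual} holonomy along its own forward orbit; it provides no mechanism for transporting non-conformality from the points $x^{(k)}$ to the limit point $x$, which is what your contradiction requires. So the sequential scheme collapses precisely where it needs to conclude.

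For comparison, the paper does not argue by sequential compactness at all, and it uses the dynamics in the opposite direction---to \emph{produce} conformality rather than to preserve non-conformality. Since $S$ is $\mathcal{F}^{\hat{c}}$-saturated, it suffices to show $\overline{S}$ is open in $\overline{M}$, and by the local product structure criterion (Proposition 7.1 of \cite{abouanass_dynamical_2026}) this reduces to showing $\overline{S}$ is saturated by projections of local unstable leaves (immediate from the definition of $S$), local flow orbits (from the flow-equivariance of $h^{\hat{c}}$, via Lemma \ref{lem:metricweak}), and local stable leaves. The stable step---the only hard one---is handled as in Lemma 8.4 of \cite{xu_holomorphic_2025}: for $y\in\mathcal{F}^{s}_{\operatorname{loc}}(x)$ with $x\in S$, forward iteration makes $\varphi^{n}(y)$ exponentially close to $\varphi^{n}(x)$, so the iterated holonomies over $\overline{y}$ are $C^0$-asymptotic to the holomorphic holonomies over $\overline{\varphi^{n}(x)}$ (using the $\varphi^{1}$-invariance of $S$); then the asymptotic-conformality lemma (Lemma \ref{lem:contract1}) forces $\overline{\partial}=0$ at every point of differentiability, and quasiconformality (Proposition \ref{prop:hcsquasiconf} together with Lemma \ref{lem:quasiconfholo}) upgrades this to holomorphicity. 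If you want to salvage your approach, this is the missing ingredient: replace the Beltrami-invariance step by an application of Lemma \ref{lem:contract1} along forward orbits, which is the tool designed to handle exactly the failure of conformality to pass through $C^0$ limits.
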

\begin{proof}
    Since $S$ is saturated by subcenter leaves, we prove that $\overline{S} \subset \overline{M}$ is open.
    By Proposition 7.1 of \cite{abouanass_dynamical_2026}, it suffices to show that for every $\overline{x} \in \overline{S}$, $\overline{\mathcal{F}^u_{\text{loc}}(x) }\subset \overline{S}$ and $\overline{\mathcal{F}^{ws}_{\text{loc}}(x)} \subset \overline{S}$.
    Fix $x \in S$. It follows from the definition of $S$ that $\overline{\mathcal{F}^u_{\text{loc}}(x)} \subset \overline{S}$.
    Also, since for every $t\in \mathbb R$, $x_1 \in M$ and $x_2 \in \mathcal{F}^{\hat{c}}(x_1)$, 
    $$h_{x_1,x_2}^{\hat{c}}=\varphi^{-t}\circ h_{\varphi^{t}(x),\varphi^{t}(y)}^{\hat{c}}\circ \varphi^{t}|_{\mathcal{F}^{u}_{\text{loc}}(x_1)},$$
    it comes that $\overline{\Phi_{\text{loc}}(x)} \subset \overline{S}$.
    By Lemma \ref{lem:metricweak}, it is enough to prove that $\overline{\mathcal{F}^s_{\text{loc}}(x)} \subset \overline{S}$ to conclude.
    The proof of this point is similar to that of Lemma 8.4 in \cite{xu_holomorphic_2025}, by replacing "center" by "subcenter" and $f$ with $\varphi^1$.
\end{proof}
\begin{proof}[Proof of Proposition \ref{prop:concholo}]
Let $x_1 \in M$ and $ x_2 \in \mathcal{F}^{\hat{c}}(x_1)$. By definition of $S$, it comes that 
\[
h^{\hat{c}}_{x_1,x_2}: \mathcal{F}^u_{\mathrm{loc}}(x_1) \to \mathcal{F}^u_{\mathrm{loc}}(x_2)
\] 
is holomorphic on $S \cap \mathcal{F}^u_{\mathrm{loc}}(x_1)$. 
By Lemma \ref{lem:S}, $S \cap \mathcal{F}^u_{\mathrm{loc}}(x_1)$ has full $m^u_{x_1}$-measure. By quasiconformality of subcenter holonomy (Proposition \ref{prop:hcsquasiconf}) and Lemma \ref{lem:quasiconfholo}, it follows that $h^{\hat{c}}_{x_1,x_2}$ is holomorphic on $\mathcal{F}^u_{\mathrm{loc}}(x_1)$, which concludes the proof of Proposition \ref{prop:concholo}.
\end{proof}
\begin{corollary}
    For every $p \in M$, $\restr{\mathcal{F}^{\hat{c}}}{\mathcal{F}^{\hat{c}u}(p)}$ is a holomorphic foliation in $\mathcal{F}^{\hat{c}u}(p)$ and $\restr{\mathcal{F}^{\hat{c}}}{\mathcal{F}^{\hat{c}s}(p)}$ is a holomorphic foliation in $\mathcal{F}^{\hat{c}s}(p)$.
\end{corollary}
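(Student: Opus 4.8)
The plan is to apply the holomorphic Journé lemma, Proposition \ref{prop:journeholomo}, to the restricted foliation $\restr{\mathcal{F}^{\hat{c}}}{\mathcal{F}^{\hat{c}u}(p)}$ viewed as a foliation of the complex manifold $\mathcal{F}^{\hat{c}u}(p)$. Since we work in dimension $7$, each of $E^{s}$, $E^{\hat{c}}$, $E^{u}$ has complex dimension one, so by the ``moreover'' part of Proposition \ref{prop:leavesholo} the leaf $\mathcal{F}^{\hat{c}u}(p)$ is an immersed complex manifold of complex dimension $2$, on which $\restr{\mathcal{F}^{\hat{c}}}{\mathcal{F}^{\hat{c}u}(p)}$ is a codimension-one foliation whose leaves are the subcenter leaves. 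It therefore suffices to check the two hypotheses of Proposition \ref{prop:journeholomo}: that this foliation has holomorphic leaves, and that its holonomy is holomorphic.

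The first hypothesis is immediate: in the proof of Proposition \ref{prop:leavesholo} the almost complex structure $I$ is seen to preserve both $E^{\hat{c}}$ and $E^{\hat{c}u}=E^{u}\oplus E^{\hat{c}}$, and the subcenter complex structure is the restriction of the subcenter-unstable one, so each subcenter leaf is a complex submanifold of $\mathcal{F}^{\hat{c}u}(p)$. For the second hypothesis, recall from Proposition \ref{prop:dyncohsubfol} that $\mathcal{F}^{\hat{c}u}$ is simultaneously subfoliated by $\mathcal{F}^{u}$ and $\mathcal{F}^{\hat{c}}$; since these are complementary (their plaques, of complex dimension one each, meet in single points), the local unstable plaques serve as transversals to $\restr{\mathcal{F}^{\hat{c}}}{\mathcal{F}^{\hat{c}u}(p)}$. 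With respect to these transversals the holonomy of $\restr{\mathcal{F}^{\hat{c}}}{\mathcal{F}^{\hat{c}u}(p)}$ between $x$ and $y\in\mathcal{F}^{\hat{c}}(x)$ is exactly the subcenter holonomy $h^{\hat{c}}_{xy}\colon \mathcal{F}^{u}_{\operatorname{loc}}(x)\to\mathcal{F}^{u}_{\operatorname{loc}}(y)$, which is holomorphic by Proposition \ref{prop:concholo}. Reading this through the holomorphic charts on the unstable plaques supplied by Proposition \ref{prop:leavesholo} confirms, via the characterization of Proposition \ref{prop:caractrholo}, that $\restr{\mathcal{F}^{\hat{c}}}{\mathcal{F}^{\hat{c}u}(p)}$ is transversely holomorphic.

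Both hypotheses being verified, Proposition \ref{prop:journeholomo} gives that $\restr{\mathcal{F}^{\hat{c}}}{\mathcal{F}^{\hat{c}u}(p)}$ is a holomorphic foliation of $\mathcal{F}^{\hat{c}u}(p)$. The subcenter-stable case follows by running the same argument for the time-reversed flow $(\varphi^{-t})$, which interchanges $E^{u}$ with $E^{s}$ and $\mathcal{F}^{\hat{c}u}$ with $\mathcal{F}^{\hat{c}s}$; then Proposition \ref{prop:concholo} applied to $(\varphi^{-t})$ yields holomorphicity of the subcenter holonomy between local stable leaves, and the rest goes through verbatim. I expect the only delicate point to be the bookkeeping in the holonomy step, namely checking that the holonomy of $\restr{\mathcal{F}^{\hat{c}}}{\mathcal{F}^{\hat{c}u}(p)}$ with respect to the unstable transversals genuinely coincides with the globally defined map $h^{\hat{c}}_{xy}$, which is ensured by the local product structure of $\mathcal{F}^{\hat{c}u}$ recalled above.
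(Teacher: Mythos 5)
Your proposal is correct and follows essentially the same route as the paper, whose proof is a one-line citation of Proposition \ref{prop:concholo} (holomorphic subcenter holonomy between unstable plaques), the complex structures of Section \ref{sec:4}, and the holomorphic Journé/Osgood result Proposition \ref{prop:journeholomo}; your write-up simply makes explicit the verification of its two hypotheses (holomorphic leaves via Proposition \ref{prop:leavesholo}, holomorphic holonomy via the local product structure and Proposition \ref{prop:concholo}) and the time-reversal argument for the subcenter-stable case, both of which the paper leaves implicit.
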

\begin{proof}
    This is a consequence of the previous Proposition as well as Proposition \ref{cor:FçholoinFçs} and Proposition \ref{prop:journeholomo}. 
\end{proof}

\section{Smoothness of the foliations and classification}
\label{sec:6}
\subsection{Smooth and transverse holomorphic structure of the center foliation}
\begin{proposition}
\label{prop:Fcstrholo}
    Let $(\varphi^t)$ a smooth transversely holomorphic partially hyperbolic flow on a smooth compact manifold $M$ of dimension $7$ with a flow invariant compact subcenter foliation $\mathcal{F}^{\hat{c}}$ with $C^1$ leaves and trivial holonomy.\\
    Then the center-stable $\mathcal{F}^{cs}$, center-unstable $\mathcal{F}^{cu}$ and center $\mathcal{F}^c$ foliations are $C^\infty$ and transversely holomorphic.
\end{proposition}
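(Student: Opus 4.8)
The plan is to prove the two conclusions separately — transverse holomorphicity and $C^\infty$ regularity — and to reduce the transverse statement to the holomorphicity of the subcenter holonomy already established in Proposition \ref{prop:concholo}. Since $M$ has dimension $7$ we have $\dim_{\mathbb C}E^s=\dim_{\mathbb C}E^{\hat{c}}=\dim_{\mathbb C}E^u=1$, so the local unstable leaves $\mathcal F^u_{\operatorname{loc}}(x)$ are transversals to $\mathcal F^{cs}$, and the linearizing charts $\theta_x^{-1}\colon\mathcal F^u(x)\to E^u_x\cong\mathbb C$ of Proposition \ref{prop:non-statiolin} are the natural coordinates in which to test transverse holomorphicity via Proposition \ref{prop:caractrholo}(ii). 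For $y\in\mathcal F^{cs}_{\operatorname{loc}}(x)$ I would decompose $h^{cs}_{xy}$, exactly as in the proof of Proposition \ref{prop:hcsquasiconf}, into a flow holonomy, a stable holonomy and a subcenter holonomy between local unstable leaves. Read in the $\theta$-charts, the flow factor is holomorphic by Propositions \ref{prop:holoflo} and \ref{prop:leavesholo}, the subcenter factor is holomorphic by Proposition \ref{prop:concholo}, and the stable factor is holomorphic because $\theta_y^{-1}\circ h^s_{xy}\circ\theta_x$ equals the $\mathbb C$-linear stable holonomy $H^s_{xy}$ of the cocycle $d\varphi^1|_{E^u}$ (the stable counterpart of Lemma \ref{lem:derivHu}, using the remark following Proposition \ref{prop:Hu}, and noting $h^s_{xy}(x)=y$ so that no affine constant appears). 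Hence $\theta_y^{-1}\circ h^{cs}_{xy}\circ\theta_x$ is holomorphic and $\mathcal F^{cs}$ is transversely holomorphic; reversing time gives the same for $\mathcal F^{cu}$.

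For the center foliation I would instead use Proposition \ref{prop:caractrholo}(iii): its normal bundle is $E^s\oplus E^u$, on which the transverse almost complex structure $I$ of the flow restricts, since $I$ preserves each of $E^s,E^{\hat{c}},E^u$ by Proposition \ref{prop:leavesholo}. This restriction is integrable on the local stable–unstable transversals and invariant under the $\mathcal F^c$-holonomy: the latter is generated by flow and subcenter holonomies, both of which preserve $I$ (Propositions \ref{prop:holoflo}, \ref{prop:leavesholo} and \ref{prop:concholo}). Thus $\mathcal F^c$ is transversely holomorphic as well.

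It remains to upgrade regularity, for which I would invoke Journé's lemma (Proposition \ref{prop:journe}): a foliation with uniformly $C^\infty$ leaves and uniformly $C^\infty$ holonomy is $C^\infty$. The holonomy hypothesis is now automatic from Proposition \ref{prop:unifholo}, since $\mathcal F^{cs}$, $\mathcal F^{cu}$ and $\mathcal F^c$ are transversely holomorphic. The real work — and the main obstacle — is to show that the center-stable leaves, which are a priori only $C^1$, are in fact uniformly $C^\infty$. I would do this by a leafwise (nested) Journé argument: a subcenter-stable leaf is subfoliated by the strong stable foliation $\mathcal F^s$, whose leaves are smooth, and by $\mathcal F^{\hat{c}}$, whose leaves are holomorphic and hence uniformly $C^\infty$ (Proposition \ref{prop:unifleaves}), while the two mutual sub-holonomies are holomorphic — thus uniformly $C^\infty$ — by Corollary \ref{cor:FçholoinFçs} and the corollary following Proposition \ref{prop:concholo}; Journé applied inside the leaf then makes $\mathcal F^{\hat{c}s}$ uniformly $C^\infty$, and a second application incorporating the smooth flow direction makes $\mathcal F^{cs}$ uniformly $C^\infty$. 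Proposition \ref{prop:journe} then yields that $\mathcal F^{cs}$ is $C^\infty$, and symmetrically $\mathcal F^{cu}$ is $C^\infty$.

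Finally, $\mathcal F^c$ is $C^\infty$ because $E^c=E^{cs}\cap E^{cu}$ is now a $C^\infty$ subbundle, integrable to $\mathcal F^c$, whose leaves are the transverse intersections of the $C^\infty$ plaques of $\mathcal F^{cs}$ and $\mathcal F^{cu}$; reading these intersections in simultaneous $C^\infty$ foliated charts for $\mathcal F^{cs}$ and $\mathcal F^{cu}$ exhibits $\mathcal F^c$ as a $C^\infty$ foliation. Combined with the transverse holomorphicity obtained above, this completes the proof. The delicate points to watch throughout are the coherence of the charts $\{\theta_x\}$ across the successive holonomy factors and the uniform continuity in the base point of the leafwise derivatives feeding Journé's lemma, both of which rely on the compactness and trivial holonomy of $\mathcal F^{\hat{c}}$.
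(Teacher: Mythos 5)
Your overall architecture (reduce to holonomy between local unstable leaves in the linearizing charts, split off a flow factor, a subcenter factor and a stable factor, then run Journ\'e for regularity) matches the paper's, but there is a genuine gap at the crux: the treatment of the stable factor. You assert that for $y\in\mathcal F^s_{\operatorname{loc}}(x)$ the map $\theta_y^{-1}\circ h^{s}_{xy}\circ\theta_x$ \emph{equals} the $\mathbb C$-linear cocycle holonomy $H^s_{xy}$, citing ``the stable counterpart of Lemma \ref{lem:derivHu}'' and the remark after Proposition \ref{prop:Hu}. Neither reference yields this. Lemma \ref{lem:derivHu} concerns $\theta_y^{-1}\circ\theta_x$ for $y\in\mathcal F^u(x)$, i.e.\ two linearizations of the \emph{same} unstable leaf, with no foliation holonomy involved; its honest stable counterpart is a statement about the stable linearization along stable leaves, not about the action of the stable foliation holonomy on unstable linearizations. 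The remark after Proposition \ref{prop:Hu} only says that the \emph{linear} maps $H^u,H^s$ (limits of derivative cocycles) are $\mathbb C$-linear; it says nothing about $h^s_{xy}$. In fact, a priori $h^{cs}_{xy}$ is only quasiconformal (Proposition \ref{prop:hcsquasiconf}), hence differentiable merely almost everywhere, so the identity you want cannot even be stated pointwise without first proving regularity --- and proving that regularity is the actual content of the proposition. The paper's proof fills exactly this hole in three steps: (a) \emph{wherever} $h^{cs}_{xy}$ happens to be differentiable, its derivative must be $H^s_{xy}$ (the conditional Lemma 28 of \cite{butler_uniformly_2018}, transposed to the flow); (b) quasiconformality gives differentiability $m^u_x$-a.e.; (c) Lemma \ref{lem:quasiconfholo} (quasiconformal with $\overline\partial=0$ a.e.\ implies conformal) upgrades this to holomorphicity on the whole local leaf, after composing with the subcenter factor $h^{\hat c}_{x,z}$ at the intersection point $z=\mathcal F^{\hat c}_{\operatorname{loc}}(x)\cap\mathcal F^s_{\operatorname{loc}}(y)$, which is genuinely $C^1$ by Proposition \ref{prop:concholo}. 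By asserting the identification with $H^s_{xy}$ outright, you are assuming the conclusion.

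Two secondary points. First, your argument for $\mathcal F^c$ uses Proposition \ref{prop:caractrholo}(iii), but that characterization is stated only for $C^r$ foliations with $r\geq1$, and at that stage $\mathcal F^c$ is not yet known to be $C^1$; moreover, invariance of $I$ under the full $4$-dimensional center holonomy needs a stable/unstable/flow splitting of that holonomy, which is the same work again. The paper avoids this by combining the transversely holomorphic submersion atlases of $\mathcal F^{cs}$ and $\mathcal F^{cu}$ into $f_i=(f_i^u,f_i^s)$ and invoking Osgood's lemma for the product transition maps. Second, your nested Journ\'e argument for the smoothness of the center-stable \emph{leaves} is an unnecessary detour: by Proposition \ref{prop:leavesholo} the subcenter-stable leaves are already immersed complex manifolds, so $\mathcal F^{\hat c s}$ has uniformly $C^\infty$ leaves by Proposition \ref{prop:unifleaves}, and passing to the weak (flow-saturated) foliation preserves uniform smoothness of leaves; Journ\'e is then needed only once, for the foliation itself.
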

\begin{proof}
    We prove that the center-stable foliation $\mathcal{F}^{cs}$ is transversely holomorphic.
    Let $x \in M$ and $y \in \mathcal{F}^{cs}_{\text{loc}}(x)$.
    We want to show that $h^{cs}_{x,y}: \mathcal{F}^u_{\text{loc}}(x) \to \mathcal{F}^u_{\text{loc}}(y)$ is holomorphic.
    As we mentioned in the proof of Proposition \ref{prop:hcsquasiconf}, we can assume $y \in \mathcal{F}^{\hat{c}s}_{\text{loc}}(x)$.
    Let $z$ be the unique intersection point of $\mathcal{F}^{\hat{c}}_{\text{loc}}(x)$ with $\mathcal{F}^{s}_{\text{loc}}(y)$ and define 
    $$H^{cs}_{x,y}:=H^{s}_{z,y}\circ d_xh^{\hat{c}}_{x,z}$$
    where $H^s$ is the stable holonomy of the cocycle $d\varphi^1|_{E^u}$ (see Proposition \ref{prop:Hu}). By the remark after that proposition, and Proposition \ref{prop:concholo}, $H^{cs}_{x,y}$ is $\mathbb C$-linear and:
    \begin{lemma}
        Let $(\varphi^t)$ a smooth transversely holomorphic partially hyperbolic flow on a smooth compact manifold $M$ of dimension $7$ with a flow invariant compact subcenter foliation $\mathcal{F}^{\hat{c}}$ with $C^1$ leaves and trivial holonomy.\\
        Then for $x \in M$ and $y\in \mathcal{F}^{\hat{c}s}(x)$, the map $h^{cs}_{x,y}: \mathcal{F}^u_{\text{loc}}(x) \to \mathcal{F}^u_{\text{loc}}(y)$ is holomorphic, and whose differential at $x$ is $H^{cs}_{x,y}$.
    \end{lemma}
    \begin{proof}
        We first show that if $y \in \mathcal{F}^s_{\text{loc}}(x)$ and $h^{cs}_{x,y}$
        is differentiable at $x$, then necessarily $d_xh^{cs}_{x,y}=H^s_{x,y}$:
        the proof given in Lemma 28 of \cite{butler_uniformly_2018} in the case of a volume preserving $u$-quasiconformal partially hyperbolic diffeomorphism applies in our setting, by replacing $f$ with $\varphi^1$ and "center" with "subcenter".
        Now assume $x \in M$ and $y\in \mathcal{F}^{\hat{c}s}(x)$. Let $z$ be the unique intersection point of $\mathcal{F}^{\hat{c}}_{\text{loc}}(x)$ with $\mathcal{F}^{s}_{\text{loc}}(y)$.
        Then $h^{cs}_{x,y}=h^{cs}_{z,y}\circ h^{cs}_{x,z}$.
        The map $h^{cs}_{x,z}: \mathcal{F}^u_{\text{loc}}(x) \to \mathcal{F}^u_{\text{loc}}(z)$ coincides with the subcenter holonomy between $\mathcal{F}^u_{\text{loc}}(x)$ and $\mathcal{F}^u_{\text{loc}}(z)$ in $\mathcal{F}^{\hat{c}u}(x)$ which is holomorphic by Proposition \ref{prop:concholo} thus $C^1$.
        Assume $h^{cs}_{x,y}$ is differentiable at $x$.
        Then by the above, $h^{cs}_{z,y}$ is differentiable at $z$ and its derivative at $z$ is $H^{s}_{z,y}$.
        Therefore the differential of $h^{cs}_{x,y}$ at $x$ is $H^{s}_{z,y}\circ d_xh^{\hat{c}}_{x,z}=H^{cs}_{x,y}$ which is $\mathbb C$-linear.
        By Proposition \ref{lem:quasiconfholo}, we know that $h^{cs}_{x,y}$ is differentiable $m^u_x$-almost everywhere.
        Since the restriction to $h^{cs}_{x,y}$ to an open neighborhood of $z\in \mathcal{F}^u_{\operatorname{loc}}(x)$ in $\mathcal{F}^u_{\operatorname{loc}}(x)$ is exactly of the form $h^{cs}_{z,z'}$, where $z'=h^{cs}_{x,y}(z)$, the above shows that for those points at which $h^{cs}_{x,y}$ is differentiable, the differential is $\mathbb C$-linear, i.e. $h^{cs}_{x,y}$ is holomorphic at every one of those points.
        By the second point of Proposition \ref{lem:quasiconfholo}, this proves that $h^{cs}_{x,y}: \mathcal{F}^u_{\text{loc}}(x) \to \mathcal{F}^u_{\text{loc}}(y)$  is holomorphic.
    \end{proof}
    We conclude the proof of Proposition \ref{prop:Fcstrholo}.
    By Proposition \ref{prop:caractrholo}, the previous Lemma shows that $\mathcal{F}^{cs}$ is transversely holomorphic.
    In particular, the center-stable holonomy between local unstable leaves is uniformly $C^\infty$ (see Proposition \ref{prop:unifholo}).
    Since $\mathcal{F}^{cs}$ coincides with the weak subcenter-stable foliation and the subcenter-stable leaves are holomorphic (by Proposition \ref{prop:leavesholo}) thus uniformly $C^\infty$ (see Proposition \ref{prop:unifleaves}), the center-stable leaves are uniformly $C^\infty$ (see the proof of Proposition 4.1 in \cite{abouanass_dynamical_2026} : if $\mathcal{F}$ has uniformly $C^r$ leaves, for $r\geq 1$ or $r=\infty$, then $\mathcal{F}^w$ also).
    By Journé's Lemma (Proposition \ref{prop:journe}), $\mathcal{F}^{cs}$ is a $C^\infty$ foliation.
    If we apply all our arguments to the flow $(\varphi^{-t})$, then we conclude in the same way that $\mathcal{F}^{cu}$ is transversely holomorphic with uniformly smooth leaves, thus smooth.\\
    Therefore by Definition \ref{def:trholo}, there exist a smooth atlas of submersions $(U_i, f_i^s)_{i\in I}$ and holomorphic transition maps $\gamma^s_{ij}$ defining the transverse holomorphic structure of the center-stable foliation. There also exist an atlas of submersions $(V_i, f_i^u)_{i\in J}$ and holomorphic transition maps $\gamma^u_{ij}$ defining the transverse holomorphic structure of the center-unstable foliation. 
    By taking a refinement if necessary, we can assume $I=J$ and $V_i=U_i$ for every $i\in I$.
    We define: $f_i = (f_i^u, f_i^s):U_i \to \mathbb C^2$ and 
    $\gamma_{ij}=\gamma_{ij}^u \otimes \gamma_{ij}^s:(z_1,z_2)\mapsto (\gamma_{ij}^u (z_1),  \gamma_{ij}^s(z_2))$.
    By Osgood's lemma, $\gamma_{ij}$ is holomorphic because it is continuous and holomorphic along each variable. 
    Also, since for every $p\in U_i$
    $$\ker(df_i^s)\cap\ker(df_i^u)=E^{cs}_p\cap E^{cu}_p =E^c_p,$$
    $f_i: U_i \to \mathbb C^4$ is a submersion.
    This defines an atlas of $C^\infty$ submersions $(U_i, f_i)$ onto $\mathbb C^4$ with holomorphic transition maps $\gamma_{ij}$ which is compatible with the one defining the center foliation. 
    This proves that $\mathcal{F}^c$ is also a smooth transversely holomorphic foliation.
\end{proof}
\begin{remark}
    The results obtained in the previous sections are also true when $E^{\hat{c}}$ is trivial, i.e. when $(\varphi^t)$ is a transversely holomorphic Anosov flow.
    The previous result says that the weak stable and weak unstable foliations of a transversely holomorphic Anosov flow on a smooth compact five-manifold are transversely holomorphic and $C^\infty$, which is what we obtained in \cite{abouanass_global_2025} more directly.
\end{remark}

\subsection{Smoothness of the subcenter foliation}
\begin{corollary}
    \label{cor:Fçsmooth}
    Let $(\varphi^t)$ a smooth transversely holomorphic partially hyperbolic flow on a smooth compact manifold $M$ of dimension $7$ with a flow invariant compact subcenter foliation $\mathcal{F}^{\hat{c}}$ with $C^1$ leaves and trivial holonomy.\\
    Then the sub-center foliation $\mathcal{F}^{\hat{c}}$ is $C^\infty$. 
\end{corollary}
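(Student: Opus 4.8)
The plan is to realize $\mathcal{F}^{\hat{c}}$ as a $C^\infty$ foliation by verifying the two hypotheses of Journé's Lemma (Proposition \ref{prop:journe}): that $\mathcal{F}^{\hat{c}}$ has uniformly $C^\infty$ leaves and uniformly $C^\infty$ holonomy. One should note at the outset that, unlike $\mathcal{F}^{cs}$, $\mathcal{F}^{cu}$ and $\mathcal{F}^c$, the subcenter foliation has \emph{odd} real codimension $5$ (its transverse direction being $E^s\oplus E^u\oplus\mathbb{R}X$), so it cannot itself be transversely holomorphic and the assembly argument of Proposition \ref{prop:Fcstrholo} via a single $\mathbb{C}$-valued submersion is unavailable; the transverse regularity must instead be extracted one direction at a time.

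The leaf-regularity step is routine: by Proposition \ref{prop:leavesholo} each subcenter leaf is an immersed complex manifold whose complex structure varies continuously with the base point, so $\mathcal{F}^{\hat{c}}$ is a foliation with holomorphic leaves, and Proposition \ref{prop:unifleaves} then gives immediately that $\mathcal{F}^{\hat{c}}$ has uniformly $C^\infty$ leaves. For the holonomy step I would work in the lifted picture and represent the $\mathcal{F}^{\hat{c}}$-holonomy as the map $w\mapsto g_w(x)$, where the transverse coordinate $w$ ranges over a local transversal modelled on $E^s\oplus E^u\oplus \mathbb{R}X$ and $x$ is the leaf coordinate; the goal is to show this map is $C^\infty$ in $w$ with derivatives depending continuously on $(x,w)$. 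Using the local product structures of the invariant foliations (the local product of $\mathcal{F}^u$ and $\mathcal{F}^{cs}$, together with the product structures inside $\mathcal{F}^{\hat{c}u}$ and $\mathcal{F}^{\hat{c}s}$ from Proposition 6.12 of \cite{abouanass_dynamical_2026}), the restriction of this holonomy to the unstable, stable and flow directions is, respectively: the subcenter holonomy $h^{\hat{c}}_{xy}\colon \mathcal{F}^u_{\operatorname{loc}}(x)\to \mathcal{F}^u_{\operatorname{loc}}(y)$, holomorphic by Proposition \ref{prop:concholo}; its stable analogue, holomorphic by the time-reversed form of Proposition \ref{prop:concholo} (equivalently, the holomorphicity of $\restr{\mathcal{F}^{\hat{c}}}{\mathcal{F}^{\hat{c}s}(p)}$); and the flow holonomy, which by Proposition \ref{prop:holoflo} and flow-invariance of $\mathcal{F}^{\hat{c}}$ coincides with a restriction of the smooth map $\varphi^t$. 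Each directional holonomy is thus $C^\infty$, and uniformly so, the uniform dependence on the base point coming from the continuous variation of the non-stationary linearization $(\theta_x)_{x\in M}$ of Proposition \ref{prop:non-statiolin} together with the smoothness and compactness of $\varphi^t$ on $M$. Iterating Journé's Lemma over these pairwise transverse directions yields that $w\mapsto g_w(x)$ is uniformly $C^\infty$, i.e. $\mathcal{F}^{\hat{c}}$ has uniformly $C^\infty$ holonomy; Proposition \ref{prop:journe} then gives that $\mathcal{F}^{\hat{c}}$ is $C^\infty$.

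The hard part is precisely this holonomy step. One must justify carefully that the full $5$-dimensional transverse holonomy factors, through the product structures, into the three directional holonomies along $\restr{\mathcal{F}^s}{}$, $\restr{\mathcal{F}^u}{}$ and $\Phi$, and that Journé's Lemma can be applied successively to these three transverse foliations while preserving the continuity of the derivatives in $(x,w)$; by contrast the leaf-regularity and the final invocation of Proposition \ref{prop:journe} are straightforward. (An alternative route would be to prove $\mathcal{F}^{\hat{c}s}$ and $\mathcal{F}^{\hat{c}u}$ separately $C^\infty$ and to set $\mathcal{F}^{\hat{c}}=\mathcal{F}^{\hat{c}s}\cap\mathcal{F}^{\hat{c}u}$, but since those foliations are not yet known to be smooth, the direct application of Journé's Lemma to $\mathcal{F}^{\hat{c}}$ is the more economical path.)
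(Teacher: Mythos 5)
Your leaf-regularity step and your reduction of the problem to uniform smoothness of the transverse holonomy are both correct and agree with the paper. The gap is in the factorization claim at the heart of your holonomy step. Writing the $\mathcal{F}^{\hat{c}}$-holonomy between the flow-thickened transversals $T^w_x$ and $T^w_{x'}$ in coordinates adapted to the splitting (flow)$\times$(unstable)$\times$(stable), it does \emph{not} split as a product of the three directional holonomies; it has the triangular form
\[
h^{\hat{c}}_{x,x'}(t,y,z)=\bigl(t+s_{x,x'}(y,z),\; h^{\hat{c},u}_{x,x'}(y),\; h^{\hat{c},s}_{x,x'}(z)\bigr),
\]
where $s_{x,x'}(y,z)$ is the flow-time shift needed to bring the image point back onto the reference transversal. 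This shift vanishes when $y$ or $z$ equals the basepoint, which is exactly why your three "directional restrictions" through the basepoint look clean, but for general $(y,z)$ it is a genuinely joint function of both transverse coordinates. Your proposed iteration of Journé's Lemma then breaks down: Journé requires uniform smoothness along \emph{all} plaques of the two transverse foliations, not only those through the basepoint, and along a plaque $\{(t,y,z_0)\}$ with $z_0$ fixed away from the basepoint, smoothness of the holonomy is precisely smoothness of $y\mapsto s_{x,x'}(y,z_0)$ --- which none of Propositions \ref{prop:concholo}, \ref{prop:holoflo} or \ref{prop:non-statiolin} provides.

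This cross-term is where the paper spends essentially all of its effort, and it is also where Proposition \ref{prop:Fcstrholo} --- which your proposal never invokes in the holonomy step --- is indispensable. The paper equips the transversal $T_x$ with a holomorphic product structure coming from the transverse holomorphic structures of $\mathcal{F}^{cs}$ and $\mathcal{F}^{cu}$, uses smoothness and transverse holomorphicity of $\mathcal{F}^{c}$ to reduce uniform smoothness of $h^{\hat{c}}_{x,x'}$ to that of $s_{x,x'}$, and then proves smoothness of $y\mapsto s_{x,x'}(y,z)$ by exhibiting it as $t_z(y)-t_{z'}(g_z(y))$, where $t_z$ is a smooth flow-time projection along $E^{\hat{c}u}$ and $g_z$ is a composition of a subcenter holonomy (holomorphic by Proposition \ref{prop:concholo}) with two \emph{center}-foliation holonomies effecting the change of transversal, holomorphic only because $\mathcal{F}^c$ is transversely holomorphic by Proposition \ref{prop:Fcstrholo}. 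A symmetric argument in $z$, followed by Journé's Lemma plus Campanato's Lemma for the continuity of derivatives in the basepoint, gives joint uniform smoothness of $s_{x,x'}$. So your overall strategy (holomorphic leaves plus uniformly smooth holonomy plus Proposition \ref{prop:journe}) is the right skeleton, but the step you flagged as "the hard part" is not a matter of careful bookkeeping: the clean factorization you assume is false, and repairing it requires both the explicit time-shift analysis and the already-established smooth transversely holomorphic structure of the center, center-stable and center-unstable foliations.
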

\begin{proof}
    Since the subcenter leaves are holomorphic by Proposition \ref{prop:leavesholo} thus uniformly $C^\infty$, it suffices to show that the subcenter holonomy is uniformly $C^\infty$ (see Proposition \ref{prop:journe} again).
    Let $x,x'$ two nearby points on the same subcenter leaf. 
    Let $T_x$ (respectively $T_{x'}$) a small transversal to $\mathcal{F}^c$ at $x$ (at $x'$ respectively) containing $\mathcal{F}^{u}_{\delta_0}(x)$ and $\mathcal{F}^{s}_{\delta_0}(x)$ (containing $\mathcal{F}^{u}_{\delta_0}(x')$ and $\mathcal{F}^{s}_{\delta_0}(x')$ respectively) for some small $\delta_0$.
    Since the center-stable and the center-unstable foliations are transversely holomorphic, $T_x$ is a complex manifold subfoliated by two transverse transversely holomorphic $C^\infty$ foliations $\mathcal{F}^s_{T_x}$ and $\mathcal{F}^u_{T_x}$ obtained by intersecting respectively $\mathcal{F}^{cs}$ and $\mathcal{F}^{cu}$ with $T_x$.
    In the same way, $T_{x'}$ is a complex manifold subfoliated by two transverse transversely holomorphic $C^\infty$ foliations $\mathcal{F}^s_{T_{x'}}$ and $\mathcal{F}^u_{T_{x'}}$.
    There exists $0<\delta<\delta_0$ and $C>1$ such that the map
    $$\eta_x:\left\{\begin{array}{ccl}
        \mathcal{F}^{u}_{\delta}(x)\times \mathcal{F}^{s}_{\delta}(x) & \to & T_x\\
        (y,z) & \mapsto & \mathcal{F}^{s}_{T_x,C\delta}(y)\cap \mathcal{F}^{u}_{T_x,C\delta}(z)
    \end{array} \right.$$
    is a holomorphic diffeomorphism onto its image (since it is a homeomorphism onto its image and it is holomorphic along each variable because $\mathcal{F}^s_{T_{x}}$ and $\mathcal{F}^u_{T_{x}}$ are transversely holomorphic). The same can be said for $x'$.
    Therefore, by restricting $T_x$ and $T_x'$, we can assume these maps are holomorphic diffeomorphisms.
    In the following, for a smooth transversal $T$ to the flow (in the sense that the vector field $X$ generating $(\varphi^t)$ is nowhere tangent to $T$), we will denote by $T^w:=\bigcup_{-\epsilon<t<\epsilon}\varphi^t(T)$.
    Let $\epsilon>0$ small enough so that $T_x^w$ and $T^w_{x'}$ are small smooth transversals to $\mathcal{F}^{\hat{c}}$ at $x$ and $x'$ respectively. 
    We obtained thus $C^\infty$ coordinates on $T^w_x$ given by $$(t,y,z) \in \left ] - \epsilon, \epsilon \right [ \times \mathcal{F}^{u}_{\delta}(x) \times \mathcal{F}^{s}_{\delta}(x) \mapsto \varphi^t(\eta_x(y,z)) \in T^w_x,$$ and similarly on $T^w_{x'}$.
    Let $h^{\hat{c}}_{x,x'}$ denote the subcenter holonomy between $x$ and $x'$ with respect to $T^w_x$ and $T^w_{x'}$. Let $t\in \mathbb R$ small and $p=(y,z)\in T_x$. 
    Then since the subcenter foliation is invariant by the flow, it comes $h^{\hat{c}}_{x,x'}(\varphi^t(p))=\varphi^t(h^{\hat{c}}_{x,x'}(p))$ so under the above coordinates:
    $$h^{\hat{c}}_{x,x'}(t,y,z)=(t+s_{x,x'}(y,z),h^{\hat{c}, \mathcal{F}^{\hat{c}u}(x)}_{x,x'}(y),h^{\hat{c}, \mathcal{F}^{\hat{c}s}(x)}_{x,x'}(z)),$$
    where $\hat{c}, \mathcal{F}^{\hat{c}*}(x)$ in the exponent means the subcenter holonomy in the subcenter-$*$ leaf of $x$ with respect to strong $*$-leaves, and $s_{x,x'}(y,z)$ is the unique small real number $s$ so that $h^c_{x,x'}(y,z)\in \mathcal{F}_{\operatorname{loc}}^{\hat{c}}(\varphi^{-s}(y,z))$.
    Remark that $h^c_{x,x'}(y,z)=(h^{\hat{c}, \mathcal{F}^{\hat{c}u}(x)}_{x,x'}(y),h^{\hat{c}, \mathcal{F}^{\hat{c}s}(x)}_{x,x'}(z))$.
    Since the center foliation is smooth and transversely holomorphic by Proposition \ref{prop:Fcstrholo}, the above shows that the subcenter foliation has uniformly smooth holonomy if and only if the map $s_{x,x'}$ is smooth and its partial derivatives vary continuously with $(x,y,z)$.\\
    Fix $x$ and $z\in \mathcal{F}^{s}_{\operatorname{loc}}(x)$. Let $z'=h^{\hat{c}, \mathcal{F}^{\hat{c}s}(x)}_{x,x'}(z)$.
    Denote by $h^{\hat{c}, \mathcal{F}^{cu},T_x}_{z,z'}$ the subcenter holonomy in $\mathcal{F}^{cu}(z)$ between the transversals $\mathcal{F}^{wu}_{T_x,\operatorname{loc}}(z):=(\mathcal{F}^{u}_{\delta}(x)\times \{z\})^w$ and  $\mathcal{F}^{wu}_{T_{x'},\operatorname{loc}}(z'):=(\mathcal{F}^{u}_{\delta}(x')\times \{z'\})^w$.
    Let $y \in \mathcal{F}^{u}_{\operatorname{loc}}(x)=\mathcal{F}^{u}_{\delta}(x)\times \{z\}$ and $t$ a small real number. Then
    $$h^{\hat{c}, T^w_{x'},T^w_{x}}_{x,x'}(t,(y,z))=h^{\hat{c}, \mathcal{F}^{cu},T_x}_{z,z'}(t,y)=(t,0)+h^{\hat{c}, \mathcal{F}^{cu},T_x}_{z,z'}(y).$$
    On the other hand, 
    $$h^{\hat{c}, \mathcal{F}^{cu},T_x}_{z,z'}=
    \left(h^{\hat{c}, \mathcal{F}^{wu}_{\operatorname{loc}}(z'),\mathcal{F}^{wu}_{T_x,\operatorname{loc}}(z')}_{z'} \right)^{-1} \circ h^{\hat{c}, \mathcal{F}^{wu}_{\operatorname{loc}}(z'),\mathcal{F}^{wu}_{\operatorname{loc}}(z)}_{z,z'}\circ h^{\hat{c}, \mathcal{F}^{wu}_{\operatorname{loc}}(z),\mathcal{F}^{wu}_{T_x,\operatorname{loc}}(z)}_{z}.$$
    For $y' \in \mathcal{F}^{cu}_{\operatorname{loc}}(z) \simeq \Phi_{\operatorname{loc}}(z) \times \mathcal{F}^{\hat{c}u}_{\operatorname{loc}}(z) $, denote by $t_{z}(y')\in \mathbb R$ the projection of $y'$ onto the first factor.
    The map $t_{z}$ is a $C^\infty$ submersion whose differential is the projection in $E^{cu}$ on $\mathbb RX$ along $E^{\hat{c}u}$; its partial derivatives vary continuously with $(y',z)$.\\
    It comes that for $y \in \mathcal{F}^{u}_{\delta}(x)\times \{z\}$,
    $$h^{\hat{c}, \mathcal{F}^{wu}_{\operatorname{loc}}(z),\mathcal{F}^{wu}_{T_x,\operatorname{loc}}(z)}_{z}(y)=(t_z(y),h^{c, \mathcal{F}^{u}_{\operatorname{loc}}(z),\mathcal{F}^{u}_{T_x,\operatorname{loc}}(z)}_{z}(y)).$$
    Also, for small $t\in \mathbb R$ and $y'\in \mathcal{F}^u_{\operatorname{loc}}(z)$, 
    $$h^{\hat{c}, \mathcal{F}^{wu}_{\operatorname{loc}}(z'),\mathcal{F}^{wu}_{\operatorname{loc}}(z)}_{z,z'}(t,y')=(t,h^{\hat{c}, \mathcal{F}^{\hat{c}u}(z)}_{z,z'}(y')).$$
    Therefore,
    \begin{align*}
        h^{\hat{c}, \mathcal{F}^{cu},T_x}_{z,z'}(t,y)&=(t,0)+
    \left(h^{\hat{c}, \mathcal{F}^{wu}_{\operatorname{loc}}(z'),\mathcal{F}^{wu}_{T_x,\operatorname{loc}}(z')}_{z'} \right)^{-1} \left(t_z(y),h^{\hat{c}, \mathcal{F}^{\hat{c}u}(z)}_{z,z'}\circ h^{c, \mathcal{F}^{u}_{\operatorname{loc}}(z),\mathcal{F}^{u}_{T_x,\operatorname{loc}}(z)}_{z}(y) \right)\\
    &=\left(t+t_z(y)-t_{z'}(g_z(y)),g_z(y) \right)
    \end{align*}
    where $g_z(y)=\left(h^{c, \mathcal{F}^{u}_{\operatorname{loc}}(z'),\mathcal{F}^{u}_{T_x,\operatorname{loc}}(z')}_{z'}\right)^{-1}\circ h^{\hat{c}, \mathcal{F}^{\hat{c}u}(z)}_{z,z'}\circ h^{c, \mathcal{F}^{u}_{\operatorname{loc}}(z),\mathcal{F}^{u}_{T_x,\operatorname{loc}}(z)}_{z}(y)$.
    Since $g_z$ is holomorphic, the previous shows that for every such $x,z$, the map $s_{x,x'}(\cdot,z)$ is $C^\infty$ and for every $k$ its partial derivatives of order $k$ vary continuously with $(x,y,z)$.
    In the same way by reverting the roles of $y$ and $z$, for every such $x,y$, the map $s_{x,x'}(y,\cdot)$ is $C^\infty$ and for every $k$ its partial derivatives of order $k$ vary continuously with $(x,y,z)$. 
    By Journé's lemma, for every $x$ the map $s_{x,x'}$ is $C^\infty$.
    Moreover, as the proof of Journé's lemma shows (which uses Campanato's Lemma, see \cite{zavattini_regularity_nodate}), for every $k$ the partial derivatives of order $k$ of $s_{x,x'}$ vary continuously with $(x,y,z)$.
    Therefore, $\mathcal{F}^{\hat{c}}$ has uniformly $C^\infty$ holonomy, which eventually proves that it is a $C^\infty$ foliation.
\end{proof}

\subsection{The classification}
\begin{corollary}
\label{cor:classif}
    Let $(\varphi^t)$ a smooth transversely holomorphic partially hyperbolic flow on a smooth compact manifold $M$ of dimension $7$ with a flow invariant compact subcenter foliation $\mathcal{F}^{\hat{c}}$ with $C^1$ leaves and trivial holonomy.\\
    Then 
    \begin{enumerate}[label=(\roman*)]
        \item $\overline{M}$ is a smooth connected compact manifold of dimension $5$, $M \xrightarrow[]{p} \overline{M}$ is a smooth fiber bundle and $(\overline{\varphi}^t):=(\overline{\varphi^t(x)})$ is a smooth transversely holomorphic Anosov flow induced by a smooth non-vanishing vector field $\overline{X}$ ;
        \item In particular if $(\overline{\varphi}^t)$ is topologically transitive, then $(\overline{\varphi}^t)$ is either $C^\infty$-orbit equivalent to the suspension of a hyperbolic automorphism of a complex torus, or, up to finite covers, $C^\infty$-orbit equivalent to the geodesic flow on the unit tangent bundle of a compact hyperbolic three-dimensional manifold.
    \end{enumerate}
\end{corollary}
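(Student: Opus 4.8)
The plan is to assemble the structural results already in hand—smoothness and transverse holomorphicity of $\mathcal{F}^{cs}$, $\mathcal{F}^{cu}$ and $\mathcal{F}^c$ (Proposition \ref{prop:Fcstrholo}) together with smoothness of $\mathcal{F}^{\hat{c}}$ (Corollary \ref{cor:Fçsmooth})—and to transport all of it to the leaf space $\overline{M}=M/\mathcal{F}^{\hat{c}}$. Since $\mathcal{F}^{\hat{c}}$ is now a $C^\infty$ compact foliation with trivial holonomy, Epstein's theorem (Theorem \ref{thm:epstein}) makes $\overline{M}$ a topological manifold and generalized Reeb stability (Theorem \ref{thm:reeb}), with trivial structure group, upgrades $p\colon M\to\overline{M}$ to a $C^\infty$ fiber bundle whose fibers are exactly the subcenter leaves. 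As $M$ is compact connected and $p$ is a continuous surjection, $\overline{M}$ is compact connected of dimension $7-2=5$.

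Next I would descend the dynamics. Flow invariance of $\mathcal{F}^{\hat{c}}$ gives a well-defined flow $\overline{\varphi}^t$ on $\overline{M}$ with $p\circ\varphi^t=\overline{\varphi}^t\circ p$; differentiating at $t=0$ yields $dp(X(x))=\overline{X}(\overline{x})$, so $\overline{X}:=dp\circ X$ is a well-defined smooth vector field, nowhere vanishing because $X\notin E^{\hat{c}}=\ker dp$, and $\overline{\varphi}^t$ is its smooth flow. Since $\mathcal{F}^{cs}$ and $\mathcal{F}^{cu}$ are smooth and saturated by subcenter leaves (Proposition \ref{prop:dyncohsubfol}), they descend to smooth $d\overline{\varphi}^t$-invariant foliations on $\overline{M}$ with tangent bundles $\overline{E}^{ws}:=dp(E^{cs})$ and $\overline{E}^{wu}:=dp(E^{cu})$, each of dimension $3$ and each containing $\mathbb{R}\overline{X}$. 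A short computation with the splitting $TM=E^{s}\oplus E^{\hat{c}}\oplus\mathbb{R}X\oplus E^{u}$ shows $\overline{E}^{ws}\cap\overline{E}^{wu}=\mathbb{R}\overline{X}$. I then characterize the strong bundles dynamically: let $\overline{E}^{s}$ be the set of $\overline{v}\in\overline{E}^{ws}$ with $\|d\overline{\varphi}^t\overline{v}\|\to0$ exponentially as $t\to+\infty$, and dually $\overline{E}^{u}\subset\overline{E}^{wu}$ in backward time. Because $dp|_{E^{s}}$ is bounded above and below (its kernel meets $E^{s}$ only in $0$), one gets $dp(E^{s})\subseteq\overline{E}^{s}$; conversely, writing $\overline{E}^{ws}=\mathbb{R}\overline{X}\oplus dp(E^{s})$ and using that $\|\overline{X}\|$ is bounded below, any forward-contracted vector must lie in $dp(E^{s})$. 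Hence $\overline{E}^{s}=dp(E^{s})$ is a well-defined continuous $2$-plane field with genuine exponential contraction, and symmetrically for $\overline{E}^{u}$. The intersection computation then gives $T\overline{M}=\overline{E}^{s}\oplus\mathbb{R}\overline{X}\oplus\overline{E}^{u}$ with uniform hyperbolic estimates, so $\overline{\varphi}^t$ is Anosov.

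For the transverse holomorphic structure I would transport that of $\mathcal{F}^c$. The image $p(\mathcal{F}^c(x))$ is exactly the $\overline{\varphi}^t$-orbit through $\overline{x}$, so $\mathcal{F}^c$ descends to the orbit foliation $\overline{\Phi}$ of $\overline{\varphi}^t$, and $dp$ induces an isomorphism of normal bundles $TM/E^{c}\cong T\overline{M}/\mathbb{R}\overline{X}$ that intertwines $\mathcal{F}^c$-holonomy with $\overline{\Phi}$-holonomy. Thus the holonomy-invariant integrable almost complex structure on $TM/E^c$ supplied by Proposition \ref{prop:Fcstrholo} (via the criterion of Proposition \ref{prop:caractrholo}) descends to a holonomy-invariant integrable almost complex structure on $T\overline{M}/\mathbb{R}\overline{X}$; applying Proposition \ref{prop:caractrholo} once more shows $\overline{\varphi}^t$ is transversely holomorphic. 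This proves (i). Part (ii) is then immediate: $\overline{\varphi}^t$ is a smooth transversely holomorphic Anosov flow on a compact connected $5$-manifold, so when it is topologically transitive the classification of \cite{abouanass_global_2025} applies verbatim and gives the two stated alternatives.

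The step I expect to be the main obstacle is the well-definedness and hyperbolicity of the projected strong bundles $\overline{E}^{s}$ and $\overline{E}^{u}$: one must ensure that $dp$ neither collapses a contracted direction into the flow line nor distorts the contraction rates across the subcenter fibers. The dynamical characterization above is designed precisely to bypass any direct holonomy-invariance argument for $E^{s}$ along subcenter leaves, and it makes both the well-definedness (an intrinsic description as the forward-contracted subspace of $\overline{E}^{ws}$) and the uniform estimates fall out at once. Everything else is bookkeeping layered on top of the already-established Proposition \ref{prop:Fcstrholo} and Corollary \ref{cor:Fçsmooth}.
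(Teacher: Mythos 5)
Your proposal is correct and follows essentially the same route as the paper: smoothness of $\mathcal{F}^{\hat{c}}$ plus Reeb stability and trivial holonomy give the smooth $5$-manifold structure on $\overline{M}$ and the smooth bundle $p$, the flow and the transverse holomorphic structure of $\mathcal{F}^c$ descend through $p$, hyperbolicity of $(\overline{\varphi}^t)$ is obtained by pushing the splitting $E^s\oplus\mathbb{R}X\oplus E^u$ forward by $dp$ with uniform comparison of norms, and (ii) is quoted from \cite{abouanass_global_2025}. The only difference is cosmetic: where you justify well-definedness of $dp(E^s)$, $dp(E^u)$ by an intrinsic dynamical characterization as the forward/backward exponentially contracted subspaces, the paper instead compares a smooth metric on $M$ with the continuous metric pulled back from $\overline{g}$ via $dp$ on $E^s\oplus\mathbb{R}X\oplus E^u$ and lets the estimates do the same work implicitly.
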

\begin{proof}
    $\overline{M}$ is compact and connected since $p$ is continuous.
    By Reeb's stability Theorem, since $\mathcal{F}^{\hat{c}}$ is a $C^\infty$ foliation with compact leaves and trivial holonomy, for every subcenter leaf $L$, there exists a small embedded transversal $T$ and a normal neighborhood $N(L)$ of $L$ in $M$ diffeomorphic to $L\times T$.
    For every $\overline{x} \in \overline{M}$, let $T_{\overline{x}}$ such a transversal given for $\mathcal{F}^{\hat{c}}(\overline{x})$. Then $p(N(L))=p(T_{\overline{x}})\subset\overline{M}$ is an open subset of $\overline{M}$ which is homeomorphic to $T_{\overline{x}}$.
    The transition map between two such local charts is given by smooth subcenter holonomies which gives makes $\overline{M}$ a $C^\infty$ manifold of dimension five.
    Therefore, $p:M\to \overline{M}$ is a smooth fiber bundle.
    The differential of $p$ at $x=(x_1,x_2) \in N(L) \cong L \times T$ is given by the projection $v=(v_1,v_2)\in T_{x_1}L\times T_{x_2}T \mapsto v_2 \in T_{\overline{x}}p(T)$. 
    The smooth flow $(\varphi^t)$ on $M$ induces naturally a non-stationary
    smooth flow $(\overline{\varphi}^t)$ on $\overline{M}$ such that for every $t \in \mathbb R$,
    $$\overline{\varphi}^t\circ p=p\circ \varphi^t.$$
    Its orbit foliation coincides with the projection of the center foliation which is transversely holomorphic by Proposition \ref{prop:Fcstrholo}, so $(\overline{\varphi}^t)$ is also a transversely holomorphic flow induced by a smooth non-vanishing vector field.\\
    We now prove that $(\overline{\varphi}^t)$ is Anosov.
    Fix a smooth Riemannian metric $\overline{g}$ on $\overline{M}$. 
    Since $E^{s}\oplus \mathbb RX \oplus E^u$ is a continuous subbundle of $TM$ transverse to $E^{\hat{c}}$,
    we can define a continuous Riemannian metric $g^0$ on $M$ such that for $x \in M$ and $v,v'\in E^{s}_x\oplus \mathbb RX(x) \oplus E^u_{x}$, $g^0_x(v,v')=\overline{g}_{\overline{x}}(d_xp(v),d_xp(v'))$.
    Now let $g$ a smooth Riemannian metric on $M$. 
    Since $M$ is compact and $g,g^0$ are continuous metrics on $M$, they are equivalent.
    Therefore, by definition of $(\overline{\varphi}^t)$, the definition of the partial hyperbolicity of $(\varphi^t)$ and the above, it comes that
    $(\overline{\varphi}^t)$ is a smooth Anosov flow with respect to $\overline{g}$ whose stable and unstable distributions are given by $dp(E^s)$ and $dp(E^u)$ respectively.\\
    The second point of the proposition is just Theorem C of \cite{abouanass_global_2025}.
\end{proof}
\begin{corollary}
    Let $(\varphi^t)$ a smooth transversely holomorphic partially hyperbolic flow on a smooth compact manifold $M$ of dimension $7$ with a flow invariant compact subcenter foliation $\mathcal{F}^{\hat{c}}$ with $C^1$ leaves and trivial holonomy. Suppose $(\varphi^t)$ is topologically transitive.\\
    Then the center-stable and center-unstable foliations are transversely projective.
    The subcenter-stable and subcenter-unstable foliations are $C^\infty$.
\end{corollary}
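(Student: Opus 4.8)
The plan is to reduce both statements to the base Anosov flow $(\overline{\varphi}^t)$ via the smooth fibre bundle $p\colon M\to\overline{M}$ produced in Corollary \ref{cor:classif}. Since the fibres of $p$ are the subcenter leaves we have $\ker dp=E^{\hat{c}}$, and computing $dp$ on the invariant distributions gives $dp(E^{cs})=dp(E^{s})\oplus\mathbb{R}\overline{X}=\overline{E}^{s}\oplus\mathbb{R}\overline{X}=\overline{E}^{ws}$ and $dp(E^{\hat{c}s})=dp(E^{s})=\overline{E}^{s}$, together with the analogous unstable identities; here $\overline{E}^{s}=dp(E^{s})$ is the stable distribution of $(\overline{\varphi}^t)$ by Corollary \ref{cor:classif}. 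Because $E^{\hat{c}}\subset E^{cs}\cap E^{\hat{c}s}$ lies in $\ker dp$, this identifies the foliations as pullbacks: $\mathcal{F}^{cs}=p^{-1}(\overline{\mathcal{F}}^{ws})$ and $\mathcal{F}^{\hat{c}s}=p^{-1}(\overline{\mathcal{F}}^{s})$, and likewise $\mathcal{F}^{cu}=p^{-1}(\overline{\mathcal{F}}^{wu})$, $\mathcal{F}^{\hat{c}u}=p^{-1}(\overline{\mathcal{F}}^{u})$. Both assertions thus become assertions about $(\overline{\varphi}^t)$ transported through the smooth submersion $p$.

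For transverse projectivity I would use transitivity and Corollary \ref{cor:classif}: $(\overline{\varphi}^t)$ is, up to finite covers, $C^\infty$-orbit equivalent either to the suspension of a hyperbolic automorphism of a complex torus or to the geodesic flow on the unit tangent bundle of a compact hyperbolic three-manifold. In the first case the weak stable and weak unstable foliations are transversely affine, in the second they are transversely projective with holonomy in $\mathrm{PSL}(2,\mathbb{C})$ acting on $\mathbb{CP}^1=\partial\mathbb{H}^3$; as an affine structure is in particular projective, $\overline{\mathcal{F}}^{ws}$ and $\overline{\mathcal{F}}^{wu}$ are transversely projective in both cases. The point is that this is a property of the weak foliations alone: a $C^\infty$-orbit equivalence is a diffeomorphism carrying weak stable leaves to weak stable leaves, hence conjugating the two weak foliations and transporting the transverse projective structure, and likewise for the finite covering projections. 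Pulling back through $p$ — which respects the complex-codimension-one transverse structure since its fibres are contained in the center-stable, resp. center-unstable, leaves — gives that $\mathcal{F}^{cs}$ and $\mathcal{F}^{cu}$ are transversely projective.

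For the smoothness of $\mathcal{F}^{\hat{c}s}$ and $\mathcal{F}^{\hat{c}u}$, rather than transporting the strong foliations across the orbit equivalence, I would argue directly, mirroring the proof of Corollary \ref{cor:Fçsmooth}. By Proposition \ref{prop:leavesholo} the subcenter-stable leaves are holomorphic, hence uniformly $C^\infty$ by Proposition \ref{prop:unifleaves}, so by Journé's Lemma (Proposition \ref{prop:journe}) it suffices to show that $\mathcal{F}^{\hat{c}s}$ has uniformly $C^\infty$ holonomy. Taking the weak unstable foliation $(\mathcal{F}^{u})^{w}$ (tangent to $E^{u}\oplus\mathbb{R}X$, complementary to $E^{\hat{c}s}$) as transversals, the $\mathcal{F}^{\hat{c}s}$-holonomy splits into a component along $E^{u}$ — which coincides with the center-stable holonomy between local unstable leaves and is holomorphic by Proposition \ref{prop:Fcstrholo} — and a flow-drift component along $\mathbb{R}X$. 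The drift is treated exactly as the map $s_{x,x'}$ in Corollary \ref{cor:Fçsmooth}: one shows it is $C^\infty$ with partial derivatives continuous in the base point by the same Journé/Campanato argument, now using that $\mathcal{F}^{cs}$, and hence its trace on the transversal, is smooth and transversely holomorphic. Two applications of Journé's Lemma then give that $\mathcal{F}^{\hat{c}s}$, and by reversing time $\mathcal{F}^{\hat{c}u}$, is $C^\infty$.

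The main obstacle lies in this second part. A naive pullback argument, parallel to the first part, would require the \emph{strong} stable and unstable foliations of $(\overline{\varphi}^t)$ to be $C^\infty$; but $C^\infty$-orbit equivalence preserves only the weak foliations, since it may reparametrize time, and so does not carry the strong foliations of the algebraic models to those of $(\overline{\varphi}^t)$. The Journé route above avoids this difficulty altogether, at the price of re-running the delicate drift estimate of Corollary \ref{cor:Fçsmooth}; checking that the decomposition of the $\mathcal{F}^{\hat{c}s}$-holonomy into a holomorphic transverse part and a smooth flow-drift is genuinely valid and uniform in the base point is where the real work of the proof is concentrated.
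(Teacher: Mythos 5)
Your reduction to the base flow is the same as the paper's: you identify $\mathcal{F}^{cs},\mathcal{F}^{cu},\mathcal{F}^{\hat{c}s},\mathcal{F}^{\hat{c}u}$ with the $p$-pullbacks of $\overline{\mathcal{F}}^{ws},\overline{\mathcal{F}}^{wu},\overline{\mathcal{F}}^{s},\overline{\mathcal{F}}^{u}$. But at that point the paper simply quotes, as intrinsic theorems of \cite{abouanass_global_2025} about topologically transitive transversely holomorphic Anosov flows on compact five-manifolds, that the weak foliations are transversely projective \emph{and} the strong foliations are $C^\infty$, and pulls both statements back through the smooth submersion $p$. You instead try to re-derive these two facts from the classification statement (orbit equivalence to the algebraic models), and each half of that derivation has a genuine gap. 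For projectivity: in the geodesic-flow case the classification only provides an orbit equivalence after passing to a finite cover $\hat{M}\to\overline{M}$, so your transport argument produces a transverse projective structure on the \emph{lifted} foliation on $\hat{M}$. The corollary claims projectivity on $M$ itself, and a transverse structure on a covering foliation descends only if it is invariant under the deck group; covering projections pull structures back, they do not push them forward. You never address deck-invariance (which would require a uniqueness or rigidity statement for the transverse projective structure), so this half is incomplete as written.

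The second gap is more serious. You rightly note that a $C^\infty$ orbit equivalence does not carry strong foliations to strong foliations, but your substitute --- running the drift analysis ``exactly as the map $s_{x,x'}$ in Corollary \ref{cor:Fçsmooth}'' --- does not transfer. That argument is specific to holonomy between two points of the \emph{same subcenter leaf}: there, the genuine holonomy between the weak unstable leaves of $z$ and of $z'\in\mathcal{F}^{\hat{c}}(z)$ has \emph{zero} flow drift (the product structure of $\mathcal{F}^{\hat{c}u}$ by $\mathcal{F}^{u}$ and $\mathcal{F}^{\hat{c}}$ matches strong unstable plaques to strong unstable plaques), so the entire drift $s_{x,x'}$ is produced by changes of transversal \emph{inside the fixed} leaves $\mathcal{F}^{wu}(z)$ and $\mathcal{F}^{wu}(z')$, where the time functions $t_z,t_{z'}$ are smooth for purely leafwise reasons. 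For $\mathcal{F}^{\hat{c}s}$-holonomy with $x'\in\mathcal{F}^{\hat{c}s}_{\operatorname{loc}}(x)$ in a genuinely stable direction no such factorization exists: since $\dim E^{\hat{c}s}+\dim E^{u}<\dim M$, the subcenter-stable leaf of $y$ does not meet $\mathcal{F}^{u}(x')$ at all, and the drift $s(y)$ is the genuine temporal displacement of the quotient Anosov flow; its smoothness in $y$ is \emph{equivalent} to the smoothness of $\overline{\mathcal{F}}^{s}$ transverse to $\overline{\mathcal{F}}^{wu}$ --- the very statement to be proved, and one that is false for general Anosov flows, where this drift is merely H\"older. (Note also that your anchored time functions vary with $y$, so asserting their smoothness presupposes the transverse regularity of $\mathcal{F}^{\hat{c}s}$.) The missing input is precisely the rigidity result for transitive transversely holomorphic Anosov flows in dimension five that the paper cites from \cite{abouanass_global_2025}; without it, neither the orbit-equivalence transport nor a soft Journ\'e argument yields the smoothness of $\mathcal{F}^{\hat{c}s}$ and $\mathcal{F}^{\hat{c}u}$.
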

\begin{proof}
    The flow $(\overline{\varphi}^t)$ on $\overline{M}$ is topologically transitive because $(\varphi^t)$ is topologically transitive.
    By the previous corollary and the results of \cite{abouanass_global_2025}, the weak stable and weak unstable 
    foliations of $(\overline{\varphi}^t)$ are transversely projective, while its stable and unstable foliations are $C^\infty$.
    Therefore, since the center-stable (respectively the center-unstable) foliation is the pull-back by $p$ of the weak stable (respectively the weak unstable) foliation of $(\overline{\varphi}^t)$, it is transversely projective.
    In the same way,  since the subcenter-stable (respectively the subcenter-unstable) foliation is the pull-back by $p$ of the stable (respectively the unstable) foliation of $(\overline{\varphi}^t)$, it is $C^\infty$.
\end{proof}

\newpage
\bibliographystyle{abbrv}
\bibliography{article_4.bib}
\end{document}